\crefname{equation}{}{}
\numberwithin{equation}{section}
\theoremstyle{plain}
\newtheorem*{thm-intro}{Theorem}
\newtheorem*{prop-intro}{Proposition}
\newtheorem*{thm-intro2}{Theorem D}
\newtheorem{theorem}[equation]{Theorem}
\newtheorem{proposition}[equation]{Proposition}
\newtheorem{lemma}[equation]{Lemma} 
\newtheorem{corollary}[equation]{Corollary}
\newtheorem{conjecture}[equation]{Conjecture}
\theoremstyle{definition}
\newtheorem{definition}[equation]{Definition}
\newtheorem{example}[equation]{Example}
\newtheorem*{claim}{Claim}
\theoremstyle{remark}
\newtheorem{remark}[equation]{Remark} 
\newtheorem*{ack}{Acknowledgements}
\newtheorem*{conventions}{Conventions}
\newtheorem*{orga}{Organization of the paper}
\newcommand*{\intref}[2]{\def\tmp{#1}\ifx\tmp\empty\hyperref[#2]{\ref*{#2}}\else\hyperref[#2]{#1~\ref*{#2}}\fi}
\newcommand{\cc}{\operatorname{c}}
\newcommand{\CH}{\operatorname{CH}}
\newcommand{\ch}{\operatorname{ch}}
\newcommand{\codim}{\operatorname{codim}}
\newcommand{\Coh}{\operatorname{Coh}}
\renewcommand{\dim}{\operatorname{dim}}
\newcommand{\End}{\operatorname{End}}
\newcommand{\Ext}{\operatorname{Ext}}
\newcommand{\Hom}{\operatorname{Hom}}
\newcommand{\id}{\operatorname{id}}
\newcommand{\Pic}{\operatorname{Pic}}
\newcommand{\Proj}{\operatorname{Proj}}
\newcommand{\sheafhom}{\mathop{\mathcal{H}\! \mathit{om}}\nolimits}
\newcommand{\Spec}{\operatorname{Spec}}
\newcommand{\Supp}{\operatorname{Supp}}
\newcommand{\mcE}{\mathcal{E}}
\newcommand{\mcF}{\mathcal{F}}
\newcommand{\mcG}{\mathcal{G}} 
\newcommand{\mcH}{\mathcal{H}} 
\newcommand{\mcL}{\mathcal{L}}
\newcommand{\mcO}{\mathcal{O}}
\newcommand{\sfD}{\mathsf D}
\newcommand{\bbA}{\mathbb A}
\newcommand{\bbF}{\mathbb F}
\newcommand{\bbP}{\mathbb P}
\newcommand{\bbQ}{\mathbb Q} 
\newcommand{\bbZ}{\mathbb Z}
\newcommand{\Johannes}[1]{}
\newcommand{\Charles}[1]{}
\newcommand{\marg}[1]{\normalsize{{
			\color{red}\footnote{{\color{blue}#1}}}{\marginpar[\vskip
			-.25cm{\color{red}\hfill\thefootnote$\implies$}]{\vskip
				-.2cm{\color{red}$\impliedby$\tiny\thefootnote}}}}}
\newcommand{\Johannes}[1]{\marg{(Johannes) #1}}
\newcommand{\Charles}[1]{\marg{(Charles) #1}}
\title[On proper splinters in positive characteristic]{On proper splinters in positive characteristic}
\author[Johannes~Krah]{Johannes Krah}
\author[Charles~Vial]{Charles Vial}
\address{Fakult\"at f\"ur Mathematik, Universit\"at Bielefeld, D-33501 Bielefeld, Germany
}
\email{jkrah@math.uni-bielefeld.de,
	vial@math.uni-bielefeld.de}
\thanks{The research of both authors was funded by the Deutsche Forschungsgemeinschaft (DFG, German Research Foundation) -- Project-ID~491392403 -- TRR~358}
\begin{document}

\begin{abstract} 
While the splinter property is a local property for Noetherian schemes in characteristic zero, Bhatt observed that it imposes strong conditions on the global geometry of proper schemes in positive characteristic.
We show that if a proper scheme over a field of positive characteristic is a splinter, then its Nori fundamental group scheme is trivial and its Kodaira dimension is negative. 
In another direction, Bhatt also showed that any splinter in positive characteristic is a derived splinter. We ask whether the splinter property is a derived invariant for projective 
varieties in positive characteristic and give a positive answer for normal Gorenstein projective 
varieties with big anticanonical divisor. 
We also show that global $F$-regularity is a derived invariant for normal Gorenstein projective varieties in positive characteristic.
\end{abstract}

\keywords{Splinters, globally $F$-regular varieties, Nori fundamental group scheme, derived equivalence, $K$-equivalence, exceptional inverse image functor}
\subjclass[2020]{14G17, 14F35, 14L30, 14F08; (14J17, 14J26, 14E05)}

\date{\today}

\maketitle

\section{Introduction}	
	A Noetherian scheme $X$ is a \emph{splinter} if for all finite surjective morphisms $f\colon Z \to X$ the map $\mcO_X \to f_*\mcO_Z$ splits in the category of coherent $\mcO_X$-modules. The direct summand conjecture, now a theorem due to Andr\'e~\cite{andre}, stipulates that any regular Noetherian affine scheme is a splinter.
	In characteristic zero, the splinter property is a local property\,: a Noetherian scheme over $\bbQ$ is a splinter if and only if it is normal. 
	In positive characteristic, the splinter property is no longer a local property in general.
	Bhatt's beautiful \cite[Thm.~1.5]{bhatt_derived_splinters_in_positive_characteristic}, inspired from Hochster and Huneke's \cite[Thm.~1.2]{hochster_huneke_big_CM}, shows that, for a proper scheme over an affine Noetherian scheme
	 of positive characteristic, the positive-degree cohomology of the structure sheaf vanishes up to finite covers. 
	Bhatt draws two consequences for splinters of positive characteristic\,: first that the positive-degree cohomology of semiample invertible sheaves on proper splinters vanishes, and second that splinters and derived splinters coincide. 
	Our first aim is to provide further global constraints on proper splinters in positive characteristic. 
	Our second aim is to study whether the splinter property is a derived invariant for projective varieties in positive characteristic.

	\subsection*{Global constraints on proper splinters in positive characteristic}

	Recall that a smooth projective, separably rationally connected, variety over an  algebraically closed field of positive characteristic has trivial Nori fundamental group scheme~\cite{biswas}, has negative Kodaira dimension~\cite[Ch.~IV, Cor.~1.11 \& Prop.~3.3]{kollar_rational_curves_algebraic_varieties}, and has no nonzero global differential forms~\cite[Ch.~IV, Cor.~3.8]{kollar_rational_curves_algebraic_varieties}.
	Motivated by the intriguing question whether proper splinters over an algebraically closed field of positive characteristic are separably rationally connected, we show\,:

\begin{thm-intro}
	Let $X$ be a connected proper scheme over a field $k$ of positive characteristic. \linebreak
	Assume that $X$ is a splinter. 
	\begin{enumerate}[label={\bf(\Alph*)}]
		\item \label{item:thm:triv_fund_grp} \emph{(\cref{prop:trivial_fundamental_group})}
		If $X$ has a $k$-rational point $x\in X(k)$, then the Nori fundamental group scheme
		 $\pi^N_1(X,x)$ is trivial. In particular, if $k$ is algebraically closed, then any finite torsor over $X$ is trivial.
		\item \label{item:thm:negative_kod_dim} \emph{(\cref{prop:K_equvialenceodaira_dim_f_split_variety})} 
		If $X$ is positive-dimensional, then $X$ has negative Kodaira dimension, i.e., $H^0(X,\mcO_X(nK_X)) = 0$ for all $n>0$, where $K_X$ denotes the canonical divisor of $X$.
		\item \label{item:thm:global-1-forms} \emph{(\cref{thm:global-1-forms})}
		If $X$ is smooth, then $H^0(X,\Omega_X^1) = 0$.
	\end{enumerate}
\end{thm-intro}

	Note from their respective proofs that \labelcref{item:thm:global-1-forms}, and \labelcref{item:thm:negative_kod_dim} in the Gorenstein case, follow from the known fact, recalled in \cref{prop:pic_torsion_free}, that the Picard group of a proper splinter over a field of positive characteristic is torsion-free.
    The proofs of \labelcref{item:thm:triv_fund_grp}, and of \labelcref{item:thm:negative_kod_dim} in the general non-Gorenstein case, rely on a lifting property for splinters along finite torsors established in \cref{prop:torsor},
     which itself relies on the more general key lifting property established in \cref{lem:splinter_ascend_pi_!}\labelcref{item:sameH0}. 
    The new idea, which makes it in particular possible to avoid any Gorenstein assumption, is the use of the exceptional inverse image functor for finite morphisms.
    By using the more general exceptional inverse image functor for proper morphisms, 
    we further observe in~\cref{rmk:crepant} that the derived splinter property for Noetherian schemes lifts along crepant morphisms.
    In \cref{lem:splinter_ascend_pi_!}\labelcref{item:etale} and building on \cref{lem:splinter_ascend_pi_!}\labelcref{item:reducedfibers},
 we establish notably the following lifting property for splinters\,:

	\begin{thm-intro}[\cref{prop:quasitorsor}]
	Let $\pi\colon Y \to X$ be a finite surjective
	morphism of normal Nagata Noetherian schemes.
	Assume that $\pi$ satisfies either of the following conditions\,:
	\begin{enumerate}[label=\bf{(\Alph*)}]   \setcounter{enumi}{3}
		\item \label{thm:intro_d-etale} $\pi$ is quasi-\'etale.
		\item  \label{thm:intro_d-torsor} $\pi$ is a quasi-torsor over a ring, 
		and $\pi^\sharp \colon H^0(X, \mcO_X) \to H^0(Y , \mcO_Y)$ has reduced closed fibers.  
\end{enumerate}
If $X$ is a splinter, then $Y$ is a splinter.
\end{thm-intro}

Theorem~\labelcref{thm:intro_d-etale} extends \cite[Thm.~A]{datta_tucker_on_some_permanence_properties_of_derived_splinters},
where it is shown that any essentially \'etale cover of a Noetherian affine splinter is a splinter.
Besides, Theorems~\labelcref{thm:intro_d-etale,thm:intro_d-torsor}
are stated more generally (by \cref{rmk:splinter-vs-globally+}) for globally $+$-regular pairs as introduced in \cite{bhatt_et_al_globally_+_regular_varieties_and_mmp_for_threefolds_in_mixed_char}, and extend \cite[Prop.~6.20]{bhatt_et_al_globally_+_regular_varieties_and_mmp_for_threefolds_in_mixed_char} which deals with finite quasi-\'etale covers of normal excellent proper schemes over a Noetherian ring. 
Regarding Theorem~\labelcref{item:thm:triv_fund_grp}, 
we also show in \cref{prop:trivial_fundamental_group} that if $X$ is a proper splinter over a separably closed field of positive characteristic, then its \'etale fundamental group is trivial.
 In that direction, we also refer to \cite[Thm.~7.0.3]{cai-lee-ma-schwede-tucker}, where  it is in particular showed that the \'etale fundamental group of the regular locus of a normal projective globally +-regular variety satisfying some additional technical assumptions is finite, but also to the references in the introduction of \emph{loc.\ cit.}\ regarding the \'etale fundamental group of regular loci of near smooth Fano varieties.
\medskip

	Let $k$ be an algebraically closed field of positive characteristic. 
	It is well-known that a proper curve over $k$ is a splinter if and only if it is isomorphic to $\bbP^1_k$.
	For proper surfaces, we have the following results regarding splinters.
	 In~\cref{prop:splinter_surface_rational}, we use \cref{prop:K_equvialenceodaira_dim_f_split_variety} to show that if a proper surface over $k$ is a splinter, then it is rational. 
	We provide in \cref{prop:blow_up_of_points_on_a_line_or_conic} new examples of proper rational surfaces that are splinters, by establishing that the blow-up of $\bbP^2_k$ in any number of points lying on a conic is a splinter. 
	On the other hand, in \cref{prop:example_9_points_not_splinter} and \cref{prop:15_points_on_quartic_curve}, we give examples of proper rational surfaces that are not splinters. For instance, we show that over a finite field the blow-up of $\bbP^2_k$ in 9 points lying on a smooth cubic curve is not a splinter.

	\subsection*{$\mcO$-invariance and $D$-invariance of the splinter property}
	The second aim of this paper is to study whether the splinter property, and the related notion of \emph{global $F$-regularity}, is a derived invariant among projective varieties. 
	We say that two projective varieties $X$ and $Y$ over a field $k$ are $D$-equivalent if there is a $k$-linear equivalence $\sfD^b(X) \cong \sfD^b(Y)$ between their bounded derived categories of coherent sheaves.
	Given that a Gorenstein projective splinter, resp.\ a Gorenstein projective globally $F$-regular variety, in positive characteristic is expected to, resp.\ is known to,  have big anticanonical divisor (see \cref{conj:splinter_big_anticanonical_class} due to \cite{bhatt_et_al_globally_+_regular_varieties_and_mmp_for_threefolds_in_mixed_char}, resp.\ \cref{prop:gFr-big} due to \cite[Cor.~4.5]{schwede_smith_globally_f_regular_and_log_Fano_varieties}), 
	we obtain the following positive answer for varieties\,:

	\begin{thm-intro}
		Let $X$ and $Y$ be normal Gorenstein projective varieties over a field $k$ of positive characteristic.
	 Assume that $X$ and $Y$ are $D$-equivalent. Then\,:
		\begin{enumerate}[label={\bf(\Alph*)}]   \setcounter{enumi}{5}
			\item \label{thm:D-splinter} \emph{(\cref{cor:d_equvialent_splinters_pseudo-rational})} $X$ is a splinter if and only if $Y$ is a splinter, provided $-K_X$ is big.
			\item \label{thm:D-gFr} \emph{(\cref{cor:d-equiv-gFr})} $X$ is globally $F$-regular if and only if $Y$ is globally $F$-regular.
		\end{enumerate} 
	\end{thm-intro}
	
	For that purpose, we introduce in \cref{def:O_equiv} the notion of $\mcO$-equivalence for separated schemes of finite type over a Noetherian base. 
	By \cref{prop:K-O-eq}, this notion coincides with the classical notion of $K$-equivalence in the case of normal Gorenstein varieties over a field.
	As before, but now for proper morphisms that are not necessarily finite, the new idea
 is to use the exceptional inverse image functor of Grothendieck, which allows for more flexibility.
	In \cref{prop:d_equiv_implies_o_equiv}, we observe that Kawamata's \cite[Thm.~1.4(2)]{kawamata_d_equivalence_and_k_equivalence}, stating that two $D$-equivalent smooth projective complex varieties $X$ and $Y$ with $K_X$ or $-K_X$ big are $K$-equivalent, extends to the case of normal Gorenstein projective varieties over an arbitrary field. 
	If in addition either $X$ or $Y$ has at worst canonical singularities, then $X$ and $Y$ are in fact, by
	\cref{prop:K_equvialence-eqivalence_implies_small_birational_map}\labelcref{item:one_canonical_proper} and \cref{prop:K-O-eq}, strongly $\mcO$-equivalent in the sense of \cref{def:O_equiv}.
	As is well-known, a splinter (resp.\ a globally $F$-regular variety) has pseudo-rational singularities, see \cref{prop:bhatt_normal_CM}, and a pseudo-rational normal Gorenstein variety has canonical singularities, see \cref{prop:canonical_pseudorational_Gorenstein}.
	As such, Theorem~\ref{thm:D-splinter} and Theorem~\ref{thm:D-gFr} follow from the following\,:
	
	\begin{thm-intro}
		Let $X$ and $Y$ be 	proper varieties over a field $k$ of positive characteristic. 
		Assume that $X$ and~$Y$ are strongly $\mcO$-equivalent. Then\,:
	
		\begin{enumerate}[label={\bf(\Alph*)}]   \setcounter{enumi}{7}
			\item \label{thm:O-splinter} \emph{(\cref{cor:splinters-O-eq})} 
			$X$ is a splinter if and only if $Y$ is a splinter.
		\end{enumerate}
		Assume in addition that $X$ and $Y$ are normal projective over $k$.
		Then\,:
		\begin{enumerate}[label={\bf(\Alph*)}]   \setcounter{enumi}{8}
			\item \label{thm:O-gFr} \emph{(\cref{prop:globF_stable_O_equiv})} 
			$X$ is globally $F$-regular if and only if $Y$ is globally $F$-regular.
		\end{enumerate}
	\end{thm-intro}

We observe that both 
Theorems~\labelcref{thm:O-splinter,thm:O-gFr}
 hold without any ($\bbQ$-)Gorenstein assumption and in fact without any restrictions on the singularities of $X$ nor $Y$. Again, this is made possible by 
the systematic use of the exceptional inverse image functor. 
For the sake of illustration, we show in \cref{cor:splinter-gFr-K-eq}
  that the splinter property is invariant under $K$-equivalence of \emph{terminal} varieties, by utilizing the fact stated in \cref{prop:K_equvialence-eqivalence_implies_small_birational_map}\labelcref{item:temrinal}
   that a $K$-equivalence between two such varieties induces a small birational map. 
 Note however that, in positive characteristic, splinters and globally $F$-regular varieties may have worse singularities\,; in fact, both the splinter property and global $F$-regularity are expected to locally be the analogues of klt singularities in the complex setting.

\begin{orga}
	In \cref{S:reflexive_sheaes_and_dualizing_complexes,S:prelim_splinters_globF,S:invariance_small_birat_maps},
	we mostly fix notation and collect basic and known facts about
	reflexive sheaves and Weil divisors, inverse image functors, traces, and splinters and globally $F$-regular varieties. Our first new contributions are contained in \cref{S:LD-splinter,S:LD-globF} dealing respectively with splinters and  globally $F$-regular varieties.
	Notably, the use of the exceptional inverse image functor makes its first appearance in \cref{SS:lift-splinter}, where we prove the key \cref{lem:splinter_ascend_pi_!} stating in particular that the splinter property lifts along finite surjective morphisms $\pi\colon Y \to X$ of Noetherian schemes such that $\pi^!\mcO_X \cong \mcO_Y$ as coherent sheaves on $Y$ with $H^0(Y, \mcO_Y)$ a field or $H^0(X,\mcO_X) = H^0(Y,\mcO_Y)$.
	
	In \cref{S:finite_torsors_over_splitners-generalcase}, 
	we extend \cref{lem:splinter_ascend_pi_!} to show in \cref{prop:torsor} that, under certain conditions, the splinter property lifts along finite torsors for Noetherian schemes, and establish Theorems~\labelcref{thm:intro_d-etale} and~\labelcref{thm:intro_d-torsor}.
	
	In \cref{S:finite_torsors_over_splitners,S:splitners_neg_kod_dim,S:vanishing_global_1_forms,S:splinters_surfaces},
we explore global constraints on proper splinters in positive characteristic and establish Theorems~\labelcref{item:thm:triv_fund_grp,item:thm:negative_kod_dim,item:thm:global-1-forms}. Their proof relies ultimately on \cref{lem:splinter_ascend_pi_!}\labelcref{item:sameH0}.
	In \cref{S:splinters_surfaces}, we show that proper splinter surfaces in positive characteristic are rational, and give examples of rational surfaces that are splinters as well as examples of rational surfaces that are not splinters.
	
	In \cref{S:O-eq}, which can be read mostly independently of the rest of the paper,
	we introduce the notion of $\mcO$-equivalence for separated schemes of finite type over a Noetherian base scheme and compare it, in the case of normal Gorenstein varieties, to the usual notions of $K$-equivalence and $D$-equivalence. 
	We then establish Theorems~\labelcref{thm:D-splinter,thm:D-gFr,thm:O-splinter,thm:O-gFr}.	
\end{orga}

	\begin{conventions}
		A \emph{variety} is an integral separated scheme of finite type over a field. 
		For a scheme~$X$ over the finite field $\bbF_p$ with $p$ elements, the \emph{Frobenius} is denoted by $F \colon X\to X$\,; it is the identity on the underlying topological space and sends each local section of $\mcO_X$ to its $p$-th power.
		A scheme $X$ over $\bbF_p$ is said to be \emph{$F$-finite} if the Frobenius map $F\colon X \to X$ is finite. 
		Note that a scheme of finite type over a field $k$ is $F$-finite if and only if the  field $k$ is $F$-finite.
		For a Noetherian scheme $X$, we denote by $X_{\mathrm{reg}}$ its regular locus and by $X_{\mathrm{sing}} \coloneqq X\setminus X_{\mathrm{reg}}$ its singular locus. If $X$ is  reduced, then $X_{\mathrm{reg}}$ contains the generic points of the irreducible components of $X$ and hence is dense in~$X$.  If $X$ is J-1, e.g.\ if $X$ is excellent,  then $X_{\mathrm{reg}} \subseteq X$ is an open embedding \cite[\href{https://stacks.math.columbia.edu/tag/07P6}{Tag 07P6}]{stacks-project}. 
		We denote by $\nu\colon X^\nu \to X$ the normalization of $X$\,; if $X$ is Nagata, e.g.\ if $X$ is excellent, then $\nu$ is a finite morphism \cite[\href{https://stacks.math.columbia.edu/tag/035S}{Tag 035S}]{stacks-project}. 
		Both the Nagata and the excellence properties are stable under locally of finite type extensions\,; see \cite[\href{https://stacks.math.columbia.edu/tag/0359}{Tag 0359}]{stacks-project} and \cite[\href{https://stacks.math.columbia.edu/tag/07QS}{Tag 07QS}]{stacks-project}.
		For a Noetherian scheme $X$, we denote by $\sfD^b(X) \coloneqq \sfD^b(\Coh(X))$ the bounded derived category of coherent sheaves on $X$ and by $\sfD_{\mathrm{Coh}}(\mcO_X)$ (resp.\ $\sfD_{\mathrm{Coh}}^b(\mcO_X)$, resp.\ $\sfD_{\mathrm{Coh}}^+(\mcO_X)$)
		the derived category of unbounded (resp.\ bounded, resp.\ bounded below)
		 complexes of $\mcO_X$-modules with coherent cohomology sheaves. 
		The functor $\sfD^b(X) \to \sfD_{\mathrm{Coh}}(\mcO_X)$ is fully faithful with essential image $\sfD^b_{\mathrm{Coh}}(\mcO_X)$ \cite[\href{https://stacks.math.columbia.edu/tag/08E0}{Tag 08E0}]{stacks-project}.
		If $X$ is not necessarily Noetherian, we use the notation $\sfD_{\mathrm{QCoh}}(\mcO_X)$
		 (resp.\ $\sfD_{\mathrm{QCoh}}^+(\mcO_X)$)
	for	the derived category of unbounded (resp.\  bounded below) complexes of $\mcO_X$-modules with quasi-coherent cohomology sheaves.
		For a  proper morphism $f\colon Z \to X$ of Noetherian schemes, we denote by $f^\sharp \colon \mcO_X \to f_* \mcO_Z$, resp. $f^\sharp \colon \mcO_X \to \mathrm R f_* \mcO_Z$, the canonical morphism in the category, resp.\ bounded derived category, of coherent sheaves on~$X$.
		Given an effective Weil divisor~$D$ on a normal Noetherian scheme~$X$,  we write $\sigma_D \colon  \mcO_X \to \mcO_X(D)$ for the morphism determined by $D$.
	\end{conventions}

	\begin{ack}
		We thank Javier Carvajal-Rojas, Karl Schwede, and the anonymous referee, for useful comments. 
		\cref{lem:splinter_ascend_pi_!}\labelcref{item:reducedfibers} started taking shape at the Fall School ``Methods in Mixed Characteristic Geometry''  in Mainz in October 2024 and we are grateful to the organizers and participants for the pleasant and stimulating atmosphere. 
	\end{ack}

	\section{Reflexive sheaves and exceptional inverse image functors}
	\label{S:reflexive_sheaes_and_dualizing_complexes}

	\subsection{Reflexive sheaves and Weil divisors}\label{S:reflexive_sheaves_and_weil_divisors}
	Let $X$ be an integral Noetherian scheme. 
	Recall that a coherent sheaf $\mcF$ on $X$ is called \emph{reflexive} if the canonical map $\mcF \to \mcF^{\vee \vee}$ is an isomorphism, where by definition $\mcF^\vee \coloneqq \sheafhom_{\mcO_X}(\mcF, \mcO_X)$. 

	\begin{enumerate}
		\item For any coherent sheaf $\mcF$ and any reflexive coherent sheaf $\mcG$ the sheaf $\sheafhom_{\mcO_X}(\mcF, \mcG)$ is reflexive \cite[\href{https://stacks.math.columbia.edu/tag/0AY4}{Tag 0AY4}]{stacks-project}.
		\item If $X$ is normal, then a coherent sheaf is reflexive if and only if it is $S_2$ \cite[Thm.~1.9]{hartshorne_gen_divisors_on_gorenstein_schemes}.
		\item \label{item:restriction_to_big_open_equiv_of_cat} If $X$ is normal and $i \colon U \hookrightarrow X$ is an open immersion such that $\codim_X (X \setminus U)\geq 2$, then $i_* i^* \mcF \cong \mcF$ for any reflexive coherent sheaf $\mcF$. Furthermore, the restriction $i^*$ induces an equivalence of categories from reflexive coherent sheaves on $X$ to reflexive coherent sheaves on $U$ \cite[Thm.~1.12]{hartshorne_gen_divisors_on_gorenstein_schemes}.
		\item \label{item:restriction_to_big_open_Cartier}
		If $X$ is normal and $\mcF$ is a reflexive coherent sheaf, then there exists an open immersion $i \colon U \hookrightarrow X$ such that $\codim_X (X \setminus U)\geq 2$ and such that $i^* \mcF$ is finite locally free \cite[\href{https://stacks.math.columbia.edu/tag/0AY6}{Tag 0AY6}]{stacks-project}.
	\end{enumerate}
	To any Weil divisor $D$ on an integral normal Noetherian scheme $X$
	 with function field $K(X)$, one can associate a coherent sheaf $\mcO_X(D) \subseteq K(X)$ whose sections on  open subsets $V \subseteq X$ are given by
	$$ \Gamma(V, \mcO_X(D))\coloneqq \{f \in K(X)^\times \mid \mathrm{div}(f)\vert_V +D\vert_V \geq 0\} \cup \{0\}.$$
	The sheaf $\mcO_X(D)$ is reflexive of rank~$1$ and it is invertible if and only if $D$ is Cartier.
	Since any reflexive rank $1$ sheaf is isomorphic to a subsheaf of the locally constant sheaf $K(X)$, we have
	\cite[\href{https://stacks.math.columbia.edu/tag/0EBM}{Tag 0EBM}]{stacks-project} 
	 a 1-1 correspondence
	$$\{\mbox{Weil divisors on $X$ up to linear equivalence} \} \leftrightarrow \{\mbox{reflexive sheaves of rank 1 on } X\}/\cong .$$
Moreover thanks to \labelcref{item:restriction_to_big_open_equiv_of_cat,item:restriction_to_big_open_Cartier}
	this bijection turns out to be a group homomorphism, see \cite[Prop.~2.8(d)]{hartshorne_gen_divisors_on_gorenstein_schemes}, provided one takes the double dual of the usual tensor product, i.e.,
	$$ \mcO_X(D+D')\cong (\mcO_X(D) \otimes \mcO_X(D'))^{\vee \vee}.$$
	A Weil divisor $D$ on a normal Noetherian scheme is effective if and only if $\mcO_X \subseteq \mcO_X(D) \subseteq K(X)$. 
	Thus a reflexive rank $1$ sheaf $\mcF$ corresponds to an effective Weil divisor $D$ if and only if there is an injective morphism $\mcO_X \hookrightarrow \mcF$. 
	For later use recall the following criterion.
	\begin{lemma}
		\label{lem:torsion_weil_divisor} 
		Let $X$ be an integral normal Noetherian scheme and assume that $H^0(X, \mcO_X)$ is a field. 
		Then a Weil divisor $D$ on $X$ is linearly equivalent to zero if and only if both $\mcO_X(D)$ and $\mcO_X(-D)$ admit  nonzero global sections.
	\end{lemma}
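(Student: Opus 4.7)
The plan is to prove the two directions separately and then deduce the characterization from the paragraphs immediately preceding the lemma, which establish the dictionary between Weil divisors and reflexive rank $1$ sheaves.

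For the forward direction, if $D \sim 0$, then $\mcO_X(D) \cong \mcO_X$ and $\mcO_X(-D) \cong \mcO_X$, so the constant function $1$ yields a nonzero global section of each. This is immediate from the group homomorphism
$$ \{\text{Weil divisors on } X \text{ up to linear equivalence}\} \to \{\text{reflexive rank 1 sheaves on } X\}/\cong.$$

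For the backward direction, assume $\mcO_X(D)$ and $\mcO_X(-D)$ both admit nonzero global sections. By the criterion recalled just before the lemma, a nonzero global section of $\mcO_X(D)$ corresponds to an injective map $\mcO_X \hookrightarrow \mcO_X(D)$, and hence to an effective Weil divisor $D'$ linearly equivalent to $D$; similarly, there is an effective Weil divisor $D''$ linearly equivalent to $-D$. Then $E \coloneqq D' + D''$ is an effective Weil divisor with $E \sim 0$. Thus there exists $h \in K(X)^\times$ with $\mathrm{div}(h) = E \geq 0$.

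It remains to show that $E = 0$. Since $X$ is normal, the condition $\mathrm{div}(h) \geq 0$ means that $h$ is regular at every codimension-$1$ point, hence (using normality again and Hartogs) $h \in H^0(X, \mcO_X)$. By assumption $H^0(X, \mcO_X)$ is a field, so $h^{-1} \in H^0(X, \mcO_X)$ as well, giving $-\mathrm{div}(h) = \mathrm{div}(h^{-1}) \geq 0$. Therefore $\mathrm{div}(h) = 0$, i.e., $E = D' + D'' = 0$, forcing both effective divisors to vanish. We conclude $D \sim D' = 0$.

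I do not foresee a real obstacle here; the only subtlety is to remember that for a normal variety with $H^0(X, \mcO_X)$ a field, an effective principal divisor is automatically zero, which is the key mechanism preventing $D' + D''$ from being a nontrivial effective divisor.
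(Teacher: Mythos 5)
Your proof is correct. The paper's argument is phrased at the level of sheaf morphisms: it interprets the section $t$ of $\mcO_X(-D)$ as a section of $\sheafhom_{\mcO_X}(\mcO_X(D), \mcO_X)$ via the reflexive-sheaf identities, observes that the compositions $s \circ t$ and $t \circ s$ are nonzero global endomorphisms of $\mcO_X(D)$ and $\mcO_X$ respectively, and concludes they are isomorphisms because $H^0(X,\mcO_X)$ is a field. You instead translate everything into the language of divisors and rational functions: sections give effective representatives $D' \sim D$, $D'' \sim -D$, and the principal effective divisor $E = D' + D''$ is shown to vanish because the defining rational function $h$ is a unit in the field $H^0(X,\mcO_X)$. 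The two routes are equivalent dressings of the same core fact — a nonzero global regular function on a normal variety with $H^0(X,\mcO_X)$ a field is invertible — but yours avoids invoking the $\sheafhom$ identities for reflexive sheaves and is arguably more elementary, at the small cost of an extra appeal to normality (to conclude that a rational function with no poles in codimension one is regular). Both are fine; the paper's version sits more comfortably next to the surrounding discussion of reflexive sheaves, which is presumably why the authors chose it.
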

	\begin{proof}
		The only if part is obvious. Assume $s\colon \mcO_X \to \mcO_X(D)$ and $t\colon \mcO_X \to  \mcO_X(-D)$ are nontrivial sections.
		We have isomorphisms of sheaves $\sheafhom_{\mcO_X}(\mcO_X, \mcO_X(-D))\cong \sheafhom_{\mcO_X}(\mcO_X(D), \mcO_X)$ and $\sheafhom_{\mcO_X}(\mcO_X(D), \mcO_X(D))\cong \sheafhom_{\mcO_X}(\mcO_X, \mcO_X)$. 
		Therefore we can interpret $t$ as a global section of $\sheafhom_{\mcO_X}(\mcO_X(D), \mcO_X)$. The compositions $s\circ t$ and $t\circ s$ are both nonzero and give elements in $H^0(X, \mcO_X)$, thus they are isomorphisms. Hence $\mcO_X \cong \mcO_X(D)$, which shows that $D$ is trivial.
	\end{proof}

	Given two Weil divisors $D$ and $E$ on a normal Noetherian scheme $X$, we write $D\sim E$ if $D$ and~$E$ are linearly equivalent. Likewise, for two $\bbQ$-Weil divisors $D$ and $E$, we write  $D\sim_\bbQ E$ if $D$ and $E$ are $\bbQ$-linearly equivalent.
	
	Let $f\colon Y \to X$ be a finite surjective morphism of integral normal
	Noetherian schemes, and let $D$ be a $\bbQ$-Weil divisor on $X$.
	 Assume that $X$ is universally catenary, e.g.\ Cohen--Macaulay, so that if $U\subseteq X$ is an open subset with $\codim_X(X\setminus U) \geq 2$, then $\codim_Y(Y\setminus f^{-1}(U)) \geq 2$. By \labelcref{item:restriction_to_big_open_Cartier}, choose such a $U$ such that $D$ restricts to a $\bbQ$-Cartier divisor.
	The pullback $f^*D$ can be defined by restricting $D$ to $U$, pulling back to $f^{-1}(U)$, and then extending the pullback uniquely to a $\bbQ$-Weil divisor on $Y$\,;
compare with \cite[Prop.~2.18]{hartshorne_gen_divisors_on_gorenstein_schemes} and with the explicit construction of \cite[\S 2.2]{schwede_tucker_on_the_behaviour_of_test_ideals_under_finite_morphisms}.

	\subsection{Exceptional inverse image functor and dualizing complexes}
	\label{SS:uppershriek}
	Let $f\colon Y \to X$ be a morphism of schemes of finite type and separated over a Noetherian scheme~$S$. We consider the \emph{exceptional inverse image functor} $f^!\colon \sfD^+_{\mathrm{QCoh}}(\mcO_X) \to \sfD^+_{\mathrm{QCoh}}(\mcO_Y)$ as defined in 
	\cite[\href{https://stacks.math.columbia.edu/tag/0A9Y}{Tag 0A9Y}]{stacks-project}.
The functor $f^!$ sends $\sfD^+_{\mathrm{Coh}}(\mcO_X) $ to $ \sfD^+_{\mathrm{Coh}}(\mcO_Y)$
	\cite[\href{https://stacks.math.columbia.edu/tag/0ATZ}{Tag 0ATZ}]{stacks-project}.
	 If $f \colon Y \to X$ is proper,
	 then $f^!$ is right adjoint to $\mathrm{R}f_* \colon \sfD^+_{\mathrm{Coh}}(\mcO_Y) \to \sfD^+_{\mathrm{Coh}}(\mcO_X)$ \cite[\href{https://stacks.math.columbia.edu/tag/0AU3}{Tag 0AU3}]{stacks-project}.
	 
	 Assume now that $X$ admits a \emph{dualizing complex} $\omega_X^\bullet$ as defined in~\cite[\href{https://stacks.math.columbia.edu/tag/0A85}{Tag 0A85}]{stacks-project}. 
	 Then $\omega_Y^\bullet \coloneqq f^! \omega_X^\bullet$
	 defines a dualizing complex on $Y$ \cite[\href{https://stacks.math.columbia.edu/tag/0AU3}{Tag 0AU3}]{stacks-project}.
	If $X$ is equidimensional, $\omega_X \coloneqq \mcH^{-\dim X}(\omega_X^\bullet)$
	 is an $S_2$ sheaf, called the \emph{dualizing sheaf} \cite[\href{https://stacks.math.columbia.edu/tag/0AWH}{Tag 0AWH}]{stacks-project}.
	Moreover, $X$ is Cohen--Macaulay if and only if $\omega_X^\bullet = \omega_X[\dim X]$ \cite[\href{https://stacks.math.columbia.edu/tag/0AWQ}{Tag 0AWQ}]{stacks-project}
	and $X$ is Gorenstein if and only if $\omega_X^\bullet$ is an invertible object \cite[\href{https://stacks.math.columbia.edu/tag/0AWV}{Tag 0AWV}]{stacks-project}. The latter condition is further equivalent to 
	$\omega_X^\bullet = \omega_X[\dim X]$ with $\omega_X$ an invertible sheaf
	\cite[\href{https://stacks.math.columbia.edu/tag/0FPG}{Tag 0FPG}]{stacks-project}.
	If $X$ is normal and equidimensional, then $\omega_X$ is a reflexive sheaf of rank~$1$ and 
	there thus exists  a unique  (up to linear equivalence) Weil divisor $K_X$, called the \emph{canonical divisor}, such that $\omega_X \cong \mcO_X(K_X)$.

	Let now $h \colon X \to \Spec k$ be a  scheme of finite type and separated over a field~$k$. 
	Then  $\omega_X^\bullet \coloneqq h^! \mcO_{\Spec k} \in \sfD^+_{\mathrm{Coh}}(\mcO_X)$
	is a dualizing complex on $X$ \cite[\href{https://stacks.math.columbia.edu/tag/0AWJ}{Tag 0AWJ}]{stacks-project}.	
	If $X$ is smooth over $k$, then the dualizing sheaf $\omega_X$ coincides with the \emph{canonical sheaf} $\bigwedge^{\dim X} \Omega_{X/k}^1$ \cite[\href{https://stacks.math.columbia.edu/tag/0E9Z}{Tag 0E9Z}]{stacks-project}.
	If $h\colon X \to \Spec k$ is proper and $X$ is equidimensional,
	then $\mathrm{R}h_*$ is left adjoint to $h^!$ and for
	every $K \in \sfD^b(X)$, there is a functorial isomorphism
	$\Ext^i_X(K, \omega^\bullet_X) = \Hom_k (H^i(X, K), k)$ compatible with shifts and exact triangles, see, e.g., \cite[\href{https://stacks.math.columbia.edu/tag/0FVU}{Tag 0FVU}]{stacks-project}. 
	By Yoneda, the object $\omega_X^\bullet$ is unique up to unique isomorphism among all objects satisfying this universal property.
	
	In the more general situation where $f \colon Y \to X$ is a morphism of quasi-compact quasi-separated schemes, $\mathrm{R}f_* \colon \sfD_{\mathrm{QCoh}} (\mcO_Y) \to \sfD_{\mathrm{QCoh}} (\mcO_X)$ still admits a right adjoint $f^\times$, see, e.g., \cite[Thm.~25.17]{goertz_wedhorn_ii}. 
The functor $ f^\times$ sends 
	$\sfD^+_{\mathrm{QCoh}}(\mcO_X)$ to $\sfD^+_{\mathrm{QCoh}}(\mcO_Y)$ and,
	if $X$ is Noetherian and $f$ is proper,  agrees with $f^!$\,; see \cite[Lem.~25.23 \& Thm.~25.61]{goertz_wedhorn_ii}.

	The following general \cref{lem:k_and_o_equvalence} is a consequence of the properties of the exceptional inverse image functor.
	In particular, choosing $q\colon Z \to Y$ in the statement of \cref{lem:k_and_o_equvalence} to be an isomorphism, it shows that if $p \colon Z \to X$ is a separated morphism of finite type, then
	 $p^! \mcO_X \cong \mcO_Z$ if and only if $ \mathrm{L} p^*\omega_X^\bullet \cong \omega_Z^\bullet$.

	\begin{lemma}\label{lem:k_and_o_equvalence}
		Let $X$ and $Y$ be schemes of finite type and separated over a Noetherian scheme $S$ such that $S$ admits a dualizing complex $\omega_S^\bullet$.
		Denote by $h_X \colon X \to S$ and $h_Y \colon Y \to S$ the structure morphisms.
		Let $Z$ be a scheme of finite type and separated over $S$ with $S$-morphisms $p\colon Z \to X$ and $q\colon Z \to Y$.
		Then in $\sfD_{\mathrm{Coh}}(\mcO_Z)$
		we have
		$$\mathrm{L}p^* \omega_X^\bullet \cong \mathrm{L}q^*\omega_Y^\bullet \iff p^! \mcO_X \cong q^! \mcO_Y,$$
		where $\omega_X^\bullet = h_X^! \omega_S^\bullet$ and $\omega_Y^\bullet = h_Y^! \omega_S^\bullet$.
		In particular, if $X$ and $Y$ are equidimensional and Gorenstein, then 
		$$p^* \omega_X [\dim X]\cong q^*\omega_Y[\dim Y] \iff p^! \mcO_X \cong q^! \mcO_Y,$$
		where $\omega_X = \mcH^{-\dim X} (\omega_X^\bullet)$ and $\omega_Y = \mcH^{-\dim Y} (\omega_Y^\bullet)$.
	\end{lemma}
	\begin{proof}
		Let $\omega_Z^\bullet  \coloneqq p^! \omega_X^\bullet = q^!\omega_Y^\bullet$.
		 Recall, e.g., from \cite[\href{https://stacks.math.columbia.edu/tag/0AU3}{Tag 0AU3}]{stacks-project}, that the functor \linebreak $\mathrm{R}\sheafhom_{\mcO_Z}(-, \omega_Z^\bullet)$ defines an involution of $\sfD_{\mathrm{Coh}}(\mcO_Z)$
		 and that we have the duality formula
		\begin{equation*}\label{eq:dualizing_upper_shriek}
		\mathrm{R}\sheafhom_{\mcO_Z}(p^! M, \omega_Z^\bullet) = \mathrm{L}p^*\mathrm{R}\sheafhom_{\mcO_X}(M, \omega_X^\bullet)
		\end{equation*}
	    which holds naturally in $M \in \sfD^+_{\mathrm{Coh}}(\mcO_X)$ and similarly for $q \colon Z \to Y$.
		Setting $M=\mcO_X$ or $M=\mcO_Y$ yields
		\begin{equation*}
		\mathrm{R}\sheafhom_{\mcO_Z}(p^! \mcO_X, \omega_Z^\bullet) = \mathrm{L}p^* \omega_X^\bullet 
		\quad \text{and} \quad
		\mathrm{R}\sheafhom_{\mcO_Z}(q^! \mcO_Y, \omega_Z^\bullet) = \mathrm{L}q^* \omega_Y^\bullet.
		\end{equation*}
		Therefore, $p^! \mcO_X \cong q^! \mcO_Y$ if and only if $\mathrm{L} p^* \omega_X ^\bullet \cong \mathrm{L}  q^* \omega_Y^\bullet$.
	\end{proof}

	\begin{remark}\label{rmk:lift_Gorenstein}
		Let $\pi \colon Y \to X$ be a separated morphism of finite type of Noetherian schemes. If $X$ admits a dualizing complex $\omega_X^\bullet$ and $\pi^! \mcO_X \cong \mcO_Y$, then \cref{lem:k_and_o_equvalence} implies that $\mathrm{L}\pi^*\omega_X^\bullet \cong \omega_Y^\bullet$.
		Since a scheme admitting a dualizing complex is Gorenstein if and only if it admits an invertible dualizing complex, we observe that if $X$ is Gorenstein, then $Y$ is Gorenstein. 
		Likewise, since a scheme  admitting a dualizing complex is Cohen--Macaulay if and only if it admits a dualizing complex that is the shift of a sheaf, we observe that if $X$ is Cohen--Macaulay and $\pi$ is flat, then $Y$ is Cohen--Macaulay.
	\end{remark}
	
	\begin{remark} \label{rmk:uppershriek_condition} 
		A proper surjective morphism of integral excellent Noetherian schemes $\pi\colon Y \to X$ such that $\pi^!\mcO_X \cong \mcO_Y$ and such that $X$ admits a dualizing complex on a dense open subset is generically finite.
		Indeed, the regular locus $X_{\mathrm{reg}}$ of $X$ is open (since $X$ is excellent) and dense (since $X$ is integral) and, 
		likewise, the regular locus $U$ of $\pi^{-1}(X_{\mathrm{reg}})$ is open and dense.
		Denote by $p\colon U \to X_{\mathrm{reg}}$ the restriction of $\pi$. 
		Since the restriction to open subsets commutes with exceptional inverse image functors \cite[\href{https://stacks.math.columbia.edu/tag/0G4J}{Tag 0G4J}]{stacks-project}, we have $p^! \mcO_{X_{\mathrm{reg}}} \cong \mcO_U$.
		By \cref{lem:k_and_o_equvalence}, we have an isomorphism $\omega_U[\dim Y] \cong  p^*\omega_{X_{\mathrm{reg}}}[\dim X]$, and thus $\dim Y = \dim X$.
	\end{remark}

	\subsection{Exceptional inverse image along finite morphisms}\label{sec:upper_shriek_finite_morphism}

	For the sake of the proof of the lifting \cref{lem:splinter_ascend_pi_!}, we will need to consider the exceptional inverse functor for finite morphisms, and in particular for cases~\labelcref{item:etale} and~\labelcref{item:reducedfibers}, for finite morphisms between
	possibly non-Noetherian schemes.
	
Let $\pi\colon Y \to X$ be a finite morphism of Noetherian schemes.
The exact functor $\pi_* \colon \Coh(Y) \to \Coh(X)$ admits a right adjoint, which by abuse 
we denote by $\pi^{!}\colon \Coh(X) \to \Coh(Y)$, given by
$\pi^!(\mcF) \coloneqq \sheafhom_{\mcO_X}(\pi_* \mcO_Y, \mcF)$
considered as 
 an $\mcO_Y$-module, see, e.g., \cite[Ch.~III, Ex.~6.10]{hartshorne_algebraic_geometry} or \cite[\href{https://stacks.math.columbia.edu/tag/0AWZ}{Tag 0AWZ}]{stacks-project}.
In the context of \cref{SS:uppershriek}, the right adjoint of $\pi_* = \mathrm{R}\pi_* \colon \sfD_{\mathrm{Coh}}(\mcO_X) \to \sfD_{\mathrm{Coh}} (\mcO_Y)$  can be obtained from $\pi^{!} = \sheafhom_{\mcO_X}(\pi_* \mcO_Y, -)$ by forming the right derived functor \cite[\href{https://stacks.math.columbia.edu/tag/0AU3}{Tag 0AU3}]{stacks-project}.
If $\pi$ is finite and flat, then $\pi_* \mcO_Y$ is a vector bundle and  so $\sheafhom_{\mcO_X}(\pi_* \mcO_Y, -) = \mathrm{R}\sheafhom_{\mcO_X}(\pi_* \mcO_Y, -)$,
 i.e., in that case, the notation $\pi^!$ is unambiguous.
 In general, in order to avoid any ambiguity, we will write $\pi^!\mcO_X \cong \mcO_Y$ in $\mathrm{Coh}(Y)$ if there is an isomorphism of $\mcO_Y$-modules 
$\sheafhom_{\mcO_X}(\pi_* \mcO_Y, \mcO_X) \cong \mcO_Y$, and we will write $\pi^!\mcO_X \cong \mcO_Y$ in $\sfD_{\mathrm{Coh}} (\mcO_Y)$ if there is an isomorphism $\mathrm{R}\sheafhom_{\mcO_X}( \pi_* \mcO_Y, \mcO_X) \cong \mcO_Y$ in $\sfD_{\mathrm{Coh}} (\mcO_Y)$.

If $\pi \colon Y \to X$ is a finite morphism of quasi-compact quasi-separated schemes, $\mathrm{R}\pi_*\colon \sfD_{\mathrm{QCoh}}(\mcO_Y) \to  \sfD_{\mathrm{QCoh}}(\mcO_Y)$ admits $\pi^\times$ as a right adjoint. 
If $\pi$ is finite and \emph{pseudo-coherent}, then $\pi^\times$ coincides with $\mathrm{R}\sheafhom_{\mcO_X}(\pi_* \mcO_Y, -)$ on $\sfD^+_{\mathrm{QCoh}}(\mcO_X)$ \cite[\href{https://stacks.math.columbia.edu/tag/0AX2}{Tag 0AX2}]{stacks-project}. A morphism of finite type of Noetherian schemes is pseudo-coherent \cite[\href{https://stacks.math.columbia.edu/tag/0684}{Tag 0684}]{stacks-project} and a flat base change of a pseudo-coherent morphism is pseudo coherent \cite[\href{https://stacks.math.columbia.edu/tag/0680}{Tag 0680}]{stacks-project}.
We will use this more general adjoint in the proof of \cref{lem:splinter_ascend_pi_!} for a flat base change of a finite morphism of Noetherian schemes.

\begin{remark}
	Let $\pi\colon Y \to X$ be a finite morphism of Noetherian schemes. 
	By taking the cohomology sheaves in degree zero, one sees that if $\pi^!\mcO_X \cong \mcO_Y$ in $\sfD_{\mathrm{Coh}} (\mcO_Y)$, then $\pi^!\mcO_X \cong \mcO_Y$ in $\mathrm{Coh}(Y)$.
\end{remark}

\begin{remark}
	A finite morphism $\pi\colon Y \to X$ of Noetherian schemes with $X$ integral and such that $\pi^!\mcO_X \cong \mcO_Y$ in $\mathrm{Coh}(Y)$ is surjective. Indeed, the question is local on $X$ and we may assume both $X$ and $Y$ are affine, say $X=\Spec A$ and $Y = \Spec B$. 
	Let $I \coloneqq \ker (\pi^\sharp \colon A \to B)$. 
	Then any $\varphi \in \Hom_A (B,A)$ has image contained in $\mathrm{Ann}_A(I)$. 
	Since $A$ is a domain, $\Hom_A (B,A) \cong B \neq 0$ forces $I$ to be zero and hence $\pi$ to be surjective.
\end{remark}

\subsection{Traces for finite morphisms}\label{ss:trace}
The \emph{classical trace map} $\mathrm{tr}_{Y/X}\colon  \pi_* \mcO_Y \to \mcO_X$ for a finite surjective morphism $\pi \colon Y \to X$ of integral schemes with $X$ normal is defined as follows.
Denote $K(X)$ and $K(Y)$ the function fields of $X$ and $Y$, respectively. 
On affine open subsets $U = \Spec A \subseteq X$ with $\pi^{-1}(U) = \Spec B \subseteq Y$, $\mathrm{tr}_{B/A} \colon B \to A$ sends $b\in B$ to the trace of the multiplication by $b$ map on $K(Y)$ viewed as a $K(X)$-vector space and, by normality of $A$, defines an element of $A$.

If $U\subseteq X$ is any open subset in an integral scheme $X$, then $H^0(U, \mcO_X) = \bigcap_{x \in U} \mcO_{X,x} \subseteq K(X)$, and if $X$ is further assumed to be normal, then  $H^0(U, \mcO_X)$ is integrally closed in $K(X)$.
As such, the classical trace map $\mathrm{tr}_{Y/X} \colon \pi_* \mcO_Y \to \mcO_X$ is given on open subsets $U\subseteq X$ as the map
sending a section $s \in H^0(\pi^{-1}(U), \mcO_Y)$ to the trace of the multiplication by $s$ map on $K(Y)$ viewed as a $K(X)$-vector space.
By \cref{lem:integral_on_global_sections} below, 
such a section $s$ is integral over $H^0(U, \mcO_X)$ and hence, the trace is an element of $H^0(U, \mcO_X)$.

\begin{lemma}\label{lem:integral_on_global_sections}
	Let $\pi \colon Y \to X$ be an integral surjective morphism of integral schemes with $X$ normal.
	Then $\pi^\sharp \colon H^0(U, \mcO_X) \to H^0(\pi^{-1}(U), \mcO_Y)$ is an integral ring extension for every open subset $U\subseteq X$.
	Moreover, if $Y$ is
	 normal, then $\pi$ is the normalization of $X$ in $K(Y)$ and for every open subset $U \subseteq X$, $H^0(\pi^{-1}(U), \mcO_Y)$ is the integral closure of $H^0(U, \mcO_X)$ in the function field $K(Y)$.
\end{lemma}
\begin{proof}
		Without loss of generality we can assume $U=X$.
	Let $\Spec A \subseteq X$ be an affine open subset and let $\Spec B  \coloneqq \pi^{-1}(\Spec A) \subseteq Y$ be the pre-image under $\pi$.
Let $s \in S\coloneqq H^0(Y,\mcO_Y)$ be nonzero. 
Since $A \to B$ is integral, there exists a monic irreducible polynomial $P_{A} \in A[T]$ such that $P_{A}(s)=0$ in $B$.
By normality of $X$, $A$ is integrally closed in $K(X)$, and it follows that $P_{A}$ is irreducible as a polynomial in $K(X)[T]$.
Indeed, if $P_{A} = QR$ in $K(X)[T]$ with $Q$ and $R$ monic, then
denoting $C$ the integral closure of $A$ in a splitting field of $P_A$ over $K(X)$, we  have $Q, R \in C[T] \cap K(X)[T] = A [T]$. 
Let now $\Spec A' \subseteq X$ be another affine open subset and let $P \in K(X)[T]$ be the gcd of $P_{A}$ and~$P_{A'}$. Then $P(s) =0$, so $P$ is not constant as $s \neq 0$ in $K(Y)$ by integrality of $Y$. As $P_{A}$ and $P_{A'}$ are irreducible, we obtain $P_{A} = P = P_{A'}$.
It follows that $P=P_{A}$ has coefficients in $R = H^0(X, \mcO_X) = \bigcap A$, where the intersection runs over all affine open subsets of $X$. Thus $s$ is integral over $R$.
		
		If $Y$ is normal, then $H^0(V, \mcO_Y)=\bigcap_{y \in V} \mcO_{Y,y} \subseteq K(Y)$ is integrally closed for every open subset~$V \subseteq Y$.
		Hence, for every open subset $U\subseteq X$, $H^0(\pi^{-1}(U), \mcO_Y)$ is the integral closure of $H^0(U, \mcO_X)$ in $K(Y)$.
		This coincides with the relative normalization in \cite[\href{https://stacks.math.columbia.edu/tag/0BAK}{Tag 0BAK}]{stacks-project} if $U$ is affine.
\end{proof}

The following lemma  is a slight generalization of  \cite[Lem.~9]{speyer_frob_split_subvars} (we do not assume that the fields are the fraction fields of the integral rings involved). It will be used in the proofs of \cref{lem:splinter_ascend_pi_!}\labelcref{item:etale}
  with $R = H^0(X,\mcO_X), S=H^0(Y,\mcO_Y)$, $K=K(X)$, and $L=K(Y)$.

\begin{lemma}\label{lem:speyer}
	Let $R \hookrightarrow S$ be an integral
	extension of integral domains fitting into a commutative diagram
	\[
	\begin{tikzcd}
	R \dar[hookrightarrow] \rar[hookrightarrow]& S \dar[hookrightarrow]\\
	K \rar[hookrightarrow] & L
	\end{tikzcd}
	\]
	with $K \to L$ a finite field extension such that $R$ and $S$ are integrally closed in $K$ and $L$, respectively.
	Then the classical trace $\mathrm{tr}_{L/K} \colon L \to K$ sends $S$ to $R$, 
	and sends $\sqrt{\mathfrak{p}S}$ to $ \mathfrak{p}$  for every prime ideal $\mathfrak{p} \subseteq R$.
\end{lemma}
\begin{proof}
	If $L/K$ is not separable, then $\mathrm{tr}_{L/K}$ is zero. 
	Thus we can and do assume that $L/ K$ is separable.
	Let $M$ be the Galois closure of $L$ over $K$, denote by $G= \mathrm{Gal}(M/K)$ the Galois group, and let $H \subseteq G$ be the stabilizer of $L$.
	Let $T\subseteq M$ be the integral closure of $R$ in $M$.
	Since $R \hookrightarrow S$ is integral, $S \subseteq T$.
	Note that  for any $s\in L$, we have $\mathrm{tr}_{L/ K} (s) = \sum_{g \in G/H} g(s)$, where $g$ runs over a set of coset representatives of $G/H$.
	By definition of the integral closure, any automorphism $ g \in G$ satisfies $g(T) = T \subseteq M$, so $\mathrm{tr}_{L/ K} (s)$ lies in $T\cap K = R$ for any $s\in S$.
	Similarly, any $g \in G$ preserves $\sqrt{T \mathfrak{p}} \subseteq T$.
	Hence, for any $s \in \sqrt{S \mathfrak{p}} \subseteq \sqrt{T \mathfrak{p}}$, $\mathrm{tr}_{L/ K} (s)$ lies in $\sqrt{T \mathfrak{p}}\cap R = \mathfrak{p}$.
\end{proof}

\begin{remark}
  Note that in the situation of \cref{lem:speyer}, the classical trace map $\mathrm{tr}_{L/K} \colon L \to K$ need not restrict to the classical trace map $\mathrm{tr}_{S/R} \colon S \to R$. Consider for instance the Frobenius $F \colon \bbP^1_{\bbF_p} \to \bbP^1_{\bbF_p}$. It induces the identity on global sections but on function fields induces the purely inseparable field extension ${\bbF_p}(t) \to {\bbF_p}(t^{1/p})$.
\end{remark}

The \emph{classical trace map} $\mathrm{tr}_{Y/X}\colon  \pi_* \mcO_Y \to \mcO_X$ for a finite flat morphism $\pi \colon Y \to X$ locally of finite presentation is 
given by the $\mcO_X$-linear map sending a local section $s$ of the finite locally free $\mcO_X$-module~$\pi_*\mcO_Y$ to the trace of the multiplication by $s$ on $\pi_*\mcO_Y$.
If $X$ is further assumed to be normal, and $X$ and $Y$ to be integral, the classical trace map $\mathrm{tr}_{Y/X}$ coincides with the previous construction (hence the same nomenclature for both trace maps). Indeed, a local basis of $\pi_* \mcO_Y$ as an $\mcO_X$-module gives a local basis of $K(Y)$ as a $K(X)$-vector space.\medskip

Let $\pi\colon Y \to X$ be a finite surjective morphism of Noetherian schemes. 
To give an isomorphism $\pi^!\mcO_X \cong \mcO_Y$ in $\Coh(Y)$ is equivalent to give an isomorphism of $\mcO_Y$-modules
$$\mcO_Y \xrightarrow{\cong} \sheafhom_{\mcO_X}(\pi_*\mcO_Y, \mcO_X),$$
or equivalently, if $\pi$ is further assumed to be flat, to produce an $\mcO_X$-linear map 
$\mathrm{Tr}_{Y/X} \colon \pi_*\mcO_Y \to \mcO_X$
such that the symmetric bilinear form $\mathrm{Tr}_{Y/X}(\alpha \cdot \beta)$ on the locally free sheaf $\pi_*\mcO_Y$ with values in~$\mcO_X$ is nonsingular. 
Such a nonsingular trace map $\mathrm{Tr}_{Y/X}$ then spans $\pi^!\mcO_X$ freely as an $\mcO_Y$-module.

If $\pi$ is finite flat, or if $Y$ is integral and $X$ integral normal, then the classical trace map provides a canonical morphism of $\mcO_Y$-modules
$$\mcO_Y \longrightarrow \sheafhom_{\mcO_X}(\pi_*\mcO_Y, \mcO_X), \quad 1 \mapsto \mathrm{tr}_{Y/X}.$$
We will write $\pi^!\mcO_X = \mcO_Y$ in $\Coh(Y)$ if this canonical morphism is an isomorphism.
\medskip

The classical trace map $\mathrm{tr}_{Y/X}\colon  \pi_* \mcO_Y \to \mcO_X$ for a finite flat morphism $\pi \colon Y \to X$ of schemes is nonsingular
if and only if $\pi$ is finite \'etale \cite[\href{https://stacks.math.columbia.edu/tag/0BVH}{Tag 0BVH}]{stacks-project}.
  In fact, the nonsingularity of $\mathrm{tr}_{Y/X}$ characterizes finite surjective quasi-\'etale morphisms, i.e., finite surjective morphisms $\pi\colon Y \to X$ such that there exists an open subset $U \subseteq X$ with $\codim_X (X\setminus U) \geq 2$ and such that $\pi\vert_U \colon \pi^{-1}(U) \to U$ is \'etale\,:

\begin{lemma}\label{lem:quasi-etale}
	Let $\pi\colon Y \to X$ be a finite surjective morphism of integral normal Noetherian schemes. Consider the following statements\,:
	\begin{enumerate}
				\item \label{item:quasi-etale_first_lemma} $\pi$ is quasi-\'etale.
		\item \label{item:classical_trace_generates_upper_shriek} 
	$\pi^!\mcO_X = \mcO_Y$ in $\Coh(Y)$,
		  i.e., the classical trace is nonsingular.
	\end{enumerate}
	Then $\labelcref{item:quasi-etale_first_lemma} \Rightarrow \labelcref{item:classical_trace_generates_upper_shriek}$, and $\labelcref{item:classical_trace_generates_upper_shriek} \Rightarrow \labelcref{item:quasi-etale_first_lemma}$ holds if $X$ is universally catenary, e.g., if $X$ is Cohen--Macaulay.
\end{lemma}
\begin{proof}
Assume $\pi$ is quasi-\'etale.
	Let $U\subseteq X$ be an open subset with $\codim_X (X\setminus U)\geq 2$ such that  $\pi\vert_U \colon \pi^{-1}(U) \to U$ is finite \'etale.
	The map $\mcO_Y \to \pi^!\mcO_X = \sheafhom_{\mcO_X}(\pi_*\mcO_Y, \mcO_X), 1 \mapsto \mathrm{tr}_{Y/X}$ is a map of quasi-coherent sheaves on $Y$ that restricts to an isomorphism on $\pi^{-1}(U)$, and hence is an isomorphism on $Y$ since both sheaves are reflexive after applying $\pi_*$.
Conversely, we claim that if $\pi\colon Y \to X$ is a finite surjective morphism with $Y$ integral, and $X$ normal and universally catenary,  then there exists an open subset $U\subseteq X$ with $\codim_X (X\setminus U)\geq 2$ such that  $\pi\vert_U \colon V\coloneqq \pi^{-1}(U) \to U$ is finite flat. 
This suffices to conclude since by \cite[\href{https://stacks.math.columbia.edu/tag/0BVH}{Tag 0BVH}]{stacks-project}  the classical trace for $\pi\vert_U$ is nonsingular if and only if $\pi\vert_U$ is \'etale.
Let $V\subseteq Y$ be the flat locus of $\pi$, which by \cite[\href{https://stacks.math.columbia.edu/tag/0398}{Tag 0398}]{stacks-project} is open, and let $Z \coloneqq Y\setminus V$. 
Since $X$ is universally catenary, any codimension-1 point of $Y$ is mapped to a codimension-1 point of $X$ by \cite[\href{https://stacks.math.columbia.edu/tag/02JT}{Tag 02JT}]{stacks-project}, and by \cite[Ch.~III, Prop.~9.7]{hartshorne_algebraic_geometry}, $V$ contains all codimension-1 points of $Y$. Hence $Z$ has codimension at least 2 in $Y$, and so $\pi(Z)$ defines a closed subscheme of codimension at least 2
in $X$. We may then choose $U\coloneqq X\setminus \pi(Z)$.
\end{proof}

\section{Preliminaries on splinters and globally $F$-regular varieties}
	\label{S:prelim_splinters_globF}
\subsection{Splinters} \label{SS:splinter}
We review the notion of \emph{splinter} for Noetherian schemes, the local constraints it imposes, as well as the global constraints it imposes on proper schemes over a field of positive characteristic.

\begin{definition}
	A Noetherian scheme $X$ is a \emph{splinter} if for any finite surjective morphism $f \colon Y \to X$ the canonical map $\mcO_X \to f_*\mcO_Y$ splits in the category $\Coh(X)$ of coherent sheaves on $X$. 
	A Noetherian scheme $X$ is a \emph{derived splinter} if for any proper surjective morphism $f\colon Y \to X$ the map $\mcO_X \to \mathrm{R} f_*\mcO_Y$ splits in the bounded derived category $\sfD^b(X)$ of coherent sheaves on $X$.
\end{definition}

Note that a derived splinter is a splinter, so that being a derived splinter is \emph{a priori} more restrictive than being a splinter.
 In the recent work \cite{bhatt_et_al_globally_+_regular_varieties_and_mmp_for_threefolds_in_mixed_char}, the notion of splinter has been extended to pairs. Precisely\,:

\begin{definition}[{\cite[Def.~6.1]{bhatt_et_al_globally_+_regular_varieties_and_mmp_for_threefolds_in_mixed_char}}] \label{def:glob+reg}
	 	 Let $(X,\Delta)$ be a pair consisting of a normal excellent Noetherian scheme $X$ and of an effective $\bbQ$-Weil divisor $\Delta$.
	 	 The pair $(X,\Delta)$ is called \emph{globally $+$-regular} if 
	 	   for any finite surjective morphism $f\colon Y \to X$ with $Y$ normal, 
	 the natural map $\mcO_X \to f_* \mcO_Y(\lfloor f^*\Delta\rfloor)$ splits in $\mathrm{Coh}(X)$.
	\end{definition}

 \begin{remark}  \label{rmk:splinter-vs-globally+}
A normal excellent Noetherian scheme $X$ is a splinter if and only if $(X,0)$ is globally $+$-regular. 
	The ``only if'' statement is obvious. For the converse, recall from \cite[\href{https://stacks.math.columbia.edu/tag/035S}{Tag 035S}]{stacks-project} that
	the normalization of an excellent scheme is a finite morphism.
	Thus, if $\pi\colon Y\to X$ is a finite surjective morphism, then $Y$ is excellent so that the normalization $\nu\colon Y^\nu \to Y$ is finite. Any splitting of $\mcO_X \to (\pi\circ \nu)_*\mcO_{Y^\nu}$ provides a splitting of $\mcO_X \to \pi_*\mcO_{Y}$.
\end{remark}

\begin{remark}
	Note that \cite[Def.~6.1]{bhatt_et_al_globally_+_regular_varieties_and_mmp_for_threefolds_in_mixed_char} only consider schemes whose closed points have residue fields of positive characteristic.
	A reason for this is outlined in  \cite[Rmk.~6.3]{bhatt_et_al_globally_+_regular_varieties_and_mmp_for_threefolds_in_mixed_char}.
	Since for our applications this condition is not necessary, we deviate from that convention.
	We also note that in this work the excellence condition on $X$ can be weakened to requiring $X$ to be universally catenary (in order to define pullbacks of $\bbQ$-Weil divisors along finite surjective morphisms) and Nagata (in order to dominate any finite cover of $X$ by a normal finite cover).
\end{remark}

In general, if a Noetherian scheme $X$ is a splinter, then it is a basic fact that any open $U \subseteq X$ is a splinter\,; see, e.g., \cref{lem:splinter_invariant_codim_2_surgery}\labelcref{item:open_of_splinter} below.
Hence, if $X$ is a splinter, then all of its local rings are splinters.
Moreover, if $X$ is in addition assumed to be affine,  
$X$ is a splinter if and only if all its local rings are splinters \cite[Lem.~2.1.3]{datta_tucker_on_some_permanence_properties_of_derived_splinters}.

In characteristic zero, a Noetherian scheme $X$ is a splinter if and only if it is normal \cite[Ex.~2.1]{bhatt_derived_splinters_in_positive_characteristic}, and it is a derived splinter if and only if it has rational singularities \cite[Thm.~2.12]{bhatt_derived_splinters_in_positive_characteristic}.
In particular, in characteristic zero, the splinter and derived splinter properties are distinct, and they
    both define local properties.

In positive characteristic, Bhatt showed that the splinter and the derived splinter properties agree~\cite[Thm.~1.4]{bhatt_derived_splinters_in_positive_characteristic} and observed that, in contrast to the affine setting,
the splinter property is not a local property for proper schemes. 
The following proposition summarizes the known local constraints on splinters in positive characteristic.

\begin{proposition}[{\cite[Ex.~2.1, Rmk.~2.5, Cor.~6.4]{bhatt_derived_splinters_in_positive_characteristic}}, {\cite[Rmk.~5.14]{bhatt2021cohenmacaulayness}}, \cite{singh_q_gorenstein_splinter_rings_of_characteristic_p_are_f_regular}, {\cite[\S 2.2]{smith_globally_f_regular_varieties_applications_to_vanishing_theorems_for_quotients_of_fano_varieties}}]
	\label{prop:bhatt_normal_CM}
	Let $X$ be a scheme of finite type over a field of positive characteristic. If $X$ is a splinter, 
then
	\begin{enumerate}
		\item $X$ is normal\,;
		\item $X$ is Cohen--Macaulay\,;
		\item $X$ is pseudo-rational\,;
		\item $X$ is $F$-rational.
	\end{enumerate}
Moreover, if $X$ is $\bbQ$-Gorenstein and $F$-finite, then its local rings are strongly $F$-regular.
\end{proposition}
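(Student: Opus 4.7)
The statement collects known local consequences of the splinter property in characteristic $p$, so the plan is to verify each item by invoking the cited reference and briefly indicate what goes into the argument.

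First, for item (i), I would follow \cite[Ex.~2.1]{bhatt_derived_splinters_in_positive_characteristic}. Since $X$ is of finite type over a field, it is Nagata, so both the closed immersion $X_{\mathrm{red}} \hookrightarrow X$ and the normalization $\nu\colon \widetilde X \to X_{\mathrm{red}}$ are finite surjective morphisms. Applying the splinter property to each of these and using a torsion-free argument on the cokernels forces both maps to be isomorphisms, giving reducedness and then normality of $X$.

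Next, for items (ii) and (iii), I would invoke Bhatt's theorem \cite[Thm.~1.4]{bhatt_derived_splinters_in_positive_characteristic} that every splinter of finite type over a field of positive characteristic is in fact a derived splinter. Applied to an alteration $f\colon Y \to X$ with $Y$ regular (guaranteed by de Jong), the induced splitting of $\mcO_X \to \mathrm R f_*\mcO_Y$ realizes $\mcO_X$ as a direct summand of $\mathrm R f_*\mcO_Y$. Cohen--Macaulayness of $X$ then follows from \cite[Rmk.~5.14]{bhatt2021cohenmacaulayness} by transferring the Cohen--Macaulay property along this direct summand (via Grothendieck duality), and pseudo-rationality is \cite[Cor.~6.4]{bhatt_derived_splinters_in_positive_characteristic}, obtained by dualizing the splitting to get the required injectivity $f_*\omega_Y \hookrightarrow \omega_X$.

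Finally, for (iv) and the $\bbQ$-Gorenstein addendum, $F$-rationality follows from Cohen--Macaulayness combined with the fact that a splinter in characteristic $p$ is $F$-pure (indeed $F$-split), as recorded in \cite[Rmk.~2.5]{bhatt_derived_splinters_in_positive_characteristic}. Under the $\bbQ$-Gorenstein hypothesis, Singh's theorem \cite{singh_q_gorenstein_splinter_rings_of_characteristic_p_are_f_regular} upgrades this to (weak) $F$-regularity of the local rings of $X$, which under $F$-finiteness of $k$ coincides with strong $F$-regularity by standard Hochster--Huneke theory, as recalled in \cite[\S 2.2]{smith_globally_f_regular_varieties_applications_to_vanishing_theorems_for_quotients_of_fano_varieties}. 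The deepest input, and hence the main obstacle in any self-contained treatment, is the Cohen--Macaulayness statement (ii): it ultimately rests on the Cohen--Macaulayness of the absolute integral closure $R^+$ of an excellent Noetherian domain of characteristic $p$, whose full proof relies on the perfectoid techniques developed by Bhatt and collaborators.
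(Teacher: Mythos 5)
The paper itself does not supply a proof of this proposition; it is recorded purely as a compilation of known results, with the citations in the header serving as the ``proof.'' So there is no in-house argument to match, and your attempt to flesh out the references is a reasonable exercise. Your sketch for (i)--(iii) and for the $\bbQ$-Gorenstein addendum is essentially sound, and you correctly identify the Cohen--Macaulayness of $R^+$ as the deepest input behind (ii).

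However, your reasoning for (iv) contains a genuine error. You claim that $F$-rationality follows from Cohen--Macaulayness combined with $F$-purity (or $F$-splitting). That implication is false: the affine cone over an ordinary elliptic curve, e.g.\ $k[x,y,z]/(x^3+y^3+z^3)$ over an algebraically closed field with $p\equiv 1\pmod 3$, is a normal Cohen--Macaulay $F$-split hypersurface singularity that is not $F$-rational (it is not even pseudo-rational). Of course this cone is not a splinter, so the proposition is not endangered; but your proposed route does not reach the conclusion. The standard argument for (iv) is genuinely different and does not pass through $F$-purity at all: a splinter $R$ has the property that ideals are contracted from every module-finite extension (a splitting $s\colon S\to R$ sends $IS$ into $I$), hence in particular parameter ideals satisfy $IR^+\cap R=I$. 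Combined with Smith's theorem that tight closure of parameter ideals agrees with plus closure in an excellent local domain of characteristic $p$, this shows parameter ideals are tightly closed; together with Cohen--Macaulayness from (ii), this is precisely $F$-rationality. The $F$-split property you invoke is a separate and strictly weaker consequence of the splinter property, not a stepping stone towards (iv).
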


Recall from \cite{hochster_huneke_tight_closure_and_strong_f_regularity} that a
 ring $R$ of positive characteristic is \emph{strongly $F$-regular} if it is $F$-finite and if for any $c\in R$ not belonging to any minimal prime ideal of $R$ there exists $e>0$ such that the inclusion of $R$-modules $R \hookrightarrow F_*^e R$ which sends 1 to $F^e_*c$ splits as a map of $R$-modules.
The ring $R$ is strongly $F$-regular if and only if its local rings are strongly $F$-regular.
If $R$ is strongly $F$-regular, then the affine scheme $X=\Spec R$ is a splinter\,; see, e.g., \cite[Rmk.~2.13(2)]{datta_tucker_on_some_permanence_properties_of_derived_splinters}.
\medskip

The splinter property also imposes strong constraints on the global geometry of proper varieties in positive characteristic. For example, from Bhatt's ``vanishing up to finite cover in positive characteristic'' \cite[Thm.~1.5]{bhatt_derived_splinters_in_positive_characteristic}, we have\,:

\begin{proposition}[\cite{bhatt_derived_splinters_in_positive_characteristic}]
	\label{prop:semiample}
	Let $X$ be a proper variety over a field of positive characteristic and let $\mcL$ be a semiample invertible sheaf on $X$. 
	If $X$ is a splinter, then $H^i(X, \mcL) = 0$ for all $i>0$. In particular, 	$H^i(X, \mcO_X)=0$ for all $i>0$.
\end{proposition}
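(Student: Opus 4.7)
The plan is to upgrade Bhatt's ``vanishing up to finite cover'' theorem \cite[Thm.~1.5]{bhatt_derived_splinters_in_positive_characteristic}, stated for the structure sheaf, to the semiample line bundle $\mcL$ via a trivialization argument, and then to pull the resulting finite-cover-level vanishing back down to $X$ using the splinter hypothesis.

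Concretely, given a class $\alpha \in H^i(X,\mcL)$ with $i > 0$, I would first produce a finite surjective morphism $\pi \colon X_3 \to X$ with $\pi^*\alpha = 0$. Since $\mcL$ is semiample, pick $N > 0$ such that $\mcL^N$ is globally generated and a nonzero (hence regular, as $X$ is integral) section $s \in H^0(X,\mcL^N)$. Form the degree-$N$ cyclic cover $\pi_1 \colon X_1 \to X$ attached to $(s,\mcL,N)$; then $\pi_1^*\mcL$ is canonically $N$-torsion in $\Pic(X_1)$ via the tautological $N$-th root of $\pi_1^*s$. Pass to the $\mu_N$-torsor $\pi_2 \colon X_2 \to X_1$ trivializing $\pi_1^*\mcL$, built explicitly as $\Spec_{X_1}\bigl(\bigoplus_{j=0}^{N-1}(\pi_1^*\mcL)^{-j}\bigr)$ with algebra structure induced by the chosen trivialization of $(\pi_1^*\mcL)^N$. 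On $X_2$ the class $(\pi_1\pi_2)^*\alpha$ sits in $H^i(X_2,\mcO_{X_2})$, so Bhatt's theorem produces a further finite surjection $\pi_3 \colon X_3 \to X_2$ killing it; set $\pi = \pi_1\pi_2\pi_3$.

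The splinter hypothesis then finishes the argument. The canonical map $\eta_\pi \colon \mcO_X \to \pi_*\mcO_{X_3}$ admits a retraction in $\Coh(X)$. Tensoring with the locally free sheaf $\mcL$ and invoking the projection formula $\pi_*\mcO_{X_3} \otimes_{\mcO_X} \mcL \cong \pi_*\pi^*\mcL$, the natural map $\mcL \to \pi_*\pi^*\mcL$ inherits a retraction. Applying $H^i(X,-)$ converts this into a split injection $H^i(X,\mcL) \hookrightarrow H^i(X_3,\pi^*\mcL)$ carrying $\alpha$ to $\pi^*\alpha = 0$, and hence $\alpha = 0$. The ``in particular'' assertion is the special case $\mcL = \mcO_X$, which is trivially semiample. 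The two places requiring care are the input from Bhatt's theorem (the main conceptual hurdle, handled in \emph{loc.\ cit.}) and the well-definedness of the $\mu_N$-torsor when $p \mid N$; the latter is immediate from the explicit $\Spec$ construction, which remains a finite flat surjection regardless of whether $\mu_N$ is étale.
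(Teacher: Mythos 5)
Your second step---descending from a finite cover $\pi$ killing the class $\alpha$ back to $X$ via the splinter hypothesis, the projection formula $\pi_*\mcO_{X_3}\otimes\mcL\cong\pi_*\pi^*\mcL$, and the affineness of $\pi$---is exactly what the paper does, and is correct.

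The first step, however, contains a genuine error. The paper cites Bhatt's Prop.~7.2 directly, which is stated for semiample line bundles; you instead try to reduce to the structure-sheaf case (Thm.~1.5) by trivializing $\mcL$ on a finite cover. The key claim that $\pi_1^*\mcL$ is $N$-torsion in $\Pic(X_1)$ is false. The cyclic cover attached to $(s,\mcL,N)$ does carry a tautological section $t\in H^0(X_1,\pi_1^*\mcL)$ with $t^N=\pi_1^*s$, but $t$ vanishes along $\pi_1^{-1}(\{s=0\})$, which is nonempty unless $s$ is nowhere vanishing (i.e., unless $\mcL^N\cong\mcO_X$ already). Global generation of $\mcL^N$ does not make any single section $s$ nowhere vanishing. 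Hence $\pi_1^*\mcL$ is neither trivial nor torsion, and the datum needed for the $\mu_N$-torsor $\pi_2$---an isomorphism $(\pi_1^*\mcL)^N\cong\mcO_{X_1}$, not merely a section---does not exist. More fundamentally, pullback along any finite surjection preserves ampleness, so if $\mcL$ is ample and nontrivial (in particular semiample), $\pi^*\mcL$ can never be trivialized by a finite surjection $\pi$; the reduction you propose is not merely incomplete but impossible in principle. To repair the argument one must invoke the semiample statement of Bhatt's Prop.~7.2 itself, rather than attempt to deduce it from the structure-sheaf statement by a covering trick.
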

\begin{proof}
	For a proper variety $X$ over a field of positive characteristic, there exists, by \cite[Prop.~7.2]{bhatt_derived_splinters_in_positive_characteristic}, for any $i>0$ a finite surjective morphism $\pi\colon Y \to X$ such that the induced map $H^i(X,\mcL) \to H^i(Y,\pi^*\mcL)$ is zero.
	If now $X$ is a splinter, the pullback map $\mcO_X \to \pi_*\mcO_Y$ admits a splitting $s$, i.e., we have
	$$ \id \colon \mcO_X \to \pi_*\mcO_Y \stackrel{s}{\to} \mcO_X.$$
	Tensoring with $\mcL$ and using the projection formula, we obtain
	$$ \id \colon H^i(X,\mcL) \xrightarrow{0}  H^i(Y,\pi^*\mcL) = H^i(X,\pi_* \pi^*\mcL) \to H^i(X,\mcL),$$
	where the equality in the middle uses that $\pi$ is finite, in particular affine.
	We conclude that $H^i(X,\mcL)=0$.
\end{proof}

As a direct consequence, we have the following useful constraint, which will be refined in \cref{prop:K_equvialenceodaira_dim_f_split_variety}, on proper splinters in positive characteristic\,:

\begin{lemma}\label{lem:splinter_trivial_canonical_sheaf}
	Let $X$ be a proper scheme over a field of positive characteristic, with positive-dimensional irreducible components. If $X$ is a splinter, then its canonical divisor $K_X$ is not effective, in particular its dualizing sheaf $\omega_X$ is nontrivial.
\end{lemma}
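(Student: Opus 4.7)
The plan is to combine Serre--Grothendieck duality with Bhatt's vanishing for the structure sheaf (\cref{prop:semiample}).

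First I would reduce to the case that $X$ is irreducible. By \cref{prop:bhatt_normal_CM}, $X$ is normal and Cohen--Macaulay, so its irreducible components are disjoint and coincide with its connected components. Each connected component $X_j$ is itself a splinter: given a finite surjective morphism $Y \to X_j$, extending by the identity on $X \setminus X_j$ yields a finite surjective morphism $Y \sqcup (X \setminus X_j) \to X$, and any splitting of the latter restricts to a splitting over $X_j$. Since $K_X$ and $\omega_X$ decompose according to the components, it suffices to argue on each one. We may therefore assume $X$ is irreducible, of positive dimension $n$; it is then a normal Cohen--Macaulay proper variety with $\omega_X^\bullet \cong \omega_X[n]$ and $\omega_X \cong \mcO_X(K_X)$ reflexive of rank~$1$.

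Applying the Grothendieck duality recalled at the end of \cref{S:reflexive_sheaes_and_dualizing_complexes} to $K = \mcO_X$ then yields the classical Serre duality pairing
$$H^0(X, \omega_X) \cong H^n(X, \mcO_X)^\vee.$$
Since $\mcO_X$ is trivially semiample, \cref{prop:semiample} forces $H^n(X, \mcO_X) = 0$ as $n > 0$, and hence $H^0(X, \omega_X) = 0$. Consequently there is no nonzero morphism $\mcO_X \to \omega_X$, so in particular no injection, which is precisely the statement that $K_X$ admits no effective representative; moreover, $\omega_X$ cannot be isomorphic to $\mcO_X$ since $\mcO_X$ has the nonzero global section~$1$.

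The argument is a direct combination of Bhatt's vanishing with Serre--Grothendieck duality for the Cohen--Macaulay dualizing sheaf, so I do not foresee a substantive obstacle beyond the mild bookkeeping needed for the reduction to irreducible components.
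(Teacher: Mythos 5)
Your proposal is correct and follows essentially the same route as the paper's proof: reduce to a connected (equivalently irreducible, by normality from \cref{prop:bhatt_normal_CM}) component, invoke Cohen--Macaulayness to get $\omega_X^\bullet = \omega_X[n]$, and then combine \cref{prop:semiample} with Serre duality to deduce $H^0(X,\omega_X) = 0$. The only difference is that you spell out the reduction to connected components and the final inference from $H^0(X,\omega_X)=0$ in a bit more detail, where the paper simply notes that one can work componentwise and that the vanishing shows $K_X$ is not effective.
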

\begin{proof} Since a splinter is normal, by
	working on each connected component of $X$ separately, we can assume that $X$ is of pure positive  dimension, say~$n$. 
	By \cref{prop:bhatt_normal_CM}, $X$ is Cohen--Macaulay, so $ \omega_X^\bullet = \omega_X [n]$. By \cref{prop:semiample} and Serre duality for Cohen--Macaulay schemes, we obtain $H^0(X, \omega_X)^\vee \cong H^n(X, \mcO_X)=0$.
	This shows that the Weil divisor $K_X$ is not effective.
\end{proof}

\subsection{Globally $F$-regular varieties} Let $p$ be a prime number.
We recall the  notion of \emph{global $F$-regularity} for normal  varieties over an $F$-finite field of characteristic $p$, 
and 
review the local constraints it imposes, as well as the global constraints it imposes on proper varieties.

\begin{definition}[\cite{smith_globally_f_regular_varieties_applications_to_vanishing_theorems_for_quotients_of_fano_varieties, schwede_smith_globally_f_regular_and_log_Fano_varieties}]
	A normal scheme $X$ over $\bbF_p$ is called \emph{globally $F$-regular} if it is  $F$-finite and if for any effective Weil divisor $D$ on $X$ there exists a positive integer $e \in \bbZ_{>0}$ such that the map
	$$\mcO_X \to F_*^e \mcO_X \xrightarrow{F_*^e(\sigma_D)} F_*^e \mcO_X(D)$$
	of $\mcO_X$-modules splits. Here $\sigma_D \colon  \mcO_X \to \mcO_X(D)$ is the morphism determined by the Weil divisor $D$.
	
	\noindent A normal scheme $X$ over $\bbF_p$ is called \emph{$F$-split} if it is $F$-finite and if $\mcO_X \to F_*\mcO_X$ splits.
	
	\noindent A pair $(X,\Delta)$ consisting of a normal scheme $X$ over $\bbF_p$ and an effective $\bbQ$-Weil divisor~$\Delta$ is called \emph{globally $F$-regular}
	if $X$ is  $F$-finite and if for any effective Weil divisor $D$ on $X$ there exists an integer $e >0$ such that the natural map
	$$\mcO_X \to F_*^e \mcO_X(\lceil (p^e-1)\Delta\rceil + D)$$
	of $\mcO_X$-modules splits. In particular, $X$ is globally $F$-regular if and only if $(X,0)$ is globally $F$-regular.  
\end{definition}

The following well-known proposition gives local constraints on a normal variety to be globally $F$-regular and echoes \cref{prop:bhatt_normal_CM}. For the convenience of the reader, we outline the proof.

\begin{proposition}
	Let $X$ be a normal 
	 scheme over $\bbF_p$.
	If $X$ is globally $F$-regular, then its local rings are strongly $F$-regular.
\end{proposition}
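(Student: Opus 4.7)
The plan is to reduce the verification of strong $F$-regularity for each local ring $\mcO_{X,x}$ to a direct application of the globally $F$-regular property to a suitably chosen effective Weil divisor on $X$.

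Fix a point $x \in X$ and set $R \coloneqq \mcO_{X,x}$. To prove that $R$ is strongly $F$-regular, it suffices to show that for any non-zerodivisor $c \in R$ there exists an integer $e > 0$ such that the $R$-linear map $R \to F_*^e R$ determined by $1 \mapsto F_*^e c$ splits. Since $X$ is normal, and in particular locally integral, I would first choose an affine integral open neighborhood $V = \Spec A \subseteq X$ of $x$ on which $c$ is defined, lifting $c$ to a nonzero element $\tilde c \in A$. The principal divisor $\mathrm{div}_V(\tilde c)$ is effective because $\tilde c \in A$, and taking closures of its prime components produces an effective Weil divisor $\overline D$ on $X$ whose restriction to $\Spec R$ is $\mathrm{div}(c)$.

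Next I would apply the globally $F$-regular property to $\overline D$ to obtain some $e > 0$ for which the composition
$$ \mcO_X \longrightarrow F_*^e \mcO_X \xrightarrow{F_*^e(\sigma_{\overline D})} F_*^e \mcO_X(\overline D) $$
admits an $\mcO_X$-linear splitting. Passing to the stalk at $x$, the reflexive sheaf $\mcO_X(\overline D)$ becomes the rank one $R$-module $c^{-1}R$ associated to the principal divisor $\mathrm{div}(c)$, which is canonically isomorphic to $R$ via multiplication by $c$. Under this trivialization, $\sigma_{\overline D}$ localizes to multiplication by $c$, so the stalk of the displayed composition becomes precisely the map $R \to F_*^e R$, $1 \mapsto F_*^e c$, which thus splits.

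The main technical point, though a minor one, is the last identification: one must track how the distinguished section $\sigma_{\overline D}$ transforms under the chosen trivialization of $\mcO_X(\overline D)$ at $x$, and verify that applying $F_*^e$ and then taking stalks preserves the splitting. Once this bookkeeping is pinned down, $R$ is strongly $F$-regular by definition, and since $x$ was arbitrary, all local rings of $X$ are strongly $F$-regular.
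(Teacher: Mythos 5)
Your proof is correct and follows essentially the same approach as the paper's, which likewise fixes a point $x$, takes the effective divisor defined near $x$ by a nonzero $c \in \mcO_{X,x}$, extends it by Zariski closure to an effective Weil divisor $D$ on $X$ so that $\sigma_D$ localizes to the inclusion $\mcO_{X,x}\hookrightarrow c^{-1}\mcO_{X,x}$, invokes global $F$-regularity, and finally multiplies by $c$ to turn the resulting splitting into a splitting of $1\mapsto F_*^e c$.
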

\begin{proof}
	Fix a point $x\in X$.
	If $c \in \mcO_{X,x}$ is a nonzero element, $c$ defines an effective divisor in the neighborhood of $x$.
	By taking the Zariski closure, this divisor extends to a Weil divisor $D$ on $X$ such that the map $\mcO_X \to \mcO_X(D)$ localizes to the map $\mcO_{X,x} \to \mcO_{X,x}[c^{-1}]$ sending $1 \mapsto 1$.
	Since $X$ is globally $F$-regular, there exists $e>0$ such that $\mcO_X \to F_*^e \mcO_X(D)$ splits.
	Thus, localizing yields a splitting of the map $\mcO_{X,x} \to F_*^e (\mcO_{X,x}[c^{-1}] )$ sending $1 \mapsto 1$.
	Multiplying by $c$  yields a splitting of the map $\mcO_{X,x} \to F_*^e\mcO_{X,x}$ sending $1 \mapsto F_*^e c$.
\end{proof}

Global $F$-regularity is a local property for normal affine varieties. Indeed,
a normal affine variety over an $F$-finite field $k$ is globally $F$-regular 
if and only if it is strongly $F$-regular 
if and only if all its local rings are strongly $F$-regular\,; see \cite[Thm.~3.1]{hochster_huneke_tight_closure_and_strong_f_regularity}. 
By \cref{prop:bhatt_normal_CM} and the discussion that follows, we see that a $\bbQ$-Gorenstein normal affine variety over an $F$-finite field is globally $F$-regular if and only if it is a splinter. In fact, any normal globally $F$-regular variety is a splinter\,:

\begin{proposition}[{\cite[Prop.~8.9]{bhatt_derived_splinters_in_positive_characteristic}, \cite[Lem.~6.14]{bhatt_et_al_globally_+_regular_varieties_and_mmp_for_threefolds_in_mixed_char}}]
	\label{prop:gFr-splinter}
	Let $X$ be a
normal 
	excellent Noetherian scheme over $\bbF_p$ and let $\Delta$ be an effective $\bbQ$-Weil divisor. 
		 If $(X,\Delta)$ is globally $F$-regular, then $(X,\Delta)$ is globally $+$-regular. 
		 
		\noindent In particular, by \cref{rmk:splinter-vs-globally+}, assuming $X$ is a normal scheme of finite type over a field,
	 if $X$ is globally $F$-regular, then $X$ is a splinter.
\end{proposition}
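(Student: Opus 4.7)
The second statement follows from the first by \cref{rmk:splinter-vs-globally+}, so we focus on the implication $(X,\Delta)$ globally $F$-regular $\Rightarrow$ $(X,\Delta)$ globally $+$-regular. Given a finite surjective morphism $f\colon Y \to X$ with $Y$ normal, the plan is to construct, for a suitable integer $e>0$ and effective Weil divisor $D$ on $X$, an $\mcO_X$-linear factorization
\[
\mcO_X \to f_*\mcO_Y(\lfloor f^*\Delta\rfloor) \xrightarrow{\ \varphi\ } F_*^e\mcO_X(\lceil (p^e-1)\Delta\rceil + D)
\]
whose composite is the natural map appearing in the definition of global $F$-regularity for $(X,\Delta)$ and $D$. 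Granting this, global $F$-regularity will supply a retraction of the composite, and precomposing with $\varphi$ will then yield the desired splitting of $\mcO_X \to f_*\mcO_Y(\lfloor f^*\Delta\rfloor)$.

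The construction of $\varphi$ proceeds generically via a ``trace of Frobenius''. The finite field extension $K(X)\subseteq K(Y)$ factors as $K(X)\subseteq L \subseteq K(Y)$ with $L/K(X)$ finite separable and $K(Y)/L$ purely inseparable; for $e$ large enough, $K(Y)^{p^e}\subseteq L$. Since $\Tr_{L/K(X)}$ is nondegenerate, one may pick $\alpha\in L$ with $\Tr_{L/K(X)}(\alpha)=1$, and the map
\[
\phi\colon K(Y)\to F_*^e K(X),\qquad y\mapsto \Tr_{L/K(X)}(\alpha\, y^{p^e})
\]
then satisfies $\phi(ay)=a^{p^e}\phi(y)$ for $a\in K(X)$; hence it is $K(X)$-linear with respect to the Frobenius-twisted module structure on the target, with $\phi(1)=1$.

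Because $f_*\mcO_Y(\lfloor f^*\Delta\rfloor)$ is a coherent $\mcO_X$-submodule of the constant sheaf $K(Y)$, its image under $\phi$ is contained in $F_*^e\mcO_X(D)$ for some effective Weil divisor $D$ on $X$. Composing with the inclusion $F_*^e\mcO_X(D)\subseteq F_*^e\mcO_X(\lceil (p^e-1)\Delta\rceil + D)$ then produces the map $\varphi$, and the equality $\phi(1)=1$ ensures that the composition with $\mcO_X \to f_*\mcO_Y(\lfloor f^*\Delta\rfloor)$ coincides with the canonical map into $F_*^e\mcO_X(\lceil (p^e-1)\Delta\rceil + D)$.

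The main obstacle is reconciling the integer $e$ coming from the field-theoretic construction of $\phi$ with the one supplied by global $F$-regularity applied to $D$. I would handle this via the standard observation that the splitting property in the definition of global $F$-regularity can be arranged to hold for all sufficiently large $e$, so $e$ can be chosen at least as large as the field-theoretic requirement $K(Y)^{p^e}\subseteq L$. Once the splittings are matched, the assembly of the retraction of $\mcO_X \to f_*\mcO_Y(\lfloor f^*\Delta\rfloor)$ from the retraction produced by global $F$-regularity is formal.
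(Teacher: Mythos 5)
The paper gives no proof of this proposition; it is cited directly from Bhatt and from Bhatt--Ma--Patakfalvi--Schwede--Tucker--Waldron--Witaszek, so the comparison must be with the standard proof there. Your overall strategy --- factor the map $\mcO_X\to f_*\mcO_Y(\lfloor f^*\Delta\rfloor)$ through the $F$-regularity map $\mcO_X\to F_*^e\mcO_X(\lceil(p^e-1)\Delta\rceil+D)$ using a generically defined $p^e$-linear trace and precompose a retraction --- is indeed the approach of the references, and the construction of $\phi$ via the separable/purely-inseparable factorization is correct.

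However, your ``reconciling $e$'' step has a genuine circularity that the proposed fix does not resolve. The effective divisor $D$ you construct depends on $e$: a section $y\in f_*\mcO_Y(\lfloor f^*\Delta\rfloor)$ is allowed poles along $\lfloor f^*\Delta\rfloor$, so $y^{p^e}$ has poles along $p^e\lfloor f^*\Delta\rfloor$, and the resulting divisor $D=D_e$ on $X$ bounding $\phi_e\bigl(f_*\mcO_Y(\lfloor f^*\Delta\rfloor)\bigr)$ grows roughly like $p^e$ times (a pushforward of) $\Delta$. Global $F$-regularity applied to $D_e$ then produces an integer $e'$, and increasing $e$ to $\max(e,e')$ changes $D_e$ again, so the two choices never stabilize. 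The observation that splittings persist for all large $e$ is true for a \emph{fixed} $D$, which is not what you have here. The correct remedy is to show that the trace image lands directly in $F_*^e\mcO_X(\lceil(p^e-1)\Delta\rceil+D)$ for a \emph{fixed} effective $D$, i.e., to absorb the $p^e\Delta$-growth of the poles into the $\lceil(p^e-1)\Delta\rceil$ term that already appears in the target, leaving only a bounded remainder for $D$. Once $D$ is fixed independently of $e$, your argument goes through. Note also that for $\Delta=0$ this issue disappears (the sections $y$ are then regular on $Y$, so $y^{p^e}$ has no new poles), which is the case used for the ``In particular'' statement about splinters; so your argument is complete there but has a real gap for general $\Delta$.
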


As \cref{prop:semiample} in the splinter case,
global $F$-regularity imposes strong constraints on the global geometry of normal \emph{projective} varieties\,:

\begin{proposition}[{\cite[Cor.~4.3]{smith_globally_f_regular_varieties_applications_to_vanishing_theorems_for_quotients_of_fano_varieties}}, {	\cite[Thm.~1.1]{schwede_smith_globally_f_regular_and_log_Fano_varieties}}]
	\label{prop:gFr-big}
Let $X$ be a normal projective variety over a
field of positive characteristic.
 Assume that~$X$ is globally $F$-regular. Then\,:
\begin{enumerate}
	\item For all nef invertible sheaves $\mcL$ on $X$, $H^i(X, \mcL)=0$ for all $i>0$. 
	\item $X$ is log Fano\,; in particular, if $X$ is in addition $\bbQ$-Gorenstein, then $-K_X$ is big.
	\label{item:globF_log_Fano}
\end{enumerate}
\end{proposition}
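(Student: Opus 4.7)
The plan is to derive both statements from the defining splittings of global $F$-regularity, combining Fujita's vanishing for~(i) and Grothendieck duality for the Frobenius for~(ii).

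For~(i), let $\mcL$ be a nef line bundle and fix an ample line bundle $\mcA$ on~$X$. By Fujita's vanishing theorem in positive characteristic, there exists $m_0>0$ such that $H^i(X,\mcA^m \otimes \mcN)=0$ for all $i>0$, all $m\ge m_0$, and all nef line bundles $\mcN$ on $X$. Pick $m \ge m_0$ such that $\mcA^m$ is very ample, and let $D \in |\mcA^m|$, so that $\mcO_X(D)\cong \mcA^m$. By global $F$-regularity, there exists $e>0$ such that $\mcO_X \to F^e_*\mcO_X(D)$ splits. Tensoring this splitting with $\mcL$, taking cohomology, and using the projection formula together with the affineness of $F^e$ yields an injection
\[
H^i(X, \mcL) \hookrightarrow H^i(X, F^e_*(\mcO_X(D) \otimes F^{e*}\mcL)) = H^i(X, \mcA^m \otimes \mcL^{p^e}).
\]
Since $\mcL^{p^e}$ is nef, the right-hand side vanishes by the choice of $m_0$, proving~(i).

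For~(ii), the strategy is to produce an effective $\bbQ$-Weil divisor $\Delta$ on~$X$ such that the pair $(X,\Delta)$ is Kawamata log terminal and $-(K_X+\Delta)$ is ample, thereby exhibiting $X$ as log Fano. Grothendieck duality for the finite Frobenius $F^e$, applied to the reflexive rank-one sheaf $\mcO_X(D)$ on the normal variety~$X$, yields a canonical reflexive isomorphism
\[
\sheafhom_{\mcO_X}(F^e_*\mcO_X(D),\mcO_X) \cong F^e_*\mcO_X\bigl((1-p^e)K_X - D\bigr).
\]
A splitting of $\mcO_X \to F^e_*\mcO_X(D)$ then provides a nonzero global section of $\mcO_X((1-p^e)K_X - D)$, so that this class is linearly equivalent to an effective divisor~$E$. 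Choosing $D$ as the divisor of a general section of $\mcA^N$ for some very ample $\mcA$ and large $N$, and setting $\Delta \coloneqq \tfrac{1}{p^e-1}E$, one rewrites $-(K_X+\Delta) \sim_{\bbQ} \tfrac{1}{p^e-1}\,D$, which is ample. A Bertini-type genericity argument combined with the strong $F$-regularity of the local rings of $X$ established in the previous proposition then ensures that the pair $(X,\Delta)$ is klt.

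The ``in particular'' statement is then immediate: if $X$ is $\bbQ$-Gorenstein, the identity $-K_X = -(K_X+\Delta) + \Delta$ expresses $-K_X$ as the $\bbQ$-sum of an ample and an effective $\bbQ$-divisor, and is therefore big. The main obstacle lies in~(ii), specifically in controlling the singularities of the boundary $\Delta$ coming from the splitting so as to ensure that $(X,\Delta)$ is klt rather than merely log canonical; the key input is the transversality of the section to the non-regular locus of~$X$ arising from the genericity of~$D$, together with the characteristic-$p$ equivalence between strong $F$-regularity and klt singularities in the $\bbQ$-Gorenstein setting.
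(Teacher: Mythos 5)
The paper states this proposition as a citation to Smith~[Cor.~4.3] and Schwede--Smith~[Thm.~1.1] and does not reprove it, so the comparison must be with the proofs in those references.

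For part~(i), your route differs from Smith's: rather than iterating the $F^e$-splitting along an ample effective divisor to bootstrap from Serre vanishing, you invoke Fujita vanishing in positive characteristic (Keeler) and reach the conclusion in a single step. This is correct and arguably cleaner, but it imports Keeler's theorem as a black box where Smith's original argument avoids it by iteration. Either way, the key moves (pick an effective ample $D$, split $\mcO_X \to F^e_*\mcO_X(D)$, tensor with $\mcL$, apply the projection formula and affineness of $F^e$) are the standard ones.

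For part~(ii), the duality step is correct and matches Schwede--Smith: Grothendieck duality for the finite Frobenius applied to the reflexive rank-one sheaf $\mcO_X(D)$ converts the splitting into a nonzero section of $\mcO_X((1-p^e)K_X - D)$, yielding an effective $E$ with $\Delta \coloneqq \tfrac{1}{p^e-1}E$ and $-(K_X+\Delta) \sim_\bbQ \tfrac{1}{p^e-1}D$ ample. However, the way you propose to get klt-ness of $(X,\Delta)$ is not right. The divisor $E$ is determined by the splitting via the Cartier operator; it is not a general member of any linear system, so no Bertini-type genericity of $D$ controls the singularities of $(X,\Delta)$. And the strong $F$-regularity of the local rings of $X$ (the pair with $\Delta = 0$) does not by itself give klt-ness of $(X,\Delta)$. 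In Schwede--Smith the actual mechanism is: one first shows that the pair $(X,\Delta)$ is itself globally $F$-regular (this requires its own argument, iterating the splitting to handle the $\lceil (p^{e'}-1)\Delta\rceil$ twists appearing in the definition for pairs), whence it is locally strongly $F$-regular as a pair, and then one invokes Hara--Watanabe/Takagi: a strongly $F$-regular pair with $K_X+\Delta$ $\bbQ$-Cartier is klt. You gesture at the $F$-regular $\Rightarrow$ klt implication at the very end, but apply it to $X$ rather than to the pair, and the genericity step cannot make up the difference. Once klt-ness of $(X,\Delta)$ is established, the ``in particular'' conclusion that $-K_X$ is big in the $\bbQ$-Gorenstein case is fine.
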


	A  proper normal curve over an algebraically closed field $k$ of positive characteristic is globally $F$-regular if and only if it is splinter if and only if it is isomorphic to the projective line.
	It follows from the proof of \cite[Thm.~6.2]{kawakami_totaro_endomorphisms_of_varieties_and_bott_vanishing} that a smooth projective Fano variety over a perfect field of positive characteristic is globally $F$-regular if and only if it is a splinter if and only if it is $F$-split.
	The following conjecture stems
	from \cref{prop:gFr-big}\labelcref{item:globF_log_Fano} and the folklore expectation
	(see, e.g., \cite[Rmk.~6.16]{bhatt_et_al_globally_+_regular_varieties_and_mmp_for_threefolds_in_mixed_char}) that splinters should be globally $F$-regular.

	\begin{conjecture}[{\cite[Conj.~6.17]{bhatt_et_al_globally_+_regular_varieties_and_mmp_for_threefolds_in_mixed_char}}] 
		\label{conj:splinter_big_anticanonical_class}
		Let $X$ be a $\bbQ$-Gorenstein projective scheme over a field of positive characteristic. If $X$ is a splinter, then 
		$-K_X$ is big.
	\end{conjecture}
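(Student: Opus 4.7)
The first plan is to combine the results already assembled in \cref{S:prelim_splinters_globF} with asymptotic Riemann--Roch. By Theorem~\labelcref{item:thm:negative_kod_dim} we have $\kappa(X) = -\infty$, and by \cref{prop:bhatt_normal_CM} the scheme $X$ is Cohen--Macaulay, so Serre duality together with \cref{prop:semiample} applied to $\mcL = \mcO_X$ gives $H^i(X, \mcO_X) = 0$ for all $i > 0$. The leading term of $\chi(X, \mcO_X(-nK_X))$ is then $(-K_X)^{\dim X} \, n^{\dim X}/(\dim X)!$, so to conclude it would suffice to prove $(-K_X)^{\dim X} > 0$ together with a vanishing $h^i(X, -nK_X) = o(n^{\dim X})$ for $i > 0$ and $n$ large. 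Both numerical positivity of $(-K_X)^{\dim X}$ and the required vanishing should be extracted from a positive-characteristic input specific to the splinter property, e.g.\ from $F$-rationality (\cref{prop:bhatt_normal_CM}) combined with a Frobenius-trace argument applied to $\mcO_X(-nK_X)$ after a finite cover provided by the splinter hypothesis.

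\textbf{Route 2 (via global $F$-regularity).} The cleaner strategy is to prove the stronger expectation that every $\bbQ$-Gorenstein projective splinter in positive characteristic is globally $F$-regular, after which \cref{prop:gFr-big}\labelcref{item:globF_log_Fano} immediately yields that $-K_X$ is big. The local input is in place by the last statement of \cref{prop:bhatt_normal_CM}: under the $\bbQ$-Gorenstein hypothesis, the local rings of $X$ are strongly $F$-regular, so for any effective Weil divisor $D$ on $X$ and any point $x$, some splitting of $\mcO_{X,x} \to F^e_* \mcO_X(D)_x$ exists for a point-dependent $e$. My plan is to globalize this by choosing, for a fixed $D$, an appropriate finite surjective cover $\pi \colon Y \to X$ (normalization of an iterated Frobenius pullback absorbing $D$, chosen so that $\pi^* \mcO_X(D)$ becomes $p^e$-divisible on $Y$ for some uniform $e$), and then combining the splitting $\mcO_X \to \pi_* \mcO_Y$ coming from the splinter property with a Frobenius trace on $Y$ to synthesize a global splitting of $\mcO_X \to F^e_* \mcO_X(D)$, in the spirit of the local proof that splinter and strongly $F$-regular coincide in the $\bbQ$-Gorenstein case.

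\textbf{Main obstacle.} I expect the crux of Route~2 to be precisely the passage from local to global: on an affine scheme a single exponent $e$ suffices because splittings extend along localization maps, but on a projective $X$ the integer $e$ must work uniformly across all of $X$, and the cover $Y$ used to absorb $D$ must be constructed compatibly with Frobenius. The principal obstruction for Route~1, by contrast, is that \cref{prop:semiample} controls only semiample line bundles, whereas $-K_X$ is exactly the class whose bigness we wish to prove; any usable vanishing for $-nK_X$ must come from a fundamentally new input. A geometric alternative is to run an MMP on $K_X$ using Theorem~\labelcref{item:thm:negative_kod_dim} to reduce to a Mori fiber space (and eventually to a Fano variety), but this requires a working positive-characteristic MMP in the needed generality, together with an extension of \cref{rmk:crepant} to $K_X$-negative divisorial contractions and flips showing that the splinter property is preserved, neither of which is currently available.
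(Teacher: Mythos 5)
This statement is labeled as a \emph{conjecture} in the paper (attributed to \cite{bhatt_et_al_globally_+_regular_varieties_and_mmp_for_threefolds_in_mixed_char}), and the paper does \emph{not} prove it: the authors establish only the strictly weaker \cref{prop:K_equvialenceodaira_dim_f_split_variety}, that proper splinters in positive characteristic have $\kappa(X,K_X)=-\infty$. Your proposal is a survey of candidate strategies rather than a proof, and you are right to stop short of claiming completeness. Let me confirm and sharpen the gaps you already sense.

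Route~1 fails for a structural reason, not merely a technical one: knowing $\kappa(X,K_X)=-\infty$ and $H^i(X,\mcO_X)=0$ for $i>0$ gives you no handle whatsoever on $(-K_X)^{\dim X}$. The inequality $(-K_X)^{\dim X}>0$ is essentially the content of what you want to prove once one has a suitable vanishing, so this is circular. Worse, \cref{prop:semiample} applies only to semiample line bundles, and there is no reason $-K_X$ (or any tensor power) should be semiample in advance. Asymptotic Riemann--Roch cannot bootstrap positivity of a self-intersection number; you need a geometric or positivity input, e.g.\ a covering family of rational curves or a Frobenius splitting with respect to an ample divisor. The way you phrase it --- ``should be extracted from a positive-characteristic input specific to the splinter property'' --- is precisely the missing ingredient, and it does not exist in the paper.

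Route~2 correctly identifies the heart of the matter: the conjecture would follow from the stronger folklore expectation (see \cite[Rmk.~6.16]{bhatt_et_al_globally_+_regular_varieties_and_mmp_for_threefolds_in_mixed_char}) that $\bbQ$-Gorenstein projective splinters are globally $F$-regular, combined with \cref{prop:gFr-big}. The local-to-global obstruction you flag is genuine and is exactly why this is open. On an affine scheme, the $e$ in the splitting $\mcO_X \to F^e_*\mcO_X(D)$ can be chosen locally and glued because localization preserves splittings; on a projective scheme the relevant $\Hom$-groups are finite-dimensional $k$-vector spaces, and a single $e$ must work globally. The splinter hypothesis gives you splittings of $\mcO_X \to \pi_*\mcO_Y$ for finite covers $\pi$, but converting these into Frobenius splittings with the extra twist by $D$ requires, for instance, a cover $Y\to X$ on which $\pi^*D$ becomes $\bbQ$-Cartier and $F^e$-divisible with a \emph{controlled} trace map; constructing such a cover so that the trace is compatible with the given splitting is not known. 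Your MMP variant runs into the additional difficulty you note (a full MMP in arbitrary dimension in positive characteristic, plus a splinter-preservation statement for flips and divisorial contractions, neither of which is available), and moreover \cref{rmk:crepant} addresses only crepant morphisms, not $K_X$-negative ones.

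In short: your diagnosis of the obstacles is accurate, neither route closes, and the statement is (as of the paper) an open conjecture. You should present this as a strategy memo, not as a proof.
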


\section{Invariance under small birational maps}
\label{S:invariance_small_birat_maps}

We start by describing the behavior of the splinter property under open embeddings.
Recall that the normalization of a Nagata scheme is finite \cite[\href{https://stacks.math.columbia.edu/tag/035S}{Tag 035S}]{stacks-project}, and that, 
if $Y \to X$ is locally of finite type and $X$ is Nagata, then $Y$ is Nagata \cite[\href{https://stacks.math.columbia.edu/tag/0359}{Tag 0359}]{stacks-project}.
Moreover, any excellent scheme is Nagata \cite[\href{https://stacks.math.columbia.edu/tag/07QS}{Tag 07QS}]{stacks-project}.

	\begin{lemma}\label{lem:splinter_invariant_codim_2_surgery}
		Let $X$ be a Noetherian scheme
		and let $U\subseteq X$ be an open dense subset.
		\begin{enumerate}
			\item \label{item:open_of_splinter} If $X$ is a 
			splinter, then $U$ is a
			splinter.
			\item \label{item:splinter_determined_on_big_open} Assume that $X$ is normal and Nagata, and that $\codim_X (X \setminus U) \geq 2$. 
			If~$U$ is a splinter, then $X$ is a splinter.
		\end{enumerate}
	\end{lemma}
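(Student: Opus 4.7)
The plan for both parts is to transport the splinter condition between $U$ and $X$, but via different mechanisms: part~(i) extends finite surjective covers of $U$ to finite surjective covers of $X$, whereas part~(ii) extends splittings on $U$ to splittings on $X$ using reflexivity.

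For (i), given a finite surjective $g \colon Z \to U$, I would extend the coherent $\mcO_U$-algebra $\mcA \coloneqq g_*\mcO_Z$ to a coherent $\mcO_X$-algebra $\widetilde{\mcA}$ with $\widetilde{\mcA}|_U \cong \mcA$. The construction is affine-local: over $V = \Spec R \subseteq X$ with $V \cap U = \Spec R_f$, choose finitely many $R_f$-algebra generators of $\mcA(V\cap U)$ and clear denominators in their integral equations to produce a finite $R$-subalgebra recovering $\mcA(V\cap U)$ after inverting $f$; Noetherian patching on overlaps then yields a global $\widetilde{\mcA}$. The resulting finite morphism $\bar g \colon \bar Z \coloneqq \underline{\Spec}_X \widetilde{\mcA} \to X$ satisfies $\bar g^{-1}(U) \cong Z$ and is automatically surjective, since $\bar g(\bar Z)$ is closed in $X$ and contains the dense image $g(Z) = U$. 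The splinter property of $X$ then splits $\mcO_X \to \bar g_*\mcO_{\bar Z}$, and restricting this splitting to $U$ yields the desired splitting of $\mcO_U \to g_*\mcO_Z$.

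For (ii), given a finite surjective $f \colon Y \to X$, the Nagata hypothesis makes the normalization $\nu \colon \widetilde Y \to Y$ finite, and a splitting of $\mcO_X \to (f\circ\nu)_*\mcO_{\widetilde Y}$ composed with the canonical morphism $f_*\mcO_Y \to (f\circ\nu)_*\mcO_{\widetilde Y}$ produces a splitting of $\mcO_X \to f_*\mcO_Y$. So we may assume $Y$ is normal, in which case $f_*\mcO_Y$ is $S_2$ on $X$ (depths of finite covers compute correctly under finite maps) and hence reflexive by the facts recalled in Section~\ref{S:reflexive_sheaes_and_dualizing_complexes}. The restriction $f|_{f^{-1}(U)} \colon f^{-1}(U) \to U$ is a finite surjective cover of the splinter $U$, so $\mcO_U \to (f_*\mcO_Y)|_U$ admits a splitting $s_U$, a morphism between reflexive coherent sheaves on $U$. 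By property~(iii) recalled in Section~\ref{S:reflexive_sheaes_and_dualizing_complexes}, applied to $i \colon U \hookrightarrow X$ using $\codim_X(X\setminus U) \ge 2$, $s_U$ extends uniquely to a morphism $s \colon f_*\mcO_Y \to \mcO_X$. The composition $\mcO_X \to f_*\mcO_Y \xrightarrow{s} \mcO_X$ restricts to $\id_{\mcO_U}$, and so, by the same uniqueness, equals $\id_{\mcO_X}$, producing the required splitting.

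The main difficulty is the algebra extension in (i): one must preserve both finiteness and the multiplicative structure simultaneously, so plain Noetherian extension of coherent modules does not directly suffice. Clearing denominators handles each affine chart, but assembling the local extensions into a single coherent $\mcO_X$-algebra requires a patching argument that may demand enlarging the local models on overlaps. In contrast, (ii) is essentially formal once the Nagata reduction to normal $Y$ is carried out: everything reduces to the unique extension of morphisms between reflexive sheaves on a normal Noetherian scheme across a codimension-$\ge 2$ complement.
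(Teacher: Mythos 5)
Your part (ii) is essentially identical to the paper's proof: reduce to $Y$ normal via the Nagata hypothesis (normalization is finite, and a splitting for the normalization yields one for $Y$), observe that $f_*\mcO_Y$ is $S_2$ hence reflexive, and then extend the splitting from $U$ across the codimension-$\ge 2$ complement using reflexivity on the normal scheme $X$.

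Your part (i) follows the same conceptual route as the paper — extend the finite cover of $U$ to a finite cover of $X$, split there, and restrict — but the paper discharges the extension step by citing Bhatt's Proposition~4.1 of \emph{Derived splinters in positive characteristic}, which asserts exactly that any finite cover of an open subscheme of a Noetherian scheme extends to a finite cover of the whole scheme. Your blind attempt instead tries to prove this extension result from scratch, and as you yourself flag, the argument does not close: the affine-local ``clear denominators'' construction (which also tacitly assumes $V\cap U$ is a principal open of $V$, which requires refining the cover) produces finite $\mcO_V$-subalgebras of $j_*\mcA$ that need not agree on overlaps, and you have not supplied the gluing argument needed to assemble them into a single coherent $\mcO_X$-algebra. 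That gluing is precisely the technical content of Bhatt's proposition, so the gap is genuine as written, even though the overall strategy is correct. Replacing the sketch with the citation (or with a complete proof of the extension lemma, e.g.\ via extending the coherent sheaf of algebras and then taking a suitable finite subalgebra as Bhatt does) would make your argument match the paper's.
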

	\begin{proof}
		To prove \labelcref{item:open_of_splinter} consider a finite cover $f\colon Y \to U$.
		By Zariski's Main Theorem \cite[\href{https://stacks.math.columbia.edu/tag/05K0}{Tag 05K0}]{stacks-project},
		this cover extends to a  finite morphism $\bar{f}\colon \overline{Y} \to X$.
		Since $X$ is a splinter, we obtain a section $s$ such that the composition
		$$ \mcO_X \to \bar{f}_* \mcO_{\overline{Y}} \xrightarrow{s} \mcO_X$$
		is the identity. By restricting to $U$, we obtain the desired section of $\mcO_U \to f_*\mcO_Y$.
		
		For \labelcref{item:splinter_determined_on_big_open} consider a finite cover $f\colon Y \to X$.
		Since $X$ is Nagata, so is $Y$. By possibly replacing $Y$ by its normalization,
		we can assume that $Y$ is normal.
		The sheaf $f_*\mcO_Y$ satisfies the property $S_2$ by \cite[Prop.~5.7.9]{ega_iv_2} and is therefore reflexive. 
		Since $U$ is a splinter, we obtain a splitting of  $\mcO_U \to f_*\mcO_Y \vert_U$ and this extends to a splitting of $\mcO_X \to f_* \mcO_Y$ as $X$ is normal and all the involved sheaves are reflexive.
	\end{proof}

\begin{remark}\label{rmk:dense_open_glob_+_reg}
	Let $\Delta$ be an effective $\bbQ$-Weil divisor on a normal excellent Noetherian scheme $X$
	and let $U\subseteq X$ be an open dense subset. As the proof is analogous to the one of \cref{lem:splinter_invariant_codim_2_surgery}, we leave it to the reader to verify that if $(X, \Delta)$ is globally $+$-regular, then $(U, \Delta\vert_U)$ is globally $+$-regular. Conversely, if  $\codim_X (X \setminus U) \geq 2$ and 
	if $(U, \Delta\vert_U)$ is globally $+$-regular, then $(X, \Delta)$ is globally $+$-regular.
\end{remark}

	\begin{lemma}\label{lem:generic_fiber_splinter}
		Let $\pi \colon X \to Y$ be a surjective morphism of Noetherian schemes.
		Assume that $X$ is a  splinter and that $Y$ is integral with generic point $\eta$, then the generic fiber $X_\eta$ 
		 is a splinter.
	\end{lemma}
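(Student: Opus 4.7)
The plan is to reduce to \cref{lem:splinter_invariant_codim_2_surgery}\labelcref{item:open_of_splinter} by spreading out any finite cover of $X_\eta$ to a finite cover of $\pi^{-1}(V)$ for some open neighborhood $V$ of $\eta$ in $Y$, and then transporting a splitting back to $X_\eta$ via flat base change.

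First, I would record that $X_\eta = X \times_Y \Spec K(Y)$ is itself a Noetherian scheme: on affines of $X$ and $Y$ it is obtained by localizing a Noetherian ring at a multiplicative subset. Moreover, $X_\eta$ presents as the cofiltered limit of the Noetherian open subschemes $\pi^{-1}(V)$ as $V$ ranges over affine open neighborhoods of $\eta$ in $Y$, with affine transition maps; each natural morphism $j_V \colon X_\eta \to \pi^{-1}(V)$ is flat, being the base change of the flat morphism $\Spec K(Y) \to V$.

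Given a finite surjective morphism $f \colon Z \to X_\eta$, standard limit/spreading-out arguments produce an open neighborhood $V$ of $\eta$ in $Y$ together with a finite surjective morphism $f_V \colon Z_V \to \pi^{-1}(V)$ whose base change along $j_V$ recovers $f$. By \cref{lem:splinter_invariant_codim_2_surgery}\labelcref{item:open_of_splinter}, the open subset $\pi^{-1}(V) \subseteq X$ is a splinter, so the canonical map $\mcO_{\pi^{-1}(V)} \to (f_V)_* \mcO_{Z_V}$ admits a splitting $s$ in $\Coh(\pi^{-1}(V))$. Since $j_V$ is flat and $f_V$ is affine, flat base change yields a canonical isomorphism $j_V^* (f_V)_* \mcO_{Z_V} \cong f_* \mcO_Z$ compatible with the canonical maps from the structure sheaves, so $j_V^* s$ is the desired splitting of $\mcO_{X_\eta} \to f_* \mcO_Z$.

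The only nontrivial ingredient is the spreading-out of the finite cover $f$, but this is routine since finite morphisms to a Noetherian target are finitely presented and descend through cofiltered limits with affine transition maps; surjectivity of $f_V$ can be arranged by shrinking $V$ if necessary, using constructibility of the image.
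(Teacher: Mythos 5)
Your proof is correct and follows essentially the same route as the paper's: spread out the finite cover to $\pi^{-1}(V)$ for a suitable open neighborhood $V$ of $\eta$, invoke \cref{lem:splinter_invariant_codim_2_surgery}\labelcref{item:open_of_splinter} to split there, and transport the splitting back by flat base change. The additional discussion of the cofiltered-limit presentation and the finite-presentation hypothesis for spreading out is the same standard machinery the paper implicitly invokes, just spelled out more explicitly.
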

	\begin{proof}
		Let $f \colon Z \to X_\eta$ be a finite surjective morphism. 
		Since $f$ is of finite presentation, it spreads to a morphism of finite presentation $\mathcal{Z} \to X|_V$~\cite[Thm.~8.8.2]{ega_iv_3} for some dense affine open $V \subseteq Y$, and 
		the latter restricts on a dense affine open $U \subseteq V$ to a finite surjective morphism $\mathcal{Z}|_U \to X|_U$~\cite[Thm.~8.10.5]{ega_iv_3}.
		By \cref{lem:splinter_invariant_codim_2_surgery} the scheme $X_U$ is a splinter.
		Thus $\mcO_{X_U} \to {f_U}_* \mcO_{Z_U}$ admits a section. 
		By flat base change, restricting to the generic fiber yields the desired splitting.
	\end{proof}

   We now turn to the analogues of  \cref{lem:splinter_invariant_codim_2_surgery} and \cref{lem:generic_fiber_splinter} in the global $F$-regular setting.
	The following elementary lemma appears, e.g., in \cite[Lem.~1.5]{gongyo_li_patakfalvi_zsolt_schwede_tanaka_zong_on_rational_connectedness_of_globally_f_regular_threefolds}.

\begin{lemma}\label{lem:small_birational_map_globF}
	Let $(X,\Delta)$ be a pair consisting of a normal variety over a field of positive characteristic and of an effective $\bbQ$-Weil divisor $\Delta$, and let $U \subseteq X$ be an open subset.
	\begin{enumerate}
		\item \label{item:open_of_gFr} If $(X,\Delta)$ is globally $F$-regular, then $(U,\Delta\vert_U)$ is globally $F$-regular.
		\item \label{item:gFr_determined_on_big_open} Assume that $U\subseteq X$ is dense and $\codim_X (X \setminus U) \geq 2$. 
		If~$(U,\Delta\vert_U)$ is globally $F$-regular, then $(X,\Delta)$ is globally $F$-regular.
	\end{enumerate}
\end{lemma}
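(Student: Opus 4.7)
The plan is to mimic the proof of \cref{lem:splinter_invariant_codim_2_surgery} in the globally $F$-regular setting. The main technical input is that, for any Weil divisor $E$ on the normal variety $X$, the sheaf $F_*^e \mcO_X(E)$ is reflexive on $X$. Indeed, since $X$ is $F$-finite, $F^e\colon X\to X$ is a finite morphism whose underlying map of topological spaces is the identity, so at each point $x\in X$ the finite local extension $\mcO_{X,x} \to F^e_*\mcO_{X,x}$ (an isomorphism of abstract rings) preserves the depth of the module $\mcO_X(E)_x$. Thus $F_*^e \mcO_X(E)$ is $S_2$, and hence reflexive since $X$ is normal.

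For \labelcref{item:open_of_gFr}, given an effective Weil divisor $D$ on $U$, let $\overline{D}$ denote its Zariski closure in $X$, an effective Weil divisor on $X$ with $\overline{D}|_U = D$. Applying the global $F$-regularity of $(X,\Delta)$ to $\overline{D}$ yields an integer $e>0$ and a splitting of $\mcO_X \to F_*^e \mcO_X(\lceil (p^e-1)\Delta\rceil + \overline{D})$. Restricting this splitting to $U$ and using that restriction commutes with round-ups yields the required splitting of $\mcO_U \to F_*^e \mcO_U(\lceil (p^e-1)\Delta|_U\rceil + D)$.

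For \labelcref{item:gFr_determined_on_big_open}, let $D$ be an effective Weil divisor on $X$ and denote by $\vf\colon \mcO_X \to F_*^e \mcO_X(\lceil (p^e-1)\Delta\rceil + D)$ the natural map. By the global $F$-regularity of $(U,\Delta|_U)$ applied to $D|_U$, there exists $e>0$ for which $\vf|_U$ admits a splitting $s_U$. By the key input above, both source and target of $\vf$ are reflexive on $X$, so $\sheafhom_{\mcO_X}(F_*^e \mcO_X(\lceil (p^e-1)\Delta\rceil + D), \mcO_X)$ is reflexive as well; the equivalence of reflexive sheaves on $X$ and on $U$ recalled in \labelcref{item:restriction_to_big_open_equiv_of_cat} then extends $s_U$ uniquely to a morphism $s\colon F_*^e \mcO_X(\lceil (p^e-1)\Delta\rceil + D) \to \mcO_X$. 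Finally, $s\circ\vf$ and $\id_{\mcO_X}$ agree on the dense open $U$ and thus on all of $X$, since $X$ is integral. The one technical point to verify, from which both parts follow routinely, is the reflexivity of $F_*^e \mcO_X(E)$, which is dealt with by the depth-preservation argument above.
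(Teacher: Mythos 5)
Your proof is correct and follows essentially the same route as the paper: extending/restricting divisors along $U\hookrightarrow X$ and using that $F_*^e\mcO_X(\cdot)$ is reflexive so that splittings extend across the codimension-$\ge 2$ complement. The only cosmetic difference is that the paper justifies the reflexivity of $F_*^e\mcO_X(E)$ by citing that $F_*$ preserves $S_2$ sheaves for the finite morphism $F$ (EGA IV, Prop.~5.7.9), whereas you give the equivalent local depth-preservation argument directly.
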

\begin{proof}
	For the proof of \labelcref{item:open_of_gFr}, consider an effective Weil divisor $D_0$ on $U$. Let $D$ be the Zariski closure of $D_0$ in $X$. 
	By assumption there exists $e>0$ such that $\mcO_X \to F_*^e \mcO_X(\lceil (p^e-1)\Delta\rceil + D)$ splits. Restricting to $U$ yields the desired splitting of $\mcO_U \to F_*^e \mcO_U(\lceil (p^e-1)\Delta\vert_U \rceil + D_0)$.
	
	To prove \labelcref{item:gFr_determined_on_big_open}, note that, since $F$ is finite, $F_*$ preserves $S_2$ sheaves by \cite[Prop.~5.7.9]{ega_iv_2}, so $F_*$ sends reflexive sheaves to reflexive sheaves.
	Furthermore, the restriction along $U\hookrightarrow X$ induces a bijection between Weil divisors of $X$ and Weil divisors of $U$.
	Statement \labelcref{item:gFr_determined_on_big_open} follows since, for any Weil divisor $D$, the map $\mcO_X \to F^e_* \mcO_X(\lceil (p^e-1)\Delta \rceil + D)$ of reflexive sheaves splits if and only if its restriction to $U$ splits.
\end{proof}

\begin{lemma}\label{lem:generic_fiber_globF}
	Let $\pi \colon X \to Y$ be a surjective morphism of normal varieties over
	 a field of positive characteristic.
	Assume that $(X,\Delta)$ is globally $F$-regular and that $Y$ is integral with generic point $\eta$. Then the generic fiber $X_\eta$ is normal and $(X_\eta, \Delta_\eta)$ is globally $F$-regular.
\end{lemma}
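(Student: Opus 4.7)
My plan is to follow the same outline as \cref{lem:generic_fiber_splinter}: spread out objects from $X_\eta$ to $X$, apply the hypothesis on $(X,\Delta)$, then pull back via the flat map $X_\eta \to X$. The key fact I will use repeatedly is that, since $\pi(x) = \eta$ and $\mcO_{Y,\eta} = K(Y)$ already, the local rings satisfy $\mcO_{X_\eta,x} = \mcO_{X,x}$ for any $x \in X_\eta$.

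I would start by settling normality and $F$-finiteness of $X_\eta$. Normality follows from this identification of local rings, since each $\mcO_{X,x}$ is normal by assumption. For $F$-finiteness, $K(Y)$ is a finitely generated extension of the $F$-finite field $k$ and is therefore itself $F$-finite, so $X_\eta$, being of finite type over $K(Y)$, is $F$-finite.

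For the global $F$-regularity statement, I would take an effective Weil divisor $D_0 = \sum_j b_j W_j$ on $X_\eta$ and spread it out as follows. For each $j$, the generic point $w_j$ of $W_j$ satisfies $\pi(w_j) = \eta$ and $\dim \mcO_{X,w_j} = \dim \mcO_{X_\eta,w_j} = 1$, so $\overline{\{w_j\}} \subseteq X$ is a prime Weil divisor; setting $\bar{D}_0 \coloneqq \sum_j b_j \overline{\{w_j\}}$ produces an effective Weil divisor on $X$ with $\bar{D}_0\vert_{X_\eta} = D_0$. Similarly, writing $\Delta = \sum_i a_i D_i$, the $\bbQ$-Weil divisor $\Delta_\eta$ is obtained from $\Delta$ by discarding those $D_i$ that do not dominate $Y$ (which are exactly the components whose support is disjoint from $X_\eta$). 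Global $F$-regularity of $(X,\Delta)$ then yields $e > 0$ and a splitting of
\[\mcO_X \to F^e_* \mcO_X(\lceil (p^e-1)\Delta\rceil + \bar{D}_0).\]
Pulling this splitting back along the flat map $X_\eta \to X$ and using that the formation of $\mcO_X(\cdot)$, the ceiling of a $\bbQ$-divisor, and Frobenius pushforward all commute with this flat base change, I obtain the required splitting of $\mcO_{X_\eta} \to F^e_* \mcO_{X_\eta}(\lceil (p^e-1)\Delta_\eta\rceil + D_0)$.

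The main obstacle I anticipate is the clean verification of the compatibility between divisorial reflexive sheaves, ceilings, Frobenius pushforward, and the flat restriction from $X$ to $X_\eta$. This is mostly formal once checked locally via $\mcO_{X_\eta,x} = \mcO_{X,x}$, but to keep the bookkeeping transparent I would carry it out in two stages: first pass from $X$ to $X_U \coloneqq \pi^{-1}(U)$ for a sufficiently small open $U \ni \eta$ via \cref{lem:small_birational_map_globF}\labelcref{item:open_of_gFr}, which is the already-known open-immersion case, and then take the generic fiber of $X_U \to U$, which amounts to flat base change along $\Spec K(Y) \to U$.
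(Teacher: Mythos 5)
Your proof is correct and the core argument — spread the effective Weil divisor $D_0$ out to its Zariski closure on $X$, invoke global $F$-regularity of $(X,\Delta)$ to get a splitting, and restrict along the flat map $X_\eta \to X$ — is exactly what the paper does. The one place where you genuinely diverge is normality of $X_\eta$: the paper routes this through the splinter machinery ($(X,\Delta)$ globally $F$-regular $\Rightarrow$ $X$ globally $F$-regular by \cite[Lem.~3.5]{schwede_smith_globally_f_regular_and_log_Fano_varieties} $\Rightarrow$ $X$ is a splinter $\Rightarrow$ $X_\eta$ is a splinter by \cref{lem:generic_fiber_splinter} $\Rightarrow$ $X_\eta$ normal via \cref{prop:bhatt_normal_CM}), whereas you observe directly that $\mcO_{X_\eta,x} = \mcO_{X,x}$ for $x \in X_\eta$ because $\mcO_{Y,\eta}=K(Y)$, so normality of $X_\eta$ is inherited from the assumed normality of $X$. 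Your argument is shorter and more elementary, and it has the additional virtue of immediately justifying the compatibility of reflexive-sheaf constructions under the restriction $X_\eta \hookrightarrow X$, which the paper leaves implicit. Your careful checks (that the closure of a prime divisor of $X_\eta$ has codimension $1$ in $X$, that $\Delta_\eta$ is obtained by discarding non-dominating components, that $K(Y)$ is $F$-finite) are all correct and worth spelling out, as you do.
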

\begin{proof}
	By \cite[Lem.~3.5]{schwede_smith_globally_f_regular_and_log_Fano_varieties}, $X$ is globally $F$-regular. It follows from \cref{prop:gFr-splinter} that $X$ is a splinter and then 
	from \cref{lem:generic_fiber_splinter} that
	$X_\eta$ is a splinter, so $X_\eta$ is normal.
	Given an effective Weil divisor $D_0$ on $X_\eta$, we denote by $D$ its Zariski closure in $X$.
	Since $(X, \Delta)$ is globally $F$-regular, there exists $e>0$ such that $\mcO_X \to F_*^e \mcO_X(\lceil (p^e-1)\Delta \rceil +D)$ splits. 
	The lemma follows by restricting the splitting along $X_\eta \hookrightarrow X$.
\end{proof}

Finally, we draw as a consequence of the above that both the splinter property and the global $F$-regular property are invariant under \emph{small birational maps} of normal varieties.

\begin{definition}[Small birational map] \label{def:small_bir_map}
	A rational map $f\colon X \dashrightarrow Y$ between Noetherian schemes
	is said to be a \emph{small birational map} if there exist nonempty open subsets $U\subseteq X$ and $V\subseteq Y$ with $\codim_X(X\setminus U) \geq 2$ and $\codim_Y(Y\setminus V)\geq 2$ such that $f$ induces an isomorphism $U \stackrel{\simeq}{\longrightarrow} V$.
\end{definition}

\begin{proposition}\label{prop:small_birational_map}
	Let $f\colon X \dashrightarrow Y$ be a small birational map between normal schemes of finite type over a Noetherian scheme $S$. The following statements hold\,:
	\begin{enumerate}
		\item \label{item:splinter_small_birational_map} Assuming $S$ is Nagata, $X$ is a splinter if and only if $Y$ is a splinter.
		\item \label{item:globF_small_birational_map} Assuming $S= \Spec k$ with $k$ of positive characteristic,
		$X$ is globally $F$-regular if and only~if $Y$ is globally $F$-regular.
	\end{enumerate}
\end{proposition}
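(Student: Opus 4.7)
The plan is to deduce the proposition directly from the two codimension-$2$ invariance results already established in this section, namely \cref{lem:splinter_invariant_codim_2_surgery} for the splinter property and \cref{lem:small_birational_map_globF} for global $F$-regularity. By the definition of a small birational map, we may fix open subsets $U\subseteq X$ and $V\subseteq Y$ with $\codim_X(X\setminus U)\geq 2$ and $\codim_Y(Y\setminus V)\geq 2$ such that $f$ restricts to an isomorphism $U\xrightarrow{\sim} V$. A preliminary observation is that schemes of finite type over a field are Nagata, so the hypotheses of \cref{lem:splinter_invariant_codim_2_surgery}\labelcref{item:splinter_determined_on_big_open} are automatically satisfied; similarly, normal varieties over an $F$-finite field are $F$-finite, as required to apply \cref{lem:small_birational_map_globF}.

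For part \labelcref{item:splinter_small_birational_map}, I would argue by symmetry. Assuming $X$ is a splinter, \cref{lem:splinter_invariant_codim_2_surgery}\labelcref{item:open_of_splinter} yields that $U$ is a splinter; transport along the isomorphism $U\simeq V$ then gives that $V$ is a splinter, and finally \cref{lem:splinter_invariant_codim_2_surgery}\labelcref{item:splinter_determined_on_big_open} applied to the pair $V\subseteq Y$ shows that $Y$ is a splinter. The reverse implication is identical after exchanging the roles of $X$ and $Y$.

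For part \labelcref{item:globF_small_birational_map}, I would run the same three-step argument but using \cref{lem:small_birational_map_globF} in place of \cref{lem:splinter_invariant_codim_2_surgery}: if $X$ is globally $F$-regular, then so is $U$ by \labelcref{item:open_of_gFr}, hence so is $V$ by transport across $U\simeq V$, and hence so is $Y$ by \labelcref{item:gFr_determined_on_big_open}. Here one takes $\Delta = 0$ throughout.

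There is essentially no main obstacle: the technical content (the Nagata normalization trick for splinters, and the reflexivity of $F_*^e$ of a reflexive sheaf for global $F$-regularity) has already been absorbed into the two cited lemmas. The only point requiring a moment's thought is verifying the standing hypotheses of those lemmas on $X$ and $Y$, which follows immediately from their being normal (and $F$-finite in the second case) schemes of finite type over $k$.
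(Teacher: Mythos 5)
Your argument matches the paper's proof, which likewise deduces both parts directly from \cref{lem:splinter_invariant_codim_2_surgery} and \cref{lem:small_birational_map_globF} applied to the open subsets witnessing the small birational map. Your additional remarks verifying the Nagata and $F$-finiteness hypotheses are routine but correct.
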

\begin{proof}
	Statement \labelcref{item:splinter_small_birational_map} follows directly from \cref{lem:splinter_invariant_codim_2_surgery} and statement \labelcref{item:globF_small_birational_map} from \cref{lem:small_birational_map_globF}.
\end{proof}

\section{Lifting and descending the splinter property}
\label{S:LD-splinter}

	\subsection{Descending the splinter property}\label{rmk:splittingcondition}
		If $\pi \colon Y \to X$ is a morphism of varieties such that $\mcO_X \to \pi_* \mcO_Y$ is an isomorphism, e.g., if $\pi$ is flat proper with geometrically connected and geometrically reduced fibers \cite[\href{https://stacks.math.columbia.edu/tag/0E0L}{Tag 0E0L}]{stacks-project}, or if $\pi\colon Y \to X$ is proper birational and $X$ is a normal variety, it is a formal consequence that the splinter property descends along $\pi$. Indeed, we have the following lemma.
		\begin{lemma}\label{lem:splinter_descent}
			Let $\pi\colon Y\to X$ be a morphism of Noetherian schemes. 
			\begin{enumerate}
		\item	If $Y$ is a splinter and the map $\mcO_X \to \pi_*\mcO_Y$ is split, then $X$ is a splinter.
		 \label{item:splinter_descent}
		\item \label{item:derived_splinter_descent} If $Y$ is a derived splinter and $\mcO_X \to \mathrm{R} \pi_*\mcO_Y$ is split, then $X$ is a derived splinter.
			\end{enumerate}
		Assume further that 
$X$ and $Y$ are normal and excellent,
		and let $\Delta$ be an effective
		$\bbQ$-Cartier divisor on $X$, or let $\Delta$ be an effective $\bbQ$-Weil divisor and assume that $\pi$ is finite.
		\begin{enumerate}		  \setcounter{enumi}{2}
			\item \label{item:glob_+_reg_descent} If $(Y, \pi^*\Delta)$ is globally $+$-regular and  $\mcO_X \to \pi_*\mcO_Y$ is split, then $(X, \Delta)$ is globally $+$-regular.
		\end{enumerate}
		\end{lemma}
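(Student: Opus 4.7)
The plan is to establish all three parts by the same transfer argument\,: pull a test cover of $X$ back to $Y$, use the assumption on $Y$ to obtain a splitting upstairs, then push forward and precompose with the given splitting $\mcO_X \to (\mathrm{R})\pi_*\mcO_Y$ to descend.

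For \labelcref{item:splinter_descent}, given a finite surjective $f \colon Z \to X$, form the Cartesian square with projections $\pi' \colon Y\times_X Z \to Z$ and $f' \colon Y\times_X Z \to Y$. Since finite surjective morphisms are preserved under base change, $f'$ is a legitimate test cover for the splinter property of $Y$, and one obtains a splitting $s \colon f'_*\mcO_{Y\times_X Z} \to \mcO_Y$. Applying $\pi_*$ and composing with the given splitting $\pi_*\mcO_Y \to \mcO_X$ produces a retraction $t \colon \pi_* f'_*\mcO_{Y\times_X Z} \to \mcO_X$ of the canonical map $\mcO_X \to \pi_* f'_*\mcO_{Y\times_X Z}$. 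Using $\pi\circ f' = f\circ \pi'$, the latter agrees with $f_*\pi'_*\mcO_{Y\times_X Z}$ and factors through $\mcO_X \to f_*\mcO_Z$\,; restricting $t$ therefore yields the desired splitting of $\mcO_X \to f_*\mcO_Z$. Part \labelcref{item:derived_splinter_descent} is entirely analogous, with $\pi_*$ and $f'_*$ replaced by $\mathrm R\pi_*$ and $\mathrm R f'_*$, using that proper surjective morphisms are preserved under base change and that the same identities continue to hold in $\sfD^b(X)$.

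For \labelcref{item:glob_+_reg_descent}, given a finite surjective $f \colon Z \to X$ with $Z$ normal, replace $Y\times_X Z$ by its normalization $\tilde W$, which is finite over $Y\times_X Z$ as the latter is Nagata (being finite over the Nagata scheme $Y$). The composition $\tilde f \colon \tilde W \to Y\times_X Z \to Y$ is then a finite surjective morphism with $\tilde W$ normal\,; write $\tilde \pi \colon \tilde W \to Z$ for the other projection. Since $\pi \circ \tilde f = f \circ \tilde \pi$, we have $\tilde f^* \pi^*\Delta = \tilde \pi^* f^* \Delta$, so global $+$-regularity of $(Y, \pi^*\Delta)$ yields a splitting of $\mcO_Y \to \tilde f_* \mcO_{\tilde W}(\lfloor \tilde f^* \pi^*\Delta\rfloor)$. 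Pushing forward by $\pi_*$ and composing with $\pi_*\mcO_Y \to \mcO_X$ as in \labelcref{item:splinter_descent} produces a retraction of the canonical map $\mcO_X \to \pi_* \tilde f_* \mcO_{\tilde W}(\lfloor \tilde f^* \pi^*\Delta\rfloor) = f_* \tilde \pi_* \mcO_{\tilde W}(\lfloor \tilde \pi^* f^*\Delta\rfloor)$. To conclude, one factors this canonical map through $\mcO_X \to f_*\mcO_Z(\lfloor f^*\Delta\rfloor)$ using the natural map $\mcO_Z(\lfloor f^*\Delta\rfloor) \to \tilde \pi_*\mcO_{\tilde W}(\tilde \pi^*\lfloor f^*\Delta\rfloor)$ together with the inclusion $\mcO_{\tilde W}(\tilde \pi^*\lfloor f^*\Delta\rfloor) \hookrightarrow \mcO_{\tilde W}(\lfloor \tilde \pi^* f^*\Delta\rfloor)$ coming from the inequality $\tilde\pi^*\lfloor f^*\Delta\rfloor \leq \lfloor \tilde \pi^* f^*\Delta\rfloor$ on the normal scheme $\tilde W$.

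The main obstacle lies in \labelcref{item:glob_+_reg_descent}\,: verifying the divisorial identities $\tilde f^*\pi^*\Delta = \tilde \pi^* f^*\Delta$ and $\tilde \pi^*\lfloor f^*\Delta\rfloor \leq \lfloor \tilde \pi^* f^*\Delta\rfloor$ via pullback of $\bbQ$-Weil divisors under finite dominant morphisms of normal schemes, and checking that all natural maps in sight assemble into a commutative diagram producing the required factorization through $f_*\mcO_Z(\lfloor f^*\Delta\rfloor)$. Parts \labelcref{item:splinter_descent} and \labelcref{item:derived_splinter_descent} are essentially formal consequences of base change and adjunction.
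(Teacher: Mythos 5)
Your proof is correct and takes essentially the same route as the paper's: form the Cartesian square, normalize the fiber product (finite since $Y$ is Nagata), transfer the splitting on $Y$ down to $X$ via the given retraction of $\mcO_X \to \pi_*\mcO_Y$, and factor the result through $\mcO_X \to f_*\mcO_Z(\lfloor f^*\Delta\rfloor)$. If anything you are slightly more explicit than the paper at the final factorization step, which the paper condenses into the identification $\mcO_{Z'}(\lfloor \pi'^*f^*\Delta\rfloor) = \pi'^*\mcO_Z(\lfloor f^*\Delta\rfloor)$ but which, as you correctly spell out, rests on the natural inclusion arising from $\tilde\pi^*\lfloor f^*\Delta\rfloor \le \lfloor \tilde\pi^* f^*\Delta\rfloor$.
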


		\begin{proof}
			We only prove \labelcref{item:glob_+_reg_descent} since \labelcref{item:splinter_descent,item:derived_splinter_descent} admit similar simpler proofs.
			First note that the condition that $\mcO_X \to \pi_*\mcO_Y$ splits forces $\pi$ to be dominant. We can assume that $X$ and $Y$ are integral.
			Thus, the pullback $\pi^* \Delta$ is well-defined if $\Delta$ is $\bbQ$-Cartier \cite[\href{https://stacks.math.columbia.edu/tag/02OO}{Tag 02OO}]{stacks-project}, or if $\pi$ is finite. 
			Let $f\colon Z \to X$ be a finite surjective morphism with $Z$ normal. Let $Z' \to  Y \times_X Z$ be the normalization of the fiber product, which is finite over $Y$ since $Y$ is 
			excellent. (This step is not needed for the proofs of \labelcref{item:splinter_descent,item:derived_splinter_descent} where one simply takes $Z'=   Y \times_X Z$.)
			We obtain a commutative square
			\[
			\begin{tikzcd}
				Z' \arrow[r, "\pi'"]  \arrow[d, "f'"]   
				&  Z\arrow[d, "f"]    \\
				Y\arrow[r,"\pi"]                &  X.
			\end{tikzcd}
			\]
			Since $(Y, \pi^*\Delta)$ is globally $+$-regular, we have a splitting
			$$\mcO_Y\to f_*' \mcO_{Z'}(\lfloor f'^*\pi^* \Delta \rfloor) \xrightarrow{t}\mcO_Y.$$
			Fixing a splitting $\mcO_X \to \pi_*\mcO_Y \xrightarrow{s} \mcO_X$, we obtain a factorization
			$$\id_{\mcO_X}\colon \mcO_X \to \pi_* \mcO_Y \to \pi_* f_*' \mcO_{Z'}(\lfloor f'^*\pi^* \Delta \rfloor) = f_* \pi'_* \mcO_{Z'}(\lfloor \pi'^* f^* \Delta \rfloor) \xrightarrow{\pi_* t} \pi_* \mcO_Y \xrightarrow{s} \mcO_X.$$
			Note that $\pi'^* \lfloor f^* \Delta \rfloor \leq \lfloor \pi'^* f^* \Delta \rfloor$. Hence, $\mcO_X \to f_* \pi'_* \mcO_{Z'}(\lfloor \pi'^* f^* \Delta \rfloor)$ factors through $\mcO_X \to f_* \mcO_Z(\lfloor f^*\Delta \rfloor) \to  f_* \pi'_* \pi'^*\mcO_{Z}(\lfloor f^* \Delta \rfloor)$ and the lemma follows.
		\end{proof}

		\begin{remark}\label{rmk:product}
			Assume $X$ and $Y$ are schemes over a field $k$. If $X\times_k Y$ is a splinter, then so are $X$ and $Y$. 
			Indeed, if $\pi\colon X\times_k Y \to X$ denotes the first projection, then $\pi_*\mcO_{X\times_k Y} = \mcO_X \otimes_k H^0(Y,\mcO_Y)$ and any splitting of the $k$-linear map $k \to H^0(Y,\mcO_Y), 1 \mapsto 1_Y$ provides a splitting of the natural map $\mcO_X \to \pi_*\mcO_{X\times_k Y}$.
		\end{remark}

\subsection{Lifting the splinter property} \label{SS:lift-splinter}
We refer to \cref{sec:upper_shriek_finite_morphism} for an overview regarding the exceptional inverse image functor for finite morphisms.
 We will say that a ring homomorphism $A \to B$ has \emph{reduced closed fibers} 
 if the fibers over closed points of the corresponding map on spectra are reduced\,; 
 this happens for instance when either $B$ is a field or $A=B$.
  The following lemma in case~\labelcref{item:etale} extends \cite[Thm.~A]{datta_tucker_on_some_permanence_properties_of_derived_splinters} and \cite[Prop.~6.20]{bhatt_et_al_globally_+_regular_varieties_and_mmp_for_threefolds_in_mixed_char}, where it is shown respectively that the splinter property lifts along \'etale morphisms of affine Noetherian schemes and along finite \'etale covers of normal excellent proper schemes over a Noetherian affine base.
 \cref{lem:splinter_ascend_pi_!} in cases~\labelcref{item:etale} and~\labelcref{item:reducedfibers} is key to the proof of Theorems~\labelcref{thm:intro_d-etale,thm:intro_d-torsor}, 
 while \cref{lem:splinter_ascend_pi_!} in case~\labelcref{item:sameH0} (whose proof is much more elementary than in cases~\labelcref{item:etale} and~\labelcref{item:reducedfibers}) is key to the proofs of Theorems~\labelcref{item:thm:triv_fund_grp,item:thm:negative_kod_dim,item:thm:global-1-forms}.
 Recall from \cref{ss:trace} that, for a finite surjective morphism $\pi\colon Y\to X$ of integral normal Noetherian schemes, we denote $\mathrm{tr}_{Y/X}$ the classical trace.
 Recall from \cref{lem:quasi-etale} that if $\pi$ is quasi-\'etale, then 	$\pi^!\mcO_X = \mcO_Y$ in $\Coh(Y)$
 (i.e., $\mathrm{tr}_{Y/X}$ freely generates $\pi^!\mcO_X \coloneqq \sheafhom_{\mcO_X}(\pi_*\mcO_Y, \mcO_X)$ as an $\mcO_Y$-module), and that the converse implication holds if $X$ is a splinter (since splinters are Cohen--Macaulay).

	\begin{lemma}[Lifting the splinter property]
		\label{lem:splinter_ascend_pi_!}
	Let $\pi\colon Y \to X$ be a finite surjective morphism of Noetherian schemes.
Assume either of the following conditions\,:
	\begin{enumerate}[label=\rm{(\alph*)}]
		\item \label{item:etale} $X$ and $Y$ are integral normal and Nagata, and $\pi^!\mcO_X  = \mcO_Y$ in $\Coh(Y)$, e.g., $\pi$ is quasi-\'etale.
		\item \label{item:sameH0} $\pi^!\mcO_X \cong \mcO_Y$ in $\Coh(Y)$, and either  $H^0(Y, \mcO_Y)$ is a field or $H^0(X,\mcO_X) = H^0(Y,\mcO_Y)$.
		\item \label{item:reducedfibers} $\pi^!\mcO_X \cong \mcO_Y$ in $\Coh(Y)$, $X$ is
		 Nagata, $Y$ is normal,
		  and $\pi^\sharp \colon H^0(X, \mcO_X) \to H^0(Y , \mcO_Y)$ has reduced closed fibers.  
	\end{enumerate}
	If $X$ is a splinter, then $Y$ is a splinter.
\end{lemma}

		\begin{proof}
	Let $f\colon Z \to Y$ be a finite surjective morphism.		
	The splitting of $f^\sharp \colon \mcO_Y \to f_*\mcO_Z$ is equivalent to the surjectivity of $$\Hom_{\mcO_Y}(f_*{\mcO_Z}, \mcO_Y) \xrightarrow{-\circ f^\sharp} \Hom_{\mcO_Y}(\mcO_Y, \mcO_Y).$$
	The adjunction $\pi_* \dashv \pi^!$ provides the following commutative diagram
	\begin{equation}\label{eq:diagram}
\begin{tikzcd}
\Hom_{\mcO_Y}(f_*{\mcO_Z}, \mcO_Y)  \dar{\cong} \arrow{rr}{-\circ f^\sharp} && \Hom_{\mcO_Y}(\mcO_Y, \mcO_Y) \dar{\cong} \arrow[bend left=75]{ddd}{\mathrm{Tr}_{Y/X}}\\
\Hom_{\mcO_Y}(f_*{\mcO_Z}, \pi^! \mcO_X)  \dar{=} \arrow{rr}{-\circ f^\sharp} && \Hom_{\mcO_Y}(\mcO_Y, \pi^! \mcO_X) \dar{=} \\
\Hom_{\mcO_X}(\pi_* f_*{\mcO_Z},\mcO_X)  \dar{-\circ (\pi\circ f)^\sharp} \arrow{rr}{-\circ \pi_* f^\sharp} && \Hom_{\mcO_X}(\pi_* \mcO_Y, \mcO_X) \dar{- \circ \pi^\sharp} \\
\Hom_{\mcO_X}(\mcO_X, \mcO_X) \arrow[equal]{rr} && \Hom_{\mcO_X}(\mcO_X,\mcO_X). 
\end{tikzcd}
			\end{equation}
Here, $\mathrm{Tr}_{Y/X}$ is simply defined as the composite of the right vertical arrows.
			In case \labelcref{item:etale}, we choose the isomorphism $\pi^!\mcO_X \cong \mcO_Y$ to be the one induced by the classical trace map  $\mathrm{tr}_{Y/X}$\,; 
			under the identifications $\Hom_{\mcO_Y}(\mcO_Y, \mcO_Y)=H^0(Y, \mcO_Y)$ and $\Hom_{\mcO_X}(\mcO_X, \mcO_X)=H^0(X, \mcO_X)$, the map $\mathrm{Tr}_{Y/X}$ agrees with $\mathrm{tr}_{Y/X}$.
			Since $X$ is a splinter, the bottom-left vertical arrow $-\circ (\pi \circ f)^\sharp$ is surjective.
			This implies on the one hand that  $-\circ f^\sharp$ is nonzero, and  on the other hand that $-\circ \pi^\sharp$ is surjective.	
			This is already sufficient to establish \cref{lem:splinter_ascend_pi_!} under condition~\labelcref{item:sameH0}.
			Assuming that $H^0(Y, \mcO_Y)$ is a field,
			that $-\circ f^\sharp$ is nonzero yields that $-\circ f^\sharp$ is surjective since it  is a map of $H^0(Y, \mcO_Y)$-modules.
			Assuming that  $H^0(X,\mcO_X) = H^0(Y,\mcO_Y)$, that $-\circ \pi^\sharp$ is surjective yields that  the composition of the right vertical arrows, which is $H^0(X,\mcO_X)$-linear, is bijective and hence that $-\circ f^\sharp$ is surjective.

			We now prove \cref{lem:splinter_ascend_pi_!} under conditions~\labelcref{item:etale} or~\labelcref{item:reducedfibers}. The proof works uniformly for both cases, but we refer to \cref{rmk:etale} for a direct proof in case $\pi$ is \'etale. 
			We write $R =H^0(X, \mcO_X)$ and $S = H^0(Y, \mcO_Y)$. 
			Recall that the map $\coprod \Spec R_z^h \to \Spec R$, where the coproduct runs through all closed points~$z$ of $\Spec R$ and where $R_z^h$ is the henselization of~$R_z$, is faithfully flat \cite[\href{https://stacks.math.columbia.edu/tag/07QM}{Tag 07QM}]{stacks-project}.
			We thus have to prove that for all closed points of $\Spec R$ the map $- \circ f^\sharp$ is surjective after base change to $ R_z^h $.
			Write $\pi^{h} \colon Y^h \to X^h$ and $f^h \colon Z^h \to Y^h$ for the pullbacks of $\pi$ and~$f$ along $R  \to R_z^h$, respectively.
			It is a priori not clear that $R$ is Noetherian\,;
			 however,
			 since $\Spec R_z^h \to \Spec R$ is affine, $X^h$ and $Y^h$ are quasi-compact and quasi-separated.
			By base change for the exceptional inverse image
			 \cite[Thm.~25.31]{goertz_wedhorn_ii}, 
			we have 
			$h^* \pi^! \mcO_X = (\pi^h)^\times \mcO_{X^h}$ in $\sfD_{\mathrm{QCoh}} (\mcO_{Y^h})$, where $h \colon Y^h \to Y$ is the natural map and $(\pi^h)^\times$ is the right adjoint to $\mathrm{R} \pi^h_* \colon \sfD_{\mathrm{QCoh}}^+ (\mcO_Y) \to \sfD_{\mathrm{QCoh}}^+ (\mcO_X)$. 
			Since $h$ is flat, we have $\mcH^0(h^*\pi^! \mcO_X) =h^* \mcH^0( \pi^! \mcO_X)$, and so $\mcH^0((\pi^{h})^\times \mcO_{X^{h}} ) \cong \mcO_{Y^{h}}$.
			As outlined in \cref{sec:upper_shriek_finite_morphism}, $(\pi^{h})^\times$ is the right derived functor of $\sheafhom_{\mcO_{X^h}}(\pi^h_* \mcO_{Y^h}, -)$ 
			and we thus have $\sheafhom_{\mcO_{X^h}}(\pi^h_* \mcO_{Y^h}, \mcO_{X^h}) \cong \mcO_{Y^h}$ as $\mcO_{Y^h}$-modules.
			On the other hand, for sheaves $\mcF, \mcG \in \Coh(X)$ we have $\Hom_{\mcO_X}(\mcF, \mcG) = H^0(X, \mathrm{R} \sheafhom_{\mcO_X}( \mcF, \mcG))$ \cite[\href{https://stacks.math.columbia.edu/tag/08DK}{Tag 08DK}]{stacks-project}.
			Therefore, by flat base change of quasi-coherent cohomology and the compatibility of $\mathrm{R}\sheafhom(-, -)$ with flat pullback \cite[Prop.~22.70]{goertz_wedhorn_ii}, 
			we obtain the same diagram as~\labelcref{eq:diagram} with $X$ replaced by $X^h$, $Y$ by $Y^h$,  $Z$ by $Z^h$, $\pi^!$ by $(\pi^h)^\times$, and where $-\circ (\pi^h\circ f^h)^\sharp$ is surjective. 
		 We are thus reduced to showing that $- \circ (f^h)^\sharp$ is surjective for every closed point $z \in \Spec R$.

		Since a splinter is normal, we have
			in both cases \labelcref{item:etale} and~\labelcref{item:reducedfibers} that $X$ and~$Y$ are normal. 
				Working componentwise, we may and do assume that $X$ and $Y$ are both integral.
			In particular, as discussed in \cref{ss:trace}, both $R$ and $S$ are integrally closed domains in $K(X)$ and $K(Y)$, respectively.
            Normality of rings is preserved under smooth base change~\cite[\href{https://stacks.math.columbia.edu/tag/033C}{Tag 033C}]{stacks-project} and under filtered colimits~\cite[\href{https://stacks.math.columbia.edu/tag/037D}{Tag 037D}]{stacks-project}, and hence $X^h$ and $Y^h$ are normal.
			Since $Y^h$ is quasi-compact, $Y^h$ has only finitely many connected components.
            Let $Y^h = Y_1 \amalg \dots \amalg Y_r$ be the decomposition into connected components and let $S\otimes_R R_z^h = H^0(Y^h,\mcO_{Y^h}) = S_1 \times \dots \times S_r$, where $S_i = H^0(Y_i, \mcO_{Y_i})$.
	        By \cref{lem:integral_on_global_sections}, $R \hookrightarrow S$ is integral, hence each $S_i$ is integral over $R_z^h$.

            Recall that $R_z^h$ is the filtered colimit of the pointed \'etale ring maps $(R_z, z) \to (T,\mathfrak{q})$ 
              such that $\kappa(z)= \kappa(\mathfrak{q})$.
            Each projector $S\otimes_R R_z^h \to S_i$ is defined by an idempotent in $S \otimes_R R_z^h$. 
            Thus, there exists a pointed \'etale ring map $(R_z, z) \to (T,\mathfrak{q})$ such that each projector $S\otimes_R R_z^h \to S_i$ is already defined in $ S \otimes_R T$,
            i.e., $ S \otimes_R T=  S_1' \times \dots \times S_r'$ with $S_i = S_i' \otimes_T R_z^h$.
          We then have $Y_T \coloneqq Y\times_{\Spec R} \Spec T = Y_1' \amalg \dots \amalg Y_r'$  with $Y_i = Y_i' \times _{\Spec T} S_i$.
           	Note that $Y_T$ is normal Noetherian so that each $Y_i'$ is an integral normal Noetherian scheme.
           	Using that for any normal $R$-algebra~$A$ and any \'etale ring morphism $T \to T'$ the base change map $A\otimes_R T \to A \otimes_R T'$ is injective (by openness of flat morphisms locally of finite presentation \cite[\href{https://stacks.math.columbia.edu/tag/01UA}{Tag 01UA}]{stacks-project}), and that a filtered colimit of integral domains is an integral domain, we see that any affine open cover of $Y'_i$ yields an affine open cover of $Y_i$ by integral domains and hence that $Y_i$ is integral.

			In particular, the rings $S_i$ are normal domains. 
            We claim that each $S_i$ is a local domain. Indeed, $R_z^h \to S_i$ is an integral ring map and $S_i$ contains no nontrivial idempotent elements. 
            Since $R_z^h$ is henselian, any finite $R_z^h$-subalgebra of $S_i$ is a finite product of local rings \cite[\href{https://stacks.math.columbia.edu/tag/04GG}{Tag 04GG (10)}]{stacks-project}.
            Since any such subalgebra contains no nontrivial idempotents it is a local ring. Hence, $S_i$ is a filtered colimit of local rings and because $S_i$ is integral over $R_z^h$, each morphism in the directed system is a morphism of local rings and so $S_i$ is a local ring.

			Moving forward, since we are assuming $X$ to be Nagata,  after possibly normalizing $Z$ and working on each component separately, we may and do assume that $Z$ is integral and normal.
			By further taking the normalization of $Z$ in a normal closure of $K(Z)$ over $K(X)$ we may and do assume that $K(Z)/ K(X)$ is normal.  
			We summarize the situation in the commutative diagram
			\[
			\begin{tikzcd}
				Z_1 \amalg \cdots \amalg Z_r    \arrow{rr}{f_1 \amalg \dots \amalg f_r} \arrow[swap]{drr}{g_1 \amalg \dots \amalg g_r} &&
				Y_1 \amalg \dots \amalg Y_r \dar{\pi_1 \amalg \dots \amalg \pi_r} \arrow{r} & \Spec S_1 \amalg \dots \amalg \Spec S_r \dar \\
				&  & X^{h} \arrow{r} & \Spec R_z^h 
			\end{tikzcd}
			\]
			where $f_i\colon Z_i \to Y_i$ is the base change of $Z^{h}$ along $S \otimes_R R_z^h \to S_i$ and $g_i = \pi_i \circ f_i \colon Z_i \to X^{h}$.
			
			\begin{claim} 
				The map $\mcO_{X^h} \to (g_i)_* \mcO_{Z_i}$ splits for every $1 \leq i \leq r$.
			\end{claim}

			\begin{proof}[Proof of the Claim]
				Let $G$ be the Galois group of $K(Z)$ over $K(X)$ and let $T \coloneqq H^0(Z, \mcO_Z)$ be the integral closure of $R$ in $K(Z)$.
				The field extension $K(Z)^G / K(X)$ is purely inseparable and 
				the ring homomorphism $R \to T^G$ to the invariant subring $T^G$, which is the integral closure of~$R$ in~$K(Z)^G$, is
				a homeomorphism on spectra.
				By a straightforward variant of
				\cite[\href{https://stacks.math.columbia.edu/tag/0BRK}{Tag 0BRK}]{stacks-project} for  domains that are integrally closed in a field, 
				$G$  acts transitively
				on the maximal ideals $\mathfrak{p}_1, \ldots, \mathfrak{p}_s$ of $T$ lying over $z$.
				As discussed previously in the case of the integral extension $R \to S$, $T \otimes_R R_z^h$ splits into a product of local rings $T_1 \times \dots \times T_s$.
				Let $Z_j'$ be the connected component of $Z^h$ with $H^0(Z_j', \mcO_{Z_j'}) = T_j$.
				The induced action of $G$ on $Z^h$ then permutes the morphisms $g_j' \colon Z_j' \to X^h$ and so induces isomorphisms $(g_j')_*\mcO_{Z_j'} \cong (g_k')_*\mcO_{Z_k'}$ of $\mcO_{X^{h}}$-algebras.
				Since $X$ is a splinter, the pullback $\mcO_X \to \pi_* f_* \mcO_Z$ splits. By base change, such a splitting induces a splitting
				$$\mcO_{X^{h}} \to \bigoplus_{j=1}^r (g_j')_*\mcO_{Z_j'} \xrightarrow{\sum_j s_j} \mcO_{X^{h}}.$$
				Since $\sum_j s_j$ sends the constant section $1_{X^h} \mapsto 1_{X^h}\in {R_z^h}$, there exists $j_0$ such that $s_{j_0}(1_{X^h}) \notin \mathfrak{m}_z^h$, where $\mathfrak{m}_z^h$ denotes the maximal ideal of $R_z^h$.
				Thus, $\mcO_{X^{h}} \to (g_{j_0}')_*\mcO_{Z_{j_0}'}$ splits.
				Using the isomorphisms $(g_j')_*\mcO_{Z_j'} \cong (g_k')_*\mcO_{Z_k'}$, we see that every $\mcO_{X^{h}} \to (g_j')_*\mcO_{Z_j'}$ splits.
				The claim then follows, as each $Z_i$ is a finite disjoint union of some connected components $Z_j'$.
			\end{proof}

            Now the map $-\circ (f^h)^\sharp$ decomposes as the direct sum\,:
			$$ - \circ \oplus_i f_i^\sharp\colon \bigoplus_{i=1}^r \Hom_{\mcO_{Y_i}}((f_i)_* \mcO_{Z_i}, \mcO_{Y_i}) = \Hom_{\mcO_{Y^{h}}} \left(f^{h}_* \mcO_{Z^{h}}, \mcO_{Y^{h}}\right) \to \Hom_{\mcO_{Y^{h}}}\left(\mcO_{Y^{h}},\mcO_{Y^{h}}\right)= \bigoplus_{i=1}^r S_i. $$
			By the claim above, $\mcO_{X^{h}} \to (g_i)_*\mcO_{Z_i}$ splits for every $i$.
			Thus, the left-vertical arrow in~\labelcref{eq:diagram} base changed to $R_z^h$ remains surjective after restricting to a component $Y_i$ of $Y^{h}$.
			Hence, it is enough to show that any morphism of rings $\varphi_i\colon T_i \to S_i$ such that the composite
			$\mathrm{Tr}_{Y_i/X^{h}} \circ \varphi_i \colon T_i \to R_z^h$ is surjective is also surjective. Denote $\mathfrak{m} \subseteq R$ and $\mathfrak{n_i} \subseteq S_i$ the maximal ideals.
			
            Under condition~\labelcref{item:reducedfibers},
			we still have that $R_z^h \to S_i$ has reduced closed fibers, and hence $\mathfrak{m}S_i = \mathfrak{n}_i$. 
			By $R_z^h$-linearity of
			the map $\mathrm{Tr}_{Y_i/X^{h}}$, we get $\mathrm{Tr}_{Y_i/X^{h}}(\mathfrak{n}_i) \subseteq \mathfrak{m}$.
			Thus, any element $s \in S_i$ such that $\mathrm{Tr}_{Y_i/X^{h}}(s)=1$ is a unit in $S_i$. Since $\varphi_i$ is $S_i$-linear, we conclude that $\varphi_i$ is surjective.

Assume now condition~\labelcref{item:etale}.
 Note that the restriction of the classical trace for $Y/X$ is the classical trace for $Y_i/X^{h}$\,; indeed,  they both agree on the base change along $R \to R^h_z$ of the flat locus $U\subseteq X$ of $\pi$ and then use that $X^{h}$ is integral.
 Note that for any integral extension of local rings $\varphi\colon (A,\mathfrak a) \hookrightarrow (B,\mathfrak b)$ we have $\mathfrak b = \sqrt{\mathfrak{a}B}$\,; this is indeed clear if $\varphi$ is finite and the general case follows by a limit argument.
By \cref{lem:speyer} and the discussion in \cref{ss:trace}, the trace $\mathrm{tr}_{Y_i/X^h}\colon S_i \to R^h_z$ sends $\mathfrak{n}_i = \sqrt{\mathfrak{m}S}$ into $\mathfrak{m}$. 
We can then conclude as before.
\end{proof}

\begin{remark}
	There are finite \'etale maps $\pi\colon Y \to X$ such that $\pi^\sharp \colon H^0(X, \mcO_X) \to H^0(Y , \mcO_Y)$ admits non-reduced closed fibers.
	Consider for instance a smooth projective surface $S$ over a field $k$ admitting a free action by a finite cyclic group $\bbZ/n\bbZ$ with $n>1$ invertible in $k$, and consider the action of $\bbZ/n\bbZ$ on $\bbA^1_k$ given by multiplication by the powers of a primitive $n$-th root of unity. 
	Then the diagonal action of $\bbZ/n\bbZ$ on $Y \coloneqq S\times_k \bbA^1_k$ is free and the quotient map $\pi$ is \'etale, but the induced map $\pi^\sharp$ on global sections is $k[T] \to k[T], \ T \mapsto T^n$, which has a non-reduced fiber over $0$. 
\end{remark}

\begin{remark}
	The proof of \cref{lem:splinter_ascend_pi_!}\labelcref{item:etale} and~\labelcref{item:reducedfibers} simplifies if $X$ is proper over a Noetherian ring as in this case (1) $R^h$ is Noetherian, so that $X^h$ is Noetherian,  and (2) $R\hookrightarrow S$ is finite, so that  $S^h= S\otimes_R R^h$ splits as a finite product of local domains by \cite[\href{https://stacks.math.columbia.edu/tag/04GG}{Tag 04GG (10)}]{stacks-project}.
\end{remark}

\begin{remark}
	Let $R \hookrightarrow S$ be a finite extension of normal Gorenstein Nagata Noetherian domains with reduced closed fibers.  If $R$ is a splinter, then $S$ is a splinter. 
	Indeed, we have to show that for any finite ring extension $S \hookrightarrow T$ the evaluation at 1 map $\Hom_S(T, S) \to S$ is surjective, or equivalently, that its base change to $R^h_{\mathfrak m}$ is surjective for any maximal ideal $\mathfrak m$ of $R$. Since $R$ is Nagata, we can assume that $T$
	is integrally closed.
	Writing $S\otimes_R R^h_{\mathfrak m} = S_1 \times \cdots \times S_r$, with each $S_i$ a henselian domain, 
	we are reduced to showing that each evaluation at 1 map $\Hom_{S_i}(T\otimes_SS_i, S_i) \to S_i$ is surjective. 
    Since a Noetherian local ring is Gorenstein if and only if its henselization is, and since
	the Picard group of a local ring is trivial, we have 
	 $\pi_i^! R^h_{\mathfrak m} \cong S_i$, where $\pi_i\colon R^h_{\mathfrak m} \hookrightarrow S_i$ is the natural finite ring extension. 
	By the Claim in the proof of \cref{lem:splinter_ascend_pi_!}, the composition 
	$\Hom_{S_i}(T\otimes_SS_i, S_i) \to S_i \xrightarrow{\mathrm{Tr}_i} R^h_{\mathfrak m}$ is surjective for all $i$, where  $\mathrm{Tr}_i$ is the $R^h_{\mathfrak m}$-linear map induced by the isomorphism $\pi_i^! R^h_{\mathfrak m} \cong S_i$. Since
	 henselization of local rings preserves the closed fiber, we have
	  $\mathfrak m S_i = \mathfrak n_i$, where $\mathfrak n_i$ is the maximal ideal of $S_i$, and we can conclude as in the proof of
	 \cref{lem:splinter_ascend_pi_!}\labelcref{item:reducedfibers}. 
\end{remark}

\begin{remark}[Lifting the splinter property along finite \'etale covers] \label{rmk:etale}
	As mentioned to us by Karl Schwede, \cref{lem:splinter_ascend_pi_!} admits the following  simpler proof in case $\pi \colon Y \to X$ is a finite \'etale morphism of Nagata Noetherian schemes. 
    Assume that $X$ is a splinter and let $f\colon Z \to Y$ be a finite surjective morphism. 
    To show that $f^\sharp \colon \mcO_Y \to f_*\mcO_Z$ splits, it suffices to show that $(f\circ f')^\sharp$ splits for a further finite cover $f'\colon Z' \to Z$. 
    By dominating $Z$ with the disjoint union of the reductions of its irreducible components, we may assume that $Z$ is integral.
    By further replacing $Z$ with its normalization in the normal closure of $K(Z)$
    over $K(X)$, which remains finite over $X$ since $X$ is assumed Nagata, we may and do assume that $Z$ is normal and that $K(Z)/K(X)$ is a finite normal extension of fields.
In that situation,  the base change $g_Y \colon Z\times_X Y \to Y$ of $g = \pi \circ f \colon Z  \to X$
 identifies with the morphism $\coprod_\sigma g_\sigma \colon \coprod_\sigma Z_\sigma \to Y$, 
    where the coproduct runs through all embeddings $\sigma\colon  K(Y) \hookrightarrow K(Z)$ that are the identity on $K(X)$ and
	where $g_\sigma$ is the integral closure of $Y$ with respect to the embedding~$\sigma$.
    A splitting of $\mcO_X \to g_*\mcO_Z$ provides by base change a splitting of $\mcO_Y \to (g_Y)_*\mcO_W$. 
    If $H^0(Y,\mcO_Y)$ is a local ring, then arguing as in the proof of the claim there exists~$\sigma$ such that $\mcO_Y \to (g_\sigma)_* \mcO_{Z_\sigma}$ splits, and using the action of $\mathrm{Gal}(K(Z)/K(X))$, which permutes the $g_\sigma$'s transitively,
    we find that $\mcO_Y \to (g_\sigma)_* \mcO_{Z_\sigma}$ splits for all~$\sigma$.
    In general,
    we can reduce to this case by using that $\mcO_Y \to (g_\sigma)_* \mcO_{Z_\sigma}$ splits if and only if its base change to the localizations of all closed points of $\Spec H^0(Y, \mcO_Y)$ splits\,; see the proof of \cref{lem:bhatt_et_al_enough_to_check_at_closed_points} below.
    In particular, since $g_{\id} = f \colon Z_{\id} = Z \to Y$, we find that $f^\sharp$ splits, as desired.
\end{remark}

\begin{remark}[Lifting the $F$-split property] \label{rem:F-split_ascend}
	Let $\pi\colon Y \to X$ be a finite surjective morphism of Noetherian schemes over $\bbF_p$. Assume either that $\pi$ is \'etale with $X$ normal, or that condition \labelcref{item:sameH0} is fulfilled with the additional assumption that $\mcO_X \to \pi_*\mcO_Y$ splits. 
If $X$ is $F$-split, then $Y$ is $F$-split. 

In the \'etale case, this is \cite[Lem.~11.1]{patakfalvi2020beauvillebogomolov}.
In the other case, we argue as in the proof of \cref{lem:splinter_ascend_pi_!}\labelcref{item:sameH0} with $f\colon Z \to Y$ replaced by the Frobenius $F\colon Y \to Y$. 
If $s\colon \pi_*\mcO_Y \to \mcO_X$ is a splitting of the map $\mcO_X \to \pi_*\mcO_Y$, then, with a Frobenius splitting of $X$, we obtain a diagram
$$\mcO_X \to \pi_*F_*\mcO_Y = F_*\pi_* \mcO_Y \xrightarrow{F_*(s)} F_* \mcO_X \to \mcO_X,$$
where the composition is the identity. This proves that the bottom-left vertical arrow $-\circ (\pi \circ F)^\sharp$ of \labelcref{eq:diagram} is surjective. 
As in the splinter case, we deduce that $-\circ F^\sharp$ is surjective, i.e., that $Y$ is $F$-split.
\end{remark}

\begin{lemma}[Lifting the derived splinter property]
		\label{lem:Dsplinter_ascend_pi_!} 
	Let $\pi\colon Y \to X$ be a proper surjective morphism
	 of Noetherian schemes.
Assume  $\pi^!\mcO_X \cong \mcO_Y$ in  $\sfD_{\mathrm{Coh}} (\mcO_Y)$, and either  $H^0(Y, \mcO_Y)$ is a field or $H^0(X,\mcO_X) = H^0(Y,\mcO_Y)$.
If $X$ is a derived splinter, then $Y$ is a derived splinter.
\end{lemma}
\begin{proof} 
This is proved as in \cref{lem:splinter_ascend_pi_!}\labelcref{item:sameH0}
by using 
the adjunction $\mathrm{R}\pi_* \dashv \pi^!$ for a proper morphism $\pi\colon Y \to X$ instead of the adjunction 
$\pi_* \dashv \pi^!$ for a finite morphism $\pi\colon Y \to X$.
\end{proof}

A morphism of schemes $\pi\colon Y \to X$ is said to be a \emph{crepant morphism} if it is proper, birational, and is such that $\pi^!\mcO_X \cong \mcO_Y$. Note that, by \cref{lem:k_and_o_equvalence},
 if $X$ is assumed to be Gorenstein, the latter condition is equivalent to $\pi^*\omega_X = \omega_Y$.

\begin{proposition}[The splinter property in positive characteristic lifts along crepant morphisms]
	\label{rmk:crepant}
Let $\pi \colon Y \to X$ be a crepant morphism of Noetherian schemes.
 \begin{enumerate}
 	\item  \label{item:crepant-splinter}  
 	If $\pi \colon Y \to X$ is defined over~$\bbF_p$
 	 and
 	if $X$ is a splinter, then $Y$ is a splinter. 
\item \label{item:crepant-Dsplinter} 
If $X$ is a derived splinter, then $Y$ is a derived splinter.
\item \label{item:crepant-Fsplit} 
If $X$ is $F$-split and $Y$ is integral, then $Y$ is $F$-split.
 \end{enumerate}
\end{proposition}
\begin{proof}
   If $X$ is a derived splinter, then it is normal so that $\pi_*\mcO_Y = \mcO_X$ 
   and hence $\pi^\sharp  \colon H^0(X, \mcO_X) \to H^0(Y , \mcO_Y)$ is an isomorphism. 
   We can then apply \cref{lem:Dsplinter_ascend_pi_!}.
	The statement \labelcref{item:crepant-splinter}  follows from the fact \cite[Thm.~1.4]{bhatt_derived_splinters_in_positive_characteristic} that splinters in positive characteristic agree with derived splinters.
	
	For \labelcref{item:crepant-Fsplit}, choose a splitting $\mcO_X \to F_*\mcO_X \xrightarrow{s} \mcO_X$.
	Let $\tilde{s}$ be the image of $s$ under 
	\begin{align*}
		\Hom_{\mcO_X}(F_* \mcO_X, \mcO_X) = &\Hom_{\mcO_X}(\mcO_X,F^! \mcO_X) \to \Hom_{\mcO_Y}(\pi^!\mcO_X, \pi^!F^!\mcO_X) \\
		= &\Hom_{\mcO_Y}(\pi^!\mcO_X, F^! \pi^!\mcO_X) \cong  \Hom_{\mcO_Y}(\mcO_Y,F^! \mcO_Y) =  \Hom_{\mcO_Y}(F_*\mcO_Y, \mcO_Y).
	\end{align*}
	Then $\mcO_Y \to F_* \mcO_Y \xrightarrow{\tilde{s}} \mcO_Y$ is the identity. 
	Indeed, since $Y$ is integral, this can be verified after restricting to the dense open subset where $\pi$ is an isomorphism so that the claim follows from the fact that $s$ is a splitting of $\mcO_X \to F_* \mcO_X$.
\end{proof}

\begin{remark} 
First, \cref{rmk:crepant}\labelcref{item:crepant-Dsplinter} should be compared to \cref{cor:splinters-O-eq} below.
Second, \cref{rmk:crepant}\labelcref{item:crepant-Fsplit} notably removes the Gorenstein assumption in \cite[Lem.~1.3.13]{brion_kumar_frobenius_spliting_methods_in_gemoetry_and_representation_theory}
 where it is shown that if $\pi\colon Y \to X$ is a crepant morphism of normal $F$-finite schemes over $\bbF_p$ with $X$ Gorenstein and $F$-split, then $Y$ is $F$-split.
Third, regarding \cref{rmk:crepant}\labelcref{item:crepant-splinter}, we have the following related result.
Let $\pi\colon Y \to X$ be a birational morphism of integral normal schemes proper over a complete Noetherian local domain  with positive characteristic residue field. 
Assume that $X$ is $\bbQ$-Gorenstein and that $\pi^*K_X = K_Y$ in $\mathrm{Pic}(Y)\otimes \bbQ$. 
It is shown in \cite[Lem.~4.19]{bhatt_et_al_globally_+_regular_varieties_and_mmp_for_threefolds_in_mixed_char} that
 if $X$ is a splinter, then $Y$ is a splinter.
\end{remark}

	For the sake of completeness, we mention that \cref{lem:splinter_ascend_pi_!} also holds for globally $+$-regular pairs\,:
	
\begin{lemma}[Lifting global $+$-regularity]
	\label{lem:lifting_glob_+_reg}
		Let $\pi\colon Y \to X$ be a finite surjective morphism of normal excellent
		Noetherian schemes and let $\Delta$ be an effective $\bbQ$-Weil divisor on $X$.
	 Assume either of the following conditions\,:
		\begin{enumerate}[label=\rm{(\alph*)}]
		\item \label{item:globplus_quasietale}  $\pi$ is quasi-\'etale.
		\item  $\pi^!\mcO_X \cong \mcO_Y$ in $\Coh(Y)$,  and $\pi^\sharp \colon H^0(X, \mcO_X) \to H^0(Y , \mcO_Y)$ has reduced closed fibers.  
	\end{enumerate}
	 If $(X,\Delta)$ is globally $+$-regular, then $(Y, \pi^*\Delta)$ is globally $+$-regular.
\end{lemma}
	\begin{proof}
		Let $f\colon Z \to Y$ be a finite surjective morphism with $Z$ normal.
		We have to show that the map $f^\sharp_{\pi^*\Delta} \colon \mcO_Y \to f_*\mcO_Z(\lfloor f^* \pi^*\Delta \rfloor)$ splits, or equivalently, that
		$$\Hom_{\mcO_Y}(f_*{\mcO_Z}(\lfloor f^* \pi^*\Delta \rfloor), \mcO_Y) \xrightarrow{-\circ f^\sharp_{\pi^*\Delta}} \Hom_{\mcO_Y}(\mcO_Y, \mcO_Y)$$
		is surjective.
		Note that the map $(\pi \circ f)^\sharp_\Delta\colon \mcO_X \to \pi_* f_*\mcO_Z(\lfloor f^* \pi^*\Delta \rfloor)$ factors through $\mcO_X \to \pi_*\mcO_Y$.
		Hence, the following diagram commutes\,:
		\[
		\begin{tikzcd}
			\Hom_{\mcO_Y}(f_*{\mcO_Z}(\lfloor f^* \pi^*\Delta \rfloor), \mcO_Y)  \dar{\cong} \arrow{rr}{-\circ f^\sharp_{\pi^*\Delta}} && \Hom_{\mcO_Y}(\mcO_Y, \mcO_Y) \dar{\cong} \\
			\Hom_{\mcO_Y}(f_*{\mcO_Z}(\lfloor f^* \pi^*\Delta \rfloor), \pi^! \mcO_X)  \dar{=} \arrow{rr}{-\circ f^\sharp_{\pi^*\Delta}} && \Hom_{\mcO_Y}(\mcO_Y, \pi^! \mcO_X) \dar{=} \\
			\Hom_{\mcO_X}(\pi_* f_*{\mcO_Z}(\lfloor f^* \pi^*\Delta \rfloor),\mcO_X)  \dar{-\circ (\pi \circ f)^\sharp_\Delta} \arrow{rr}{-\circ \pi_* f^\sharp_{\pi^*\Delta}} && \Hom_{\mcO_X}(\pi_* \mcO_Y, \mcO_X) \dar{- \circ \pi^\sharp} \\
			\Hom_{\mcO_X}(\mcO_X, \mcO_X) \arrow[equal]{rr} && \Hom_{\mcO_X}(\mcO_X,\mcO_X). 
		\end{tikzcd}
		\]
		We then proceed as in the proof of \cref{lem:splinter_ascend_pi_!}.
	\end{proof}

	\subsection{The splinter property and base change of field} \label{SS:splinters-base change}
The following proposition establishes the invariance of the splinter property for proper schemes over a field under algebraic base change. 
We refer to \cref{rmk:bc-splinter} for base change along any field extension.

		\begin{proposition}\label{lem:splinter_base change}
			Let $X$ be a
			  scheme such that $H^0(X, \mcO_X)$ is a field, e.g., $X$ is proper, connected, and reduced over a field $k$.
			Then, for any algebraic field extension~$L$ of $H^0(X, \mcO_X)$, the scheme $X_L \coloneqq  X\times_{H^0(X,\mcO_X)} L$ is a splinter if and only if $X$ is a splinter.
		\end{proposition}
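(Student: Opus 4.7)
The plan is to argue the two implications separately. Set $k' \coloneqq H^0(X,\mcO_X)$, a field by assumption. Since $X$ is proper over $k'$, flat base change identifies $H^0(X_L, \mcO_{X_L})$ with $L$, so both schemes have field-valued global sections. Throughout I will use that a splitting of a map of coherent sheaves is preserved by base change.

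For the direction $X$ splinter $\Rightarrow$ $X_L$ splinter, I would first reduce to the case where $L'/k'$ is finite. Writing $L$ as the filtered colimit of its finite subextensions $L'$ over $k'$, we have $X_L = \lim_{L'} X_{L'}$ in the category of Noetherian schemes, and any finite surjective morphism $g\colon W \to X_L$ is of finite presentation over $X_L$, hence spreads out to a finite surjective morphism $g'\colon W' \to X_{L'}$ for some finite $L'/k'$, whose $L$-base change recovers $g$. It thus suffices to show $X_{L'}$ is a splinter for each finite subextension $L'/k'$. For such $L'$, the projection $\pi \colon X_{L'} \to X$ is a finite surjective morphism of Noetherian schemes with $H^0(X_{L'}, \mcO_{X_{L'}}) = L'$ a field, so \cref{lem:splinter_ascend_pi_!}\labelcref{item:lifting_splinters} applies as soon as one verifies $\pi^!\mcO_X \cong \mcO_{X_{L'}}$. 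For this, I would use that $\pi$ is the pullback of the finite flat morphism $\Spec L' \to \Spec k'$ along $X \to \Spec k'$, so that $\pi_*\mcO_{X_{L'}} = \mcO_X \otimes_{k'} L'$ and therefore $\pi^!\mcO_X = \sheafhom_{\mcO_X}(\pi_*\mcO_{X_{L'}}, \mcO_X) \cong \mcO_X \otimes_{k'} \Hom_{k'}(L', k')$; since $L'$ is a field, hence Gorenstein as a $k'$-algebra, $\Hom_{k'}(L', k')$ is a free $L'$-module of rank one, yielding $\pi^!\mcO_X \cong \mcO_{X_{L'}}$.

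For the converse $X_L$ splinter $\Rightarrow$ $X$ splinter, I would use flat base change directly. Given a finite surjective morphism $f \colon Z \to X$ with base change $f_L \colon Z_L \to X_L$, set $V \coloneqq \Hom_X(f_*\mcO_Z, \mcO_X)$, which is a finite-dimensional $k'$-vector space since $X$ is proper over $k'$. Flat base change along $\Spec L \to \Spec k'$ (applied to $\sheafhom_{\mcO_X}(f_*\mcO_Z, \mcO_X)$ using coherence and proper base change) yields a canonical isomorphism $V \otimes_{k'} L \cong \Hom_{X_L}((f_L)_*\mcO_{Z_L}, \mcO_{X_L})$ that intertwines the $k'$-linear evaluation map $\phi \colon V \to k'$, $s \mapsto s \circ \eta_f$, with the corresponding evaluation at $\eta_{f_L}$. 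The splinter hypothesis on $X_L$ (for $f_L$) says $\phi \otimes_{k'} L$ is surjective; by faithful flatness of the field extension $L/k'$, so is $\phi$, giving a splitting of $\eta_f$ and proving that $X$ is a splinter.

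The only step that requires care is the identification $\pi^!\mcO_X \cong \mcO_{X_{L'}}$ for the finite base change $\pi$, which rests on the Gorenstein property of finite field extensions; the remaining ingredients—spreading out finite covers from a Noetherian limit, flat base change for $\Hom$ between coherent sheaves over a proper base, and faithful flatness of arbitrary field extensions—are standard.
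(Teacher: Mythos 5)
Your proof is correct and, for the ``if'' direction, is essentially the same as the paper's: reduce to a finite subextension $L'/k'$ by spreading out a finite cover over $X_L$, verify $\pi^!\mcO_X\cong\mcO_{X_{L'}}$ for the finite flat projection $\pi\colon X_{L'}\to X$, and invoke \cref{lem:splinter_ascend_pi_!}. The one small variation is how $\pi^!\mcO_X\cong\mcO_{X_{L'}}$ is established: the paper computes $h^!\mcO_{\Spec k'}\cong\mcO_{\Spec L'}$ for $h\colon\Spec L'\to\Spec k'$ and then appeals to base change for the exceptional inverse image along $X\to\Spec k'$, whereas you compute $\pi^!\mcO_X=\sheafhom_{\mcO_X}(\pi_*\mcO_{X_{L'}},\mcO_X)\cong\mcO_X\otimes_{k'}\Hom_{k'}(L',k')$ directly and use that $\Hom_{k'}(L',k')$ is free of rank one over $L'$; both are fine. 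For the ``only if'' direction the paper simply invokes its descent \cref{lem:splinter_descent} (using that $k'\hookrightarrow L$ splits as a map of $k'$-vector spaces, so $\mcO_X\to\pi_*\mcO_{X_L}$ is split), while you give a direct argument via flat base change for $\sheafhom$ together with faithful flatness of $L/k'$; your route is slightly more self-contained and avoids quoting the descent lemma, but it proves the same thing. No gaps.
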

		\begin{proof}
			The ``only if'' part of the proposition follows from \cref{lem:splinter_descent}. 
			Let $K\coloneqq H^0(X, \mcO_X)$.
			First assume $L$ is a finite extension of $K$ and let $h \colon \Spec L \to \Spec K$. Then 
			$$h^!\mcO_{\Spec K} =\Hom_K(L, K) \cong L = \mcO_{\Spec L}$$
			as $L$-vector spaces.
			By base change for the exceptional inverse image \cite[\href{https://stacks.math.columbia.edu/tag/0E9U}{Tag 0E9U}]{stacks-project}, there exists an isomorphism $\pi^!\mcO_X \cong \mcO_{X_L}$, where $\pi\colon X_L \to X$ is the base change of $h$ along $X \to \Spec K$.
			We conclude by \cref{lem:splinter_ascend_pi_!} that $X_L$ is a splinter, since 
			$$H^0(X_L, \mcO_{X_L})=H^0(X, \mcO_X) \times_K L=L$$
			by flat base change.
			Now assume $K \to L$ is any algebraic field extension and take a finite cover $f \colon Y \to X_L$. Since $f$ is defined by finitely many equations and $L$ is algebraic over $K$, we can find a finite cover $f' \colon Y' \to X_{L'}$ such that $K \subseteq L' \subseteq L$ is an intermediate extension, finite over~$K$, 
			and $f=f'\times_{L'}L$ is the base change of $f'$. By the previous argument $X_{L'}$ is a splinter, thus $\mcO_{X_{L'}} \to f'_* \mcO_{Y'}$ admits a section $s\colon f'_*\mcO_{Y'} \to \mcO_{X_{L'}}$. Now pulling back to $X_L$ and using flat base change, we obtain the desired section of $\mcO_{X_L} \to f_*\mcO_Y$.
		\end{proof}
	
	\begin{corollary}\label{cor:splinter_bc}
			Let $X$ be a connected proper scheme over a field $k$ such that $H^0(X,\mcO_X)$ is a separable extension of $k$. 
			Then, for any algebraic field extension~$K$ of $k$,
			the scheme $X_K \coloneqq  X\times_k K$ is a splinter if and only if $X$ is a splinter.
	\end{corollary}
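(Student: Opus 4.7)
The plan is to reduce the corollary to the already established \cref{lem:splinter_base change} by decomposing $X_K$ into connected components using the separability hypothesis on $L \coloneqq H^0(X,\mcO_X)$.

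First I would observe that, since $X$ is proper over $k$, $L$ is a finite-dimensional $k$-algebra; by hypothesis it is a field and a separable extension of $k$, hence a \emph{finite} separable extension. The key point is that, by separability, the $K$-algebra $L \otimes_k K$ is étale, so it decomposes as a finite product
$$L \otimes_k K \;\cong\; \prod_{i=1}^r K_i$$
of finite separable field extensions $K_i/K$. Each $K_i$ receives an embedding of $L$ and is algebraic over $L$, since $K/k$ is algebraic. Consequently,
$$X_K \;=\; X \times_L (L\otimes_k K) \;\cong\; \bigsqcup_{i=1}^r X_{K_i}, \qquad X_{K_i} \coloneqq X \times_L K_i,$$
and flat base change gives $H^0(X_{K_i}, \mcO_{X_{K_i}}) = K_i$, so each $X_{K_i}$ is connected with field-valued global sections.

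Next I would apply \cref{lem:splinter_base change} to the algebraic extension $L \hookrightarrow K_i$ to conclude that $X_{K_i}$ is a splinter if and only if $X$ is a splinter. The remaining task is the elementary check that $X_K$ is a splinter if and only if each of its connected components $X_{K_i}$ is a splinter: one direction is a special case of \cref{lem:splinter_invariant_codim_2_surgery}\labelcref{item:open_of_splinter}, and the other follows because any finite surjective morphism onto a finite disjoint union decomposes along the clopen components and splittings can be assembled piecewise.

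I do not expect any serious obstacle: the only conceptual point is that separability of $L/k$ is exactly what guarantees that $L\otimes_k K$ is reduced, so that the components of $X_K$ satisfy the field-valued $H^0$ hypothesis required by \cref{lem:splinter_base change}. Without separability, nilpotents in $L\otimes_k K$ would obstruct this direct reduction.
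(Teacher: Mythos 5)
Your proof is correct and takes a genuinely different route from the paper. The paper handles the ``if'' direction by passing all the way to an algebraic closure $\bar k$: it observes that $X_{\bar k}$ is a disjoint union of $\dim_k H^0(X,\mcO_X)$ copies of $X\times_{H^0(X,\mcO_X)}\bar k$ (all factors identical), applies \cref{lem:splinter_base change} to conclude $X_{\bar k}$ is a splinter, and then \emph{descends} back to $X_K$ via \cref{lem:splinter_descent} using the split inclusion $\mcO_{X_K}\to \pi_*\mcO_{X_{\bar k}}$; the ``only if'' direction is a separate one-line application of \cref{lem:splinter_descent}. You instead decompose $X_K$ directly as $\bigsqcup_i X_{K_i}$ using the splitting of the \'etale $K$-algebra $L\otimes_k K$ into a finite product of fields $K_i$ (possibly distinct), apply \cref{lem:splinter_base change} to each algebraic extension $L\hookrightarrow K_i$ to get that each $X_{K_i}$ is a splinter iff $X$ is, and then invoke the elementary observation that a finite disjoint union is a splinter iff each clopen piece is. Your route avoids the detour through $\bar k$ and the extra descent step, handling both implications at once, at the small cost of having to spell out the (easy) disjoint-union lemma; the paper's route has the advantage that all fibers over $\bar k$ are literally identical so the disjoint-union observation appears only in its trivial direction. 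Both are correct and of comparable length.
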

	\begin{proof}
			The ``only if'' part of the corollary follows from \cref{lem:splinter_descent}. 
		Let $\bar k$ be an algebraic closure of $k$. By the assumption on $H^0(X,\mcO_X)$, $X_{\bar k} \coloneqq X\times_k \bar k$ is isomorphic to the disjoint union of $\dim_k H^0(X,\mcO_X)$ copies of $X \times_{H^0(X,\mcO_X)} \bar k$. By the ``if'' part of \cref{lem:splinter_base change}, if $X$ is a splinter, then $X_{\bar k}$ is a splinter. By \cref{lem:splinter_descent}, we get that $X_K$ is a splinter.
	\end{proof}

		\begin{corollary}\label{cor:splinter_geom_normal}
			Let $X$ be a connected proper scheme over a field $k$. If $X$ is a splinter, then $H^0(X, \mcO_X)$ is a field, and $X$, considered as a scheme over $\Spec H^0(X, \mcO_X)$, is geometrically normal.
		\end{corollary}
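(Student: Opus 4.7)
The plan is to deduce both assertions from the fact that splinters are normal (by \cref{prop:bhatt_normal_CM} in positive characteristic, and by the characterization of splinters in characteristic zero as normal schemes recalled in the introduction), combined with the algebraic base-change invariance established in \cref{lem:splinter_base change}.

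First I would show that $K \coloneqq H^0(X,\mcO_X)$ is a field. Since $X$ is a splinter it is normal. A connected normal Noetherian scheme is integral: the irreducible components of a normal scheme coincide with its connected components (at any point $x$, the local ring $\mcO_{X,x}$ is a normal domain and so has a unique minimal prime, which prevents distinct irreducible components from meeting), and being connected $X$ has only one such component. Thus $X$ is integral. Properness over $k$ makes $K$ a finite $k$-algebra, and it embeds into the function field $K(X)$; a finite $k$-subalgebra of a field is itself a field, so $K$ is a field.

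Next I would establish geometric normality of $X$ over $\Spec K$. It suffices to verify that $X \times_K \overline{K}$ is normal for one algebraic closure $\overline{K}$ of $K$. Since $K = H^0(X,\mcO_X)$ by construction and $\overline{K}/K$ is algebraic, \cref{lem:splinter_base change} applies directly and yields that $X \times_K \overline{K}$ is itself a splinter, hence normal, as required.

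The argument is a bookkeeping exercise once the base-change invariance \cref{lem:splinter_base change} is in hand, and I do not foresee a serious obstacle. The conceptual point worth flagging is that one must perform the base change over $K = H^0(X,\mcO_X)$ rather than over the original field $k$: this is precisely the hypothesis needed in \cref{lem:splinter_base change} (the algebraic extension must be taken relative to $H^0(X,\mcO_X)$), and it is exactly what the phrase ``considered as a scheme over $\Spec H^0(X,\mcO_X)$'' in the statement is pointing to.
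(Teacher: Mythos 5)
Your proposal is correct and takes essentially the same route as the paper's own proof: deduce that a splinter is normal (so a connected proper splinter is integral and $H^0(X,\mcO_X)$ is a finite domain over $k$, hence a field) and then apply \cref{lem:splinter_base change} with $L = \overline{K}$ to get geometric normality. You merely spell out the two elided steps (why normality plus connectedness plus properness forces $H^0(X,\mcO_X)$ to be a field, and why checking a single algebraic closure suffices for geometric normality) and, slightly more carefully than the paper, note that in characteristic zero normality of splinters comes from the characterization recalled in the introduction rather than from \cref{prop:bhatt_normal_CM}, which is stated only in positive characteristic.
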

		\begin{proof}
			By \cref{prop:bhatt_normal_CM}, a splinter is normal.  In particular,  $H^0(X, \mcO_X)$ is a field.
		The corollary then follows from \cref{lem:splinter_base change}.
		\end{proof}
		
	\begin{remark}[Algebraic base change for globally $+$-regular pairs]
		 Let $X$ be a connected normal proper scheme over a field $k$ such that $H^0(X, \mcO_X)$ is a field, and let $\Delta$ be an effective $\bbQ$-Weil divisor on $X$.
		Let $L$ be an algebraic field extension of $H^0(X, \mcO_X)$.
The arguments of the proof of \cref{lem:splinter_base change} show that
if	 $(X, \Delta)$ is globally $+$-regular, then $X_L  \coloneqq  X\times_{H^0(X,\mcO_X)} L$ is normal and  the pair $(X_L,  \Delta_L)$ is globally $+$-regular.

\noindent Assume in addition that $H^0(X,\mcO_X)$ is a separable extension of $k$, and let $K$ be any algebraic extension of $k$. As in \cref{cor:splinter_bc}, we have that if	$(X, \Delta)$ is globally $+$-regular, then $X_K \coloneqq  X\times_k K$ is normal and  the pair $(X_K,  \Delta_K)$ is globally $+$-regular.
	\end{remark}

	\section{Lifting and descending global $F$-regularity}
	\label{S:LD-globF}
	
	The first aim of this section is to show that the results of \cref{S:LD-splinter} regarding splinters, notably \cref{lem:splinter_ascend_pi_!}, extend to the globally $F$-regular setting\,; see \cref{lem:gFr_ascend_pi_!}.
	The second aim is to show how the criterion of Schwede--Smith (recalled in \cref{thm:schwede_smith_f_reg_one_divisor}) can be used to establish
	further results in the global $F$-regular setting\,; for instance,
	 we show in \cref{prop:product-gFr} that global $F$-regularity is stable under product and, combined with \cref{lem:gFr_ascend_pi_!}, use it to recover in \cref{lem:gFr_base change} a result of \cite{gongyo_li_patakfalvi_zsolt_schwede_tanaka_zong_on_rational_connectedness_of_globally_f_regular_threefolds} stating that global $F$-regularity for normal proper schemes over a field is stable under base change of $F$-finite fields. 
	Except for \cref{thm:schwede_smith_f_reg_one_divisor}, which is due to Schwede--Smith, the results of this section will not be used in the rest of the paper.

	\subsection{A criterion for global $F$-regularity}
	The following criterion of Schwede--Smith makes it possible in practice to reduce checking that a variety is globally $F$-regular to simply check that $\mcO_X \to F_*^e \mcO_X(D)$ splits for one specific Weil divisor $D$.
	
	\begin{theorem}[{\cite[Thm.~3.9]{schwede_smith_globally_f_regular_and_log_Fano_varieties}}]
		\label{thm:schwede_smith_f_reg_one_divisor} 
		Let $X$ be a normal variety over an $F$-finite field of positive characteristic. 
		Then $X$ is globally $F$-regular if and only if there exists an effective Weil divisor $D$ on $X$ such that
		\begin{enumerate}
			\item there exists an $e>0$ such that the natural map $\mcO_X \to F_*^e \mcO_X(D)$ splits, and
			\item the variety $X\setminus D$ is globally $F$-regular.
		\end{enumerate}
	\end{theorem}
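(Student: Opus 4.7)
\emph{Plan.} The ``only if'' direction is immediate: if $X$ is globally $F$-regular, taking $D = 0$ (or any effective Weil divisor) makes (i) a consequence of the definition of global $F$-regularity applied to $D$, while (ii) holds either trivially (for $D=0$) or by \cref{lem:small_birational_map_globF}\labelcref{item:open_of_gFr}. The substance of the theorem lies in the ``if'' direction, which I outline below.

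Suppose $D$ is an effective Weil divisor satisfying (i) and (ii), and fix an arbitrary effective Weil divisor $E$; the goal is to find $m > 0$ such that the natural map $\mcO_X \to F_*^m \mcO_X(E)$ splits. The first step is to iterate (i): by composing $\mcO_X \to F_*^e \mcO_X(D)$ with its Frobenius pushforwards (after tensoring with $\mcO_X(D)$ and taking reflexive hulls), induction shows that for each $N \ge 1$ the natural map $\mcO_X \to F_*^{Ne}\mcO_X(c_N D)$ admits a splitting $\psi_N$, where $c_N = (p^{Ne}-1)/(p^e-1)$.

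The second step uses (ii): applied to $E|_{X \setminus D}$, it yields some $e' > 0$ and a splitting $s : F_*^{e'}\mcO_{X \setminus D}(E|_{X \setminus D}) \to \mcO_{X \setminus D}$ on $X \setminus D$. To extend $s$ to $X$, observe that the coherent reflexive sheaves $\sheafhom_{\mcO_X}(F_*^{e'}\mcO_X(E), \mcO_X(kD))$ form an increasing family in $k$ whose colimit recovers sections of the corresponding $\sheafhom$ over $X \setminus D$; hence for $k \gg 0$ the morphism $s$ lifts to $\widetilde s : F_*^{e'}\mcO_X(E) \to \mcO_X(kD)$ on $X$. Normality of $X$ then forces the composition $\mcO_X \to F_*^{e'}\mcO_X(E) \xrightarrow{\widetilde s} \mcO_X(kD)$ to coincide with the canonical inclusion $\sigma_{kD}$, since both agree on the dense open $X \setminus D$ and the restriction map $H^0(X, \mcO_X(kD)) \hookrightarrow H^0(X \setminus D, \mcO_{X \setminus D})$ is injective.

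To conclude, choose $N$ large enough that $c_N \ge k$, yielding a natural inclusion $\iota : \mcO_X(kD) \hookrightarrow \mcO_X(c_N D)$. Applying $F_*^{Ne}$ to $\iota \circ \widetilde s$ and post-composing with $\psi_N$ produces a morphism $F_*^{Ne+e'}\mcO_X(E) \to \mcO_X$ whose pre-composition with the natural map $\mcO_X \to F_*^{Ne+e'}\mcO_X(E)$ equals $\psi_N$ composed with the natural map $\mcO_X \to F_*^{Ne}\mcO_X(c_N D)$, which is the identity. This is the desired splitting. The main delicate point is the extension of $s$ from $X \setminus D$ to $X$; once that is in place, the combination of the two splittings is formal, driven only by the divisor arithmetic $c_N \ge k$ and the functoriality of $F_*^{Ne}$.
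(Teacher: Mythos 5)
The paper recalls this theorem from Schwede--Smith \cite{schwede_smith_globally_f_regular_and_log_Fano_varieties} and does not prove it, so there is no internal argument to compare against; your outline is correct and follows the standard Schwede--Smith strategy. Concretely: iterating the splitting of $\mcO_X \to F_*^e\mcO_X(D)$ yields splittings $\psi_N$ of $\mcO_X \to F_*^{Ne}\mcO_X(c_N D)$ with $c_N = (p^{Ne}-1)/(p^e-1) \to \infty$; global $F$-regularity of $U = X\setminus \Supp D$ splits $\mcO_U \to F_*^{e'}\mcO_U(E|_U)$; the colimit identification $\varinjlim_k \Hom_X(\mcF, \mcO_X(kD)) \cong \Hom_U(\mcF|_U,\mcO_U)$ for coherent $\mcF$ lifts this to $\widetilde s\colon F_*^{e'}\mcO_X(E) \to \mcO_X(kD)$ for $k\gg0$; the composite $\widetilde s \circ \eta$ agrees with $\sigma_{kD}$ by density and normality; and composing $F_*^{Ne}(\iota\circ\widetilde s)$ with $\psi_N$ for $N$ with $c_N\geq k$ gives the desired splitting. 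In a written-up version the two steps that deserve extra care, because $D$ is merely a Weil divisor, are the reflexive-hull identification $(F_*^e\mcO_X(D)\otimes\mcO_X(D))^{\vee\vee}\cong F_*^e\mcO_X((1+p^e)D)$ used in the iteration, and the colimit identification above; both should be verified on $X_{\mathrm{reg}}$, where $D$ is Cartier and the Frobenius projection formula applies directly, and then transferred to $X$ using that restriction to $X_{\mathrm{reg}}$ (whose complement has codimension $\geq 2$) is an equivalence on reflexive coherent sheaves.
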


	\begin{remark} \label{rmk:SS-criterion}
		Suppose $X$ is a normal projective variety over an $F$-finite field of positive characteristic. 
	If $D$ is an ample divisor on $X$, the variety $X\setminus D$ is affine and therefore globally $F$-regular if and only if its local rings are strongly $F$-regular. 
	Since regular local rings are strongly $F$-regular, a smooth projective variety $X$ over an $F$-finite field of positive characteristic is globally $F$-regular if and only if $\mcO_X \to F_*^e \mcO_X(D)$ splits for some ample divisor $D$.
	\end{remark}

	\subsection{Descending global $F$-regularity}
	The following \cref{lem:gFr_descent}, which is due to Schwede--Smith, holds in particular when the map $\mcO_X \to \pi_*\mcO_Y$ is an isomorphism, e.g., when $\pi\colon Y \to X$ is flat proper with geometrically connected and geometrically reduced fibers \cite[\href{https://stacks.math.columbia.edu/tag/0E0L}{Tag 0E0L}]{stacks-project}, or when $\pi \colon Y \to X$ is proper birational and $X$ is a normal variety.

	\begin{lemma}[{\cite[Cor.~6.4]{schwede_smith_globally_f_regular_and_log_Fano_varieties}}]
		\label{lem:gFr_descent}
		Let $\pi\colon Y\to X$ be a morphism of varieties over 
		a field
		 of positive characteristic. 
		If $Y$ is 
		normal 
		globally $F$-regular and if the map $\mcO_X \to \pi_*\mcO_Y$ is split, then $X$ is 
		normal 
		globally $F$-regular.
	\end{lemma}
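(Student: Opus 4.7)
The plan is to mirror the argument of \cref{lem:splinter_descent}\labelcref{item:splinter_descent}, now applied to the divisorial definition of global $F$-regularity in place of the splitting condition defining splinters. Normality of $X$ follows from the fact that $\mcO_X$ is an $\mcO_X$-module direct summand of $\pi_*\mcO_Y$\,: the Serre conditions $R_1$ and $S_2$, which hold for $\pi_*\mcO_Y$ by normality of $Y$, transfer to $\mcO_X$ via the retraction $s\colon \pi_*\mcO_Y \to \mcO_X$.

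For global $F$-regularity, let $D$ be an effective Weil divisor on $X$. Set $U \coloneqq X_{\mathrm{reg}}$ and $V \coloneqq \pi^{-1}(U)\cap Y_{\mathrm{reg}}$\,; both complements have codimension $\geq 2$ by normality. Over $U$, $\mcO_X(D)|_U$ is a line bundle, and the Cartier pullback $\pi^*(D|_U)$ is a well-defined Cartier divisor on $V$. Taking its Zariski closure in $Y$ gives an effective Weil divisor $D_Y$ on $Y$ with $\mcO_Y(D_Y)|_V$ canonically isomorphic to the line-bundle pullback of $\mcO_X(D)|_U$. Since $Y$ is globally $F$-regular, there exist an integer $e > 0$ and an $\mcO_Y$-linear splitting $\phi_Y \colon F_*^e \mcO_Y(D_Y) \to \mcO_Y$ of the natural map $\mcO_Y \to F_*^e \mcO_Y(D_Y)$.

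Pushing forward by $\pi$ yields an $\mcO_X$-linear morphism $F_*^e \pi_*\mcO_Y(D_Y) \to \pi_*\mcO_Y$, and over $U$ the classical projection formula identifies $\pi_*\mcO_Y(D_Y)|_U$ with $(\pi_*\mcO_Y \otimes \mcO_X(D))|_U$. Composing the restriction to $U$ of the pushforward of $\phi_Y$ with $s\otimes \id_{\mcO_X(D)}$, and then extending from $U$ to $X$ via the equivalence of categories between reflexive coherent sheaves on $X$ and on the big open $U$ recalled in \labelcref{item:restriction_to_big_open_equiv_of_cat}, yields the desired splitting of $\mcO_X \to F_*^e \mcO_X(D)$.

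The main obstacle is the reflexive projection-formula identification\,: since $\pi$ is not assumed finite or flat, the projection formula is directly available only on the line-bundle locus $U$. One invokes reflexivity of both $\pi_*\mcO_Y(D_Y)$ and the reflexive hull of $\pi_*\mcO_Y \otimes \mcO_X(D)$ to extend the isomorphism across $X\setminus U$, which has codimension $\geq 2$. Once this is in place, the diagram chase that assembles the splitting on $X$ proceeds formally parallel to the proof of \cref{lem:splinter_descent}\labelcref{item:splinter_descent}.
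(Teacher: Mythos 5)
Your argument takes a genuinely different route from the paper. The paper's proof is a two-line reduction: it first observes that $Y$ globally $F$-regular implies $Y$ is a splinter (\cref{prop:gFr-splinter}), that the splinter property descends along split $\pi$ (\cref{lem:splinter_descent}), and that splinters are normal (\cref{prop:bhatt_normal_CM}), so $X$ is normal; once normality of $X$ is in hand, the paper simply cites Schwede--Smith [Cor.~6.4]. You, by contrast, are re-proving the cited Schwede--Smith result from scratch. Your divisor-construction and reflexive-extension mechanism is morally sound and resembles the internal arguments used elsewhere in the paper (e.g.\ \cref{lem:gFr_ascend_pi_!}), and it has the advantage of being self-contained; the paper's route has the advantage of brevity and of cleanly isolating the single new observation (normality of $X$) needed to remove the normality hypothesis from the cited result.

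That said, there is a genuine error in your normality step. You claim that the Serre conditions $R_1$ and $S_2$ ``hold for $\pi_*\mcO_Y$ by normality of $Y$'' and ``transfer to $\mcO_X$ via the retraction.'' This is incorrect on two counts. First, $\pi$ is not assumed finite (nor proper), so $\pi_*$ does not in general preserve $S_2$, and $\pi_*\mcO_Y$ need not even be coherent as an $\mcO_X$-module; there is no reason for $\pi_*\mcO_Y$ to satisfy $S_2$ \emph{as an $\mcO_X$-module}. Second, and more fundamentally, $R_1$ is a property of the local \emph{rings} $\mcO_{X,x}$ at codimension-one points, not a module-theoretic depth condition, and it does not ``transfer'' along an $\mcO_X$-module retraction. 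The normality of $X$ is nonetheless true, but it requires a genuine argument. The cleanest direct argument is the classical one: on an affine open, $\mcO_X \hookrightarrow \pi_*\mcO_Y$ is a split injection of domains (the splitting forces injectivity, hence $\pi$ dominant, hence an inclusion of fraction fields), and any $x\in \mathrm{Frac}(\mcO_X)$ integral over $\mcO_X$ is integral over $\pi_*\mcO_Y$, hence lies in $\pi_*\mcO_Y$ by normality of $Y$, hence lies in $\mcO_X$ after applying the $\mcO_X$-linear retraction $s$. Alternatively, follow the paper's route through the splinter property, which is exactly what \cref{prop:gFr-splinter}, \cref{lem:splinter_descent}, and \cref{prop:bhatt_normal_CM} are set up to deliver. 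You should replace the $R_1$/$S_2$ transfer claim with one of these arguments before the rest of your proof can stand.
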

	\begin{proof}
  By \cref{prop:gFr-splinter}, $Y$ is a splinter and, by \cref{lem:splinter_descent}, $X$ is a splinter. Hence, by \cref{prop:bhatt_normal_CM}, $X$ is normal. We can now apply \cite[Cor.~6.4]{schwede_smith_globally_f_regular_and_log_Fano_varieties}.
	\end{proof}

	\subsection{Lifting global $F$-regularity}
	The following \cref{lem:gFr_ascend_pi_!} is the analogue of \cref{lem:splinter_ascend_pi_!} and \cref{rmk:crepant}.
	 \cref{lem:gFr_ascend_pi_!}\labelcref{item:case_0_pi_finite_etale} was previously established in \cite[Lem.~11.1]{patakfalvi2020beauvillebogomolov} and,
	 	under the more restrictive assumptions that $X$ is projective and Gorenstein and that $\pi$ is birational,
	   \cref{lem:gFr_ascend_pi_!}\labelcref{item:case_2_pi_proper} was established in \cite[Lem.~3.3]{gongyo_takagi_surfaces_of_globally_f_regular_and_f_split_type}.

	\begin{proposition}\label{lem:gFr_ascend_pi_!}
		Let $\pi \colon Y \to X$ be a proper surjective morphism of varieties over a field $k$ of characteristic $p>0$.
		Assume either of the following conditions\,:
		\begin{enumerate}[label=\rm{(\alph*)}]
			\item \label{item:case_0_pi_finite_etale} $\pi$ is finite quasi-\'etale, and $Y$ is normal.
			\item \label{item:case_1_pi_finite} $\pi$ is finite, $\pi^!\mcO_X \cong \mcO_Y$ in $\Coh(Y)$, and either $H^0(Y, \mcO_Y)$ is a field or $H^0(X,\mcO_X) = H^0(Y,\mcO_Y)$.
			\item \label{item:globF_reducedfibers} $\pi$ is finite, $\pi^!\mcO_X \cong \mcO_Y$ in $\Coh(Y)$,  $Y$ is normal, 
			and $\pi^\sharp \colon H^0(X, \mcO_X) \to H^0(Y , \mcO_Y)$ has reduced closed fibers. 
			\item \label{item:case_2_pi_proper} $X$ is quasi-projective, $\pi^!\mcO_X \cong \mcO_Y$ in $\sfD_{\mathrm{Coh}}(\mcO_Y)$, and either $H^0(Y, \mcO_Y)$ is a field or $H^0(X,\mcO_X) = H^0(Y,\mcO_Y)$.
		\end{enumerate}
		If $X$ is a 
		normal 
		globally $F$-regular variety, then $Y$ is a
		 normal 
		 globally $F$-regular variety.	
	\end{proposition}
	\begin{proof}
		We first prove \labelcref{item:case_0_pi_finite_etale}, \labelcref{item:case_1_pi_finite} and \labelcref{item:globF_reducedfibers}.
		By \cref{prop:gFr-splinter}, $X$ is a splinter, and it follows from  \cref{lem:splinter_ascend_pi_!} that $Y$ is a splinter and hence is normal.
		By normality of $X$, the complement of $X_{\mathrm{reg}}$ has codimension at least 2, and by finiteness of $\pi$, the complement of $\pi^{-1}(X_{\mathrm{reg}}) $ in $Y$ has codimension at least~$2$. 
		Therefore, by \cref{lem:small_birational_map_globF}, $Y$ is globally $F$-regular if and only if $\pi^{-1}(X_{\mathrm{reg}}) \subseteq Y$ is globally $F$-regular.
		Replacing $X$ by $X_{\mathrm{reg}}$ we can and do assume  that $X$ is regular.
		Since $Y$ is normal, $Y_{\mathrm{sing}}$ is a proper closed subset of $Y$ and since $\pi$ is finite, $\pi(Y_{\mathrm{sing}}) \subseteq X$ is also a proper closed subset.
		Let $U \subseteq X$ be an affine open subset in the complement of $\pi(Y_{\mathrm{sing}})$.
		By \cite[\href{https://stacks.math.columbia.edu/tag/0BCU}{Tag 0BCU}]{stacks-project},  $D\coloneqq X \setminus U$ has codimension~$1$, so defines a Weil divisor on $X$.
		Since $X$ is regular, $D$ is further Cartier and $\pi^* \mcO_X(D)$ is invertible.
		Note that the pullback  $\pi^*D = \pi^{-1}(D)$ of $D$ is  a Cartier divisor \cite[\href{https://stacks.math.columbia.edu/tag/02OO}{Tag 02OO}]{stacks-project}.
		Let $\sigma_D \colon \mcO_X\to \mcO_X(D)$ be the global section defined by the divisor $D$.
		Then the pullback $\pi^* \sigma_D$ defines a global section of $\pi^*\mcO_X(D)$ whose zero locus is precisely $\pi^{-1}(D)$.
		Thus $\pi^*\sigma_D$ is the global section of $\pi^*\mcO_X(D)=\mcO_Y(\pi^*D)$ defined by the divisor $\pi^*D$ \cite[\href{https://stacks.math.columbia.edu/tag/0C4S}{Tag 0C4S}]{stacks-project}.
		
		Now $\pi^{-1}(U) = Y \setminus \pi^*D$ is an affine open subset contained in the regular locus of $Y$, so is strongly $F$-regular.
		By \cref{thm:schwede_smith_f_reg_one_divisor}, it is enough to show that there exists an $e>0$ such that the map $\mcO_Y \to F_*^e\mcO_Y(\pi^* D)$ splits.
		Since $X$ is globally $F$-regular, there exists an integer $e>0$ and a splitting $s$ such that 
		$$ \id_{\mcO_X} \colon \mcO_X \to F_*^e\mcO_X \xrightarrow{F_*^e(\sigma_D)} F_*^e \mcO_X(D) \xrightarrow{s} \mcO_X.$$
		Since $X$ is a splinter,  we have a splitting
		$$ \id_{\mcO_X} \colon \mcO_X \xrightarrow{\pi^\sharp} \pi_* \mcO_Y \xrightarrow{t} \mcO_X.$$
		Note that by the projection formula, $t$ induces a splitting of $\mcO_X(D) \to \pi_* \pi^* \mcO_X(D) = \pi_*\mcO_Y(\pi^*D)$ which by abuse we still denote by~$t$.
        The composite map
		\begin{equation}\label{eq:splitting}
			 \mcO_X \xrightarrow{\pi^\sharp} \pi_* \mcO_Y \xrightarrow{\pi_*F^{e\sharp} } \pi_*F^e_*\mcO_Y \xrightarrow{	\pi_* F_*^e(\pi^*\sigma_D) } \pi_* F_*^e \mcO_Y (\pi^*D)
		\end{equation}
	then admits 
	$$ \pi_* F_*^e \mcO_Y (\pi^*D) =  F_*^e \pi_* \mcO_Y (\pi^*D) \xrightarrow{F^e_*(t)} F^e_*\mcO_X(D) \xrightarrow{s} \mcO_X$$ 
	as a splitting.
		As in \cref{lem:splinter_ascend_pi_!}, we consider the diagram
		\[
		\begin{tikzcd}
			\Hom_{\mcO_Y}(F_*^e \mcO_Y(\pi^*D) , \mcO_Y)  \dar{\cong} \arrow{rrr}{-\circ \pi^*\sigma_D} &&& \Hom_{\mcO_Y}(\mcO_Y, \mcO_Y) \dar{\cong} \\
			\Hom_{\mcO_Y}(F_*^e \mcO_Y(\pi^*D), \pi^! \mcO_X)  \dar{=} \arrow{rrr}{-\circ \pi^*\sigma_D} &&& \Hom_{\mcO_Y}(\mcO_Y, \pi^! \mcO_X) \dar{=} \\
			\Hom_{\mcO_X}(\pi_* F_*^e \mcO_Y(\pi^*D),\mcO_X)  \dar{-\circ \pi_* F_*^e(\pi^*\sigma_D) \circ \pi_*F^{e\sharp} \circ \pi^\sharp} \arrow{rrr}{-\circ \pi_* F_*^e(\pi^*\sigma_D) \circ \pi_*F^{e\sharp}}  &&& \Hom_{\mcO_X}(\pi_* \mcO_Y, \mcO_X) \dar{- \circ \pi^\sharp} \\
			\Hom_{\mcO_X}(\mcO_X, \mcO_X) 	\arrow[equal]{rrr} &&& \Hom_{\mcO_X}(\mcO_X,\mcO_X). 
		\end{tikzcd}
		\]
		Since the splitting of \cref{eq:splitting} is equivalent to the left-vertical map $-\circ \pi_* F_*^e(\pi^*\sigma_D) \circ \pi_* F^{e\sharp} \circ \pi^\sharp$ being surjective, in case \labelcref{item:case_1_pi_finite} we conclude, as in the proof of \cref{lem:splinter_ascend_pi_!}, that the map $-\circ \pi^*\sigma_D$ is surjective, or equivalently, that $\mcO_Y \to F_*^e \mcO_Y (\pi^*D)$ splits.
		In cases \labelcref{item:case_0_pi_finite_etale,item:globF_reducedfibers}, 
		we argue again as in the proof of \cref{lem:splinter_ascend_pi_!}, whose notation we take up.
		Let $R = H^0(X, \mcO_X)$ and $z\in \Spec R$ be a closed point.
		Since henselization is a filtered colimit of \'etale ring maps, the base change of the absolute Frobenius $F \colon Y \to Y$ along $Y^{h} \to Y$ identifies with the absolute Frobenius of $Y^{h}$.
		Let $\pi'\colon Y' \to Y$ be the normalization of $Y$ in a normal closure of $K(Y)$ over~$K(X)$.
		Arguing as above we obtain a splitting of the map $\mcO_X\to \pi_*\pi_*' F_*^e\mcO_{Y'} (\pi'^* \pi^* D)$.
		This provides a splitting of the base change $\mcO_{X^h}\to \pi_*^h\pi'^h_* F_*^e\mcO_{Y'^h} (\pi'^{h*} \pi^{h*} D\vert_{X^h})$.
		We have a decomposition into connected components $Y^h = Y_1 \amalg \dots \amalg Y_r $ and $Y'^h = Y'_1 \amalg \dots \amalg Y'_s$, where the induced action of $\mathrm{Gal}(K(Y') / K(X))$ permutes the morphisms $Y_j' \to X^h$.
		Hence, we obtain a splitting of $\mcO_{X^h}\to \pi_*^h\pi'^h_* F_*^e\mcO_{{Y'}^h} (\pi'^{h*} \pi^{h*} D\vert_{X^h})\vert_{Y'_j}$ for every $j$.
		As every $Y_i$ is dominated by one $Y_j'$, we obtain that 
		$\mcO_{X^h}\to \pi_*^h F_*^e\mcO_{Y^h} ( \pi^{h*} D\vert_{X^h})\vert_{Y_i}$
		splits for every $i$.		
		We then conclude as in the proof of \cref{lem:splinter_ascend_pi_!}.

		We now prove \labelcref{item:case_2_pi_proper}.
		By \cref{prop:gFr-splinter}, $X$ is a splinter, so a derived splinters since $k$ is of positive characteristic \cite[Thm.~1.4]{bhatt_derived_splinters_in_positive_characteristic}. 
		By \cref{rmk:uppershriek_condition}, $\pi$ is generically finite.
		Let $U\subseteq X$ be an affine regular dense open subset such that the restriction of $\pi$ to $U$ is finite.
		After possibly further shrinking $U$ we can assume 
		that $\pi^{-1}(U)$ is also regular.
		Since restriction to open subschemes commutes with exceptional inverse image functors \cite[\href{https://stacks.math.columbia.edu/tag/0G4J}{Tag 0G4J}]{stacks-project}, we have $\pi\vert_U^!\mcO_U \cong \mcO_{\pi^{-1}(U)}$.
		Since $X$ is quasi-projective, any Weil divisor on $X$ is dominated by a Cartier divisor. 
		Thus, by possibly further shrinking $U$, we may and do assume that $D\coloneqq X\setminus U$ is a Cartier divisor. 
		Since $\pi^{-1}(U)$ is affine an regular, it is enough to show, by \cref{thm:schwede_smith_f_reg_one_divisor}, that there exists an $e>0$ such that the map $\mcO_Y \to F_*^e\mcO_Y(\pi^* D)$ splits.
		By arguing as in case \labelcref{item:case_1_pi_finite} with $\pi_*$ replaced by $\mathrm{R}\pi_*$,
		we conclude that a splitting of $\mcO_X \to F_*^e \mcO_X(D)$ induces a splitting of $\mcO_Y \to F_*^e\mcO_Y(\pi^* D)$.
	\end{proof}

	\begin{remark}
		\label{rmk:lift-gFr}
		By using the version of \cref{thm:schwede_smith_f_reg_one_divisor} for pairs, i.e., the original \cite[Thm.~3.9]{schwede_smith_globally_f_regular_and_log_Fano_varieties}, we leave it to the reader to show the following version of \cref{lem:gFr_ascend_pi_!} for pairs. 
		Let  $\pi\colon Y \to X$ be a finite 
		surjective morphism of varieties over a field of positive characteristic that satisfies \labelcref{item:case_0_pi_finite_etale}, \labelcref{item:case_1_pi_finite}, or~\labelcref{item:globF_reducedfibers}, and let $\Delta$ be an effective $\bbQ$-Weil divisor on $X$.
		If $X$ is normal and if $(X, \Delta)$ is globally $F$-regular, then $Y$ is normal
	 	and $(Y,\pi^*\Delta)$ is globally $F$-regular.
	\end{remark}

	\subsection{Products of globally $F$-regular varieties}
	As far as we know, it is unknown whether the splinter property is stable under product. 
	On the other hand, global $F$-regularity for products is more tractable since the Frobenius of a product is the product of the Frobenii and since one may use the criterion of \cref{thm:schwede_smith_f_reg_one_divisor} to check global $F$-regularity for one specific divisor.
	The following \cref{prop:product-gFr} generalizes \cite[Thm.~5.2]{hashimoto_surjectivity_of_multiplication_and_f_regularity_of_multigraded_rings}, where the case of products of projective globally $F$-regular varieties was dealt with by taking affine cones.

	\begin{proposition}\label{prop:product-gFr}
		Let $X$ and $Y$ be normal varieties over a perfect field~$k$ of positive characteristic.
		Then, $X$ and $Y$ are globally $F$-regular if and only if their product $X \times_k Y$ is globally $F$-regular.
	\end{proposition}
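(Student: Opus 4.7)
The plan is to prove both directions using the infrastructure from the preceding sections: the ``only if'' direction via descent along the projections, and the ``if'' direction via the Schwede--Smith criterion \cref{thm:schwede_smith_f_reg_one_divisor} applied to a carefully chosen divisor on $X \times_k Y$.

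For the ``only if'' direction, I would work with the projection $\pi_X \colon X \times_k Y \to X$. Flat base change along $X \to \Spec k$ yields $\pi_{X,*}\mcO_{X \times_k Y} \cong \mcO_X \otimes_k H^0(Y, \mcO_Y)$, so any $k$-linear retraction of the inclusion $k \hookrightarrow H^0(Y,\mcO_Y)$ furnishes a splitting of $\mcO_X \to \pi_{X,*}\mcO_{X \times_k Y}$. Since $k$ is perfect and $X$, $Y$ are normal, the product $X \times_k Y$ is normal, so \cref{lem:gFr_descent} (applied to each connected component if needed) concludes that $X$ is globally $F$-regular; the argument for $Y$ is symmetric.

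For the ``if'' direction, my plan is to exhibit a single effective Weil divisor $D$ on $X \times_k Y$ satisfying both conditions of \cref{thm:schwede_smith_f_reg_one_divisor}. I would first pick affine opens $U_X \subseteq X$ and $U_Y \subseteq Y$; by normality and \cite[\href{https://stacks.math.columbia.edu/tag/0BCU}{Tag 0BCU}]{stacks-project}, the complements $D_X \coloneqq X \setminus U_X$ and $D_Y \coloneqq Y \setminus U_Y$ are pure of codimension one, hence effective Weil divisors. Global $F$-regularity of $X$ and $Y$ then provides, after equalizing exponents by composing with Frobenius splittings of the zero divisor, an integer $e > 0$ and splittings $s_X \colon F_*^e\mcO_X(D_X) \to \mcO_X$ and $s_Y \colon F_*^e\mcO_Y(D_Y) \to \mcO_Y$ of the canonical maps. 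Setting $D \coloneqq \pi_X^*D_X + \pi_Y^*D_Y$, whose complement $U_X \times_k U_Y$ is affine, the critical input is perfection of $k$: the absolute Frobenius on $X \times_k Y$ respects the product decomposition, producing a canonical isomorphism
$$F_*^e \mcO_{X \times_k Y}(D) \cong F_*^e\mcO_X(D_X) \boxtimes_k F_*^e\mcO_Y(D_Y)$$
under which the canonical map is the external tensor of the canonical maps. Hence $s_X \boxtimes s_Y$ splits $\mcO_{X \times_k Y} \to F_*^e \mcO_{X \times_k Y}(D)$, giving the first hypothesis of the criterion.

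The second hypothesis requires that the affine scheme $U_X \times_k U_Y$ be globally $F$-regular, equivalently strongly $F$-regular. I expect this to be the main obstacle of the proof: it reduces to the classical ring-theoretic statement that the tensor product over a perfect field $k$ of two strongly $F$-regular finitely generated $k$-algebras is strongly $F$-regular. The argument I have in mind combines the perfection of $k$ (which lets $F_*$ commute with $\otimes_k$) with a test-element argument that reduces the splitting condition to checking it at a single simple tensor $c \otimes c'$ with $c$, $c'$ test elements for the respective factors.
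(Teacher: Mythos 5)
Your ``only if'' direction is correct and matches the paper. The gap is in the ``if'' direction: the missing idea is the initial reduction to the case where $X$ and $Y$ are \emph{smooth}, and this single move removes both of the difficulties your outline runs into. Since $X$ and $Y$ are normal and $k$ is perfect, $X\times_k Y$ is normal and the complement of $X_{\mathrm{reg}}\times_k Y_{\mathrm{reg}}$ has codimension $\ge 2$; by \cref{lem:small_birational_map_globF} one may therefore replace $X$ and $Y$ by their regular loci and assume both are smooth over~$k$. Once that is done, the divisors $D_X$ and $D_Y$ cutting out affine opens are automatically Cartier, so your box-product identity $\mcO_{X\times_k Y}(\pi_X^*D_X+\pi_Y^*D_Y)\cong \mcO_X(D_X)\boxtimes_k\mcO_Y(D_Y)$ is immediate (for non-Cartier Weil divisors on the normal product this identity would require a separate argument on reflexive hulls, which you do not supply), and the affine open $U_X\times_k U_Y$ is smooth over a perfect field, hence regular and therefore strongly $F$-regular with nothing further to prove.

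Without that reduction you must establish the ring-theoretic claim you yourself flag as ``the main obstacle'': that the tensor product over a perfect field of two strongly $F$-regular finitely generated $k$-algebras is strongly $F$-regular. This is not an off-the-shelf classical fact. Since a normal affine variety over an $F$-finite field is globally $F$-regular if and only if it is strongly $F$-regular, your claim is essentially the \emph{affine} case of the very proposition being proved, so invoking it as known would be circular unless it is proved independently first --- and the test-element sketch you give is too vague to do that. The reduction to smoothness is exactly how the paper breaks this circle.
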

	\begin{proof}		
		Denote by $\pi_X$ and $\pi_Y$ the natural projections from $X\times_k Y$ to $X$ and $Y$, respectively.
		Since $X$ and $Y$ are normal and $k$ is perfect, their product $X\times_k Y$ is normal, see \cite[\href{https://stacks.math.columbia.edu/tag/038L}{Tag 038L}]{stacks-project}.
		Moreover, $X \setminus X_{\mathrm{reg}}$ and $Y \setminus Y_{\mathrm{reg}}$ both have codimension $\geq 2$ and thus $X \times_k Y \setminus X_{\mathrm{reg}} \times_k Y_{\mathrm{reg}}$ has codimension $\geq 2$.
		Therefore, by \cref{lem:small_birational_map_globF}, 
		we can and do assume without loss of generality that $X$ and $Y$ are smooth over $k$.
		
		Assume first that $X\times_k Y$ is globally $F$-regular. As in \cref{rmk:product}, since $\pi_*\mcO_{X\times_k Y}  = \linebreak \mcO_X \otimes_k H^0(Y,\mcO_Y)$, any splitting of the $k$-linear map $k \to H^0(Y,\mcO_Y), 1 \mapsto 1_Y$ provides a splitting to the natural map $\mcO_X \to \pi_*\mcO_{X\times_k Y}$. By \cref{lem:gFr_descent}, it follows that $X$ is globally $F$-regular.
		
		For the converse, we first note that there exist effective Cartier divisors $D$ on $X$ and $E$ on $Y$ such that $X\setminus D$ and $Y \setminus E$ are affine.
		Indeed, since $X$ and $Y$ are normal and $k$ is perfect, they admit dense affine open subsets, and then use the fact that the complement of a dense affine open subset is a divisor by \cite[\href{https://stacks.math.columbia.edu/tag/0BCU}{Tag 0BCU}]{stacks-project}.
		The divisors obtained this way are a priori Weil divisors, but since $X$ and $Y$ are smooth, they are actually Cartier divisors.
		Since $k$ is assumed to be perfect,
		$$X \setminus D \times_k Y \setminus E = X\times_k Y \setminus \pi_X^*D \cup \pi_Y^*E$$
		is smooth and affine, so in particular strongly $F$-regular.
		By \cref{thm:schwede_smith_f_reg_one_divisor} it is enough to show that the map
		$$ \mcO_{X\times Y} \to  F_*^e \mcO_{X\times Y} \to F_*^e \mcO_{X\times Y}(\pi_X^*D + \pi_Y^*E)$$
		splits for some $e>0$.
		Since $X$ is globally $F$-regular, we can find an $e>0$ such that $\mcO_X \to F_*^e \mcO_X(D)$ splits and similarly for $Y$.
		As remarked in \cite[p.~558]{smith_globally_f_regular_varieties_applications_to_vanishing_theorems_for_quotients_of_fano_varieties}, if $\mcO_X \to F_*^e \mcO_X(D)$ splits for some $e>0$, then $\mcO_X \to F_*^{e'} \mcO_X(D)$ splits for all $e'\geq e$. Thus, there exists an integer $e>0$ such that both $\mcO_X \to F_*^e \mcO_X(D)$ and $\mcO_Y \to F_*^e \mcO_Y(E)$ split.
		The morphism
		$$\sigma_{\pi_X^*D + \pi_Y^* E} \colon \mcO_{X\times Y} \to \mcO_{X\times Y}(\pi_X^*D + \pi_Y^* E)$$
		can be identified with the tensor product $\pi_X^*\sigma_D \otimes \pi_Y^*\sigma_E$, where
		\begin{equation*}
		\sigma_D\colon \mcO_X \to \mcO_X(D)\quad \text{and} \quad \sigma_E \colon \mcO_Y \to \mcO_Y(E)
		\end{equation*}
		denote the corresponding morphisms on $X$ and $Y$.
		Pushing forward along Frobenius, we obtain 
		$$F^e_*\sigma_{\pi_X^*D + \pi_Y^* E} = \pi_X^* F^e_*\sigma_D \otimes \pi_Y^* F^e_*\sigma_E.$$
		We conclude, by taking the tensor product of the sections of $\mcO_X \to F_*^e \mcO_X(D)$ and $\mcO_Y \to F_*^e \mcO_Y(E)$, that
		$$\mcO_{X\times Y} \to F_*^e \mcO_{X\times Y}(\pi_X^*D + \pi_Y^* E)$$
		splits. Hence $X\times_k Y$ is globally $F$-regular.
	\end{proof}

	\begin{remark}\label{rmk:prod-pairs}
		By using the version of \cref{thm:schwede_smith_f_reg_one_divisor} for pairs, i.e., the original \cite[Thm.~3.9]{schwede_smith_globally_f_regular_and_log_Fano_varieties}, we leave it to the reader to show the following version of \cref{prop:product-gFr} for pairs\,: 
		Let $X$ and $Y$ be normal varieties over a perfect field $k$ of positive characteristic, and denote by $\pi_X\colon X\times_k Y \to X$ and $\pi_Y\colon X\times_k Y \to Y$ the natural projections. Let $\Delta_X$ and $\Delta_Y$ be effective $\bbQ$-Weil divisors on $X$ and $Y$, respectively.
		Then, $(X,\Delta_X)$ and $(Y, \Delta_Y)$ are globally $F$-regular if and only if $(X\times_k Y, \pi_X^*\Delta_X + \pi_Y^*\Delta_Y)$ is globally $F$-regular.
	\end{remark}

	\subsection{Global $F$-regularity and base change of field}
	By using the lifting \cref{lem:gFr_ascend_pi_!}, it is possible to show that the base change results  for splinters along algebraic field extensions of \cref{SS:splinters-base change} also hold for normal global $F$-regular varieties.
	However, by using the criterion of Schwede--Smith \cite[Thm.~3.9]{schwede_smith_globally_f_regular_and_log_Fano_varieties},
	Gongyo--Li--Patakfalvi--Schwede--Tanaka--Zong \cite{gongyo_li_patakfalvi_zsolt_schwede_tanaka_zong_on_rational_connectedness_of_globally_f_regular_threefolds} have established a more general base change results that deals with not necessarily algebraic extensions.

	\begin{proposition}[Gongyo--Li--Patakfalvi--Schwede--Tanaka--Zong \cite{gongyo_li_patakfalvi_zsolt_schwede_tanaka_zong_on_rational_connectedness_of_globally_f_regular_threefolds}]
		\label{lem:gFr_base change}

		Let $X$ be a normal proper scheme over 
		a field $k$ of positive characteristic. 
		Assume that $(X,\Delta)$ is globally $F$-regular. 
		Then, for any $F$-finite field extension $L$ of $k$ with a morphism $\Spec L \to \Spec H^0(X,\mcO_X)$, the scheme $X_L \coloneqq  X\times_{H^0(X,\mcO_X)} L$ is normal and the pair $(X_L,\Delta_L)$ is globally $F$-regular.
	\end{proposition}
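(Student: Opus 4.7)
The strategy is to verify the pair version of the Schwede--Smith criterion \cref{thm:schwede_smith_f_reg_one_divisor}, namely \cite[Thm.~3.9]{schwede_smith_globally_f_regular_and_log_Fano_varieties}, for $(X_L,\Delta_L)$. Without loss of generality, after replacing $k$ by $H^0(X,\mcO_X)$, I may assume $H^0(X,\mcO_X)=k$, so that $\pi\colon X_L\to X$ is a flat, affine, faithfully flat base change along an $F$-finite field extension $L/k$. Normality of $X_L$ and its global $F$-regularity will be established together by exhibiting, locally on $X_L$, strong $F$-regular affine charts and one common splitting.

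First, using that $X$ is globally $F$-regular and hence log Fano by \cref{prop:gFr-big}\labelcref{item:globF_log_Fano}, choose an effective Weil divisor $D$ on $X$ whose support contains $\mathrm{Supp}(\Delta)\cup X_{\mathrm{sing}}$ and such that $U := X\setminus D$ is affine. Set $D_L := \pi^* D$ (a well-defined Weil divisor since $\pi$ is flat) and $U_L := X_L\setminus D_L = U\times_k L$, which is again affine. Next, since $(X,\Delta)$ is globally $F$-regular, apply \cite[Thm.~3.9]{schwede_smith_globally_f_regular_and_log_Fano_varieties} to obtain an integer $e>0$ such that
\[
\mcO_X \longrightarrow F^e_*\mcO_X\bigl(\lceil (p^e-1)\Delta\rceil + D\bigr)
\]
splits in $\Coh(X)$. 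Because $\pi$ is flat and because Frobenius commutes with flat base change between $F$-finite schemes, pulling this splitting back along $\pi$ yields a splitting
\[
\mcO_{X_L} \longrightarrow F^e_*\mcO_{X_L}\bigl(\lceil (p^e-1)\Delta_L\rceil + D_L\bigr).
\]

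Third, one must show that $U_L = U\times_k L$ is globally $F$-regular. Since $U$ is affine, it is strongly $F$-regular; since $U_L$ is affine, it suffices to show that its local rings are strongly $F$-regular. This reduces to the following statement: for an $F$-finite Noetherian strongly $F$-regular $k$-algebra $A$ and an $F$-finite field extension $L/k$, the base change $A\otimes_k L$ is strongly $F$-regular. Granting this, every local ring of $U_L$ is strongly $F$-regular, hence normal, so $U_L$ is normal and globally $F$-regular. Varying $D$ so that the $U$'s cover $X$ then shows that $X_L$ itself is normal.

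The pair version of the Schwede--Smith criterion now applies to $(X_L,\Delta_L)$ with the divisor $D_L$ and the splitting above, and gives that $(X_L,\Delta_L)$ is globally $F$-regular. The principal obstacle is the third step: the transfer of strong $F$-regularity under base change along an arbitrary $F$-finite field extension is not a formal consequence of the preceding lemmas and requires the analysis carried out in \cite{gongyo_li_patakfalvi_zsolt_schwede_tanaka_zong_on_rational_connectedness_of_globally_f_regular_threefolds}, relying on $L/k$ being geometrically regular (both fields being $F$-finite of the same characteristic) and on a careful study of Frobenius splittings under such base change.
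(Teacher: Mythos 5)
Your proposal is essentially a reconstruction of the original Gongyo--Li--Patakfalvi--Schwede--Tanaka--Zong argument (which indeed proceeds via the Schwede--Smith criterion), not the alternate proof given in the paper. The paper's proof is genuinely different: since any divisor on $X_L$ is defined over a finitely generated subextension, it reduces to the case where $L$ is a \emph{simple} extension of $H^0(X,\mcO_X)$, then handles the algebraic case with the lifting lemma \cref{lem:gFr_ascend_pi_!} (via $\pi^!\mcO_X\cong\mcO_Y$) and the purely transcendental case by combining the product result \cref{prop:product-gFr} with the generic-fiber lemma \cref{lem:generic_fiber_globF} applied to $X\times\bbA^1\to\bbA^1$. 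That route completely sidesteps the delicate interaction between the absolute Frobenius and base change along a non-perfect field extension.

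Beyond the difference in strategy, there are two genuine gaps in your argument. First, the assertion that ``Frobenius commutes with flat base change between $F$-finite schemes'' is incorrect for the \emph{absolute} Frobenius: the square formed by $F_{X_L}$, $F_X$, and the two copies of $\pi\colon X_L\to X$ is commutative but not Cartesian unless $X$ is perfect, so $\pi^*F^e_*\mcO_X(G)$ and $F^e_*\mcO_{X_L}(G_L)$ differ (they do not even have the same rank once $L/k$ is non-algebraic over a non-perfect $k$). The natural comparison map goes from $\pi^*F^e_*\mcO_X(G)$ to $F^e_*\mcO_{X_L}(G_L)$, which is the wrong direction for transporting a retraction $s\colon F^e_*\mcO_X(G)\to\mcO_X$; extracting a splitting of $\mcO_{X_L}\to F^e_*\mcO_{X_L}(G_L)$ from the splitting on $X$ is precisely the technical heart of the Gongyo et al.\ proof, not a formal consequence of flatness. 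Second, you explicitly delegate the transfer of strong $F$-regularity of the affine chart to the reference, and there is a logical circularity in establishing normality of $X_L$: the affine charts $U_L$ only cover $X_{\mathrm{reg}}\setminus\mathrm{Supp}(\Delta)$ base-changed, so they have complement of codimension one in $X_L$ and do not by themselves show $X_L$ is normal, yet the Schwede--Smith criterion needs normality of $X_L$ as an input. These gaps are closed in Gongyo et al., but as written your argument does not supply the missing details, whereas the paper's alternate proof avoids them entirely.
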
	
	\begin{proof} Let us provide an alternate proof.
		 We may and do assume $X$ is connected.
		Since any divisor on $X_L$ is defined over a finitely generated field extension of the field $H^0(X,\mcO_X)$, we may assume that $L$ is a simple extension of $H^0(X,\mcO_X)$.
		If $L$ is algebraic, we can apply \cref{lem:gFr_ascend_pi_!}, 
		while if $L$ is purely transcendental, we can apply \cref{prop:product-gFr} (or  \cref{rmk:prod-pairs} in case $\Delta \neq 0$) and \cref{lem:generic_fiber_globF} to $X\times_{H^0(X,\mcO_X)} \bbA^1 \to \bbA^1$. Note that in this situation it is not necessary to assume that $H^0(X, \mcO_X)$ is perfect as $X_{\mathrm{reg}} \times_{H^0(X, \mcO_X)} \bbA^1$ is regular.
	\end{proof}

	\begin{remark} \label{rmk:bc-splinter}
		Our proof of 
\cref{lem:gFr_base change} shows that one could extend \cref{lem:splinter_base change} concerned with base change of splinters along algebraic field extensions to arbitrary field extensions 
if one could establish that the splinter property is stable under taking product with the affine line~$\bbA^1$.
	\end{remark}
	
	\begin{corollary}[{\cite[Cor.~2.8]{gongyo_li_patakfalvi_zsolt_schwede_tanaka_zong_on_rational_connectedness_of_globally_f_regular_threefolds}}]
		Let $X$ be a connected normal proper scheme over an $F$-finite field $k$. Assume that $H^0(X,\mcO_X)$ is a separable extension of $k$ and that $(X,\Delta)$ is globally $F$-regular.
		Then, for any $F$-finite field extension~$K$ of $k$,
		the scheme $X_K \coloneqq  X\times_k K$ is normal and the pair $(X_K,\Delta_K)$ is globally $F$-regular.
	\end{corollary}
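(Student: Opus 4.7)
The plan is to reduce to the previous \cref{lem:gFr_base change}, exploiting the separability hypothesis on $H^0(X,\mcO_X)/k$ to control the base change of scalars.

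Since $X$ is proper and connected over $k$, the $k$-algebra $H^0(X,\mcO_X)$ is a finite extension of $k$ and, by assumption, separable over $k$. Therefore $H^0(X,\mcO_X) \otimes_k K$ is a finite étale $K$-algebra and decomposes as a product of finite separable field extensions $L_1,\ldots,L_n$ of $K$. Each $L_i$ is $F$-finite, since finite extensions of $F$-finite fields are $F$-finite, and carries a canonical $H^0(X,\mcO_X)$-algebra structure via the $i$-th factor projection. Using the identification
\[
X_K \;=\; X \times_{H^0(X,\mcO_X)} \bigl(H^0(X,\mcO_X) \otimes_k K\bigr) \;=\; \coprod_{i=1}^{n} X \times_{H^0(X,\mcO_X)} L_i,
\]
the divisor $\Delta_K$ decomposes as the disjoint sum of the divisors $\Delta_{L_i}$ on the corresponding factors.

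I would then apply \cref{lem:gFr_base change} to each $L_i$: this yields that $X \times_{H^0(X,\mcO_X)} L_i$ is normal and that the pair $(X \times_{H^0(X,\mcO_X)} L_i, \Delta_{L_i})$ is globally $F$-regular. To conclude, I would observe that normality and global $F$-regularity for pairs are stable under finite disjoint union. For the latter, given an effective Weil divisor $D$ on $X_K$, one decomposes $D = \sum_i D_i$ according to the connected components and chooses $e_i > 0$ together with a splitting of $\mcO_{X_i} \to F_*^{e_i} \mcO_{X_i}(\lceil(p^{e_i}-1)\Delta_{L_i}\rceil + D_i)$ for each $i$; using the standard fact (recalled in the proof of \cref{prop:product-gFr}) that such splittings persist after replacing $e_i$ by any larger integer, one can take a common $e = \max_i e_i$ and assemble a splitting of the required map on $X_K$.

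The only substantive input beyond \cref{lem:gFr_base change} is the structural fact that separability of $H^0(X,\mcO_X)/k$ upgrades $H^0(X,\mcO_X) \otimes_k K$ from a potentially complicated finite $K$-algebra to a product of fields; without separability, this tensor product could acquire nilpotents, and neither the decomposition into fields nor the direct application of \cref{lem:gFr_base change} would be available. No further obstacle is expected.
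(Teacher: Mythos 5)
The paper gives no proof of this corollary; it is cited from the external reference without argument. Your reasoning is correct and is a natural way to reduce to \cref{lem:gFr_base change}. It is worth contrasting it with the paper's proof of the parallel statement for splinters, \cref{cor:splinter_bc}: there the authors pass to an algebraic closure $\bar k$, note that $X_{\bar k}$ is a finite disjoint union of copies of $X\times_{H^0(X,\mcO_X)}\bar k$, invoke \cref{lem:splinter_base change} upstairs, and then \emph{descend} from $X_{\bar k}$ back to $X_K$ via \cref{lem:splinter_descent}. In the globally $F$-regular setting that descent step does not port over directly: \cref{lem:gFr_descent} is stated for morphisms of \emph{varieties} over a fixed $F$-finite field, and $X_{\bar k}\to X_K$ is not of finite type over $K$ when $\bar k/K$ is infinite (nor does \cref{lem:gFr_descent} treat a nonzero $\Delta$). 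Your route — decompose $H^0(X,\mcO_X)\otimes_k K$ as a finite product of $F$-finite field extensions $L_i/K$, each equipped with a morphism $\Spec L_i\to\Spec H^0(X,\mcO_X)$, and apply \cref{lem:gFr_base change} factor by factor — sidesteps these issues entirely and is arguably the cleaner proof.

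One small point to spell out: producing a \emph{single} exponent $e$ validating the splitting on all components simultaneously uses a persistence fact which the proof of \cref{prop:product-gFr} recalls only for $\Delta=0$. For a nonzero $\Delta$ the correct formulation is a little more delicate because of the rounding $\lceil(p^e-1)\Delta\rceil$; the version needed here — that a splitting at level $e$ propagates to all multiples of $e$ — is standard (it follows by iterating Frobenius and is contained in \cite{schwede_smith_globally_f_regular_and_log_Fano_varieties}), and one can then take $e$ to be the product or least common multiple of the $e_i$. With that made explicit, your reduction is complete.
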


\section{Finite torsors over splinters}
	\label{S:finite_torsors_over_splitners-generalcase}

	 We say that a morphism $\pi\colon Y\to X$ of schemes over a 
	  scheme $S$ is a \emph{finite torsor} over $S$ if it is a
	  fppf torsor under a finite  locally free group scheme $G$ over $S$.
	  (The reason for not restricting ourselves to finite flat group schemes over a Noetherian scheme is that $H^0(X,\mcO_X)$ might fail to be Noetherian although $X$ is Noetherian.)
	The aim of this section is to prove Theorems~\labelcref{thm:intro_d-etale} and~\labelcref{thm:intro_d-torsor}.
   Recall from \cref{ss:trace} that  a finite \'etale morphism $\pi \colon Y \to X$ satisfies $\pi^!\mcO_X \cong \mcO_Y$.
   	First, in order to apply our lifting \cref{lem:splinter_ascend_pi_!} to finite torsors over  splinters, we have\,:

	\begin{lemma}
		\label{lem:covers}
		Let $\pi\colon Y \to X$ be a 
		morphism of Noetherian schemes over a ring $R$ such that $\Pic(\Spec R) = 0$\,; e.g., $R$ is a local ring or a UFD. 
		If  $\pi$ is a finite torsor over $R$,
		then $\pi^!\mcO_X \cong \mcO_Y$.
	\end{lemma}
	\begin{proof}
      As discussed in \cref{ss:trace},
		in order to show that $\pi^!\mcO_X \cong \mcO_Y$,  it is equivalent to produce an $\mcO_X$-linear map 
		$\mathrm{Tr}_{Y/X} \colon \pi_*\mcO_Y \to \mcO_X$
		such that the symmetric bilinear form $\mathrm{Tr}_{Y/X}(\alpha \cdot \beta)$ on the locally free sheaf $\pi_*\mcO_Y$ with values in $\mcO_X$ is nonsingular.
	In 	case $\pi$ is \'etale, this is achieved by considering the classical trace.
		In general,  the classical trace need not be nonsingular, but the existence of a nonsingular trace is claimed in \cite[p.~222]{bombieri_mumford_enqiques_classification_of_surfaces_in_char_p_iii} in the special case where $R$ is a field and 
		a proof is provided in  \cite[Thm.~3.9]{carvajal_rojas_finite_torsors_over_strongly_f_regular_singularities}. 
		(Note from e.g.~\cite[Prop.~2.6.4 \&~2.6.5(i)]{brion_some_structure_theorems_for_algebraic_groups} that any finite $G$-torsor is a finite $G$-quotient in the sense of \cite[Rmk.~2.3]{carvajal_rojas_finite_torsors_over_strongly_f_regular_singularities}.)
		We now consider the more general case where $G$ is a finite locally free group scheme over 
		a ring $R$.
		In that case, $H\coloneqq H^0(G, \mcO_G)$ is a finitely generated projective
		Hopf algebra with antipode.
		The dual Hopf algebra $H^\vee = \Hom_R(H, R)$ is also a finitely generated projective Hopf algebra with antipode.
		If $\Pic(\Spec R)=0$, then $H^\vee$ admits the additional structure of a Frobenius algebra  \cite[Thm.~1]{pareigis_when_hopf_algebras_are_frobenius_algebras}.
	By \cite[Thm.~3 \& Discussion on p.~596]{pareigis_when_hopf_algebras_are_frobenius_algebras} the $R$-submodule $\int_{H^\vee}^l \subseteq H^\vee$ of left integrals is freely generated by a nonsingular
		left integral $\mathrm{Tr}_G \in H^\vee$.
	If $\pi \colon Y \to X$ is a $G$-torsor over $R$, 
by pulling back $\mathrm{Tr}_G$ along $Y \to \Spec R$ and composing with $ \pi_*\mcO_Y\to \pi_*a_* \mcO_{Y\times_R G}$, where $a\colon Y\times_R G \to Y$ is the action of $G$ on $Y$, one obtains as in \cite[\S3.1, p.~12]{carvajal_rojas_finite_torsors_over_strongly_f_regular_singularities}  an $\mcO_X$-linear map $\pi_*\mcO_Y \to \pi_*\mcO_Y$ that factors through $\mcO_X = (\pi_*\mcO_Y)^G$ to an $\mcO_X$-linear map $\mathrm{Tr}_{Y/X} \colon \pi_*\mcO_Y \to \mcO_X$.
		Since $\mathrm{Tr}_G$ is nonsingular, arguing as in \cite[Thm.~3.9]{carvajal_rojas_finite_torsors_over_strongly_f_regular_singularities} shows that the bilinear form $(\alpha, \beta) \mapsto \mathrm{Tr}_{Y/X}(\alpha\cdot \beta)$ is nonsingular.
	\end{proof}
	
	\begin{remark}
	Let $\pi \colon Y \to X$ be a finite torsor over a ring $R$. 
	Assume that  $\Pic(\Spec R) =0$ and that $X$ is Noetherian and admits a dualizing complex. As explained in \cref{rmk:lift_Gorenstein}, \cref{lem:covers} implies that $\pi^*\omega_X^\bullet \cong \omega_Y^\bullet$.
In particular, if $X$ is Gorenstein, then $Y$ is Gorenstein. Likewise, if $X$ is Cohen--Macaulay, then $Y$ is Cohen--Macaulay.
	\end{remark}

	We have the following lemma from \cite{bhatt_et_al_globally_+_regular_varieties_and_mmp_for_threefolds_in_mixed_char}\,:
	\begin{lemma}[{\cite[Lem.~6.6]{bhatt_et_al_globally_+_regular_varieties_and_mmp_for_threefolds_in_mixed_char}}]\label{lem:bhatt_et_al_enough_to_check_at_closed_points}
		Let $X \to \Spec R$ be a		
		Noetherian scheme over a ring~$R$.
		Then the following are equivalent\,:
		\begin{enumerate}
			\item The scheme $X$ is a splinter.
			\item For each closed point $z \in \Spec R$ the base change to the localization $X_{R_z}$ is a splinter.
		\end{enumerate}
		Further, assume $X$ is in addition normal and excellent and let $\Delta$ be an effective $\bbQ$-Weil divisor on~$X$.
		Then the following are equivalent\,:
		\begin{enumerate}
			\item \label{item:localization_i} The pair $(X , \Delta)$ is globally $+$-regular.
			\item \label{item:localization_ii} For each closed point $z \in \Spec R$ the base change to the localization $(X_{R_z}, \Delta_{R_z})$ is globally $+$-regular.
		\end{enumerate}
	\end{lemma}
	\begin{proof}
		Our assumptions are less restrictive than the setup of \cite[\S 6]{bhatt_et_al_globally_+_regular_varieties_and_mmp_for_threefolds_in_mixed_char}, but the proof of \cite[Lem.~6.6]{bhatt_et_al_globally_+_regular_varieties_and_mmp_for_threefolds_in_mixed_char} works as we outline below.
Note that in any case $X$ is normal. 
		Working on each connected component of $X$ separately, we can and do assume that $X$ is integral.
		Let $f\colon Y \to X$ be a finite cover (with $Y$ normal if we assume $(X,\Delta)$ globally $+$-regular).
		By flat base change, and as argued in the proof of \cite[Lem.~6.6]{bhatt_et_al_globally_+_regular_varieties_and_mmp_for_threefolds_in_mixed_char},   the evaluation at~1 map
		$$\Hom_{\mcO_X}(f_* \mcO_Y(\lfloor f^* \Delta \rfloor), \mcO_X)\to H^0(X, \mcO_X)$$
		is surjective if and only if the evaluation-at-$1$ map
		$$\Hom_{\mcO_{X_{R_z}}}(f_* \mcO_{R_z}(\lfloor f^* \Delta\vert_{X_{R_z}} \rfloor), \mcO_{X_{R_z}} )\to H^0({X_{R_z}}, \mcO_{X_{R_z}})$$ is surjective
		for every closed point $z\in \Spec R$. This shows the implications $\labelcref{item:localization_ii} \Rightarrow \labelcref{item:localization_i}$.
		For the converse, it is enough to observe that any finite surjective morphism $Y' \to X_{R_z}$  is the localization of a finite surjective morphism $Y \to X$, and in case $X$ is excellent that any finite surjective morphism $ Y' \to X_{R_z}$ with $Y'$ normal and integral is the localization of a finite surjective morphism $Y \to X$ with $Y$ normal.
		To see this, use that $Y' \to X_{R_z}$ is of finite presentation and hence spreads to a morphism of finite presentation $Y'' \to X|_V$ for some dense affine open $V \subseteq \Spec R$~\cite[Thm.~8.8.2]{ega_iv_3}. 
		The latter restricts on a dense affine open $U \subseteq V$ to a finite surjective morphism $Y''|_U \to X|_U$~\cite[Thm.~8.10.5]{ega_iv_3}, which then by Zariski's Main Theorem \cite[\href{https://stacks.math.columbia.edu/tag/05K0}{Tag 05K0}]{stacks-project} extends to a finite surjective morphism $Y \to X$.
		In case $X$ is excellent, $X$ is in particular Nagata, 
	 so is $X_{R_z}$ \cite[\href{https://stacks.math.columbia.edu/tag/032U}{Tag 032U}]{stacks-project}.
	Therefore, a finite surjective morphism $Y \to X$ with $Y$ normal is provided by taking
	the normalization of $\mcO_X$ in the fraction field $K(Y')$, see \cite[\href{https://stacks.math.columbia.edu/tag/0AVK}{Tag 0AVK}]{stacks-project}.
	\end{proof}

The following proposition  extends \cref{lem:splinter_ascend_pi_!}\labelcref{item:etale} from lifting the splinter property along finite \'etale covers to finite torsors.

	\begin{proposition}[Lifting the splinter property along finite torsors]
		\label{prop:torsor}
	Let $\pi\colon Y \to X$ be a
	morphism of Noetherian 
	schemes.
	Assume that $\pi$ is a finite torsor over a  ring~$R$,
	and that it satisfies either of the following conditions\,:
	\begin{enumerate}[label=\rm{(\alph*)}]
		\item  \label{item:i} $H^0(Y,\mcO_Y)$ is a field.
		\item \label{ii}   $H^0(X,\mcO_X) = H^0(Y,\mcO_Y)$.
		\item \label{item:iii} $X$ is Nagata, $Y$ is normal, and $\pi^\sharp \colon H^0(X, \mcO_X) \to H^0(Y , \mcO_Y)$ has reduced closed fibers.
	\end{enumerate}
	If $X$ is a splinter,
	then $Y$ is a splinter. 	
	\end{proposition}
	\begin{proof}
		Let $z \in \Spec R$ be a closed point. 
		Note that all three conditions \labelcref{item:i,ii,item:iii} are stable under base change along $R \to R_z$, so that
		by \cref{lem:bhatt_et_al_enough_to_check_at_closed_points}
		we can reduce to the case where $R$ is  local.
		From \cref{lem:covers} we know that $\pi^!\mcO_X \cong \mcO_{Y}$ for any finite torsor morphism $\pi\colon Y \to X$ over a local ring.
		\cref{lem:splinter_ascend_pi_!} shows that $Y$ is a splinter. 
	\end{proof}

\begin{remark}
	\cref{prop:torsor} does not hold without any assumptions on global sections. Consider indeed the trivial torsor given by a finite non-reduced group scheme $G$ over a field $k$\,: $\Spec k$ is a splinter but $G$ fails to be a splinter as it is non-reduced.
\end{remark}

\begin{proposition}[Lifting the derived splinter property along finite torsors]
Let $\pi\colon Y \to X$ be a
morphism of Noetherian 
schemes.
Assume that $\pi$ is a finite torsor over a  ring~$R$,
and that either $H^0(Y,\mcO_Y)$ is a field or $H^0(X,\mcO_X) = H^0(Y,\mcO_Y)$.
	If $X$ is a derived splinter, then $Y$ is a derived splinter.
\end{proposition}
\begin{proof}
	One argues as in the proof of \cref{prop:torsor} by reducing to $R$ local (using Nagata compactification \cite[\href{https://stacks.math.columbia.edu/tag/0F41}{Tag 0F41}]{stacks-project} instead of Zariski's Main Theorem in the proof of \cref{lem:bhatt_et_al_enough_to_check_at_closed_points})
and by replacing the use of \cref{lem:splinter_ascend_pi_!} with \cref{lem:Dsplinter_ascend_pi_!}.
\end{proof}
	 
	We say that a morphism of schemes $\pi\colon Y \to X$
is  a \emph{quasi-torsor} over a ring~$R$ 
if it is a morphism over $R$ and there exists $U \subseteq X$ open with $\codim_X (X\setminus U)\geq 2$ such that $f\vert_{f^{-1}(U)}$ is  a  torsor under a finite locally free group scheme $G$ over $R$. 
	The following theorem, which generalizes
 \cite[Prop.~6.20]{bhatt_et_al_globally_+_regular_varieties_and_mmp_for_threefolds_in_mixed_char},  
establishes Theorems~\labelcref{thm:intro_d-etale} and~\labelcref{thm:intro_d-torsor}.

	\begin{theorem}[Theorems~\labelcref{thm:intro_d-etale,thm:intro_d-torsor}]
		\label{prop:quasitorsor}
		Let $\pi\colon Y \to X$ be a
		finite surjective morphism of normal Nagata Noetherian 
		schemes.
		Assume that $\pi$ satisfies either of the following conditions\,:
		\begin{enumerate}[label=\rm{(\alph*)}]
			\item \label{item:finite_quasietale} $\pi$ is  quasi-\'etale.
			\item \label{item:finite_quasitorsor_reducedfibers} 
				$\pi$ is a quasi-torsor over a ring $R$, 
				and $\pi^\sharp \colon H^0(X, \mcO_X) \to H^0(Y , \mcO_Y)$ has reduced closed fibers.  
		\end{enumerate}
		If $X$ is a splinter, then $Y$ is a splinter.
		
	 Assume in addition that $X$ is excellent, and let  $\Delta$ be an effective $\bbQ$-Weil divisor on $X$.
		If $(X, \Delta)$ is globally $+$-regular,
		then $(Y, \pi^* \Delta)$ is globally $+$-regular.
	\end{theorem}
	\begin{proof}
		The theorem in case~\labelcref{item:finite_quasietale} is  \cref{lem:splinter_ascend_pi_!}\labelcref{item:etale} and \cref{lem:lifting_glob_+_reg}\labelcref{item:globplus_quasietale}.
		We now prove case~ \labelcref{item:finite_quasitorsor_reducedfibers}.
		Using \cref{lem:bhatt_et_al_enough_to_check_at_closed_points} we can and do reduce to the case where $R$ is local so that $\Pic( \Spec R)=0$.
		By assumption, there exists an open subset $U\subseteq X$ such that $\codim_X (X\setminus U)\geq 2$ and such that $\pi\vert_U \colon V\coloneqq \pi^{-1}(U) \to U$ is 
		a finite torsor over $R$. 
 By \cref{lem:covers}, 	we have $\pi_U^! \mcO_U \cong \mcO_V$.
		Since $Y$ is assumed to be normal, we can work on each connected component of $Y$ separately and assume without loss of generality that $Y$ is connected.
		Since $X$ is a splinter, it is Cohen--Macaulay and hence
		universally catenary \cite[\href{https://stacks.math.columbia.edu/tag/02II}{Tag 02II}]{stacks-project}.
		From the fact that $\pi$ is finite surjective, it follows from \cite[\href{https://stacks.math.columbia.edu/tag/02II}{Tag 02II}]{stacks-project}
		that $Y \setminus V$ has codimension at least $2$ in~$Y$.
		Since $X$ and $Y$ are normal, 
		 we have
		$H^0(U, \mcO_U) = H^0(X, \mcO_X)$ and $H^0(V, \mcO_V) = H^0(Y, \mcO_Y)$.
		If $X$ is a splinter, \cref{lem:splinter_ascend_pi_!}\labelcref{item:reducedfibers} shows that $V$ is a splinter and we conclude with \cref{lem:splinter_invariant_codim_2_surgery} that $Y$ is a splinter.
		 On the other hand, if $(X,\Delta)$ is globally $+$-regular, \cref{lem:lifting_glob_+_reg} shows that $(V, \pi^* \Delta \vert_V)$ is globally $+$-regular and
		 we conclude with \cref{rmk:dense_open_glob_+_reg} that $(Y,\pi^*\Delta)$ is globally $+$-regular.
	\end{proof}

\begin{remark}
By replacing the use of  \cref{lem:lifting_glob_+_reg} with \cref{lem:gFr_ascend_pi_!} (or rather \cref{rmk:lift-gFr}) and the use of 
 \cref{rmk:dense_open_glob_+_reg} with \cref{lem:small_birational_map_globF} in the proof of \cref{prop:quasitorsor} in the global $+$-regular case, one obtains the following statement. 
 Let $\pi\colon Y \to X$ be a finite 
 morphism of normal varieties over 
 a field of positive characteristic.
 Assume that $\pi$ is quasi-\'etale, or that $\pi$ is a quasi-torsor over a ring such that $\pi^\sharp \colon H^0(X, \mcO_X) \to H^0(Y , \mcO_Y)$ has reduced closed fibers.  
 Let $\Delta$ be an effective $\bbQ$-Weil divisor on $X$. 
 If $(X,\Delta)$ is globally $F$-regular, then $(Y,\pi^* \Delta)$ is globally $F$-regular. 
	This generalizes \cite[Lem.~11.1]{patakfalvi2020beauvillebogomolov} where the quasi-\'etale case was treated.
\end{remark}

	\section{Finite torsors over proper splinters over a field}
	\label{S:finite_torsors_over_splitners}
	We now focus on finite torsors over proper splinters over a field and 
	 prove Theorem~\ref{item:thm:triv_fund_grp}.
		The following lemma 
	can be found in \cite[Thm.~2, p.~121]{mumford-abelian} (we thank Michel Brion for bringing this reference to our attention). We provide an alternate proof based on Hirzebruch--Riemann--Roch for (not necessarily smooth) proper schemes over a field.
	
	\begin{lemma}\label{lem:chi_formula_cover}
		Let $X$ be a proper scheme over a field $k$ and let $\pi\colon Y \to X$ be a morphism of schemes over $k$. Assume that $\pi$ satisfies either of the following conditions\,:
		\begin{enumerate}[label=\rm{(\alph*)}]
			\item $\pi$ is finite \'etale.
			\item $\pi$ is a finite torsor.\label{item:torsor_finite_group_scheme}
		\end{enumerate}
		Then $\chi(\mcO_Y) = \deg(\pi)\chi(\mcO_X)$.
	\end{lemma}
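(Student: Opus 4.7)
The plan is to reduce the identity, via flatness of $\pi$, to a Chern-class computation governed by Baum--Fulton--MacPherson Riemann--Roch for (possibly singular) proper schemes over $k$.

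In both cases (i) and (ii), $\pi$ is finite and flat of constant degree $d \coloneqq \deg(\pi)$, so $\pi_*\mcO_Y$ is locally free of rank $d$, higher direct images vanish, and the Leray spectral sequence gives $\chi(Y,\mcO_Y) = \chi(X, \pi_*\mcO_Y)$. It therefore suffices to prove $\chi(X, \pi_*\mcO_Y) = d \cdot \chi(X, \mcO_X)$. The key geometric input is that $\pi^*\pi_*\mcO_Y$ is a free $\mcO_Y$-module of rank $d$. In case (ii), the torsor structure provides an isomorphism $Y \times_X Y \cong Y \times_k G$ of $Y$-schemes via the action map; applying flat base change to the Cartesian square yields
$$\pi^*\pi_*\mcO_Y \cong \mcO_Y \otimes_k H^0(G, \mcO_G),$$
which is free of rank $d = \dim_k H^0(G,\mcO_G)$. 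For case (i), one reduces to (ii) by passing to a Galois closure: working on connected components, a Galois \'etale cover $\widetilde Y \to X$ factors as $\widetilde Y \to Y \xrightarrow{\pi} X$, which is a torsor under a finite constant group $G$ and under a subgroup $H \leq G$ respectively, and one applies (ii) together with multiplicativity of Euler characteristics ($\chi(\widetilde Y) = |G|\chi(X) = |H|\chi(Y)$, hence $\chi(Y) = [G:H]\chi(X) = d\chi(X)$).

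To complete case (ii), I invoke Riemann--Roch for proper schemes in the Baum--Fulton--MacPherson form: there is a natural transformation $\tau_X\colon K_0(X) \to \CH_*(X)_\bbQ$, covariant for proper morphisms, satisfying $\chi(X,\mcF) = \deg \tau_X(\mcF)$ and $\tau_X(\mcE \otimes \mcF) = \ch(\mcE) \cap \tau_X(\mcF)$ for $\mcE$ locally free. Consequently $\chi(X, \pi_*\mcO_Y) = \deg\bigl(\ch(\pi_*\mcO_Y) \cap \tau_X(\mcO_X)\bigr)$, and it suffices to show $\alpha \cap \tau_X(\mcO_X) = 0$, where $\alpha \coloneqq \ch(\pi_*\mcO_Y) - d$ is regarded in operational Chow cohomology. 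The triviality $\pi^*\pi_*\mcO_Y \cong \mcO_Y^{\oplus d}$ forces $\pi^*\alpha = 0$; the projection formula together with the identity $\pi_*\pi^* = d \cdot \id$ on rational Chow homology then yields
$$d \cdot \bigl(\alpha \cap \tau_X(\mcO_X)\bigr) = \alpha \cap \pi_*\pi^* \tau_X(\mcO_X) = \pi_*\bigl(\pi^*\alpha \cap \pi^*\tau_X(\mcO_X)\bigr) = 0,$$
whence $\alpha \cap \tau_X(\mcO_X) = 0$ in $\CH_*(X)_\bbQ$ and the desired identity follows upon taking degrees.

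The main obstacle will be invoking Riemann--Roch in a form suited to singular proper schemes, together with the companion projection formula in operational Chow cohomology; these are available in Fulton's intersection-theoretic framework but require some care to cite in the right form. A secondary care-point is the reduction of case (i) to case (ii), which has to be carried out component by component so that the Galois closure exists.
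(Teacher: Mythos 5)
Your proof is correct, and in case (ii) it takes a genuinely different route through the Chern-class computation than the paper. Both arguments exploit the torsor isomorphism $Y\times_X Y \cong G\times_k Y$ and invoke Riemann--Roch for singular proper schemes over a field, but the two extractions of the numerical identity differ. The paper pushes this isomorphism all the way down to $X$ to obtain $\pi_*\mcO_Y \otimes \pi_*\mcO_Y \cong (\pi_*\mcO_Y)^{\oplus n}$, hence $\ch(\pi_*\mcO_Y)^2 = n\,\ch(\pi_*\mcO_Y)$ in $\mathrm{A}^*(X)_\bbQ$, and then uses the (slightly delicate) fact that the Chern character of a nonzero-rank vector bundle is a \emph{unit} in rational operational Chow cohomology to deduce the stronger identity $\ch(\pi_*\mcO_Y) = n\,\ch(\mcO_X)$. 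You instead stop at the intermediate step $\pi^*\pi_*\mcO_Y \cong \mcO_Y^{\oplus d}$, set $\alpha = \ch(\pi_*\mcO_Y) - d$, and use the projection formula for the bivariant theory together with $\pi_*\pi^* = d\cdot\id$ on $\CH_*(X)_\bbQ$ to kill $\alpha \cap \tau_X(\mcO_X)$; this is enough for Euler characteristics without establishing the vanishing of higher Chern classes. Your route avoids having to argue that $\ch(\pi_*\mcO_Y)$ is invertible, at the modest cost of tracking the class only after capping with $\tau_X(\mcO_X)$. For case (i), you reduce to (ii) via a Galois closure; the paper offers this as an alternative to the more direct observation that the relative Todd class of an \'etale morphism is trivial, which sidesteps the need to work component by component.
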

	\begin{proof}
		We first establish \labelcref{item:torsor_finite_group_scheme}.
		Recall that there is a Hirzebruch--Riemann--Roch formula
		$$\chi(\mcE) = \int_X \ch(\mcE)\cap \mathrm{td}(X)$$
		for any vector bundle $\mcE$ on a proper scheme $X$ over a field\,; see \cite[Cor.~18.3.1]{fulton_intersection_theory}.
		In particular, the Euler characteristic only depends on the class of the Chern character $\ch(\mcE) \in \mathrm{A}^*(X)_\bbQ$, where $\mathrm{A}^*(X)_\bbQ$ denotes the Chow cohomology \cite[\href{https://stacks.math.columbia.edu/tag/0FDV}{Tag 0FDV}]{stacks-project}  with $\bbQ$-coefficients.
		Assume $\pi\colon Y \to X$ is a torsor under a finite group scheme $G$ over $k$.
		By definition of a $G$-torsor, the product $Y \times_X Y \to Y$ is isomorphic to $G \times_k Y \to Y$ as schemes over~$Y$.
		This gives an isomorphism 
		$$\pi_* \mcO_Y \otimes \pi_* \mcO_Y \cong \pi_*\mcO_Y^{\oplus n},$$
		where $n=\deg(\pi)$ is the order of $G$. 	
		Since $\pi$ is finite flat, $\pi_* \mcO_Y$ is a vector bundle (of rank $n$) on ~$X$,
		and hence by \cite[\href{https://stacks.math.columbia.edu/tag/02UM}{Tag 02UM}]{stacks-project} we have the identity
		\begin{equation*}
		\ch(\pi_* \mcO_Y)\cdot \ch( \pi_* \mcO_Y) = n\, \ch(\pi_* \mcO_Y) \quad \mbox{in } \mathrm{A}^*(X)_\bbQ.
		\end{equation*}
		Since $\ch(\pi_*\mcO_Y)$ is a unit in $\mathrm{A}^*(X)_\bbQ$, 
		we obtain $\ch(\pi_*\mcO_Y) = n \ch( \mcO_X)$. By Hirzebruch--Riemann--Roch, the equality $\chi(\pi_*\mcO_Y) = \chi(\mcO_Y)= n \chi(\mcO_X)$ follows.
		If $\pi$ is finite \'etale, one can use Grothendieck--Riemann--Roch for proper schemes over a field \cite[Thm.~18.3]{fulton_intersection_theory}, while noting that the relative Todd class $\mathrm{td}(T_\pi)$ is equal to~$1$. 
		Alternatively one can reduce to case \labelcref{item:torsor_finite_group_scheme}
by using the fact \cite[\href{https://stacks.math.columbia.edu/tag/03SF}{Tag 03SF}]{stacks-project} that
there exists a Galois cover $Y' \to X$ dominating $\pi$.
	\end{proof}

	\begin{theorem}\label{thm:splinters and covers}
		Let $X$ be a proper scheme over an integral Noetherian scheme $S$ of positive characteristic and let $\pi\colon Y \to X$ be a  morphism of schemes over $S$.
		Assume that  $H^0(X,\mcO_X) = H^0(Y,\mcO_Y)$.
		In addition, assume either of the following conditions\,:
		\begin{enumerate}[label=\rm{(\alph*)}]
			\item \label{item:finite_etale_morph} $\pi$ is finite \'etale.
			\item \label{item:finite_torsor} $\pi$ is a finite torsor.
		\end{enumerate}
		If $X$ is a splinter, 
		then $\pi$ is an isomorphism.
	\end{theorem}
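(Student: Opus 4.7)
The plan is to reduce via base change to the generic point of $S$ to the case of proper schemes over a field, and then compare Euler characteristics. First, one may assume that $X$ is connected (and non-empty): each connected component of $X$ is a clopen splinter by \cref{lem:splinter_invariant_codim_2_surgery}\labelcref{item:open_of_splinter}, and the hypothesis $H^0(X,\mcO_X) = H^0(Y,\mcO_Y)$ decomposes compatibly with the product $H^0(X,\mcO_X) = \prod_i H^0(X_i,\mcO_{X_i})$ over components. By \cref{prop:torsor}, $Y$ is then itself a splinter. Replacing $S$ by the reduction of the scheme-theoretic image of $X\to S$, we may further assume $X \to S$ is surjective.

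Let $\eta \in S$ be the generic point and set $K \coloneqq \kappa(\eta)$. The generic fiber $X_\eta$ is a non-empty proper scheme over $K$ and, by \cref{lem:generic_fiber_splinter}, is a splinter (hence normal, by \cref{prop:bhatt_normal_CM}); similarly for $Y_\eta$. The morphism $\pi_\eta\colon Y_\eta \to X_\eta$ remains finite \'etale, respectively a finite torsor under the pullback group scheme. Flat base change applied to $\mcO_X$ and $\mcO_Y$ turns the hypothesis $H^0(X,\mcO_X) = H^0(Y,\mcO_Y)$ into $H^0(X_\eta, \mcO_{X_\eta}) = H^0(Y_\eta, \mcO_{Y_\eta})$ as $K$-vector spaces.

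Applying \cref{prop:semiample} on each connected (hence integral) component of $X_\eta$ and $Y_\eta$ gives $H^i(X_\eta, \mcO_{X_\eta}) = 0 = H^i(Y_\eta, \mcO_{Y_\eta})$ for all $i > 0$. Combined with the equality of $H^0$'s, this yields $\chi(\mcO_{X_\eta}) = \chi(\mcO_{Y_\eta})$. On the other hand, \cref{lem:chi_formula_cover} (applied componentwise on $X_\eta$) gives $\chi(\mcO_{Y_\eta}) = \deg(\pi_\eta)\cdot \chi(\mcO_{X_\eta})$, and since $\chi(\mcO_{X_\eta}) \geq 1$, one concludes $\deg(\pi_\eta) = 1$.

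Finally, since $\pi$ is finite flat, $\pi_*\mcO_Y$ is a locally free $\mcO_X$-module of locally constant rank; by the connectedness of $X$ together with flat base change, this rank equals $\deg(\pi_\eta) = 1$. The canonical map $\mcO_X \to \pi_*\mcO_Y$ is then a morphism into a rank-$1$ locally free $\mcO_X$-algebra sending $1$ to the identity section, hence is an isomorphism of $\mcO_X$-algebras; as $\pi$ is affine (being finite), this forces $\pi$ to be an isomorphism. The principal point to monitor is that both the splinter property and the $H^0$-equality survive the base change to~$\eta$, which is ensured by \cref{lem:generic_fiber_splinter} and flat base change, respectively.
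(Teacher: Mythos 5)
Your argument follows the same strategy as the paper's proof: pass to the generic fiber over $S$, use \cref{lem:generic_fiber_splinter} and \cref{prop:torsor} to see that both generic fibers are splinters, then combine \cref{prop:semiample} with \cref{lem:chi_formula_cover} to force $\deg(\pi_\eta)=1$, and conclude. You are actually slightly more careful than the paper in two places that deserve credit: you replace $S$ by the (reduced) scheme-theoretic image to ensure $X\to S$ is surjective, which is needed to legitimately invoke \cref{lem:generic_fiber_splinter} (the paper applies that lemma without verifying its surjectivity hypothesis); and you spell out why $\deg(\pi_\eta)=1$ forces $\pi$ itself to be an isomorphism, via the rank-$1$ locally free $\mcO_X$-algebra $\pi_*\mcO_Y$, a step the paper leaves implicit.

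One step, however, is out of order. You invoke \cref{prop:torsor} in the first paragraph, before passing to the generic fiber. As stated, \cref{prop:torsor} requires $X$ and $Y$ to be Noetherian \emph{Nagata} schemes over a Noetherian \emph{ring}, and there is no Nagata hypothesis in the theorem: a proper scheme over an arbitrary integral Noetherian $S$ of positive characteristic need not be Nagata. The fix is precisely the paper's ordering: first pass to the generic fiber, where $X_\eta$ and $Y_\eta$ are of finite type over the field $K=\kappa(\eta)$ (hence Nagata), and only then apply \cref{prop:torsor} to $\pi_\eta\colon Y_\eta\to X_\eta$, using the flat-base-change identity $H^0(X_\eta,\mcO_{X_\eta})=H^0(Y_\eta,\mcO_{Y_\eta})$ that you already established. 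With that reordering your proof is complete and, if anything, tighter than the one in the paper.
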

	\begin{proof}
		Let $\eta$ be the generic point of $S$. It is is enough to show that the restriction $\pi_\eta \colon Y_\eta \to X_\eta$ of $\pi$ to $\eta$ is an isomorphism.
		By \cref{lem:generic_fiber_splinter}, if $X$ is a splinter, then $X_\eta$ is a splinter. Therefore, we may and do assume that $S$ is the spectrum of a field.
		Moreover, since a splinter is normal,
		we may and do assume that $X$ is connected, in which case $H^0(X,\mcO_X)$ is a field.
		On the other hand, by 		\cref{lem:splinter_ascend_pi_!}\labelcref{item:etale}, or more simply 
		\cref{lem:splinter_ascend_pi_!}\labelcref{item:sameH0}, in the \'etale case and
		by \cref{prop:torsor} in the torsor case, $Y$ is a splinter.
		Since the structure sheaf of a proper splinter in positive characteristic has trivial positive cohomology by \cref{prop:semiample}, we have $\chi(\mcO_X) = \chi(\mcO_Y)=1$, where the dimension is taken with respect to the field $H^0(X, \mcO_X)$
		and we conclude with \cref{lem:chi_formula_cover} that $\pi$ is an isomorphism.
	\end{proof}

	\begin{theorem}
		\label{prop:trivial_fundamental_group}
		Let $X$ be a connected proper scheme over a field $k$ of positive characteristic with a $k$-rational point $x\in X(k)$.
	Assume that $X$ is a splinter.
		\begin{enumerate}
			\item If $k$ is separably closed, then the \'etale fundamental group $\pi_1^{\text{\'et}}(X,x)$ of $X$ is trivial. \label{enum:geometric_fund_group_trivial}
			\item \emph{(Theorem~\ref{item:thm:triv_fund_grp})}
			 The Nori fundamental group scheme $\pi_1^N(X,x)$ of $X$  is trivial. \label{enum:Nori_fund_group_trivial}
		\end{enumerate}
	\end{theorem}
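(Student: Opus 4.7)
My plan is to deduce both parts from \cref{thm:splinters and covers}, which guarantees that a finite \'etale or finite torsor morphism $Y \to X$ with $X$ a proper splinter in positive characteristic and $H^0(X,\mcO_X) = H^0(Y,\mcO_Y)$ must be an isomorphism. I first note that since $X$ is a splinter, \cref{prop:bhatt_normal_CM} gives that $X$ is normal; being connected, $H^0(X,\mcO_X)$ is a field, and the $k$-rational point $x$ provides a retraction of $k \hookrightarrow H^0(X,\mcO_X)$, forcing $H^0(X,\mcO_X) = k$.

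For \labelcref{enum:geometric_fund_group_trivial}, it suffices to show every connected finite \'etale cover $\pi\colon Y \to X$ is an isomorphism. The separably closed assumption on $k$ ensures that the fiber $\pi^{-1}(x)$ is a disjoint union of copies of $\Spec k$, so $Y$ admits a $k$-rational point $y$ lying over $x$. As $Y$ is \'etale over the normal scheme $X$ it is normal, hence connectedness combined with $y \in Y(k)$ gives $H^0(Y,\mcO_Y) = k = H^0(X,\mcO_X)$. Applying \cref{thm:splinters and covers}\labelcref{item:finite_etale_morph} then yields that $\pi$ is an isomorphism, proving $\pi_1^{\text{\'et}}(X,x) = 1$.

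For \labelcref{enum:Nori_fund_group_trivial}, I plan to show that every pointed torsor $\pi\colon (Y,y) \to (X,x)$ under a finite $k$-group scheme $G$ is trivial, proceeding by d\'evissage along $1 \to G^0 \to G \to G^{\text{\'et}} \to 1$ where $G^0$ is the identity component. The quotient $Y/G^0 \to X$ is a $G^{\text{\'et}}$-torsor, hence finite \'etale, and carries the marked point $[y]$. Let $(Y/G^0)_0$ denote the connected component of $[y]$; the scheme-theoretic stabilizer $H \subseteq G^{\text{\'et}}$ of this component is obtained as the preimage of $[(Y/G^0)_0]$ under the action map $G^{\text{\'et}} \to \pi_0(Y/G^0)$, and restricting $Y/G^0 \times_X Y/G^0 \cong Y/G^0 \times_k G^{\text{\'et}}$ shows that $(Y/G^0)_0 \to X$ is an $H$-torsor. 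Since $(Y/G^0)_0$ is \'etale over the normal scheme $X$ and has a $k$-rational point, its $H^0$ equals $k$, so \cref{thm:splinters and covers}\labelcref{item:finite_torsor} forces $(Y/G^0)_0 \to X$ to be an isomorphism. This pins down $H = 1$ and, via the resulting section extended $G^{\text{\'et}}$-equivariantly, trivializes $Y/G^0 \to X$ as a torsor.

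The remaining task is to show that the $G^0$-torsor $Z \to X$ obtained by restricting $Y \to Y/G^0 \cong X \times_k G^{\text{\'et}}$ along $X \times \{e\}$ is trivial, and this is the main obstacle: if $G^0$ is infinitesimal and nontrivial then $H^0(Z,\mcO_Z) \neq k$ in general, so \cref{thm:splinters and covers} no longer applies directly to $Z \to X$. My plan is to invoke instead the cohomological properties of proper splinters, namely the torsion-freeness of $\Pic(X)$ from \cref{prop:pic_torsion_free} and the vanishing $H^1(X,\mcO_X) = 0$ from \cref{prop:semiample}. Filtering $G^0$ by a central series with composition factors of multiplicative or additive type, Kummer theory identifies $\mu_p$-torsors with $\Pic(X)[p] = 0$, while the Artin--Schreier sequence shows that pointed $\alpha_p$-torsors are trivialized by the vanishing of $H^1(X,\mcO_X)$ together with the marked $k$-point forcing the residual $k/k^p$ class to vanish. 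An inductive argument along such a filtration should then conclude the proof, with the noncommutative infinitesimal case requiring a careful choice of filtration by Frobenius kernels.
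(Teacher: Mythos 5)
For part \labelcref{enum:geometric_fund_group_trivial}, your argument is essentially identical to the paper's: both reduce to \cref{thm:splinters and covers}\labelcref{item:finite_etale_morph} by noting that a connected finite \'etale cover of a connected normal proper scheme over a separably closed field $k$ with a $k$-point satisfies $H^0(Y,\mcO_Y)=k$.

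For part \labelcref{enum:Nori_fund_group_trivial}, you take a genuinely different and substantially more complicated route, and it has a real gap. The paper's argument is short: by Nori's Prop.~3 (\cite{nori_the_fundamental_group_scheme}, p.~87), any surjection $\pi_1^N(X,x)\twoheadrightarrow G$ onto a nontrivial finite group scheme produces a $G$-torsor $Y\to X$ satisfying precisely the hypothesis $H^0(Y,\mcO_Y)=H^0(X,\mcO_X)$ needed to invoke \cref{thm:splinters and covers}\labelcref{item:finite_torsor}; that is the whole proof. You instead d\'evissage along $1\to G^0\to G\to G^{\mathrm{\acute{e}t}}\to 1$ and try to trivialize the infinitesimal part by hand. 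Several things go wrong there. First, you assert that a nontrivial $G^0$-torsor $Z\to X$ has $H^0(Z,\mcO_Z)\neq k$ in general; but this is not so, e.g.\ for a nontrivial $\mu_p$-torsor one has $H^0(Z,\mcO_Z)=k$ by exactly the computation in \cref{prop:pic_torsion_free}, and the relevant point is rather that you cannot assume $H^0(Z,\mcO_Z)=k$ without further input --- input which Nori's theorem supplies for free for torsors coming from $\pi_1^N$. Second, your filtration step is incomplete: a finite infinitesimal group scheme over a non-algebraically closed field need not be filtered with successive quotients isomorphic to $\mu_p$ or $\alpha_p$ (twisted forms occur), and your Kummer-theory claim $H^1_{\mathrm{fppf}}(X,\mu_p)\cong \Pic(X)[p]$ is off by a $k^\times/(k^\times)^p$ term when $k$ is imperfect; the same kind of base-field subtlety affects the $\alpha_p$ step. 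You acknowledge the noncommutative infinitesimal case is unresolved, which confirms the gap. The lesson is that \cref{thm:splinters and covers} already handles arbitrary finite torsors (including non-\'etale, noncommutative ones), so no reduction to \'etale plus infinitesimal is needed; what is missing from your write-up is the observation that Nori's construction guarantees $H^0(Y,\mcO_Y)=H^0(X,\mcO_X)$ for the torsors that actually witness nontriviality of $\pi_1^N(X,x)$.
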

	
	\begin{proof}
		Statement \labelcref{enum:geometric_fund_group_trivial} follows from \cref{thm:splinters and covers} since for $k$ separably closed any connected finite \'etale cover $\pi \colon Y \to X$ satisfies $H^0(Y, \mcO_Y)=H^0(X, \mcO_X)=k$. 
		For statement \labelcref{enum:Nori_fund_group_trivial}, first note that the Nori fundamental group scheme $\pi_1^N(X,x)$ is well-defined as a splinter is reduced. 
		Assume for contradiction that $\pi_1^N(X,x)$ is nontrivial. Since $\pi_1^N(X,x)$ is pro-finite, there is a surjective group scheme  homomorphism $\pi_1^N(X,x) \twoheadrightarrow G$ to a nontrivial finite group scheme $G$. 
		By \cite[Prop.~3, p.~87]{nori_the_fundamental_group_scheme}, there exists a $G$-torsor $Y\to X$ with $H^0(Y, \mcO_Y)=H^0(X, \mcO_X)$. 
		This contradicts \cref{thm:splinters and covers}.
	\end{proof}

	\begin{remark}[On the triviality of the Nori fundamental group scheme]
		As mentioned in \cref{cor:splinter_geom_normal}, a connected proper splinter $X$ is geometrically normal, hence geometrically reduced, over the field $K\coloneqq H^0(X, \mcO_X)$\,; in particular it acquires a rational point after some finite separable field extension of $K$. 
		Moreover, recall the general facts that the Nori fundamental group scheme is invariant under separable base change, and that the triviality of the Nori fundamental group scheme is independent of the choice of base point.
	\end{remark}

	We say that a finite \'etale cover $\pi\colon Y \to X$ is trivial if it is isomorphic over $X$ to a disjoint union of copies of $X$. We say that a finite torsor $\pi\colon Y\to X$ under a finite group scheme $G$ over $k$ is trivial if it is isomorphic to $X\times_k G$ over $X$.
	An immediate consequence of \cref{prop:trivial_fundamental_group} is the following\,:
	
\begin{corollary} \label{cor:torsor-splinter}
	Let $X$ be a connected proper scheme over a field $k$ of positive characteristic. Assume that $X$ is a splinter.
	\begin{enumerate}
		\item \label{item:etale_cover_trivial} If $k$ is separably closed, then  any finite \'etale cover of $X$ is trivial.
		\item \label{item:finite_torsor_trivial} If $k$ is algebraically closed, then any finite torsor over $X$ is trivial.
	\end{enumerate}
\end{corollary}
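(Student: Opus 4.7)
The plan is to deduce both statements directly from \cref{prop:trivial_fundamental_group}. The preliminary step is to produce a rational point on $X$: by \cref{cor:splinter_geom_normal}, the field $K \coloneqq H^0(X,\mcO_X)$ is such that $X$ is geometrically normal, hence geometrically reduced, over $K$. For part (i), $K$ is a finite purely inseparable extension of the separably closed field $k$ and is therefore itself separably closed, so $X$ acquires a $K$-rational point; for part (ii), algebraic closedness of $k$ forces $K = k$ and yields a $k$-rational point directly.

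For part (i), I would apply part (i) of \cref{prop:trivial_fundamental_group} to $X$ viewed over the separably closed field $K$ with the chosen $K$-rational point $x$, obtaining $\pi_1^{\text{\'et}}(X, x) = 1$. The corollary follows at once: any finite \'etale cover decomposes as a disjoint union of its connected components, each of which is a connected finite \'etale cover of $X$ and hence isomorphic to $X$ by triviality of $\pi_1^{\text{\'et}}(X,x)$.

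For part (ii), part (ii) of \cref{prop:trivial_fundamental_group} gives $\pi_1^N(X, x) = 1$. Given a finite $G$-torsor $\pi\colon Y \to X$ under a finite group scheme $G$ over $k$, the fiber $\pi^{-1}(x)$ is a $G$-torsor over $\Spec k$. I would use that over an algebraically closed field $k$ the pointed set $H^1_{\mathrm{fppf}}(\Spec k, G)$ is trivial for any finite flat group scheme $G$ (reducing via the connected-\'etale sequence to the standard vanishing for finite \'etale groups, $\mu_p$, and $\alpha_p$), so $\pi^{-1}(x)$ admits a $k$-rational point $y$. The pointed torsor $(Y, y) \to (X, x)$ then corresponds, via Nori's classification from \cite{nori_the_fundamental_group_scheme} (already invoked in \cref{prop:trivial_fundamental_group}), to a group-scheme homomorphism $\pi_1^N(X,x) \to G$; triviality of $\pi_1^N(X,x)$ forces this homomorphism to be trivial, whence $(Y, y) \cong (X \times_k G, (x, e))$ as pointed $G$-torsors and, in particular, $Y \cong X \times_k G$ over $X$.

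The main subtlety is this last invocation of Nori's correspondence, which a priori applies only to ``indecomposable'' pointed torsors (those satisfying $H^0(Y, \mcO_Y) = H^0(X, \mcO_X)$). I plan to address this by factoring $(Y, y)$ through its indecomposable sub-$H$-torsor $(Y_0, y) \to (X, x)$, where $Y_0$ is the connected component of $y$ in $Y$ and $H \le G$ is its scheme-theoretic stabilizer: Nori's theorem then applies directly to $(Y_0, y)$, triviality of $\pi_1^N(X,x)$ forces $H = 1$, and reinducing along $H \hookrightarrow G$ yields $Y \cong X \times_k G$.
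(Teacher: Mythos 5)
Your overall strategy matches the paper's: derive both parts from \cref{prop:trivial_fundamental_group} and, for part (ii), from Nori's correspondence between pointed finite torsors and homomorphisms out of $\pi_1^N(X,x)$. Part (i), and the reduction of part (ii) to the existence of a rational point $y$ in the fiber (which is immediate since a nonempty finite scheme over an algebraically closed field has a rational point --- the detour via $H^1_{\mathrm{fppf}}$ is unnecessary but harmless), are both correct and correctly argued.

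The issue is your final paragraph. First, the worry it raises is not a genuine gap: the correspondence cited in the paper is an honest \emph{equivalence of categories} between \emph{all} pointed finite torsors $(Y,y)\to (X,x)$ and pairs $(G,\varphi\colon \pi_1^N(X,x)\to G)$, not merely those with $\varphi$ surjective (the ``indecomposable'' ones). Under this equivalence, triviality of $\pi_1^N$ leaves only the trivial homomorphism to each $G$, whose associated torsor is $X\times_k G$, and one is done directly. Second, and more seriously, the fix you propose does not work: taking $Y_0$ to be the connected component of $y$ in $Y$ and $H$ its scheme-theoretic stabilizer fails for infinitesimal $G$. If $G$ is local and $X$ is connected, then $Y$ is connected, so $Y_0 = Y$ and $H = G$, and no reduction occurs; yet such a torsor may well be decomposable in Nori's sense, since $H^0(Y,\mcO_Y) = k[H'\backslash G]$ can have nilpotents when the torsor is induced from a proper subgroup $H'\subsetneq G$, so that $H^0(Y,\mcO_Y)\ne H^0(X,\mcO_X)$ even though $Y$ is connected. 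The correct ``minimal sub-torsor through $y$'' (if one insists on reducing to the surjective case) is not cut out by topology but by the Tannakian/monodromy construction --- equivalently, it is the image of $\pi_1^N(X,x)\to G$ --- and is in general strictly smaller than the connected component. You should simply invoke the full equivalence, as the paper does, and drop the last paragraph.
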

\begin{proof}
	Statement \labelcref{item:etale_cover_trivial} is clear from (the proof of) \cref{prop:trivial_fundamental_group}, while statement \labelcref{item:finite_torsor_trivial} follows from the fact \cite{nori_the_fundamental_group_scheme} that
	 for a $k$-point of $x\in X(k)$ there is an equivalence of categories between the category of finite torsors $Y \to X$ equipped with a $k$-point $y \in Y(k)$ mapping to $x$ and the category of finite group schemes $G$ over $k$ equipped with a $k$-group scheme homomorphism $\pi_1^N(X,x) \to G$.
\end{proof}
	
\section{Proper splinters have negative Kodaira dimension}
	\label{S:splitners_neg_kod_dim}
	Let $X$ be a Gorenstein projective scheme over a field $k$ of positive characteristic. Since $-K_X$ big implies that $X$ has negative Kodaira dimension,
	it is expected in view of \cref{conj:splinter_big_anticanonical_class} that, if $X$ is a splinter, then its Kodaira dimension is negative. In this section, we confirm this expectation (without assuming $X$ to be Gorenstein) and prove Theorem~\ref{item:thm:negative_kod_dim}.\medskip
	
	Let $X$ be a normal proper variety over a field $k$ and let $D$ be a Weil divisor on $X$. We define the \emph{Iitaka dimension} of $D$ to be
	$$ \kappa(X, D)\coloneqq \min \left \{r \in \bbZ_{\geq 0}  \mid (h^0(X, \mcO_X(dD)) / d^r)_{d\geq 0} \text{ is bounded} \right \}. $$
	By convention, if  $h^0(X, \mcO_X(dD)) \coloneqq \dim_k H^0(X, \mcO_X(dD))=0$ for all $d>0$,  we set $\kappa(X, D)=-\infty$. Beware that we deviate from usual conventions as the Iitaka dimension is usually defined for invertible sheaves on projective varieties. 
	If $X$ is a smooth projective variety over $k$, $\kappa(X, K_X)$ agrees with the Kodaira dimension of $X$.
	The following proposition refines the observation from \cref{lem:splinter_trivial_canonical_sheaf} showing that if $X$ is a proper splinter in positive characteristic, then $K_X$ is not effective.
	\begin{theorem}[Theorem~\ref{item:thm:negative_kod_dim}]
		\label{prop:K_equvialenceodaira_dim_f_split_variety}
		Let $X$ be a  positive-dimensional connected proper scheme over a field of positive characteristic. 
		If $X$ is a splinter, 
		then $\kappa(X, K_X)=-\infty$.
	\end{theorem}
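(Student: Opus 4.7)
The approach leverages two consequences of the splinter property\,: Frobenius splitness, which forces $-(p-1)K_X$ to be effective, and the lifting result \cref{lem:splinter_ascend_pi_!}, which allows the splinter property to ascend along a suitable cyclic cover. First I would reduce to $k=\bar k$\,: by \cref{cor:splinter_geom_normal}, $X$ is geometrically normal over $K\coloneqq H^0(X,\mcO_X)$, so \cref{lem:splinter_base change} allows base change to $\bar K$, after which $X$ is a positive-dimensional connected proper splinter over $\bar K=H^0(X,\mcO_X)$, normal Cohen--Macaulay by \cref{prop:bhatt_normal_CM}, with well-defined canonical Weil divisor $K_X$ and with finite Frobenius $F\colon X\to X$. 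Since $F$ is a finite surjection, the splinter property gives a splitting of $\mcO_X\to F_*\mcO_X$\,; the adjunction $\Hom_{\mcO_X}(F_*\mcO_X,\mcO_X)\cong H^0(X,F^!\mcO_X)$ together with the standard identification $F^!\mcO_X\cong\omega_X^{[1-p]}$ (for normal $F$-finite Cohen--Macaulay schemes) then yields $H^0(X,\omega_X^{[1-p]})\neq 0$, i.e., $-(p-1)K_X\sim D_-$ for some effective Weil divisor $D_-$.

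Suppose for contradiction that $H^0(X,\omega_X^{[d]})\neq 0$ for some $d\geq 1$, and pick the minimal such $d$. A nonzero section gives $dK_X\sim D_+$ for an effective Weil divisor $D_+$, and
\[ (p-1)D_+ + dD_- \sim (p-1)(dK_X) + d(-(p-1)K_X) = 0. \]
As $H^0(X,\mcO_X)$ is a field, the only effective Weil divisor linearly equivalent to~$0$ is $0$\,; since $p-1,d\geq 1$ and both summands are effective, this forces $D_+=D_-=0$. In particular $\omega_X^{[d]}\cong \mcO_X$, and as \cref{lem:splinter_trivial_canonical_sheaf} says $K_X$ is not effective, $K_X\not\sim 0$ and $d\geq 2$\,; by minimality of $d$, $K_X$ is of exact order $d$ in the divisor class group and $H^0(X,\omega_X^{[i]})=0$ for $1\leq i\leq d-1$. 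Moreover, $D_-=0$ gives $(p-1)K_X\sim 0$, so $d\mid p-1$ and in particular $\gcd(d,p)=1$.

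Next form the cyclic cover $\pi\colon Y\to X$ with $\pi_*\mcO_Y=\mcA\coloneqq \bigoplus_{i=0}^{d-1}\omega_X^{[-i]}$, the $\mcO_X$-algebra structure being induced by the fixed isomorphism $\omega_X^{[d]}\cong \mcO_X$. Using $\omega_X^{[-i]}\cong \omega_X^{[d-i]}$ and the vanishings above, one computes $H^0(Y,\mcO_Y)=\bar K$, so $Y$ is connected. The cyclic multiplication on $\mcA$ induces a perfect $\mcO_X$-bilinear pairing $\mcA\otimes_{\mcO_X}\mcA\to \mcO_X$ (non-degeneracy is clear on the regular locus and extends by reflexivity), translating into an $\mcA$-linear isomorphism $\mcA\cong \sheafhom_{\mcO_X}(\mcA,\mcO_X)=\pi_*\pi^!\mcO_X$, so that $\pi^!\mcO_X\cong \mcO_Y$. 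By \cref{lem:splinter_ascend_pi_!}\labelcref{item:lifting_splinters}, $Y$ is a splinter, hence normal, connected, proper of positive dimension~$\dim X$. On the regular locus $X_{\mathrm{reg}}$ (where $\omega_X$ is a line bundle), $\pi$ is the étale $\mu_d$-torsor tautologically trivializing the $d$-torsion line bundle $\omega_X$, so $\omega_Y\cong \pi^*\omega_X\cong \mcO_Y$ on $\pi^{-1}(X_{\mathrm{reg}})$\,; as the complement has codimension $\geq 2$ in $Y$ and $\omega_Y$ is reflexive, this extends to $\omega_Y\cong \mcO_Y$ on all of $Y$. Then $H^0(Y,\omega_Y)=H^0(Y,\mcO_Y)=\bar K\neq 0$, contradicting \cref{lem:splinter_trivial_canonical_sheaf} applied to $Y$.

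The main obstacle is verifying $\pi^!\mcO_X\cong \mcO_Y$ for the cyclic cover in the potentially non-Gorenstein setting\,: $\pi$ need not be a torsor, so \cref{prop:torsor} does not directly apply, and one must instead exploit the specific cyclic structure of $\mcA$. Non-degeneracy of the cyclic multiplication pairing holds precisely because the degree-$d$ product realizes the chosen trivialization of $\omega_X^{[d]}$, and this is the technical input that the exceptional inverse image framework of \cref{lem:splinter_ascend_pi_!} is designed to exploit.
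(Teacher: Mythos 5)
Your proposal is correct and follows the same overall strategy as the paper's own proof\,: show $(1-p)K_X$ is effective (the content of \cref{lem:kodaira_dim_F_split}, which you re-derive via the adjunction $\Hom(F_*\mcO_X,\mcO_X)\cong H^0(X,F^!\mcO_X)$), reduce the remaining case to $\omega_X^{[d]}\cong\mcO_X$ of exact order $d\geq 2$, form the cyclic cover $\pi\colon Y\to X$, lift the splinter property via \cref{lem:splinter_ascend_pi_!}, and derive a contradiction with \cref{lem:splinter_trivial_canonical_sheaf} applied to $Y$. Two details of your argument differ from the paper's \cref{prop:splinter_dualizing_sheaf_not_torsion} and are worth flagging. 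First, you observe from the simultaneous effectivity of $dK_X$ and $-(p-1)K_X$ that $d\mid p-1$, hence $\gcd(d,p)=1$\,; this makes the cyclic cover an étale $\mu_d$-torsor over $X_{\mathrm{reg}}$ and lets you avoid the Hopf-algebraic input of Carvajal-Rojas behind \cref{lem:covers}\labelcref{item:finite_torsor_under_group_scheme}, which the paper invokes precisely because it does not rule out $p\mid d$. Second, you verify $\pi^!\mcO_X\cong\mcO_Y$ directly on all of $Y$ via the cyclic multiplication pairing together with reflexivity of $\pi_*\mcO_Y$ and of $\sheafhom_{\mcO_X}(\pi_*\mcO_Y,\mcO_X)$, and then apply \cref{lem:splinter_ascend_pi_!} to $Y\to X$ itself\,; the paper instead applies the lifting lemma over $X_{\mathrm{reg}}$ (where $\pi$ is an honest torsor), shows $V=\pi^{-1}(X_{\mathrm{reg}})$ is a splinter, and passes to the normalization $\overline{Y}$ by a codimension-$\geq 2$ argument (\cref{lem:splinter_invariant_codim_2_surgery}). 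Both variants are sound\,; yours is marginally more direct and slightly more elementary, at the modest cost of having to justify the perfectness of the cyclic pairing away from $X_{\mathrm{reg}}$.
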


	First we have the following variant of a well-known lemma\,; see, e.g., \cite[Ex.~2.12]{patakfalvi_schwede_tucker_positive_char_alg_geometry}. 
	
	\begin{lemma}\label{lem:kodaira_dim_F_split}
		Let $X$ be a proper scheme over a field $k$ of positive characteristic $p>0$.
		Assume either of the following conditions\,:
		\begin{enumerate}[label=\rm{(\alph*)}]
			\item \label{item:X_splinter} $X$ is a splinter.
			\item \label{item:X_F_split} $X$ is normal and $F$-split.
		\end{enumerate}
		Then the Weil divisor $(1-p)K_X$ is effective. In particular,
		 either  $\kappa(X, K_X)=-\infty$, or $K_X$ is torsion (in which case $\kappa(X, K_X)=0$).
	\end{lemma}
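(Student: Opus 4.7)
The plan is to combine a Frobenius splitting with Grothendieck duality to produce a nonzero global section of $\mcO_X((1-p)K_X)$, and then invoke \cref{lem:torsion_weil_divisor} to extract the dichotomy.

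I would first reduce to the case where $X$ is connected, since the hypotheses and conclusion can be checked component-wise, and a connected component of a normal proper scheme is integral with $H^0(X,\mcO_X)$ a field. For case (i), I would further reduce to $X$ being $F$-finite by base changing along $K\coloneqq H^0(X,\mcO_X)\hookrightarrow K^{\mathrm{perf}}$\,: \cref{cor:splinter_geom_normal} gives that $X/K$ is geometrically normal, and \cref{lem:splinter_base change} ensures that $X_{K^{\mathrm{perf}}}$ remains a splinter over the $F$-finite field $K^{\mathrm{perf}}$. Effectivity of $(1-p)K_X$ would then follow from the corresponding statement on $X_{K^{\mathrm{perf}}}$ via faithfully flat descent of global sections of the reflexive sheaf $\mcO_X((1-p)K_X)$. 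In case (ii), $F$-finiteness is built into the hypothesis.

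Under these reductions, in both cases $X$ is normal, $F$-finite, connected, and the unit map $\mcO_X\to F_*\mcO_X$ splits\,: in case (ii) by definition, and in case (i) by applying the splinter property to the finite surjective Frobenius morphism $F\colon X\to X$. By Grothendieck duality for the finite flat morphism~$F$ on the regular locus of~$X$, one has $F^!\mcO_X \cong \omega_X^{1-p}$ (using that $F^*\omega_X\cong\omega_X^{\otimes p}$ there), and this extends by reflexivity across the codimension~${\geq}\,2$ singular locus to an isomorphism
\[
\sheafhom_{\mcO_X}(F_*\mcO_X,\mcO_X)\;=\;F_* F^!\mcO_X\;\cong\;F_*\mcO_X((1-p)K_X).
\]
A splitting of $\mcO_X\to F_*\mcO_X$ therefore produces a nonzero element of $\Hom_{\mcO_X}(F_*\mcO_X,\mcO_X)=H^0(X,\mcO_X((1-p)K_X))$, showing that $(1-p)K_X$ is effective.

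For the final assertion, suppose $\kappa(X,K_X)\neq-\infty$ and fix $n>0$ with $nK_X$ effective. Setting $D\coloneqq n(p-1)K_X$, both $D$ (as $(p-1)$ times the effective divisor $nK_X$) and $-D=n(1-p)K_X$ (as $n$ times the effective divisor $(1-p)K_X$) are effective, so \cref{lem:torsion_weil_divisor} forces $D\sim 0$, hence $K_X$ is torsion and $\kappa(X,K_X)=0$. The main obstacle is justifying the duality isomorphism $\sheafhom_{\mcO_X}(F_*\mcO_X,\mcO_X)\cong F_*\mcO_X((1-p)K_X)$ for a normal, possibly non-smooth, scheme\,; this requires verifying both sides are reflexive and identifying them on the regular locus, and relies on the complement $X_{\mathrm{sing}}\subseteq X$ having codimension at least~$2$. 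A secondary point is the $F$-finite reduction in case~(i), where one must check the compatibility of $K_X$ and its reflexive powers with the faithfully flat base change to $K^{\mathrm{perf}}$.
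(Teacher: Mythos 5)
Your proof is correct and follows essentially the same route as the paper's: reduce to the $F$-finite, $F$-split case via base change along an algebraic extension of $H^0(X,\mcO_X)$ (you use $K^{\mathrm{perf}}$, the paper uses $\bar k$), dualize the Frobenius splitting via Grothendieck duality for the finite morphism $F$ to produce a global section of the reflexive sheaf $\mcO_X((1-p)K_X)$, and then apply the torsion criterion \cref{lem:torsion_weil_divisor} for the dichotomy. The only cosmetic difference is that you treat cases (i) and (ii) uniformly after the $F$-finite reduction (noting that the splinter property applied to the Frobenius yields an $F$-splitting), whereas the paper formally reduces case (i) to case (ii); the duality isomorphism you flag as the main point to check is handled in the paper by restricting to the Cohen--Macaulay locus, using the non-canonical identification $F^!\omega_X\cong\omega_X$, and extending by reflexivity.
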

	\begin{proof}
		First assume that $X$ is a splinter.
		Since $X$ is normal, we can and do assume that $X$ is connected. After replacing the base field $k$ by $H^0(X, \mcO_X)$, we consider the base change $\pi \colon X_{\bar{k}} \to X$ along an algebraic closure $k \to \bar{k}$.
		By \cref{lem:splinter_base change}, $X_{\bar{k}}$ is a splinter.
		The base change formula for the exceptional inverse image \cite[\href{https://stacks.math.columbia.edu/tag/0E9U}{Tag 0E9U}]{stacks-project} shows that $\pi^* \omega_X = \omega_{X_{\bar{k}}}$.
		Thus, it is enough to show the statement for $X_{\bar{k}}$, since $\pi$ flat implies $H^0(X_{\bar{k}}, \pi^* \mcO_X((1-p)K_X)) = H^0(X,  \mcO_X((1-p)K_X)) \otimes_k \bar{k}$.
		Since $\bar{k}$ is $F$-finite, $X_{\bar{k}}$ is in particular $F$-split. Thus, it is enough to show the statement under the assumptions \labelcref{item:X_F_split}.

			Assume that $X$ is normal and $F$-split.
			Parts of the arguments below can for example be found in \cite[\S 4.2]{schwede_smith_globally_f_regular_and_log_Fano_varieties}. We provide nonetheless a proof for the sake of completeness.
			Choosing a (non-canonical) isomorphism $F^! k = \Hom_k (F_* k, k) \cong k $ as $k$-vector spaces, we obtain an isomorphism $F^!\omega_X^\bullet \cong \omega_X^\bullet$ in $\sfD_{\mathrm{Coh}}(\mcO_X)$ and, by restricting to $X_{\mathrm{reg}}$ and taking cohomology sheaves, an isomorphism $F^!\omega_{X_{\mathrm{reg}}} \cong \omega_{X_{\mathrm{reg}}}$ in $\Coh({X_{\mathrm{reg}}})$.
			Since the involved sheaves are reflexive, $F^!\omega_X \cong \omega_X$ holds in $\Coh(X)$ and we obtain $\sheafhom_{\mcO_X} (F_*\mcO_X, \omega_X) \cong F_* \omega_X$.

		By assumption, there exists a map $s$ such that the composition
		$$\mcO_X \to F_* \mcO_X \xrightarrow{s} \mcO_X$$
		is the identity.
		We apply $\sheafhom_{\mcO_X}(-, \omega_X)$ to obtain
		$$\omega_X \leftarrow F_* \omega_X \xleftarrow{s^\vee} \omega_X,$$
		where the composition is the identity.
		After restricting to the regular locus, we can twist with $\omega_{X_{\mathrm{reg}}}^{-1}$ and apply the projection formula to obtain a diagram	
		$$\mcO_{X_{\mathrm{reg}}} \leftarrow F_* \mcO_{X_{\mathrm{reg}}}  ((1-p)K_{X_{\mathrm{reg}}} )\xleftarrow{s^\vee} \mcO_{X_{\mathrm{reg}}},$$
		where the composition is the identity. Using that the involved sheaves are reflexive, we obtain a nonzero global section of $F_* \mcO_X((1-p)K_X)$. 
		This gives a nonzero element of $H^0(X, \mcO_X((1-p)K_X))$, hence  $(1-p)K_X$ is effective.
		If no positive multiple of $K_X$ is effective, then $\kappa(X, K_X)=-\infty$. 
		So assume that $nK_X$ is effective for some $n>0$. 
		 Since $n(p-1)K_X$ and $ n(1-p)K_X$ are both effective, $n(1-p)K_X$ is trivial by \cref{lem:torsion_weil_divisor}.
	\end{proof}
	
	\begin{remark}
	Note that $F$-split varieties
may have trivial canonical divisor.
	 For instance, ordinary elliptic curves and ordinary $K3$ or abelian surfaces are $F$-split\,; see \cite[Rmk.~7.5.3(i)]{brion_kumar_frobenius_spliting_methods_in_gemoetry_and_representation_theory}.
	\end{remark}

	In order to prove \cref{prop:K_equvialenceodaira_dim_f_split_variety}, it remains to show that the canonical divisor of a proper
	 splinter is not torsion. First we note that the Picard group of a proper splinter is torsion-free\,; this is a small generalization of a result of 
	Carvajal-Rojas \cite[Cor.~5.4]{carvajal_rojas_finite_torsors_over_strongly_f_regular_singularities} who showed that a globally $F$-regular projective variety has torsion-free Picard group. In particular, this provides a proof of \cref{prop:K_equvialenceodaira_dim_f_split_variety} 
	if $K_X$ is Cartier, e.g., if $X$ is in addition assumed to be 
Gorenstein.

	\begin{proposition}\label{prop:pic_torsion_free}
		Let $X$ be a proper scheme over a field of positive characteristic. If $X$ is a splinter, then $\mathrm{Pic}(X)$ is torsion-free.
	\end{proposition}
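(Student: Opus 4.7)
The plan is to show that every torsion class in $\mathrm{Pic}(X)$ is trivial, by constructing a cyclic cover to which \cref{thm:splinters and covers}\labelcref{item:finite_torsor} applies. First I would reduce to the case that $X$ is connected, since $\mathrm{Pic}(X)$ decomposes over the connected components of $X$ and each such component is again a splinter by \cref{lem:splinter_invariant_codim_2_surgery}\labelcref{item:open_of_splinter}. Then $X$ is normal by \cref{prop:bhatt_normal_CM}, hence integral, so $K \coloneqq H^0(X,\mcO_X)$ is a field.

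Suppose for contradiction that $\mcL \in \mathrm{Pic}(X)$ has order exactly $n > 1$, and choose a trivialization $\mcL^{\otimes n} \xrightarrow{\sim} \mcO_X$. I would then form the associated cyclic cover
\[\pi \colon Y \coloneqq \Spec_X\Bigl(\bigoplus_{i=0}^{n-1}\mcL^{\otimes(-i)}\Bigr) \longrightarrow X,\]
which, via the natural $\bbZ/n$-grading on the $\mcO_X$-algebra $\bigoplus_{i=0}^{n-1}\mcL^{\otimes(-i)}$, acquires the structure of a finite torsor under the group scheme $\mu_n$ over $k$, with $\deg(\pi) = n$.

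The key step is to establish that $H^0(X,\mcL^{\otimes(-i)}) = 0$ for every $0 < i < n$. A nonzero global section $s$ would define an effective Cartier divisor $D = \mathrm{div}(s)$ with $\mcL^{\otimes(-i)} \cong \mcO_X(D)$; then $s^{\otimes n}$ is a nonzero element of $H^0(X,\mcL^{\otimes(-in)}) = H^0(X,\mcO_X) = K$, hence a unit, so its vanishing divisor $nD$ equals $0$. This forces $D = 0$ and $\mcL^{\otimes(-i)} \cong \mcO_X$, contradicting that $\mcL$ has order exactly $n$. With this vanishing established, one has
\[H^0(Y,\mcO_Y) = \bigoplus_{i=0}^{n-1} H^0(X,\mcL^{\otimes(-i)}) = K = H^0(X,\mcO_X),\]
so that \cref{thm:splinters and covers}\labelcref{item:finite_torsor} forces $\pi$ to be an isomorphism, contradicting $\deg(\pi) = n > 1$.

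The main subtle point will be verifying that this cyclic cover is indeed a $\mu_n$-torsor (not merely a finite flat morphism), particularly when the characteristic $p$ divides $n$ and $\mu_n$ is non-reduced; this is a classical construction, the torsor structure being fppf-locally trivialized upon extracting an $n$-th root of the chosen trivialization of $\mcL^{\otimes n}$.
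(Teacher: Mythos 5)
Your proof is correct and coincides with the alternate argument given in the paper: form the $\mu_n$-torsor $Y = \Spec_X\bigl(\bigoplus_{i=0}^{n-1}\mcL^{\otimes(-i)}\bigr)$, show that $H^0(Y,\mcO_Y)=H^0(X,\mcO_X)$ by checking that the nontrivial torsion powers of $\mcL$ have no sections, and invoke \cref{thm:splinters and covers}\labelcref{item:finite_torsor} to force $\deg(\pi)=1$. The paper also gives a shorter primary argument that avoids torsors altogether: a torsion line bundle is semiample, so \cref{prop:semiample} gives $\chi(X,\mcL)=h^0(X,\mcL)$; since $\mcL$ is numerically trivial, $\chi(X,\mcL)=\chi(X,\mcO_X)=1$, while a nontrivial torsion line bundle on a connected splinter has $h^0(X,\mcL)=0$, a contradiction. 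You are slightly more careful than the paper's write-up in two places that are in fact necessary for the argument to go through verbatim: taking $\mcL$ of exact order $n$ (so that $H^0(X,\mcL^{\otimes(-i)})=0$ for all $0<i<n$, not merely for $i=1$), and justifying that the cyclic cover really is a $\mu_n$-torsor even when $p\mid n$.
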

	\begin{proof}
		We argue as in \cite[Rmk.~5.6]{carvajal_rojas_finite_torsors_over_strongly_f_regular_singularities} which is concerned with the globally $F$-regular case. If $\mcL$ is a torsion invertible sheaf, then it is in particular semiample and therefore if $X$ is a splinter, then we have $\chi(X, \mcL)=h^0(X, \mcL)$ by \cite[Prop.~7.2]{bhatt_derived_splinters_in_positive_characteristic} (recalled in \cref{prop:semiample}). But then $0=\chi(X, \mcL)=\chi(X, \mcO_X)=1$ if $\mcL$ is nontrivial. This is impossible.
		
		Alternately, one can argue using \cref{thm:splinters and covers} as follows.
		An $n$-torsion invertible sheaf $\mcL$ on $X$ gives rise to a nontrivial $\mu_n$-torsor $\pi \colon Y \to X$, where $Y$ is defined to be the relative spectrum of the finite $\mcO_X$-algebra $\mcO_X \oplus \mcL \oplus \dots \oplus \mcL^{ n-1}$.
		If $n>1$, we must have $H^0(X, \mcL)=0$, since any nontrivial section $s\colon \mcO_X \to \mcL$ would also give a nontrivial section $s^{ n}$ of $\mcO_X$, so $s$ would be nowhere vanishing and therefore $\mcL$ would be trivial.
		This yields the equality $$H^0(Y, \mcO_Y)=H^0(X, \pi_* \mcO_Y) = H^0(X, \mcO_X \oplus \mcL \oplus \dots \oplus \mcL^{ n-1})=H^0(X, \mcO_X).$$
		We conclude from \cref{thm:splinters and covers} that if $X$ is a splinter, then $\deg(\pi) = 1$, i.e., $n=1$ and $\mcL$ is trivial.
	\end{proof}
	
	To deal with the non-Gorenstein case, we have\,:
	
	\begin{proposition}\label{prop:splinter_dualizing_sheaf_not_torsion}
		Let $X$ be a connected positive-dimensional proper scheme over a field of positive characteristic. 
		If $X$ is a splinter, then
		the canonical divisor $K_X$ is not torsion.
	\end{proposition}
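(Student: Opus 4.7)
The plan is to argue by contradiction using a cyclic canonical cover, to which we can apply the lifting result for finite quasi-torsors. Suppose $K_X$ has exact order $n \geq 1$ in the class group of $X$. Since $X$ is a splinter (hence normal by \cref{prop:bhatt_normal_CM}) and connected, $H^0(X,\mcO_X)$ is a field and we may assume it equals $k$. If $n = 1$ then $K_X \sim 0$ is effective, contradicting \cref{lem:splinter_trivial_canonical_sheaf}. So assume $n \geq 2$. Fix an isomorphism $\mcO_X(nK_X) \cong \mcO_X$ and form the finite morphism $\pi \colon Y \coloneqq \Spec_X(\mcR) \to X$, where
\[
\mcR \coloneqq \bigoplus_{i=0}^{n-1} \mcO_X(iK_X)
\]
is equipped with the $\mcO_X$-algebra structure given by the reflexive tensor product of summands, with indices reduced modulo $n$ via the chosen isomorphism.

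Over the regular locus $X_{\mathrm{reg}} \subseteq X$ (whose complement has codimension $\geq 2$ by normality of $X$), each $\mcO_X(iK_X)$ restricts to a line bundle, so $\pi^{-1}(X_{\mathrm{reg}}) \to X_{\mathrm{reg}}$ is a $\mu_n$-torsor; hence $\pi$ is a finite quasi-torsor. Moreover $\mcR$ is a direct sum of reflexive sheaves, hence is $S_2$, and $\pi$ is étale (in particular regular) on an open subset of $Y$ whose complement has codimension $\geq 2$, so by Serre's criterion $Y$ is normal. We next compute
\[
H^0(Y,\mcO_Y) = \bigoplus_{i=0}^{n-1} H^0\bigl(X,\mcO_X(iK_X)\bigr).
\]
For $0 < i < n$ the divisor $iK_X$ is a nontrivial torsion class. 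Any effective torsion Weil divisor $D$ on $X$ is trivial: if $\mcO_X \hookrightarrow \mcO_X(D)$ is nonzero and $mD \sim 0$, then the composition $\mcO_X \hookrightarrow \mcO_X(D) \hookrightarrow \cdots \hookrightarrow \mcO_X(mD) \cong \mcO_X$ is an element of the field $H^0(X,\mcO_X) = k$, necessarily a unit, forcing each inclusion to be an isomorphism. Thus $H^0(X,\mcO_X(iK_X)) = 0$ for $0 < i < n$, and $H^0(Y,\mcO_Y) = k$.

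By \cref{prop:quasitorsor}\labelcref{item:finite_quasitorsor_under_group_scheme} (applied with $\Delta = 0$ via \cref{rmk:splinter-vs-globally+}), $Y$ is a splinter. Over $\pi^{-1}(X_{\mathrm{reg}})$ the morphism $\pi$ is étale, so $\omega_Y$ restricts to $\pi^* \omega_X$ there, and by construction of the cyclic cover the pullback of $\mcO_X(K_X)$ becomes canonically trivialized on $Y$; thus $\omega_Y \cong \mcO_Y$ on an open whose complement has codimension $\geq 2$ in $Y$. Since both sheaves are reflexive, $\omega_Y \cong \mcO_Y$ globally, i.e., $K_Y \sim 0$ is effective. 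But $Y$ is a connected positive-dimensional proper splinter, so \cref{lem:splinter_trivial_canonical_sheaf} applied to $Y$ gives that $K_Y$ is not effective, a contradiction.

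The main technical point requiring care is the globalization of the cyclic cover and the induced trivialization of $\omega_Y$: this relies on combining the reflexive tensor structure on $\mcR$ with étaleness of $\pi$ over $X_{\mathrm{reg}}$, and then extending sheaf isomorphisms across the codimension-$\geq 2$ locus via reflexivity. The computation of $H^0(Y,\mcO_Y)$ using the elementary observation that effective torsion divisors on a connected proper scheme with $H^0 = k$ are trivial is what allows the lifting \cref{prop:quasitorsor} to apply.
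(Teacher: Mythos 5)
Your overall strategy -- form the cyclic cover $Y = \Spec_X\bigl(\bigoplus_{i=0}^{n-1}\mcO_X(iK_X)\bigr)$, compute $H^0(Y,\mcO_Y)$, lift the splinter property, derive that the dualizing sheaf of the cover is trivial, and contradict \cref{lem:splinter_trivial_canonical_sheaf} -- is exactly the approach of the paper. However, there is a genuine gap in your normality argument when $p \mid n$.

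You claim that ``$\pi$ is \'etale (in particular regular) on an open subset of $Y$ whose complement has codimension $\geq 2$,'' and use this, together with the $S_2$ property of $\mcR$, to conclude via Serre's criterion that $Y$ is normal. But when $p \mid n$, the group scheme $\mu_n$ is \emph{not} \'etale over $k$, and a $\mu_n$-torsor is then \emph{nowhere} \'etale: $\Omega_{V/X_{\mathrm{reg}}}$ is an everywhere nonzero line bundle because it pulls back to $\Omega_{\mu_n/k}$ along the trivialization $V\times_{X_{\mathrm{reg}}}V\cong V\times_k\mu_n$. Concretely, Zariski-locally $V$ looks like $\Spec A[t]/(t^n - u)$ for a unit $u$, and for $p\mid n$ this ring can fail to be $R_1$ (e.g.\ $A = k[x,y]_{(x,y)}$, $n=p$, $u = 1+x^2y$ gives, after the substitution $s=t-1$, the hypersurface $s^p = x^2 y$, whose singular locus has codimension $1$). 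So $Y$ need not be normal, and \cref{prop:quasitorsor} does not apply directly. The same issue recurs where you invoke \'etaleness to identify $\omega_Y$ with $\pi^*\omega_X$ over $\pi^{-1}(X_{\mathrm{reg}})$.

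The paper sidesteps this by first showing that $V=\pi^{-1}(X_{\mathrm{reg}})$ is a splinter (using \cref{lem:covers}\ref{item:finite_torsor_under_group_scheme} and the lifting \cref{lem:splinter_ascend_pi_!}, which do not require reducedness of $V$), then deducing normality of $V$ from \cref{prop:bhatt_normal_CM}, passing to the normalization $\overline{Y}\to Y$ which is an isomorphism over $V$, and extending the triviality of $\omega_V$ across the codimension-$\ge 2$ complement. Alternatively, you could first observe that $p\nmid n$: by \cref{lem:kodaira_dim_F_split} the divisor $(1-p)K_X$ is effective; being also torsion, it must be trivial by your own observation (or \cref{lem:torsion_weil_divisor}), so $n\mid p-1$, hence $p\nmid n$. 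With that in hand your \'etaleness claim becomes valid and the rest of your argument goes through.
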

	\begin{proof} 
		Assume for contradiction that $K_X$ is torsion of order $r$, i.e., that $\omega_X \vert_{X_\mathrm{reg}}$ is a torsion invertible sheaf of order $r$. 
		By considering the relative spectrum, we obtain a $\mu_r$-quasi-torsor 
		$$\pi \colon Y = \Spec_X \left (\bigoplus_{i=0}^{r-1} \mcO_X(iK_X) \right ) \to X ,$$
		which, over $X_{\mathrm{reg}}$, restricts to a $\mu_r$-torsor $\pi|_U\colon V \coloneqq \pi^{-1}(X_{\mathrm{reg}}) \to X_{\mathrm{reg}} \eqqcolon U$.

	In addition, $\pi_* \mcO_Y = \bigoplus_{i=0}^{r-1} \mcO_X(iK_X)$, and by \cref{lem:torsion_weil_divisor}  the sheaves $\mcO_X(iK_X)$ have no nonzero global sections for $1 \leq i \leq r-1$.
		Thus $H^0(V,\mcO_V) = H^0(X_{\mathrm{reg}}, \mcO_{X_{\mathrm{reg}}}) = H^0(X, \mcO_{X})$, where the second equality holds by normality of $X$, is a field and we conclude by \cref{prop:torsor}\labelcref{item:i} or~\labelcref{ii} that $V$ is a splinter.
		In particular, $V$ is normal and therefore, by, e.g.,
\cite[\href{https://stacks.math.columbia.edu/tag/035E}{Tag 035E}]{stacks-project}, 
 the normalization $Y^\nu \to Y$ is an isomorphism over $V$. Since normalization is finite, $Y^\nu \setminus V$ has codimension $\geq 2$ and thus $Y^\nu$ is a splinter by \cref{lem:splinter_invariant_codim_2_surgery}.

		Now $\pi^!_U \mcO_{X_{\mathrm{reg}}} = \mcO_V$ holds by \cref{lem:covers} and this implies $\pi^*_U \omega_{X_{\mathrm{reg}}} = \omega_V $.
		On the other hand, we have isomorphisms
		$$(\pi_U)_* \pi_U^* \omega_{X_{\mathrm{reg}}} \cong \omega_{X_{\mathrm{reg}}} \otimes_{\mcO_{X_{\mathrm{reg}}}} \bigoplus_{i=0}^{r-1} \omega_{X_{\mathrm{reg}}}^i  = \bigoplus_{i=1}^{r} \omega_{X_{\mathrm{reg}}}^i \cong \bigoplus_{i=0}^{r-1} \omega_{X_{\mathrm{reg}}}^i 
		\cong (\pi_U)_*\mcO_V$$
		as $(\pi_U)_*\mcO_V$-modules.
		Hence $V=\Spec_{X_{\mathrm{reg}}} \bigoplus_{i=0}^{r-1} \omega_{X_{\mathrm{reg}}}^i $ has trivial dualizing sheaf. 
		Consequently, since $\omega_{Y^\nu}$ is a reflexive sheaf, $Y^\nu$ has trivial dualizing sheaf. But, by \cref{lem:splinter_trivial_canonical_sheaf}, a proper splinter cannot have trivial canonical sheaf.
	\end{proof}

	\begin{proof}[Proof of \cref{prop:K_equvialenceodaira_dim_f_split_variety}]
	 	The canonical divisor $K_X$ is not torsion by \cref{prop:splinter_dualizing_sheaf_not_torsion} (or more simply by \cref{prop:pic_torsion_free} if $K_X$ is Cartier, e.g.\ if $X$ is Gorenstein), and it follows from \cref{lem:kodaira_dim_F_split} that $\kappa(X, K_X)=-\infty$.
	\end{proof}

	\begin{remark}[Torsion Weil divisors on splinters] Proper splinters may have nontrivial torsion Weil divisor classes. 
	Indeed, projective toric varieties are globally $F$-regular and in particular splinters, and Carvajal-Rojas \cite[Ex.~5.7]{carvajal_rojas_finite_torsors_over_strongly_f_regular_singularities} gives an example of a projective toric surface that admits a nontrivial $2$-torsion Weil divisor class.
	\end{remark}
	
	\section{Vanishing of global differential forms}
	\label{S:vanishing_global_1_forms}
	Fix a perfect field $k$ of positive characteristic $p$ and let $X$ be a smooth proper variety over $k$. Let $\Omega_{X/k}^\bullet$ be the de~Rham complex and recall, e.g., from \cite[Thm.~7.2]{katz_nilpotent_connections_and_the_monodromy_theorem}, that there exists an isomorphism of graded $\mcO_X$-modules
		$$C^{-1}_X\colon \bigoplus_{j \geq 0} \Omega_X^j \to  \bigoplus_{j \geq 0} \mcH^j (F_* \Omega_X^\bullet),$$
		whose inverse $C_X$ is the so-called \emph{Cartier operator}. It gives rise for all $j\geq 0$ to short exact sequences of $\mcO_X$-modules
	\begin{equation}\label{eq:short_exact_sequence_cycles_boundaries_cartier_operator}
		0 \to B_X^j \to Z_X^j \xrightarrow{C_X^j} \Omega_X^j \to 0,
	\end{equation}
	where $B_X^j$ denotes the $j$-th coboundaries and $Z_X^j$ the $j$-th cocycles of $F_*\Omega_X^\bullet$.
	Note that these coincide with the image under $F_*$ of the coboundaries and cocycles of $\Omega_X^\bullet$.
	Moreover, there is a short exact sequence
	$$ 0 \to \mcO_X \to F_*\mcO_X \to B_X^1 \to 0.$$	
	The proof of the following theorem is inspired by the proof of \cite[Lem.~6.3.1]{achinger_witaszek_zdanowics_global_frobenius_liftability_i}.
	\begin{theorem}[Theorem~\ref{item:thm:global-1-forms}]
		 \label{thm:global-1-forms}
		Let $X$ be a smooth proper variety over a  
		 field $k$ of positive characteristic $p$. If $X$ is a splinter, then $H^0(X, \Omega_X^1)=0$.
	\end{theorem}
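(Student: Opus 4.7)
My plan is to reduce the vanishing to the torsion-freeness of $\Pic(X)$ established in \cref{prop:pic_torsion_free}, via a semilinear algebra argument on the Cartier operator, in the spirit of \cite[Lem.~6.3.1]{achinger_witaszek_zdanowics_global_frobenius_liftability_i}. By \cref{cor:splinter_bc} and flat base change for coherent cohomology, I may assume that $k$ is algebraically closed.

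First I would exploit that $X$ is a splinter and $F\colon X \to X$ is finite surjective to conclude that $\mcO_X \to F_*\mcO_X$ splits. Combined with the vanishing $H^i(X,\mcO_X) = 0$ for $i>0$ from \cref{prop:semiample}, the affineness of $F$, and the surjectivity of Frobenius on $k = H^0(X, \mcO_X)$, the long exact sequence of cohomology for $0 \to \mcO_X \to F_*\mcO_X \to B_X^1 \to 0$ yields $H^i(X, B_X^1) = 0$ for all $i \geq 0$. The Cartier sequence $0 \to B_X^1 \to Z_X^1 \to \Omega_X^1 \to 0$ then forces an isomorphism $C\colon H^0(Z_X^1) \xrightarrow{\cong} H^0(\Omega_X^1)$, while the sequence $0 \to Z_X^1 \to F_*\Omega_X^1 \to B_X^2 \to 0$ identifies $H^0(Z_X^1)$ with a subspace of $H^0(F_*\Omega_X^1) = H^0(\Omega_X^1)$. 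A dimension count will force $H^0(Z_X^1) = H^0(\Omega_X^1)$, so that the Cartier operator induces an $F^{-1}$-semilinear bijection $C$ on $V \coloneqq H^0(X, \Omega_X^1)$.

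Next, assuming for contradiction that $V \neq 0$, I would invoke standard semilinear algebra over the algebraically closed field $k$ (essentially Lang--Steinberg, or a direct count of the $p^{\dim V}$ solutions of a Frobenius-twisted polynomial system) to produce a nonzero $C$-fixed vector $\omega \in V$. Since $C$ fixes logarithmic forms $du/u$ and, by the Cartier--Illusie description, the $C$-invariant subsheaf of $Z_X^1$ agrees with the sheaf $d\log \mcO_X^\times$, such an $\omega$ is \'etale-locally logarithmic. The sheaf of \'etale-local primitives $\{u \in \mcO_X^\times : d\log u = \omega\}$ then carries a natural $\mu_p$-torsor structure whose class $[\omega] \in H^1_{\mathrm{fl}}(X, \mu_p)$ will be nonzero: otherwise $\omega$ would admit a global primitive $u \in H^0(X, \mcO_X^\times) = k^\times$, which is constant and thus satisfies $d\log u = 0 \neq \omega$. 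Finally, the Kummer sequence $0 \to \mu_p \to \mathbb{G}_m \xrightarrow{F} \mathbb{G}_m \to 0$ in fppf topology, together with the surjectivity of $F\colon k^\times \to k^\times$, identifies $H^1_{\mathrm{fl}}(X, \mu_p)$ with $\Pic(X)[p]$, which is trivial by \cref{prop:pic_torsion_free}, yielding the desired contradiction.

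The main obstacle I expect is making precise the passage from a nonzero $C$-fixed global section of $Z_X^1$ to a nontrivial class in $H^1_{\mathrm{fl}}(X, \mu_p)$: this involves the Cartier--Illusie identification of $C$-fixed closed forms as \'etale-locally logarithmic, together with verifying that the sheaf of \'etale-local primitives of $\omega$ does indeed carry a $\mu_p$-torsor structure compatible with the Kummer-sequence isomorphism $H^1_{\mathrm{fl}}(X, \mu_p) \cong \Pic(X)[p]$.
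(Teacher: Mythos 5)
Your proposal tracks the paper's own proof exactly through its main reductions: base change to $\bar k$, the vanishing $H^j(X,B_X^1)=0$ obtained from the splitting of $\mcO_X\to F_*\mcO_X$ and the vanishing of $H^{>0}(X,\mcO_X)$, the resulting isomorphism $H^0(X,Z_X^1)\cong H^0(X,\Omega_X^1)$ via the Cartier operator, and the dimension count showing every global $1$-form is closed and $C$ is a $p^{-1}$-semilinear bijection on $V=H^0(X,\Omega_X^1)$. The paper then simply invokes van der Geer--Katsura \cite[Prop.~4.3]{van_der_geer_katsura_on_the_height_of_calabi_yau_varieties_in_positive_characteristic} for the isomorphism $H^0(X,\Omega_X^1)\cong \Pic(X)[p]\otimes_{\bbF_p} k$ and concludes from \cref{prop:pic_torsion_free}; you instead unfold the proof of that isomorphism (Lang--Steinberg fixed vectors for a semilinear bijection, Cartier's identification of $C$-fixed closed forms with \'etale-locally logarithmic forms, the associated $\mu_p$-torsor, and the Kummer sequence $H^1_{\mathrm{fl}}(X,\mu_p)\cong\Pic(X)[p]$). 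The ``main obstacle'' you flag is precisely what the cited proposition packages: the exactness of $0\to\nu(1)\to Z_X^1\xrightarrow{1-C}\Omega_X^1\to 0$ in the \'etale topology (with $\nu(1)=d\log\mcO_X^\times$) combined with $H^1_{\mathrm{fl}}(X,\mu_p)\cong H^0_{\mathrm{\acute et}}(X,\nu(1))$ identifies $\Pic(X)[p]$ with $V^{C}$, and the semilinear algebra gives $\dim_{\bbF_p}V^{C}=\dim_k V$, so $V\cong \Pic(X)[p]\otimes_{\bbF_p}k$. So your argument is correct and, modulo this standard input, is the same proof as the paper's; the only real difference is whether you cite or re-derive the van der Geer--Katsura isomorphism.
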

	\begin{proof} 
		Clearly, we may and do assume that $X$ is connected.
		Let $\bar k$ be an algebraic closure of $k$. It is enough to show that $H^0(X_{\bar k}, \Omega_{X_{\bar k}}^1)=0$. Since $X$ is in particular geometrically reduced over $k$, $H^0(X,\mcO_X)$ is a finite separable extension of $k$\,; see, e.g., \cite[\href{https://stacks.math.columbia.edu/tag/0BUG}{Tag 0BUG}]{stacks-project}. It follows that $X_{\bar k}$ is the disjoint union of $\dim_k H^0(X,\mcO_X)$ copies of $X\times_{H^0(X,\mcO_X)} \bar k$.
		From \cref{cor:splinter_bc}, we find that $X_{\bar k}$ is a splinter.
		Therefore it is enough to establish the theorem in case $k$ is algebraically closed. So assume $k$ is algebraically closed, in which case the $p$-th power map $k=H^0(X, \mcO_X) \to H^0(X, F_* \mcO_X)=k$ is an isomorphism. 
		Since by \cref{prop:semiample}, for a splinter~$X$, $\mcO_X$ has zero cohomology in positive degrees, the long exact sequence associated to 
		$$ 0 \to \mcO_X \to F_*\mcO_X \to B_X^1 \to 0$$
		gives that $H^j(X, B_X^1 )=0$ for all $j\geq 0$.
		From the long exact sequence associated to \cref{eq:short_exact_sequence_cycles_boundaries_cartier_operator}, we find that the Cartier operator induces an isomorphism $H^0(X, Z_X^1) \to H^0(X, \Omega_X^1)$.
		In particular, $\dim H^0(X, Z_X^1) =\dim  H^0(X, \Omega_X^1)$.
		The inclusion of closed $1$-forms $\ker (d\colon \Omega_X^1\to \Omega_X^2) \subseteq \Omega_X^1$ yields an injection
		$H^0(X, \ker (d\colon \Omega_X^1\to \Omega_X^2) ) \subseteq H^0(X, \Omega_X^1)$.
		Since $H^0(X, \ker (d\colon \Omega_X^1\to \Omega_X^2) ) = H^0(X, Z_X^1)$, the above inclusion is in fact an equality.
		In other words, any global $1$-form on $X$ is closed.
		By \cite[Prop.~4.3]{van_der_geer_katsura_on_the_height_of_calabi_yau_varieties_in_positive_characteristic}, there exists an isomorphism
		$$ H^0(X, \Omega_X^1) \cong \Pic(X)[p] \otimes_\bbZ k,$$
		where $\Pic(X)[p]$ denotes the $p$-torsion invertible sheaves on $X$.
		By \cref{prop:pic_torsion_free}, $\Pic(X)$ is torsion-free.
	\end{proof}

\section{On the splinter property for proper surfaces}
	\label{S:splinters_surfaces}
		A proper curve over an algebraically closed field of positive characteristic is a splinter if and only if it is isomorphic to the projective line. In this section, we investigate which proper surfaces over an algebraically closed field of positive characteristic are splinters. First we show in  \cref{prop:splinter_surface_rational} that proper surface splinters are rational. We then show in \cref{prop:blow_up_of_points_on_a_line_or_conic} that the blow-up of the projective plane in any number of closed points lying on a given conic is a splinter. On the other hand, we give examples of rational surfaces that are not splinters in \cref{sec:surfaces_which_are_not_splinters}.

	\subsection{Proper splinter surfaces are rational}
	The fact proved in \cref{prop:K_equvialenceodaira_dim_f_split_variety} that proper splinters in positive characteristic have negative Kodaira dimension can be used to show that proper surface splinters over an algebraically closed field of positive characteristic are rational\,:
	
		\begin{proposition}\label{prop:splinter_surface_rational}
			Let $X$ be an irreducible proper surface over an algebraically closed field of positive characteristic.
			If $X$ is a splinter,
then $X$ is rational.
		\end{proposition}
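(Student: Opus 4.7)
My plan is to deduce rationality from Castelnuovo's criterion applied to a resolution of singularities. Since $X$ is a splinter, $X$ is normal by \cref{prop:bhatt_normal_CM}. By Lipman's resolution for excellent surfaces, which holds in any characteristic, there is a proper birational morphism $\pi \colon \tilde X \to X$ with $\tilde X$ smooth. Smooth proper surfaces over a field are automatically projective (Zariski), so $\tilde X$ is a smooth projective surface, and it suffices to show that $\tilde X$ is rational. Castelnuovo's rationality criterion (valid in arbitrary characteristic by Zariski and Bombieri--Mumford) asserts that $\tilde X$ is rational if and only if $H^1(\tilde X,\mcO_{\tilde X}) = 0$ and $H^0(\tilde X,\mcO_{\tilde X}(2K_{\tilde X})) = 0$.

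For the first vanishing, \cref{prop:bhatt_normal_CM} also yields that $X$ is pseudo-rational, which, in dimension~$2$, is equivalent to the condition $R^1\pi_* \mcO_{\tilde X} = 0$ for any resolution. Combined with $H^1(X,\mcO_X) = 0$ from \cref{prop:semiample}, the Leray spectral sequence gives $H^1(\tilde X,\mcO_{\tilde X}) = 0$.

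For the plurigenera, \cref{prop:K_equvialenceodaira_dim_f_split_variety} yields $\kappa(X,K_X) = -\infty$, so $H^0(X,\mcO_X(nK_X)) = 0$ for every $n > 0$. I would then exhibit, for each $n>0$, an injection
\[
H^0(\tilde X,\mcO_{\tilde X}(nK_{\tilde X})) \hookrightarrow H^0(X,\mcO_X(nK_X)).
\]
Since $\pi$ is an isomorphism over $X_{\mathrm{reg}}$ and $\pi^{-1}(X_{\mathrm{reg}})$ is dense in the integral scheme~$\tilde X$, any section of $nK_{\tilde X}$ restricts injectively to a section of $nK_X$ on~$X_{\mathrm{reg}}$. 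Normality of $X$ together with reflexivity of $\mcO_X(nK_X)$ and $\codim_X(X \setminus X_{\mathrm{reg}}) \geq 2$ then extends such a section uniquely to $X$. This forces $H^0(\tilde X,\mcO_{\tilde X}(nK_{\tilde X})) = 0$ for all $n>0$, in particular for $n=2$, completing the verification of Castelnuovo's criterion.

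The main step to keep track of is the comparison of pluricanonical sections between $\tilde X$ and the (possibly singular) $X$, since $\pi^* K_X$ and $K_{\tilde X}$ need not agree in the non-Gorenstein case; restricting to the regular locus and invoking reflexivity of $\mcO_X(nK_X)$ is what makes the argument go through without needing to control discrepancies. The remaining inputs---resolution of singularities for surfaces, projectivity of smooth proper surfaces, and Castelnuovo's criterion in positive characteristic---are all classical and characteristic-free.
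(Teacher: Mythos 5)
Your proof is correct and follows essentially the same route as the paper: Lipman's resolution, Castelnuovo's criterion, and vanishing of pluricanonical sections obtained by restricting to $\pi^{-1}(X_{\mathrm{reg}})$ and using reflexivity of $\mcO_X(nK_X)$. The only cosmetic variation is that you extract $H^1(\tilde X,\mcO_{\tilde X})=0$ directly from the Leray spectral sequence together with $R^1\pi_*\mcO_{\tilde X}=0$ and $H^1(X,\mcO_X)=0$, whereas the paper computes $\chi(\mcO_{\tilde X})=\chi(\mathrm{R}\pi_*\mcO_{\tilde X})=\chi(\mcO_X)=1$; both rest on the same input, namely $\mathrm{R}\pi_*\mcO_{\tilde X}=\mcO_X$, which the paper obtains either via Grauert--Riemenschneider and the proof of \cite[Thm.~2.12]{bhatt_derived_splinters_in_positive_characteristic} or via pseudo-rationality and \cite[Cor.~8.7]{kovacs_rational_singularities}.
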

		\begin{proof} 
			
			If $X$ is not smooth, choose a resolution of singularities $\pi\colon \tilde{X} \to X$ such that $\pi$ is an isomorphism over the regular locus $X_{\mathrm{reg}}$, which exists by \cite[\S 2]{lipman_rational_singularities_with_applications_to_algebraic_surfaces_and_unique_factorization}.
			It suffices to show that $\tilde X$ is rational.
			Note that Grauert--Riemenschneider vanishing holds for surfaces, see, e.g., \cite[\href{https://stacks.math.columbia.edu/tag/0AXD}{Tag 0AXD}]{stacks-project}. Thus, the proof of \cite[Thm.~2.12]{bhatt_derived_splinters_in_positive_characteristic}
			shows that the above resolution of singularities satisfies $\mathrm{R} \pi_* \mcO_{\tilde{X}} = \mcO_X$, that is, that $X$ has rational singularities.
			Therefore
			$$\chi(\mcO_{\tilde{X}})=\chi(\mathrm{R} \pi_* \mcO_{\tilde{X}}) = \chi(\mcO_X)=1.$$
			By Castelnuovo's rationality criterion it remains to show that $\omega_{\tilde{X}}^{ 2}$ has no nonzero global sections.
			So assume that there is a nonzero section $s \in H^0(\tilde{X}, \omega_{\tilde{X}}^{ 2})$. Then $s$ is nonzero after restriction to the open subset $\pi^{-1}(X_{\mathrm{reg}})$. Since $\pi$ is an isomorphism over $X_{\mathrm{reg}}$, the section $s$ would 
			provide a nonzero section of $\mcO_X(2K_X)$, contradicting \cref{prop:K_equvialenceodaira_dim_f_split_variety}.
		\end{proof}

	\subsection{Examples of projective rational surfaces that are splinters}
We give examples of projective rational surfaces that are splinters\,; in all cases, this is achieved by showing that they are globally $F$-regular. We start with already known examples.

		\begin{example}[Del Pezzo surfaces, {\cite[Ex.~5.5]{hara_a_characterization_of_rational_sing_in_terms_of_injectivity_of_frob_maps}}]
			\label{ex:globFreg}
				Let $X$ be a smooth projective del Pezzo surface over an algebraically closed field of characteristic $p>0$. 
				 Then $X$ is globally $F$-regular if one of the following conditions holds\,:
				\begin{enumerate}
					\item $K_X^2>3$.
					\item $K_X^2=3$ and $p>2$.
					\item $K_X^2=2$ and $p>3$.
					\item $K_X^2=1$ and $p>5$.
				\end{enumerate}
				Moreover, if none of the above conditions are satisfied, there are globally $F$-regular and non globally $F$-regular cases\,; for instance, the Fermat cubic surface in characteristic 2 is not globally $F$-regular.
		\end{example}
	\begin{example}[Hirzebruch surfaces, {\cite[Prop.~3.1]{gongyo_takagi_surfaces_of_globally_f_regular_and_f_split_type}}]
		If $X$ is a Hirzebruch surface $\bbP(\mcO_{\bbP^1} \oplus \mcO_{\bbP^1}(-n))$ over a perfect field of positive characteristic, then $X$ is globally $F$-regular. 
	\end{example}

	To the above list of examples, we can add blow-ups of $\bbP^2$ in any number of points lying on a conic\,:

	\begin{proposition}\label{prop:blow_up_of_points_on_a_line_or_conic}
		Let $k$ be an algebraically closed
		 field of characteristic $p>0$ and let $p_1, \dots, p_n$ be distinct closed points in $\bbP^2_k$.
		Assume either of the following conditions\,:
		\begin{enumerate}[label=\rm{(\alph*)}]
			\item The points $p_1, \dots, p_n$ lie on a line $L \subseteq \bbP^2_k$.
			\item The points $p_1, \dots, p_n$ lie on a (possibly singular) conic $C \subseteq \bbP^2_k$.
		\end{enumerate}
		then the blow-up $X\coloneqq \mathrm{Bl}_{\{p_1, \dots, p_n\}}\bbP^2_k$ is globally $F$-regular.
		In particular, the blow-up of $\bbP^2_k$ in at most $5$ closed points is globally $F$-regular.
	\end{proposition}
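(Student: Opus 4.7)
The plan is to verify the two conditions of the Schwede--Smith criterion \cref{thm:schwede_smith_f_reg_one_divisor}. Let $\pi\colon X \to \bbP^2_k$ be the blow-up and set $D \coloneqq \pi^*Z$, where $Z = L$ in case~(i) and $Z = C$ in case~(ii). Since every $p_i$ lies on $Z$, the support of $D$ equals $\pi^{-1}(Z)$, so $\pi$ induces an isomorphism $X \setminus \supp(D) \cong \bbP^2_k \setminus Z$. As $Z$ is ample on the smooth variety $\bbP^2_k$, the complement $\bbP^2_k \setminus Z$ is smooth and affine; its local rings are therefore regular and hence strongly $F$-regular, so it is globally $F$-regular. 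This verifies the second condition of the criterion.

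The heart of the argument is to construct, for some $e > 0$, a splitting of $\mcO_X \to F^e_*\mcO_X(D)$. Via Grothendieck--Serre duality, the relation $\omega_X = \pi^*\omega_{\bbP^2_k} \otimes \mcO_X(\sum_i E_i)$, the projection formula, and the identification $\pi_*\mcO_X(-(p^e-1)\sum_i E_i) = \bigcap_i \fm_{p_i}^{p^e-1}$, the space $\Hom_{\mcO_X}(F^e_*\mcO_X(D), \mcO_X)$ identifies with the subspace of sections $s \in H^0(\bbP^2_k, \mcO_{\bbP^2_k}(3(p^e-1) - \deg Z))$ vanishing to order $\geq p^e - 1$ at each $p_i$. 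Under this identification the splitting condition becomes $\tau(s\cdot \sigma_Z) = 1$, where $\tau \colon F^e_*\omega_{\bbP^2_k}^{1-p^e} \to \mcO_{\bbP^2_k}$ is the Frobenius/Cartier trace, concretely picking out the coefficient of $(xyz)^{p^e-1}$ from a polynomial of degree $3(p^e-1)$.

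The main technical obstacle is to exhibit such a section $s$ for $e$ sufficiently large. In the line case, choosing coordinates so that $L = \{x = 0\}$ and $p_i = (0:1:c_i)$, the homogeneous polynomial $s = x^{p^e-2}\, y^{p^e-1}\, z^{p^e-1-n}\, \prod_{i=1}^n(z - c_iy)$ is easily checked to work for any $e$ with $p^e > n$: the factors $x^{p^e-2}$ and $(z - c_i y)$ together force $s$ to vanish to order $\geq p^e-1$ at each $p_i$, and $s \cdot x$ has coefficient $1$ on $(xyz)^{p^e-1}$ because the leading $z$-term of $z^{p^e-1-n}\prod_i(z-c_iy)$ is $z^{p^e-1}$. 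The conic case is handled by an analogous explicit construction adapted to the defining quadratic $\sigma_C$; the singular or non-reduced subcases of~(ii) either reduce directly to the line case (if $C = 2L$ is a double line, so that ``$p_i \in C$'' just means $p_i \in L$) or are handled by a similar computation with $D = \pi^*L_1 + \pi^*L_2$ when $C = L_1 \cup L_2$. Finally, the ``in particular'' statement follows because the $5$-dimensional linear system of conics on $\bbP^2_k$ shows that any $\le 5$ closed points lie on some (possibly singular or non-reduced) conic, placing us in case~(ii).
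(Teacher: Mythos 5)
Your approach follows the paper's broad strategy — apply the Schwede--Smith criterion \cref{thm:schwede_smith_f_reg_one_divisor}, push the splitting data down to $\bbP^2_k$ via $\mu_*$, and verify that the coefficient of $(XYZ)^{p^e-1}$ in a suitable degree-$3(p^e-1)$ polynomial is $1$. The one substantive difference is the choice of auxiliary divisor: you take $D \coloneqq \pi^*Z$ (the total transform, which is nef but not ample), whereas the paper takes the ample divisor $D = dH - \sum_i E_i$ for $d > n$. Your verification of the second condition of \cref{thm:schwede_smith_f_reg_one_divisor} via the isomorphism $X \setminus \supp(D) \cong \bbP^2_k \setminus Z$ is clean and correct, and your explicit section in the line case is also correct: $s = x^{p^e-2}y^{p^e-1}z^{p^e-1-n}\prod_i(z-c_iy)$ does vanish to order $p^e-1$ at each $p_i$, and $s\cdot x$ carries the monomial $(xyz)^{p^e-1}$ with coefficient $1$.

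However, there is a genuine gap in the conic case. Because you take $D = \pi^*C = 2H$, the identification $\Hom_X(F^e_*\mcO_X(D), \mcO_X) \cong H^0(X, (1-p^e)K_X - D)$ yields sections of $(3(p^e-1)-2)H - (p^e-1)\sum_i E_i$, i.e.\ the required vanishing order at each $p_i$ is $p^e-1$. This does \emph{not} mesh cleanly with the power $\sigma_C^{p^e-2}$: the natural analogue of your line construction is $s = \sigma_C^{p^e-2}\cdot q$ with $q$ of degree $p^e-1$ vanishing at each $p_i$, but then $s\sigma_C = \sigma_C^{p^e-1}q$, and the trace computation involves the full binomial expansion of $(xy-z^2)^{p^e-1}$ rather than a single monomial. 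For the natural choices of $q$ (e.g.\ $q = z^{p^e-1-n}\prod_i(z-a_iy)$ with $p_i = (a_i^2\!:\!1\!:\!a_i)$), a direct computation shows that the coefficient of $(xyz)^{p^e-1}$ in $\sigma_C^{p^e-1}q$ is in fact $0$ once $p^e - 1 > n$, because $q$ contains no monomial $x^ay^az^b$ with $2a+b = p^e-1$. So ``analogous'' is not analogous here, and this part of the argument is missing. The paper sidesteps the problem precisely by choosing $D = dH - \sum_i E_i$, so that the required vanishing order drops to $p^e - 2$; then $\varphi = Z^{(p^e-1)-(d-2)}\tilde\sigma^{p^e-2}$ and $\sigma = Z^{d-2}\tilde\sigma$ give $\varphi\sigma = Z^{p^e-1}\tilde\sigma^{p^e-1}$, from which the coefficient of $(XYZ)^{p^e-1}$ is read off to be $1$ by inspecting a single term of the binomial expansion. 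You would need either to exhibit a working section $s$ for the smooth conic, or to switch to an ample $D$ as in the paper.
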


In some sense \cref{prop:blow_up_of_points_on_a_line_or_conic} is optimal since by \cref{ex:globFreg} the Fermat cubic surface, which is the blow-up of $\bbP^2_k$ in 6 points, is not globally $F$-regular if $\operatorname{char} k =2$.
	The proof adapts a strategy which is similar to the arguments of \cite[\S\S 1.3-1.4]{brion_kumar_frobenius_spliting_methods_in_gemoetry_and_representation_theory}. 
	First we  determine, under the isomorphism $\Hom_X(F_*\mcO_X, \mcO_X) \cong H^0(X, \mcO_X((1-p)K_X))$, which global sections correspond to splittings of $\mcO_X \to F_*\mcO_X$ in the case where $X=\bbP^n_k$. 
For the sake of completeness, we state and prove the following lemma, which appears as an exercise in \cite{brion_kumar_frobenius_spliting_methods_in_gemoetry_and_representation_theory}.
	
	\begin{lemma}[{cf.~\cite[Ex.~1.3.E(1)]{brion_kumar_frobenius_spliting_methods_in_gemoetry_and_representation_theory}}]\label{lem:identification_of_evaluation_map_projective_space}
		Let $X =\bbP^n_k = \Proj k[X_0, \dots, X_n]$ for a field $k$ of characteristic $p>0$. Let $F^{e\sharp} \colon \mcO_X \to F_*^e \mcO_X$ be the canonical map and consider the following chain of isomorphisms
		$$\Phi \colon \Hom_X(F_*^e \mcO_X , \mcO_X) \xrightarrow{\cong} \Ext^n_X(\mcO_X, (F_*^e \mcO_X) \otimes \omega_X)^\vee \xrightarrow{\cong} H^n(X, \omega_X^{p^e})^\vee \xrightarrow{\cong} H^0(X, \omega_X^{1-p^e})$$
		where the first and last isomorphism are given by Serre duality and the second isomorphism follows from the projection formula.
		Then the following diagram commutes
		\[
		\begin{tikzcd}
			\Hom_X(F_*^e \mcO_X, \mcO_X) \rar{-\circ F^{e\sharp}}\dar{\Phi}& \Hom_X(\mcO_X, \mcO_X) \dar{\mathrm{ev}_1}\\
			H^0(X, \mcO_X(\omega_X^{1-p^e}))\rar{\tau} & k,
		\end{tikzcd}
		\]
		where $\mathrm{ev}_1$ is the evaluation at the constant global section $1$ and $\tau$ is the map sending a homogeneous polynomial $P \in H^0(X, \mcO_X((n+1)(p^e-1)))$ of degree $(n+1)(p^e-1)$ to the coefficient of the monomial $(X_0\cdots X_n)^{(p^e-1)}$ in $P$. 
		In particular, $\Phi^{-1} (P)$ provides a splitting of $F^{e\sharp}$ if and only if $\tau(P)=1$.
	\end{lemma}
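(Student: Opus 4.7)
The plan is to unravel the chain of isomorphisms defining $\Phi$ using \v{C}ech cohomology with respect to the standard affine cover $\{U_i = D_+(X_i)\}_{i=0}^n$ of $\bbP^n_k$, and then to verify the commutativity of the square by evaluating on a single well-chosen cohomology class.

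First I would recall that, under the identification $\omega_X \cong \mcO(-n-1)$, the group $H^n(\bbP^n, \mcO(m))$ has a \v{C}ech basis of Laurent monomials in the $X_i$ of total degree $m$ with all exponents strictly negative, and that the Serre trace $H^n(\bbP^n, \omega_X) \to k$ is the coefficient of $(X_0\cdots X_n)^{-1}$. Consequently, the Serre duality pairing
\[
H^0(\bbP^n, \omega_X^{1-p^e}) \otimes H^n(\bbP^n, \omega_X^{p^e}) \to k
\]
is the coefficient of $(X_0\cdots X_n)^{-1}$ in the product\,; in particular, pairing a polynomial $P$ against the class $\xi_0 \coloneqq (X_0\cdots X_n)^{-p^e}$ extracts exactly $\tau(P)$. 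Next, I would observe that the isomorphism $\Phi$ of the statement is characterized by the identity $\langle \Phi(\phi), \xi\rangle = \mathrm{tr}(\phi^\omega_*\xi)$ for all $\xi \in H^n(\bbP^n, \omega_X^{p^e})$, where $\phi^\omega_*\colon H^n(X, \omega_X^{p^e}) \cong H^n(X, F^e_*\mcO_X \otimes \omega_X) \xrightarrow{\phi \otimes \id} H^n(X, \omega_X)$ is the map obtained from $\phi$ via the projection formula and tensoring with $\omega_X$. It therefore suffices to show that $\mathrm{tr}(\phi^\omega_*(\xi_0)) = \phi(1)$.

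The main step is the explicit computation of $\phi^\omega_*(\xi_0)$ on the \v{C}ech level. On $U_{0\cdots n} = D_+(X_0\cdots X_n)$, take the local generator $\eta_0 = dx_1 \wedge \cdots \wedge dx_n$ of $\omega_X$ with $x_i = X_i/X_0$, which corresponds to $X_0^{-(n+1)}$ under $\omega_X \cong \mcO(-n-1)$. Writing $\xi_0 = a \cdot \eta_0^{p^e}$ with $a = \bigl(X_0^n/(X_1 \cdots X_n)\bigr)^{p^e}$ and using that the projection formula isomorphism $F^e_*\mcO_X \otimes \omega_X \cong F^e_*\omega_X^{p^e}$ sends $b \otimes \eta \mapsto b \cdot \eta^{p^e}$, the class $\xi_0$ corresponds to $a \otimes \eta_0$ in $H^n(X, F^e_*\mcO_X \otimes \omega_X)$. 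Since $a$ is a $p^e$-th power and $\phi$ is $\mcO_X$-linear for the twisted module structure on $F^e_*\mcO_X$ (in which $f \cdot 1 = f^{p^e}$), we obtain $\phi(a) = \bigl(X_0^n/(X_1 \cdots X_n)\bigr) \cdot \phi(1)$, whence $\phi^\omega_*(\xi_0) = \phi(1) \cdot (X_0 \cdots X_n)^{-1}$ in $\omega_X(U_{0\cdots n})$. Its trace is thus $\phi(1)$, as required. The ``in particular'' assertion then follows at once from $\Hom(\mcO_X, \mcO_X) = k$. The only mild obstacle is the normalization bookkeeping (sign of the trace, choice of trivialization of $\omega_X$ on $U_0$), which is fixed by the standard conventions so as to identify $\tau$ with the monomial-coefficient functional without extra scalar.
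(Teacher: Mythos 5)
Your proof is correct and follows essentially the same route as the paper's: both arguments rest on the explicit Laurent-monomial description of Serre duality on $\bbP^n$ and the $p^e$-th-power behavior of Frobenius. The paper phrases the key step more diagrammatically, noting that under Serre duality precomposition with $\eta$ corresponds to $({F^e}^*)^\vee$ and tracking a monomial basis through the commutative square, whereas you pair $\Phi(\phi)$ against the specific dual class $\xi_0 = (X_0\cdots X_n)^{-p^e}$ and carry out the \v{C}ech-level computation directly using the $p^e$-semilinearity of $\phi$; but the underlying content is identical.
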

	\begin{proof}
		Recall, e.g., from \cite[Ch.~III, Thm.~5.1 \& Thm.~7.1]{hartshorne_algebraic_geometry},
		that Serre duality for invertible sheaves on $\bbP^n$ is given by the bilinear form
		\begin{align*}
			H^0(X, \mcO_X(a))\otimes H^n(X, \mcO_X(-(n+1)-a)) &\to H^n(X, \mcO_X(-(n+1))) = k\frac{1}{X_0 \cdots X_n} \\
			(P, Q) &\mapsto \text{coefficient of }(X_0 \cdots X_n)^{-1} \text{ in } PQ,
		\end{align*}
	where,  for $b<0$, we identify $H^n(X, \mcO_X(b))$ with the degree $b$ part of the negatively graded $k$-algebra $(X_0 \cdots X_n)^{-1}k[X_0^{-1}, \dots, X_n^{-1}]$.
		Consider the following commutative diagram of isomorphisms
		\[
		\begin{tikzcd}
			& \Hom_X(F_*^e \mcO_X, \mcO_X) \rar{-\circ F^{e\sharp}}\dar{\mathrm{SD}} & \Hom_X(\mcO_X, \mcO_X) \dar{\mathrm{SD}} \rar{\mathrm{ev}_1}& k\\
			H^0(X, \mcO_X((1-p^e)K_X)) \rar{\mathrm{SD}} & H^n(X, \mcO_X(p^e K_X )) ^\vee \rar{({F^e}^*)^\vee} & H^n(X, \mcO_X(K_X))^\vee ,&
		\end{tikzcd}
		\]
		where SD stands for Serre duality.
		Since ${F^e}^*\colon H^n(X,\mcO_X(-(n+1))) \to H^n(X,\mcO_X(-p^e(n+1)))$ raises a polynomial to its $p^e$-th power,
we find  that a monomial in $H^0(X, \mcO_X((1-p^e)K_X))$ is sent to $1$ in $k$ along the above diagram if it is $(X_0\cdots X_n)^{p^e-1}$ and to zero otherwise.
	\end{proof}
	
	For $X$ the blow-up of $\bbP^2_k$ in $n$ distinct closed points $p_1, \dots, p_n \in \bbP^2_k$, we denote by $E_i$ the exceptional curve over the point $p_i$ and we let $H \in \Pic(X)$ be the pullback of the class of a hyperplane in $\bbP^2_k$.
	Then
	$$\Pic(X) = \bbZ H \oplus \bbZ E_1 \oplus \dots \oplus \bbZ E_n,$$
	is an orthogonal decomposition with respect to the intersection pairing, and we have $H^2=1$ and $E_i^2=-1$ for all $1 \leq i \leq n$.
	The canonical class of $X$ is $K_X = -3H+\sum_i E_i$.
	If $C \subseteq \bbP^2_k$ is an irreducible curve, its strict transform	$\tilde{C}\subseteq X$ has class
	\begin{equation*}
		\tilde{C}= dH-\sum_{i=1}^n m_i E_i \in \Pic(X),
	\end{equation*}
	where $d$ is the degree of $C$ and $m_i$ is the multiplicity of $C$ at $p_i$.
	Any irreducible curve in $X$ is either one of the exceptional curves $E_i$, or the strict transform of an irreducible curve in $\bbP^2_k$.
	Note that for $d>n$, the divisor $dH-\sum_i E_i$ is ample, since it has positive square and since the intersection with any integral curve in $X$ is positive.
	
	\begin{proof}[Proof of \cref{prop:blow_up_of_points_on_a_line_or_conic}]
		Since $X$ is smooth, it suffices  by
		\cref{thm:schwede_smith_f_reg_one_divisor} (see \cref{rmk:SS-criterion}) to prove that there exists an ample divisor $D$ on $X$ such that  $\mcO_X \to F_*^e\mcO_X(D)$ splits for some $e>0$.

		Let $d>0$ be an integer such that the divisor $D\coloneqq dH-\sum_i E_i$ on $X$ is ample and fix a global section $\sigma \colon \mcO_X \to \mcO_X(D)$. We can interpret $\sigma$ as a homogeneous polynomial of degree $d$ vanishing at the points $p_1, \dots, p_n$. 
		Now, as in \cref{lem:identification_of_evaluation_map_projective_space}, we have an isomorphism 
		$$\Psi \colon \Hom_X(F_*^e\mcO_X(D), \mcO_X) \xrightarrow{\cong} H^0(X, \mcO_X((1-p^e)K_X - D))$$
		and global sections of $\mcO_X((1-p^e)K_X - D)$ correspond again to polynomials of a certain degree, vanishing to some certain order at the points $p_i$. The following claim is similar to \cite[p.~39]{brion_kumar_frobenius_spliting_methods_in_gemoetry_and_representation_theory}.		
		\begin{claim}
			A section $\varphi \in H^0(X, \mcO_X((1-p^e)K_X - D))$ defines a section of $\mcO_X \to F_*^e\mcO_X(D)$ if and only if $\Psi^{-1}(\varphi) \sigma \in H^0(X, \mcO_X((1-p^e)K_X))$ defines a splitting of $\mcO_X \to F_*^e \mcO_X$, where $\Psi^{-1}(\varphi)\sigma$ is the usual product of polynomials.
		\end{claim}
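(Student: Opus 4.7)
The plan is to reduce the claim to a short compatibility between Grothendieck duality for the finite Frobenius and multiplication by $\sigma$. First, I would observe that the natural map $\eta_D\colon \mcO_X \to F_*^e \mcO_X(D)$ factors as
$$
\mcO_X \xrightarrow{\ \eta\ } F_*^e \mcO_X \xrightarrow{F_*^e(\sigma)} F_*^e\mcO_X(D),
$$
where $\eta$ denotes the $e$-fold absolute Frobenius. Consequently, for any $\psi \in \Hom_X(F_*^e\mcO_X(D), \mcO_X)$ one has $\psi \circ \eta_D = (\psi \circ F_*^e(\sigma)) \circ \eta$, so $\psi$ splits $\eta_D$ if and only if $\psi \circ F_*^e(\sigma) \in \Hom_X(F_*^e\mcO_X, \mcO_X)$ splits $\eta$.

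Next, I would introduce the analogue $\Phi$ of $\Psi$ for $D=0$, namely the isomorphism
$$
\Hom_X(F_*^e\mcO_X, \mcO_X) \xrightarrow{\cong} H^0(X, \mcO_X((1-p^e)K_X))
$$
obtained from the adjunction $F_*^e \dashv F_e^!$ together with the identification $F_e^!\mcO_X \cong \omega_X^{1-p^e} = \mcO_X((1-p^e)K_X)$, which is valid since $X$ is smooth and $F^e$ is finite. Interpreting $\Psi^{-1}(\varphi)\cdot \sigma$ as the element of $H^0(X, \mcO_X((1-p^e)K_X))$ obtained by multiplying the section corresponding to $\varphi$ by the section $\sigma$, the claim then reduces to the commutativity of the diagram
\[
\begin{tikzcd}
\Hom_X(F_*^e\mcO_X(D), \mcO_X) \arrow{rr}{-\circ F_*^e(\sigma)} \dar{\Psi} && \Hom_X(F_*^e\mcO_X, \mcO_X) \dar{\Phi} \\
H^0(X, \mcO_X((1-p^e)K_X - D)) \arrow{rr}{-\cdot \sigma} && H^0(X, \mcO_X((1-p^e)K_X)),
\end{tikzcd}
\]
since $\varphi$ splits $\mcO_X \to F_*^e\mcO_X(D)$ if and only if $\Psi^{-1}(\varphi) \circ F_*^e(\sigma)$ splits $\mcO_X \to F_*^e\mcO_X$, which by the diagram is equivalent to $\Phi(\Psi^{-1}(\varphi)\circ F_*^e(\sigma)) = \varphi\cdot \sigma$ defining a splitting.

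The commutativity of this square would be established as a formal consequence of the functoriality of the adjunction $F_*^e \dashv F_e^!$ applied to the morphism $\sigma\colon \mcO_X \to \mcO_X(D)$: precomposition with $\sigma$ at the level of $\Hom_X(-, \omega_X^{1-p^e})$ becomes, under the standard identification $\Hom_X(\mcL, \omega_X^{1-p^e}) \cong H^0(X, \mcL^{-1} \otimes \omega_X^{1-p^e})$ for a line bundle $\mcL$, the multiplication-by-$\sigma$ map on global sections. The argument is essentially bookkeeping, and the main (and really only) substantive point will be this last verification that the abstract duality identifications defining $\Psi$ and $\Phi$ match concretely with the multiplication of sections; this is standard and the whole argument runs parallel to \cref{lem:identification_of_evaluation_map_projective_space}, of which the present claim is a twisted variant on the blow-up~$X$.
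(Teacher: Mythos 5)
Your proposal is correct and follows essentially the same route as the paper: both reduce the claim to the commutativity of the outer square relating $-\circ F_*^e(\sigma)$ to multiplication by $\sigma$ under the duality isomorphisms $\Psi$ and $\Phi$. The only difference is presentational — you package the vertical identifications in one step via the adjunction $F_*^e \dashv F_e^!$ and its naturality in the first variable, whereas the paper's proof unfolds $\Psi$ and $\Phi$ explicitly into their Serre-duality and projection-formula constituents (a $5$-row diagram) and checks each intermediate square; these constructions agree, so your "only substantive point" is indeed the same compatibility the paper verifies by that explicit unfolding.
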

		\begin{proof}[Proof of the claim]
			A map $\varphi \in \Hom_X(F_*^e\mcO_X(D), \mcO_X)$ is a section of $\mcO_X \to F_*^e\mcO_X(D)$ if and only if $\varphi \circ F_*^e(\sigma)$ is a section of $\mcO_X \to F_*^e\mcO_X$. Thus, we have to check that the composition $\varphi \circ F_*^e(\sigma)$ corresponds to the product $\Psi(\varphi)\sigma$. This is done by verifying, that the following diagram commutes
			\[
			\begin{tikzcd}
				\Hom_X(F_*^e \mcO_X(D), \mcO_X) \arrow{rrr}{-\circ F_*^e(\sigma)} \dar{\mathrm{SD}} &&& \Hom_X(F_*^e \mcO_X, \mcO_X) \dar{\mathrm{SD}} \\
				H^n(X, \omega_X \otimes F_*^e \mcO_X(D)) ^\vee \arrow{rrr}{(\omega_X \otimes F_*^e(\sigma))^\vee} \dar{\text{projection formula}} &&& H^n(X, \omega_X \otimes F_*^e \mcO_X)^\vee \dar{\text{projection formula}} \\
				H^n(X, F_*^e \mcO_X (p^eK_X + D)) ^\vee  \arrow{rrr}{(F_*^e( \mcO_X(p^e K_X) \otimes \sigma))^\vee} \dar &&& H^n(X, F_*^e \mcO_X(p^e K_X))^\vee \dar \\
				H^n(X, \mcO_X(p^e K_X +D)) ^\vee \arrow{rrr}{(\mcO_X(p^e K_X) \otimes \sigma)^\vee} \dar{\mathrm{SD}} &&& H^n(X, \mcO_X(p^e K_X))^\vee \dar{\mathrm{SD}} \\
				H^0(X, \mcO_X((1-p^e)K_X-D) )  \arrow{rrr}{-\cdot \sigma} &&& H^0(X, \mcO_X((1-p^e)K_X)). \qedhere
			\end{tikzcd}
			\]				 
		\end{proof}
		Denote by $\mu \colon X \to \bbP^2$ the blow-up map. Since $\mu_*\mcO_X = \mcO_{\bbP^2}$, $\mu_*$ induces an isomorphism $\End(\mcO_X) \to \End(\mcO_{\bbP^2})$.
		Since any morphism of schemes commutes with the Frobenius, a map $\varphi \colon F_*^e \mcO_X \to \mcO_X$ is a splitting of $\mcO_X \to F_*^e \mcO_X$ if and only if $\mu_*(\varphi)$ is a splitting of $\mcO_{\bbP^2} \to F_*^e \mcO_{\bbP^2}$.
		The proposition will follow if we can find suitable polynomials $\varphi \in H^0(X, \mcO_X((1-p^e)K_X -D))$ and $\sigma\in H^0(X,\mcO_X(D))$ such that $\mu_*(\varphi \sigma)$ defines a splitting of $\mcO_{\bbP^2} \to F_*^e\mcO_{\bbP^2}$.
		In terms of \cref{lem:identification_of_evaluation_map_projective_space}, the monomial $(XYZ)^{p^e-1}$ has to occur with coefficient $1$ in $\varphi \sigma$ (here we are using coordinates $\bbP^2_k  = \Proj k[X,Y,Z]$).
		
		We first compute
		$$(1-p^e)K_X -D= 3(p^e-1)H-(p^e-1)\sum_i E_i - dH + \sum_i E_i=(3(p^e-1)-d)H-(p^e-2)\sum_i E_i.$$
		If all the points $p_i$ lie on a line $L$, we can assume without loss of generality that $L=V(Z)$.
		Consider the polynomials $\tilde{\varphi}\coloneqq X^{(p^e-1)-(d-1)}Y^{p^e-1}$ and $\tilde{\sigma}\coloneqq X^{d-1}$.
		Moreover, if we set $\varphi\coloneqq \tilde{\varphi}Z^{p^e-2}$ and $\sigma\coloneqq \tilde{\sigma}Z$, then $\varphi \in H^0(X, \mcO_X((1-p^e)K_X -D))$ and $\sigma \in H^0(X, \mcO_X(D))$ and the coefficient of $(XYZ)^{p^e-1}$ in $\varphi \sigma$ is $1$.
		
		If the points lie on a conic $C$, we can assume after possible change of coordinates
	 	that the conic is given by an equation of the form $XY-Z^2$, $XY$, or $X^2$\,; see the elementary \cref{lem:conics} below.
		In the last case, the points lie on the line $X=0$, thus we may assume that $C$ is given by one of the equations $\tilde{\sigma}=XY-Z^2$ or $\tilde{\sigma}=XY$.
		Now set $\sigma \coloneqq  Z^{d-2}\tilde{\sigma}$ and $\varphi\coloneqq Z^{(p^e-1)-(d-2)}\tilde{\sigma}^{p^e-2}$ and observe that $(XYZ)^{p^e-1}$ occurs with coefficient $1$ in $\varphi \sigma$.
	\end{proof}
	
	\begin{remark}\label{rmk:blow-up_on_ordinary_elliptic_curve_f_split}
		Recall, e.g.~from \cite[Ch.~IV, Prop.~4.21]{hartshorne_algebraic_geometry}, that an elliptic curve $C = V(P) \subseteq \bbP_k^2$ is \emph{ordinary} if $(XYZ)^{p-1}$ occurs with nonzero coefficient in $P^{p-1}$. The above \cref{lem:identification_of_evaluation_map_projective_space} can also be used to show, similarly to \cite[Rmk.~6.3]{hara_looking_out_for_frobenius_summands_on_blown_up+surface_of_P2}, that the blow-up of $\bbP^2_k$ in any number of points, which lie on an ordinary elliptic curve, is $F$-split.
	\end{remark}

	\begin{lemma} \label{lem:conics}
		Let $k$ be an algebraically closed field. After suitable coordinate transform a conic in~$\bbP^2_k$ is given one of the following equations\,:
		$$XY-Z^2,\ XY, \ \text{or}\; X^2.$$
	\end{lemma}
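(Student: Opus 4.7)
The strategy is to classify $Q$ by whether the scheme $V(Q) \subseteq \bbP^2_k$ is smooth, treating the singular and smooth cases separately.

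\emph{Singular case.} Suppose $V(Q)$ has a singular point $p$. After a linear change of coordinates, we may assume $p = [0:0:1]$. Writing $Q = aX^2 + bY^2 + cZ^2 + dXY + eXZ + fYZ$ and imposing $Q(p) = 0$ together with $\partial_X Q(p) = \partial_Y Q(p) = 0$ forces $c = e = f = 0$ in any characteristic (the vanishing of $\partial_Z Q(p) = 2cZ + eX + fY$ at $p$ is then automatic). Hence $Q$ is a binary form in $X, Y$, and since $k$ is algebraically closed, we obtain a factorization $Q = L_1 L_2$ into two linear forms. If $L_1$ and $L_2$ are linearly independent, the change of basis $(X, Y) \mapsto (L_1, L_2)$ yields $Q = XY$\,; otherwise $L_2$ is proportional to $L_1$ and, after rescaling, $Q = X^2$.

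\emph{Smooth case.} If $V(Q)$ is smooth, then it is a smooth plane curve of degree $2$, hence of arithmetic genus $(2-1)(2-2)/2 = 0$, and thus isomorphic to $\bbP^1_k$ over the algebraically closed field $k$. The inclusion $V(Q) \hookrightarrow \bbP^2_k$ corresponds via restriction to the $k$-linear map $H^0(\bbP^2_k, \mcO(1)) \to H^0(V(Q), \mcO(1)|_{V(Q)}) \cong H^0(\bbP^1_k, \mcO(2))$, which is injective because $V(Q)$, being irreducible of degree $2$, is not contained in any line. By a dimension count this map is a $k$-linear isomorphism of $3$-dimensional spaces. Pulling back the basis $\{s^2, t^2, st\}$ of $H^0(\bbP^1_k, \mcO(2))$ provides new coordinates on $\bbP^2_k$ in which $V(Q)$ is parameterized by $[s:t] \mapsto [s^2 : t^2 : st]$, that is, $V(Q) = V(XY - Z^2)$.

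There is no substantial obstacle\,; the only point requiring mild care is the characteristic-$2$ behavior of partial derivatives, but the computation above is uniform in the characteristic. The dichotomy is exhaustive since a non-smooth conic always admits a singular point (being, over $k$ algebraically closed, either a union of two intersecting lines or a double line).
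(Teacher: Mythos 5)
Your proof is correct and takes a genuinely different route from the paper's. The paper splits conics into reducible and irreducible ones: the reducible case is dispatched directly (double line or union of two lines), and for an irreducible conic the argument (following Kirwan) picks a smooth point and the tangent line there, moves them to $[0:1:0]$ and $V(Z)$, observes the defining equation must take the form $aYZ+bX^2+cXZ+dZ^2$ with $a,b\neq 0$, and exhibits an explicit linear substitution (involving $\sqrt{b}$, which uses algebraic closedness) carrying $XY-Z^2$ to that equation. You instead split into singular and smooth; over an algebraically closed field the two dichotomies coincide, since your singular-case Jacobian computation shows a singular conic is a binary form in two variables and hence reducible. Your singular case is an elementary, characteristic-uniform coefficient computation followed by factorization of a binary quadratic; your smooth case identifies the conic with the image of the Veronese embedding of $\bbP^1_k$ under $\mcO(2)$ via the genus formula, the classification of genus-$0$ curves with a rational point, and a dimension count on the restriction map $H^0(\bbP^2_k,\mcO(1))\to H^0(\bbP^1_k,\mcO(2))$. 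The paper's argument for the irreducible case is more hands-on, staying at the level of explicit polynomials and coordinate changes; yours is more conceptual and sidesteps the choice of a tangent line and the explicit transformation, at the cost of invoking more machinery. Both are valid, and you correctly flag characteristic $2$ as the one point where the Jacobian computation needs to be checked separately (and is).
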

	\begin{proof}
	 First recall that $\mathrm{PGL}_3(k)$ acts $2$-transitively on $\bbP^2_k = \Proj k[X,Y,Z]$. In particular, $\mathrm{PGL}_3(k)$ acts also $2$-transitively on the set of lines in $\bbP^2_k$, which is $\bbP(H^0(\bbP_k^2, \mcO_{\bbP^2_k}(1))) =\bbP (k[X,Y,Z]_{\deg 1})$.
		If a conic $C \subseteq \bbP^2_k$ is reducible, then it is either a double line or the union of two lines and we can assume $C$ to be defined by the equations $X^2=0$ or $XY=0$, resp.
		It remains to show that an integral conic $C$ is defined by the equation $XY-Z^2=0$ after suitable coordinate transformation. 	We follow the arguments of \cite[Cor.~3.12]{kirwan_complex_algebraic_curves}.
		Since $C$ has only finitely many singular points, we can assume without loss of generality that $[0:1:0]$ is a smooth point of $C$ and the line $Z=0$ is tangent to $C$ at $[0:1:0]$. This means that $C$ is given by an equation of the form
		\begin{equation}\label{eq:equation_of_conic}
			aYZ + bX^2 + cXZ +dZ^2,
		\end{equation}
		since the line tangent to $C$ at the point $[0:1:0]$ is precisely the line given by the linear factors in the dehomogenized equation defining $C$ on the affine open $\{Y \neq 0\} \cong \Spec k[X,Z]$.
		By assumption \cref{eq:equation_of_conic} is an irreducible polynomial. This implies that $b\neq 0$ and $a \neq 0$. We conclude by noting that $XY-Z^2$ is mapped to \cref{eq:equation_of_conic} under the coordinate transformation
		\begin{equation*}
			[x,y,z] \mapsto [\sqrt{b}x, ay+cx+dz, -z]. \qedhere
		\end{equation*}
	\end{proof}

	\subsection{Examples of projective rational surfaces that are not splinters}\label{sec:surfaces_which_are_not_splinters}
		In this paragraph, we use Bhatt's \cref{prop:semiample} to show that certain projective rational surfaces are not splinters.
		First, we have the following example of surfaces that are not globally $F$-regular.

		\begin{example}[{\cite[Ex.~6.6]{schwede_smith_globally_f_regular_and_log_Fano_varieties}}]
			\label{Ex:9points} Let $k$ be an algebraically closed field of positive characteristic.
		If $X$ is the blow-up of $\bbP_k^2$ in $9$ closed points in general position, then $-K_X$ is not big. Therefore, by \cref{prop:gFr-big}, $X$ is not globally $F$-regular. Furthermore, this shows that the blow-up of $\bbP^2$ in at least 9 points in general position is not globally $F$-regular, as global $F$-regularity descends along birational morphisms (\cref{lem:gFr_descent}).
		
		\noindent On the other hand, we note that if $X$ is the blow-up of $\bbP^2$ in $9$ points lying on an ordinary elliptic curve, then $X$ is $F$-split\,; see \cref{rmk:blow-up_on_ordinary_elliptic_curve_f_split}.
		Since being ordinary is an open property, it follows that the blow-up of $\bbP^2$ in $9$ points in general position is $F$-split.
	\end{example}
	
	We can extend \cref{Ex:9points}  and show that in some cases  the blow-up of $\bbP^2$ in  9 points is not a splinter\,:

	\begin{proposition}\label{prop:example_9_points_not_splinter}
		Let $X$ be the blow-up of $\bbP_k^2$ in $9$ distinct $k$-rational points.
		Assume either of the following conditions\,:
		\begin{enumerate}
			\item \label{item:first_case} The base field $k$ is the algebraic closure of a finite field and the $9$ points lie on a smooth cubic curve (e.g., the $9$ points are in general position).
			\item \label{item:second_case} The base field $k$ has positive characteristic and the $9$ points lie at  the transverse intersection of two cubic curves in $\bbP^2_k$.
		\end{enumerate}
		Then $X$ is not a splinter.
	\end{proposition}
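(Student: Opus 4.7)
The plan is to invoke Bhatt's vanishing (\cref{prop:semiample}): for a splinter $X$ in positive characteristic, $H^i(X,\mcL) = 0$ for every semiample line bundle $\mcL$ and $i > 0$. We therefore aim to produce a semiample line bundle on $X$ with nontrivial $H^1$. Write $-K_X = 3H - \sum_{i=1}^9 E_i$, so $K_X^2 = 0$, and let $C$ be a cubic through the nine points---the given smooth cubic in case \labelcref{item:first_case}, and $C = C_1$ in case \labelcref{item:second_case}---with strict transform $\tilde C \sim -K_X$. The key input is that $(-NK_X)|_{\tilde C} \cong \mcO_{\tilde C}$ for some $N \geq 1$: in case \labelcref{item:second_case}, transverse intersection gives $\tilde C_1 \cap \tilde C_2 = \emptyset$ and $-K_X \sim \tilde C_2$, so $N = 1$ works; in case \labelcref{item:first_case}, the class $(-K_X)|_{\tilde C}$ defines an element of $\Pic^0(C) = C(\overline{\bbF_p})$, which is torsion since $C$ is an elliptic curve over the algebraic closure of a finite field.

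Next, tensoring $0 \to \mcO_X(K_X) \to \mcO_X \to \mcO_{\tilde C} \to 0$ with $\mcO_X(-mK_X)$ yields
\begin{equation*}
0 \to \mcO_X(-(m-1)K_X) \to \mcO_X(-mK_X) \to \mcO_{\tilde C}((-mK_X)|_{\tilde C}) \to 0.
\end{equation*}
Riemann--Roch combined with $K_X^2 = 0$ gives $\chi(X, -mK_X) = 1$, while Serre duality and the rationality of $X$ give $H^2(X, -mK_X) = H^0(X, (m+1)K_X)^\vee = 0$. For $1 \leq m < N$, the restriction $(-mK_X)|_{\tilde C}$ is a nontrivial degree-zero line bundle on the elliptic curve $\tilde C$ and therefore has no global sections; induction on $m$ yields $h^0(X, -mK_X) = 1$ and $h^1(X, -mK_X) = 0$. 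At $m = N$, combining the long exact sequence with $H^0(\tilde C, \mcO_{\tilde C}) = k = H^1(\tilde C, \mcO_{\tilde C})$ (any plane cubic is connected of arithmetic genus one) yields $h^0(X, -NK_X) = 2$ and $h^1(X, -NK_X) = 1$.

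It remains to verify that $-NK_X$ is semiample, which will contradict \cref{prop:semiample}. The surjection $H^0(X, -NK_X) \twoheadrightarrow H^0(\tilde C, \mcO_{\tilde C}) = k$ produces a section whose divisor $D \in |-NK_X|$ restricts to the zero divisor on $\tilde C$; combined with $D \cdot \tilde C = -NK_X \cdot \tilde C = 0$, this forces $D \cap \tilde C = \emptyset$. The two-dimensional space $H^0(X, -NK_X)$ is spanned by this section and by a section with divisor $N\tilde C$, so $|-NK_X|$ is the pencil they generate; any base point of this pencil would lie in $\tilde C \cap D = \emptyset$, hence the pencil is base-point-free. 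This produces a semiample line bundle with nonzero $H^1$, contradicting $X$ being a splinter. The main subtlety is arranging both semiampleness and nontrivial higher cohomology of the \emph{same} line bundle; these are achieved simultaneously by passing to the torsion order $N$ of $(-K_X)|_{\tilde C}$, where the resulting anticanonical pencil is forced to be base-point-free by the disjointness of its two distinguished members.
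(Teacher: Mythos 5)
Your proof is correct, and in case~\labelcref{item:first_case} it takes a genuinely different route from the paper. Both proofs use \cref{prop:semiample} and therefore aim to produce a semiample line bundle with nonvanishing $H^1$. For case~\labelcref{item:first_case}, the paper obtains semiampleness of $-K_X$ as a black box from Totaro's theorem on nef line bundles on surfaces over $\overline{\bbF}_p$, then deduces $h^0(-nK_X)\geq 2$ from the fact that $K_X$ is not torsion, and concludes $h^1(-nK_X)>0$ from Riemann--Roch. You instead prove basepoint-freeness of $-NK_X$ directly, where $N$ is the torsion order of $(-K_X)|_{\tilde C}$ in the Jacobian of the smooth cubic (necessarily finite over $\overline{\bbF}_p$): after establishing by induction that $|-mK_X|=\{m\tilde C\}$ for $0\leq m<N$ and $h^0(-NK_X)=2$, the restriction exact sequence gives a member of $|-NK_X|$ disjoint from $N\tilde C$, so the pencil has empty base locus. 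This essentially unwinds the relevant case of Totaro's argument in a self-contained way and simultaneously gives $h^1(-NK_X)=1$ from the long exact sequence rather than via Riemann--Roch. Your construction also unifies the two cases, since taking $C=C_1$ in case~\labelcref{item:second_case} makes $N=1$ and recovers the paper's argument there (the two disjoint cubics give a base-point-free anticanonical pencil). The trade-off is length: the paper's version of case~\labelcref{item:first_case} is shorter at the cost of invoking a deep semiampleness result, while yours is longer but more elementary and exposes the elliptic-fibration mechanism underlying both cases.
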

	\begin{proof}
		In both cases, we show that the anticanonical divisor $-K_X$ is semiample 
		and satisfies $H^1(X,\mcO_X(-nK_X)) \neq 0$ for some $n>0$. It follows from \cref{prop:semiample} that $X$ is not a splinter.
		
		In case \labelcref{item:first_case}, the anticanonical divisor $-K_X$ is the strict transform of the smooth cubic curve and is therefore smooth of genus 1. 
		Since $(-K_X)^2=0$,
		we get from \cite[Thm.~2.1]{totaro_moving_codimension_one_subvarieties_over_finite_fields} that $-K_X$ is semiample, that is, there exists a positive integer $n$ such that $-nK_X$ is basepoint free. In particular, since $K_X$ is not torsion, $h^0(-nK_X)\geq 2$. By Riemann--Roch $\chi(-nK_X)=1$, and hence $h^1(-nK_X)\neq 0$.
		
		In case \labelcref{item:second_case}, $-K_X$ is basepoint free (in particular semiample)  and satisfies $h^0(-K_X) \geq 2$. Indeed, $-K_X$ admits two sections, corresponding to the strict transforms of the cubics, which do not meet in any point in the blow-up as they pass through the $9$ points in $\bbP_k^2$ from different tangent directions. 
		On the other hand, we have $\chi(-K_X) = 1$ and it follows that $h^1(-K_X) \neq 0$.
	\end{proof}

	\begin{remark}
		An alternative proof of \cref{prop:example_9_points_not_splinter}\labelcref{item:second_case} can be obtained by using \cref{lem:generic_fiber_splinter}. 
		Indeed, if the $9$ blown up points lie in the intersection of two distinct smooth cubic curves, the set of cubic curves passing through the $9$ points forms a pencil and we obtain an elliptic fibration $X \to \bbP^1$. Since the generic fiber of this fibration is not a splinter, $X$ is not a splinter.
	\end{remark}
	
	\cref{prop:example_9_points_not_splinter} leaves open the question of whether the blow-up of $\bbP^2_k$ in 9 closed points in very general position over an uncountable algebraically closed field $k$ of positive characteristic is a splinter.
	Finally, further examples of rational surfaces over $\overline{\bbF}_p$ that are not splinters are provided by the following\,:
	
	\begin{proposition}\label{prop:15_points_on_quartic_curve} Let $k$ be the algebraic closure of a finite field. 
		Let $d\geq 4$ be an integer and let $C$ be an irreducible curve of degree $d$ in $\bbP_k^2$.
		Let $n \coloneqq  {2+d \choose 2}=\frac{(d+2)(d+1)}{2}$\,; e.g.,  $n=15$ if $d=4$. 
		The blow-up $X$ of $\bbP_k^2$ in $n$ distinct smooth points of $C$ is not a splinter.
		
	\end{proposition}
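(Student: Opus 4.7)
The plan is to follow the strategy of \cref{prop:example_9_points_not_splinter}\labelcref{item:first_case}: exhibit a semiample line bundle on $X$ with nonzero first cohomology, which by Bhatt's vanishing \cref{prop:semiample} precludes $X$ from being a splinter. The natural candidate is $\mcL \coloneqq \mcO_X(\tilde{C})$, where $\tilde{C} \subseteq X$ denotes the strict transform of $C$. Since each $p_i$ is a smooth point of $C$, the class of $\tilde{C}$ in $\Pic(X)$ is $dH - \sum_{i=1}^n E_i$, and $\tilde{C}$ is irreducible because $C$ is.

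First I would show that $\mcL$ is big and nef. A direct computation yields $\tilde{C}^2 = d^2 - n = \tfrac{1}{2}(d^2 - 3d - 2)$, which is positive for $d \geq 4$, so $\mcL$ is big. To see nefness, let $D$ be an irreducible curve on $X$: the case $D = \tilde{C}$ is covered by $\tilde{C}^2 > 0$, the case $D = E_i$ by $\tilde{C} \cdot E_i = 1$, and if $D$ is the strict transform of an irreducible plane curve $C' \neq C$ of degree $e$ with multiplicity $m_i$ at $p_i$, Bezout in $\bbP^2_k$ gives $de = \sum_p I_p(C, C') \geq \sum_i m_i \cdot \mathrm{mult}_{p_i}(C) = \sum_i m_i$, whence $\tilde{C} \cdot D = de - \sum_i m_i \geq 0$. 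Since $k$ is the algebraic closure of a finite field, Keel's semiampleness theorem applies: the null locus of $\mcL$ is a curve on which $\mcL$ has degree zero on every component and hence restricts to a torsion line bundle (because $\Pic^0$ of any proper curve over $\overline{\bbF_p}$ is torsion); therefore $\mcL$ itself is semiample.

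Finally I would compute $H^1(X, \mcL)$ via Riemann--Roch and Serre duality. With $K_X = -3H + \sum E_i$ one obtains $K_X \cdot \tilde{C} = n - 3d$, and hence
$$\chi(\mcL) = 1 + \tfrac{1}{2}\bigl(\tilde{C}^2 - K_X \cdot \tilde{C}\bigr) = 1 + \tfrac{1}{2}(d^2 + 3d - 2n) = 0,$$
using $2n = (d+1)(d+2) = d^2 + 3d + 2$. Since $\tilde{C}$ is effective, $h^0(\mcL) \geq 1$, while by Serre duality $h^2(\mcL) = h^0(K_X - \tilde{C})$; the class $K_X - \tilde{C} = -(d+3)H + 2\sum E_i$ has negative intersection with the nef class $H$ and so is not effective, forcing $h^2(\mcL) = 0$. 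Hence $h^1(\mcL) \geq 1$, which together with semiampleness of $\mcL$ contradicts \cref{prop:semiample}. The main input that genuinely requires $k = \overline{\bbF_p}$ is the semiampleness of $\mcL$; all other steps work uniformly in arbitrary positive characteristic.
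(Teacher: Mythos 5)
Your proof is correct and follows essentially the same line as the paper: take $\mcL = \mcO_X(\tilde{C})$ for $\tilde{C}$ the strict transform of $C$, show it is nef and big (hence semiample by Keel over $\overline{\bbF}_p$), and use Riemann--Roch ($\chi(\mcL)=0$) together with $h^0(\mcL)\geq 1$ to force $h^1(\mcL)\neq 0$, contradicting Bhatt's vanishing \cref{prop:semiample}. The paper's version of the nefness step is briefer (it uses only that $\tilde{C}$ is an irreducible curve with $\tilde{C}^2>0$, so its intersection with every other irreducible curve is automatically nonnegative), and it omits the $h^2$-computation, which is superfluous since $h^1 = h^0 + h^2 - \chi \geq h^0 \geq 1$ already.
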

	
	\begin{proof}
		By \cref{prop:semiample}, it suffices to construct a semiample invertible sheaf $\mcL$ on $X$ such that $H^1(X,\mcL)\neq 0$. For that purpose, we consider the class $D$ of the strict transform of the curve~$C$. We have
		$D=dH-\sum_{i=1}^n E_i,$
		where $H$ is the pullback of the hyperplane class in $\bbP_k^2$ and $E_i$ are the exceptional curves lying above the $n$ blown up points. We claim that the invertible sheaf $\mcL \coloneqq  \mcO_X(D)$ is semiample and satisfies $H^1(X,\mcL) \neq 0$.
		On the one hand, $D$ is nef since it is effective and satisfies $D^2 = d^2-n >0$ for $d\geq 4$. Being nef and having positive self intersection, it is also big, see, e.g., \cite[Cor.~2.16]{kollar_rational_curves_algebraic_varieties}. 
		By Keel's \cite[Cor.~0.3]{keel_basepoint_freeness_for_nef_and_big_line_bundles_in_positive_characteristic} any nef and big invertible sheaf on a surface over the algebraic closure of a finite field is semiample.
		On the other hand, by Riemann--Roch $$\chi(D)= 1 + \frac{1}{2}(D^2-K_X\cdot D) = 1 + \frac{1}{2}(d^2+3d-2n) = 0,$$
		for $K_X =-3H+\sum_iE_i$ the canonical divisor on $X$.
		Since $D$ is effective, $H^0(X,\mcL) \neq 0$, and we conclude that $H^1(X,\mcL) \neq 0$.
	\end{proof}

	\section{$K$-equivalence, $\mcO$-equivalence, and $D$-equivalence}
	\label{S:O-eq}
	
	The aim of this section is to study the derived invariance of the (derived) splinter property and of global $F$-regularity for projective varieties over a field of positive characteristic. For that purpose, we introduce the notion of $\mcO$-equivalence, which is closely related to $K$-equivalence but offers more flexibility, and explore whether the (derived) splinter property and global $F$-regularity are preserved under (strong) $\mcO$-equivalence.
	
	\subsection{$K$-equivalence}
	\label{SS:K}
	In this paragraph, we fix an excellent Noetherian scheme $S$ admitting a dualizing complex~$\omega_S^\bullet$. Any scheme $X$ over $S$ with structure morphism $h\colon X \to S$ of finite type and separated will be endowed with the dualizing complex $\omega_X^\bullet \coloneqq h^!\omega_S^\bullet$.
	If $X$ is normal,
	there is a unique, up to linear equivalence, Weil divisor $K_X$ on $X$ such that $\omega_X \cong \mcO_X(K_X)$. 
	The following notions are classical, at least in the case of smooth varieties over a field (where they agree)\,:
	
	\begin{definition}[$K$-equivalence and strong $K$-equivalence]\label{def:K_equvialence}
		Let $X$ and $Y$ be integral normal $\bbQ$-Gorenstein schemes of finite type and separated over $S$.
		We say $X$ and $Y$ are \emph{$K$-equivalent} if there exists a normal scheme $Z$ over~$S$ with proper birational $S$-morphisms $p\colon Z \to X$ and $q\colon Z \to Y$ such that $p^*K_X$ and $q^*K_Y$ are $\bbQ$-linearly equivalent.

If in addition $X$ and $Y$ are Gorenstein, we say $X$ and $Y$ are \emph{strongly $K$-equivalent} if  there exists a normal scheme $Z$ over~$S$ with proper birational $S$-morphisms $p\colon Z \to X$ and $q\colon Z \to Y$ such that $p^*K_X$ and $q^*K_Y$ are linearly equivalent.
	\end{definition}

	Obviously, if $X$ and $Y$ are Gorenstein and strongly $K$-equivalent, then they are $K$-equivalent. The converse holds provided $X$ and $Y$ are not too singular\,; see \cref{prop:K_equvialence-eqivalence_implies_small_birational_map}\labelcref{item:canonical} below.
	The following \cref{prop:K_equvialence-eqivalence_implies_small_birational_map}\labelcref{item:temrinal} says, in particular, that $K$-equivalent normal terminal varieties are isomorphic in codimension~$1$. 
This is certainly well-known, at least in characteristic zero, see, e.g., \cite[Lem.~4.2]{kawamata_d_equivalence_and_k_equivalence},
	but as we were not able to find a suitable reference for our more general setting, we provide a proof.
	An integral normal excellent
	scheme $X$ of finite type and separated over $S$ is said to be \emph{terminal} (resp.\ \emph{canonical})
	 if $X$ is $\bbQ$-Gorenstein and for any proper birational $S$-morphism $f\colon X' \to X$ with $X'$ normal, the 
discrepancies of the exceptional divisors  are all positive (resp.\ nonnegative).

	\begin{proposition}
		\label{prop:K_equvialence-eqivalence_implies_small_birational_map}
		Let $X$ and $Y$ be integral normal schemes of finite type and separated over $S$.
	 Assume that $X$ and $Y$ are $K$-equivalent.
		\begin{enumerate}
			\item \label{item:canonical} 
			If $X$ and $Y$ are canonical and Gorenstein, then $X$ and $Y$ are strongly $K$-equivalent.
			\item \label{item:one_canonical_proper}
			If $X$ and $Y$ are Gorenstein proper varieties over $S = \Spec k $ a field, and either $X$ or $Y$ is canonical, then $X$ and $Y$ are strongly $K$-equivalent.
			\item \label{item:temrinal} 
			If $X$ and $Y$ are terminal, 	then the induced birational map $X\dashrightarrow  Y$ is small in the sense of \cref{def:small_bir_map}.
		\end{enumerate}
	\end{proposition}
	\begin{proof}
		We first prove \labelcref{item:canonical,item:temrinal}.
		Let $Z$ be a normal scheme over $S$ with proper birational morphisms $p\colon Z \to X$ and $q\colon Z \to Y$ such that $p^*K_X \sim_\bbQ q^*K_Y$.
		Let $E_i \subseteq \mathrm{Exc}(p)$ and $F_j \subseteq \mathrm{Exc}(q)$ be the irreducible components, endowed with their reduced structure, of codimension~$1$ of the exceptional loci of $p$ and $q$, respectively.
		Let $K_{Z/X}= \sum_i a_i E_i$ and $K_{Z/Y} =\sum_j b_j F_j$  be the relative canonical $\bbQ$-divisors, where $a_i, b_j \in \bbQ_{\geq0}$ (resp.\ $a_i, b_j \in \bbQ_{>0}$) if 
		 $X$ and $Y$ are canonical (resp.\ terminal).
			Let 
		$$  D\coloneqq K_{Z/Y} - K_{Z/X} = \sum b_j F_j - \sum a_i E_i.$$
		The divisor $D$ is $\bbQ$-Cartier and we have $D \sim_\bbQ p^*K_X - q^*K_Y \sim_\bbQ 0$.
		Note that $-D$ is $p$-nef and $p_*D = \sum_j b_j p_*F_j$ is effective.
		By the Negativity Lemma   \cite[Lem.~2.16]{bhatt_et_al_globally_+_regular_varieties_and_mmp_for_threefolds_in_mixed_char}  (which can be applied since $S$ is assumed to be excellent and since we can assume that $p$ and $q$ are projective after applying Chow's Lemma  \cite[\href{https://stacks.math.columbia.edu/tag/02O2}{Tag 02O2}]{stacks-project} and normalizing \cite[\href{https://stacks.math.columbia.edu/tag/035E}{Tag~035E}]{stacks-project}),
		$D$ is effective. 
		(For the classical version of the Negativity Lemma in characteristic zero, see \cite[Lem.~3.39]{kollar_mori_birational_geometry_of_algebraic_varieties}.)		
		Arguing similarly for $-D$ shows that $-D$ is effective, and therefore that $D =0$.
		This establishes~\labelcref{item:canonical}, as under the additional assumption that $X$ and $Y$ are Gorenstein we have that $a_i, b_i \in \bbZ_{\geq 0}$ and $p^*K_X - q^*K_Y \sim D = 0$.
		Regarding~\labelcref{item:temrinal}, if $X$ and $Y$ are terminal, we have $\Supp(K_{Z/X})=\mathrm{Exc}(p)$ and $\Supp(K_{Z/Y}) = \mathrm{Exc}(q)$ up to some locally closed subsets of codimension~$\geq 2$.
		Therefore, $D=0$ implies that
		$\mathrm{Exc}(p)=\mathrm{Exc}(q)$ up to some locally closed subsets of codimension $\geq 2$.
		This proves the statement, since $p(\mathrm{Exc}(p))$ and $q(\mathrm{Exc}(q))$ both have codimension $\geq 2$ (a proper birational morphism to a normal Noetherian scheme has geometrically connected fibers by \cite[\href{https://stacks.math.columbia.edu/tag/03H0}{Tag 03H0}]{stacks-project}).
		
		We now prove \labelcref{item:one_canonical_proper}. Assume that $Y$ is canonical and keep the notation as above.
		Then $b_j \in \bbZ_{\geq 0}$ for all $j$ so that $p_* D$ is effective and we can conclude as above that $D$ is effective.
		Thus, $D$ is an effective divisor with $nD \sim 0$ for some $n>0$.
		But we can identify the linear system $ \lvert nD \rvert = \bbP (H^0(Z, \mcO_Z) )= \{0\}$ as sets,  showing that $nD = 0$ and consequently that $D=0$.
	\end{proof}

	Together with \cref{lem:splinter_invariant_codim_2_surgery}, we obtain that both the splinter property and global $F$-regularity for normal terminal varieties over a field of positive characteristic are invariant under $K$-equivalence\,:
	
	\begin{corollary} \label{cor:splinter-gFr-K-eq}
		Let $X$ and $Y$ be $K$-equivalent integral normal terminal schemes of finite type and separated over~$S$.
	The following statements hold.
		\begin{enumerate}
			\item $X$ is a splinter if and only if $Y$ is a splinter. 
			\item $X$ is globally $F$-regular if and only if $Y$ is globally $F$-regular, provided $S=\operatorname{Spec} k$ with $k$ an $F$-finite field.
		\end{enumerate}
	\end{corollary}	
	\begin{proof}
		This is the combination of \cref{lem:small_birational_map_globF}, \cref{prop:small_birational_map}, and \cref{prop:K_equvialence-eqivalence_implies_small_birational_map}\labelcref{item:temrinal}.
	\end{proof}
	
	\begin{remark}\label{rmk:crepant-small}
		\cref{prop:K_equvialence-eqivalence_implies_small_birational_map}\labelcref{item:temrinal} 
		 does not hold without restrictions on the singularities of $X$ and~$Y$. 
		Consider indeed any crepant morphism $p\colon Y \to X$ to a Gorenstein variety $X$ over a field $k$, with exceptional locus containing a divisor.
		Then 
		$Y= Z \to X$ provides a strong $K$-equivalence between the Gorenstein proper varieties $X$ and $Y$ that does not induce a small birational map.
		Nonetheless, we know from \cref{rmk:crepant} that if $p \colon Y \to X$ is a crepant morphism of normal varieties over a field $k$ of positive characteristic, then $X$ is a splinter if and only if $Y$ is a splinter.  
		In the next paragraph, we will introduce the notion of $\mcO$-equivalence and will use it to circumvent going through small birational maps to improve upon
		\cref{cor:splinter-gFr-K-eq}
		 and show that the derived splinter property for proper varieties is invariant under strong $\mcO$-equivalence\,; see \cref{cor:splinters-O-eq}.
	\end{remark}
	
	The following proposition in characteristic zero is \cite[Cor.~5.24]{kollar_mori_birational_geometry_of_algebraic_varieties}. We provide an analogous proof in the  general situation of normal excellent Noetherian schemes admitting a dualizing complex.
	
	\begin{proposition}\label{prop:canonical_pseudorational_Gorenstein}
		Let $X$ be a normal scheme of finite type and separated over $S$. 
		If $X$ is Gorenstein, then $X$ is pseudo-rational if and only if $X$ is canonical.
	\end{proposition}
	\begin{proof}
		Recall from \cite[Cor.\ on p.107]{lipman_teissier_pseudorational_local_rings} that a normal excellent Cohen--Macaulay Noetherian scheme $X$  admitting a dualizing complex is pseudo-rational if and only if for every proper birational morphism $p \colon Y \to X$ with $Y$ normal, the canonical map $p_* \omega_Y \to \omega_X$ is an isomorphism.		
		Let $p \colon Y \to X$ be a proper birational map with $Y$ normal and denote by $\mathrm{Exc}(p)\subseteq Y$ the exceptional locus so that $p$ is an isomorphism over $U \coloneqq X \setminus p(\mathrm{Exc}(p))$.
		Since $X$ is normal, $\codim_X p(\mathrm{Exc}(p))\geq 2$, and the map $p_* \omega_Y \to \omega_X$ can be identified with
		$$H^0(V, p_* \omega_Y) \to H^0(V \cap U, p_* \omega_Y)  = H^0(V \cap U, \omega_X) = H^0(U, \omega_X)$$
		for any open subset $V \subseteq X$,
		where the first map is the restriction map and the last equality holds since $\omega_X$ is reflexive\,;
		cf.\ \cite[p.~109]{lipman_teissier_pseudorational_local_rings}. Assume now that $X$ is Gorenstein, and
		let $K_Y \sim p^*K_X + E$ with $E$  an effective divisor supported on $\mathrm{Exc}(p)$.
		Via the projection formula, the canonical map $p_*\omega_Y \to \omega_X$ identifies with the map $\omega_X \otimes p_*\mcO_Y(E) \to \omega_X \otimes \mcO_X = \omega_X$ obtained by tensoring the identity on $\omega_X$ with the map $p_*\mcO_Y(E) \to \mcO_X$  given by restricting a local section of $p_* \mcO_Y(E)$ along $U \hookrightarrow X$.
		Since $E$ is supported on $\mathrm{Exc}(p)$, this map $p_*\mcO_Y(E) \to \mcO_X$ is an isomorphism if and only if $E$ is effective.
		Since $\omega_X$ is invertible, this proves the statement of the proposition.
	\end{proof}

	\subsection{$\mcO$-equivalence}
	
		In this paragraph, we use the formalism of the  exceptional inverse image functor, as described in  \cite[\href{https://stacks.math.columbia.edu/tag/0A9Y}{Tag 0A9Y}]{stacks-project} (under the name of \emph{upper shriek functor}), for separated schemes of finite type over a fixed Noetherian base\,; see  \cref{SS:uppershriek}.

	\begin{definition}[$\mcO$-equivalence and strong $\mcO$-equivalence]\label{def:O_equiv}
		Let $S$ be a Noetherian scheme and let $X$ and $Y$ be schemes of finite type and separated over $S$.
		
		We say $X$ and $Y$ are \emph{strongly $\mcO$-equivalent}, if there exists a scheme $Z$
	 over $S$ with proper birational $S$-morphisms $p\colon Z \to X$ and $q \colon Z \to Y$  such that
		$p^! \mcO_X \cong q^! \mcO_Y$ holds in $\sfD_{\mathrm{Coh}}(\mcO_Z)$.
		
			We say $X$ and $Y$ are \emph{$\mcO$-equivalent}, if there exists a scheme $Z$
		over $S$ with proper birational $S$-morphisms $p\colon Z \to X$ and $q \colon Z \to Y$ and a proper surjective morphism $\mu\colon \tilde{Z} \to Z$ such that
		$\mu^! p^! \mcO_X \cong \mu^!q^! \mcO_Y$ holds in $\sfD_{\mathrm{Coh}}(\mcO_{\tilde{Z}})$.
	\end{definition}
	
Obviously, strong $\mcO$-equivalence implies $\mcO$-equivalence.
	As will become clear, 
	$\mcO$-equivalence offers more generality and more flexibility than $K$-equivalence\,: in particular, it does not involve any ($\bbQ$-)Gorenstein assumption.
	For integral normal Gorenstein schemes, (strong) $\mcO$-equivalence and (strong) $K$-equivalence relate as follows\,:
	
	\begin{proposition} \label{prop:K-O-eq}
		Let $X$ and $Y$ be 
		integral normal 
Gorenstein  schemes of finite type and separated over an excellent Noetherian scheme $S$ admitting a dualizing complex $\omega_S^\bullet$.
Consider the following statements\,:
\begin{enumerate}
\item  \label{item:strong_K} $X$ and $Y$ are strongly $K$-equivalent.
\item  \label{item:strong_O}$X$ and $Y$ are strongly $\mcO$-equivalent.
\item  \label{item:K} $X$ and $Y$ are  $K$-equivalent.
\item  \label{item:O} $X$ and $Y$ are  $\mcO$-equivalent.
\end{enumerate} 
Then $\labelcref{item:strong_K} \Leftrightarrow \labelcref{item:strong_O} \Rightarrow \labelcref{item:K} \Leftrightarrow \labelcref{item:O}$. 
If in addition $X$ and $Y$ are canonical, then  $\labelcref{item:strong_K} \Leftrightarrow \labelcref{item:strong_O} \Leftrightarrow \labelcref{item:K} \Leftrightarrow \labelcref{item:O}$. 
	\end{proposition}
	\begin{proof}
Let $p\colon Z \to X$ and $q\colon Z \to Y$ be birational morphisms of schemes of finite type and separated over a Noetherian scheme $S$ admitting a dualizing complex. 
	Recall from \cref{lem:k_and_o_equvalence} that if $X$ and $Y$ are Gorenstein, 
then $p^! \mcO_X \cong q^! \mcO_Y$ if and only if $p^* \omega_X \cong q^*\omega_Y$. 
If $X$ is reduced, then $Z$ is generically reduced and hence \cite[\href{https://stacks.math.columbia.edu/tag/0BXC}{Tag 0BXC}]{stacks-project} the normalization $Z^\nu \to Z$ is birational. We thus see that 
strong $\mcO$-equivalence coincides with strong $K$-equivalence, i.e., that $\labelcref{item:strong_K} \Leftrightarrow \labelcref{item:strong_O}$.

The implication $\labelcref{item:strong_O} \Rightarrow \labelcref{item:O}$ is clear and holds without assuming $X$ and $Y$ to be normal Gorenstein.

For  $\labelcref{item:K}\Rightarrow \labelcref{item:O}$, consider the cyclic covering associated to the torsion invertible sheaf $\mathcal L \coloneqq p^*\omega_X \otimes q^*\omega_Y^{-1}$, i.e., 
$$\mu\colon \tilde{Z} \coloneqq \Spec_Z \left (\bigoplus_{i=0}^{r-1} \mcL^{i} \right ) \to Z,$$
where $r$ is the torsion index of $\mathcal L$.
As in the proof of \cref{prop:splinter_dualizing_sheaf_not_torsion}, we have that $\mu^* \mcL = \mcO_{\tilde{Z}}$, i.e., $\mu^* p^*\omega_X \cong \mu^*q^*\omega_Y$.
One concludes with \cref{lem:k_and_o_equvalence} that $\mu^!p^!\mcO_X \cong \mu^!q^!\mcO_Y$.

For $\labelcref{item:O}\Rightarrow \labelcref{item:K}$, recall, e.g., from
\cite[\href{https://stacks.math.columbia.edu/tag/02U9}{Tag 02U9}]{stacks-project}, that for a proper morphism $\mu \colon \tilde{Z} \to Z$ and an invertible sheaf $\mcL$ on $Z$ we have
\begin{equation}\label{eq:proj_formula_Chern}
\mu_*(\cc_1(\mu^*\mcL) \cap [\tilde Z]) =  \cc_1(\mcL)\cap \mu_*[\tilde Z] \quad \text{in}\;\CH_{\dim Z-1}(Z).
\end{equation}
Assume now that $X$ and $Y$ are $\mcO$-equivalent and let $\mu\colon \tilde Z \to Z$, $p\colon Z\to X$ and $q\colon Z \to Y$ be as in \cref{def:O_equiv}. 
Up to normalizing $\tilde{Z}$ and $Z$, we may and do assume that $Z$ is normal. 
By Chow's Lemma \cite[\href{https://stacks.math.columbia.edu/tag/02O2}{Tag 02O2}]{stacks-project} and by taking hyperplane sections, we may assume that $\mu$ is generically finite.
 The condition $\mu^! p^! \mcO_X \cong \mu^!q^! \mcO_Y$ together with \cref{eq:proj_formula_Chern} then imply that
the divisor class associated to 
 $p^*\omega_X \otimes q^*\omega_Y^{-1}$ is torsion, and hence by normality of $Z$ that $p^*\omega_X \otimes q^*\omega_Y^{-1}$ is a torsion invertible sheaf on $Z$, i.e.\ $p^*K_X \sim_\bbQ q^*K_Y$.

Finally if $X$ and $Y$ are canonical, then $\labelcref{item:strong_K} \Leftrightarrow \labelcref{item:K}$ by \cref{prop:K_equvialence-eqivalence_implies_small_birational_map}.
	\end{proof}

	Moreover, under some regularity assumption, the cohomology of the structure sheaf is invariant under strong $\mcO$-equivalence\,:
	
	\begin{proposition}\label{prop:O-eq-Hodge-numbers}
		Let $X$ and $Y$ be excellent regular schemes of finite type and separated over a Noetherian scheme $S$.
		If there exists an excellent regular scheme $Z$
		of finite type and separated over~$S$ with projective birational morphisms $p\colon Z \to X$ and $q \colon Z \to Y$ over $S$ such that
		$p^! \mcO_X \cong q^! \mcO_Y$, then $H^i(X,\mcO_X) \cong H^i(Y,\mcO_Y)$ as $H^0(S,\mcO_S)$-modules for all $i\geq 0$.
	\end{proposition}
	\begin{proof} 
		By  \cite[Thm.~1.1]{chatzistamatiou_ruelling_vanishing_of_the_higher_direct_images_of_the_structure_sheaf}, both canonical maps $\mcO_X \to \mathrm{R} p_* \mcO_Z$ and $\mcO_Y \to \mathrm{R} q_* \mcO_Z$ are isomorphisms. 
		Using the fact that the exceptional inverse image functor commutes with shifts, we obtain the chain of isomorphisms
		\begin{align*}
		H^i(Y,\mcO_Y) & =
		\Hom_{\sfD_{\mathrm{Coh}}(\mcO_Y)}(\mcO_Y, \mcO_Y[i])  = 
		\Hom_{\sfD_{\mathrm{Coh}}(\mcO_Y)}(\mathrm{R} q_* \mcO_Z, \mcO_Y[i]) \\
		& = \Hom_{\sfD_{\mathrm{Coh}}(\mcO_Z)}( \mcO_Z, q^! \mcO_Y[i]) 
		\cong \Hom_{\sfD_{\mathrm{Coh}}(\mcO_Z)}( \mcO_Z, p^! \mcO_X[i]) \\
		& = \Hom_{\sfD_{\mathrm{Coh}}(\mcO_X)}(\mathrm{R} p_* \mcO_Z, \mcO_X[i])
		= \Hom_{\sfD_{\mathrm{Coh}}(\mcO_X)}(\mcO_X, \mcO_X[i])
		=H^i(X,\mcO_X).\qedhere
		\end{align*}
	\end{proof}

\begin{remark} \label{rem:O-eq-Hodge-numbers}
	Let $X$ and $Y$ be regular schemes of finite type and separated over a perfect field~$k$. 
	If $X$ and $Y$ are strongly $\mcO$-equivalent and if resolution of singularities holds for reduced schemes of dimension $\dim X$ over $k$, then by applying Chow's Lemma to $Z$ as in \cref{def:O_equiv} and then resolving singularities, one may choose $Z$ to be regular. Hence, provided resolution of singularities holds for reduced schemes of dimension $\dim X$ over $k$ (which is the case if $\dim X \leq 3$ by \cite{CP}), \cref{prop:O-eq-Hodge-numbers} shows that if $X$ and $Y$ are strongly $\mcO$-equivalent, then $H^i(X,\mcO_X) \cong H^i(Y,\mcO_Y)$ for all $i \geq 0$.
\end{remark}

	\subsection{$D$-equivalence} The following is classical.
	
	\begin{definition}[$D$-equivalence]
		Two 
		proper varieties $X$ and $Y$ over a field $k$ are said to be \emph{$D$-equivalent} (or \emph{derived equivalent}) if there is a $k$-linear equivalence of categories $\sfD^b(X) \cong \sfD^b(Y)$ between their bounded derived categories of coherent sheaves.
	\end{definition}
	
Kawamata~\cite[Thm.~2.3(2)]{kawamata_d_equivalence_and_k_equivalence} showed that if $X$ and $Y$ are $D$-equivalent smooth projective varieties over an algebraically closed field of characteristic zero and if $K_X$ or $-K_X$ is big, then $X$ and $Y$ are $K$-equivalent. We have the same result in the broader context of normal Gorenstein projective varieties over any field\,:
	
	\begin{proposition}\label{prop:d_equiv_implies_o_equiv}
	Let $X$ and $Y$ be normal Gorenstein
	projective
	varieties over a
	field $k$.
	Assume that $K_X$ or $-K_X$ is big.
	If $X$ and $Y$ are $D$-equivalent, then $X$ and $Y$ are $K$-equivalent (equivalently, $\mcO$-equivalent).
	\end{proposition}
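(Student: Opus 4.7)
The plan is to adapt Kawamata's proof of \cite[Thm.~2.3(2)]{kawamata_d_equivalence_and_k_equivalence} to the positive characteristic setting, using $\mcO$-equivalence in place of $K$-equivalence so as to sidestep the need for resolution of singularities. The key enabling result is \cref{lem:k_and_o_equvalence}: since $X$ and $Y$ are smooth, hence equidimensional Gorenstein, verifying $p^*\omega_X \cong q^*\omega_Y$ on any separated finite type $k$-scheme $Z$ with morphisms to $X$ and $Y$ suffices to obtain $p^!\mcO_X \cong q^!\mcO_Y$.

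First, by Orlov's representability theorem (which holds over any field for smooth projective varieties), the $k$-linear equivalence $\Phi\colon \sfD^b(X)\xrightarrow{\sim}\sfD^b(Y)$ is a Fourier--Mukai transform with kernel $e\in\sfD^b(X\times_k Y)$. Every $k$-linear equivalence commutes with the Serre functors $S_X = (-)\otimes\omega_X[\dim X]$ and $S_Y = (-)\otimes\omega_Y[\dim Y]$, which forces $\dim X = \dim Y =: n$ and yields an isomorphism $e \otimes \pi_X^*\omega_X \cong e \otimes \pi_Y^*\omega_Y$ in $\sfD^b(X\times_k Y)$, where $\pi_X,\pi_Y$ denote the two projections.

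Next, following Kawamata, consider the support $\Supp(e)\subseteq X\times_k Y$. The bigness hypothesis on $K_X$ or $-K_X$ is used to guarantee, via the analysis in \emph{loc.\ cit.}\ of how the equivalence propagates high powers of the canonical bundle, the existence of an irreducible component $Z\subseteq \Supp(e)$ of dimension $n$ whose projections $p\colon Z\to X$ and $q\colon Z\to Y$ are surjective, hence birational. Restricting the isomorphism from the previous paragraph to $Z$ (after passing to appropriate cohomology sheaves of $e$ supported on $Z$) gives $p^*\omega_X\cong q^*\omega_Y$ as line bundles on $Z$; note that both pullbacks are line bundles, since $\omega_X,\omega_Y$ are line bundles on the smooth varieties $X,Y$. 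One may replace $Z$ by its normalization, or apply Chow's Lemma, to arrange that $p$ and $q$ are proper (and still birational) without affecting the isomorphism $p^*\omega_X\cong q^*\omega_Y$.

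Finally, applying \cref{lem:k_and_o_equvalence} to the diagram $X \xleftarrow{p} Z \xrightarrow{q} Y$ over $S=\Spec k$ transports the equality $p^*\omega_X\cong q^*\omega_Y$ into the isomorphism $p^!\mcO_X\cong q^!\mcO_Y$ in $\sfD_{\mathrm{Coh}}(\mcO_Z)$, establishing the $\mcO$-equivalence of $X$ and $Y$. The main obstacle is the construction in the second paragraph of the component $Z$ with birational projections; this is the crux of Kawamata's argument in characteristic zero and requires a careful support analysis of the Fourier--Mukai kernel together with the bigness input. Since the argument uses only general properties of Fourier--Mukai transforms, intersection theory of supports, and the bigness translated via the equivalence, it should carry over to positive characteristic without essential change, with the added advantage that no smoothness of $Z$ is required, precisely because we target $\mcO$-equivalence rather than $K$-equivalence.
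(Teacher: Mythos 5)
Your outline follows the paper's overall route: invoke Kawamata's construction from \cite[Thm.~2.3(2)]{kawamata_d_equivalence_and_k_equivalence} to obtain a variety $Z$ with birational morphisms $p\colon Z\to X$, $q\colon Z\to Y$, establish a relation between $p^*\omega_X$ and $q^*\omega_Y$, and convert it into $p^!\mcO_X\cong q^!\mcO_Y$ via \cref{lem:k_and_o_equvalence}. The endpoints match, but there is a genuine gap in the middle. What Kawamata's argument actually yields is a normal projective $Z$ with $p^*\omega_X^{\otimes r}\cong q^*\omega_Y^{\otimes r}$ for \emph{some} integer $r>0$, not the untwisted isomorphism $p^*\omega_X\cong q^*\omega_Y$ that you assert after ``passing to appropriate cohomology sheaves of $e$ supported on $Z$''. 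On a normal but possibly singular $Z$ (and one cannot resolve singularities here, which is precisely why $\mcO$-equivalence is introduced), the determinant or double dual of the relevant cohomology sheaf of $e$ need not be a line bundle, and the cancellation that would produce $p^*\omega_X\cong q^*\omega_Y$ directly is not available; one only obtains the relation up to the generic rank $r$.

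Removing that $r$-th power is where the paper does additional work, and your proposal omits it entirely. The paper writes $\omega_Z\cong\bigl(p^*\omega_X\otimes\mcO_Z(\sum_i a_iE_i)\bigr)^{\vee\vee}$ and $\omega_Z\cong\bigl(q^*\omega_Y\otimes\mcO_Z(\sum_j a'_jE'_j)\bigr)^{\vee\vee}$, where the $E_i$ (resp.\ $E'_j$) are the codimension-one components of $\mathrm{Exc}(p)$ (resp.\ $\mathrm{Exc}(q)$), deduces from $p^*\omega_X^{\otimes r}\cong q^*\omega_Y^{\otimes r}$ that $\mcO_Z\bigl(r(\sum_i a_iE_i-\sum_j a'_jE'_j)\bigr)\cong\mcO_Z$, and then invokes the argument in the proof of \cite[Prop.~6.19]{huybrechts_fm_transforms} to conclude that all $a_i$ and $a'_j$ vanish, whence $p^*\omega_X\cong q^*\omega_Y$ and the proposition follows from \cref{lem:k_and_o_equvalence}. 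A secondary imprecision in your sketch: surjective projections from an $n$-dimensional component to the $n$-dimensional $X$ and $Y$ are only generically finite, not automatically birational; showing the degree is one is a nontrivial part of Kawamata's analysis and should not be dismissed as ``surjective, hence birational''.
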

	\begin{proof}
		By \cite[Cor.~9.17]{lunts_orlov_uniqueness_of_enhancements_for_triangulated_categories},
		a $k$-linear equivalence $\sfD^b(X) \cong \sfD^b(Y)$ is induced by a Fourier--Mukai transform with kernel $K \in \sfD^b(X\times_k Y)$.
		Denote by $\pi_X \colon X \times_k Y \to X$ and $\pi_Y \colon X\times_k Y \to Y$ the projections to $X$ and $Y$.
		By \cite[Prop.~4.2]{hernandez_ruiperez_lopez_sancho_de_salas_relative_fm_transforms_singular_var}, we have a natural isomorphism
		\begin{equation}\label{eq:left_and_right_kernel}
			\mathrm{R}\sheafhom_{\mcO_{X\times_k Y}} (K, \pi_X^!\mcO_{X})  \cong \mathrm{R}\sheafhom_{\mcO_{X\times_k Y}} (K, \pi_Y^!\mcO_{Y}).
		\end{equation}
		(Note that the
		proof of \cite[Prop.~4.2]{hernandez_ruiperez_lopez_sancho_de_salas_relative_fm_transforms_singular_var} does not require that the base field be algebraically
		closed.)
		On the other hand, by base change \cite[\href{https://stacks.math.columbia.edu/tag/0E9U}{Tag 0E9U}]{stacks-project} and since $X$ and $Y$ are Gorenstein, $\pi_X^! \mcO_X \cong \mathrm{L}\pi_Y^* \omega_Y^\bullet = \pi_Y^*\omega_Y[\dim Y]$ and $\pi_Y^! \mcO_Y \cong \mathrm{L}\pi_X^* \omega_X^\bullet = \pi_X^* \omega_X[\dim X]$.
		Thus, \cref{eq:left_and_right_kernel} is equivalent to 
		\begin{equation*}
			K^\vee \cong K^\vee \otimes \pi_X^* \omega_X \otimes \pi_Y^* \omega_Y^{-1}[\dim X - \dim Y] \quad \text{where}\; K^\vee\coloneqq\mathrm{R}\sheafhom_{\mcO_{X\times_k Y}} (K, \mcO_{X\times_k Y}).
		\end{equation*}
		As shown in \cite[Proof of Prop.~2.10]{hernandez_ruiperez_lopez_sancho_de_salas_relative_fm_transforms_singular_var}, $K^\vee$ lies in $\sfD^b(X\times_k Y)$, and hence $\dim X = \dim Y$.
		
		Let $\nu\colon  Z^\nu \to Z$ be the normalization of an irreducible component $Z$ of $\mathrm{Supp} (K^\vee)$ 
		and set $p = \pi_X \circ \nu$ and $q = \pi_Y \circ \nu$.
		Then, there exists $i \in \bbZ$ such that $\nu^*\mcH^i(K^\vee)\vert_Z$ generically has positive rank $r>0$.
		Arguing as in \cite[Lem.~6.9]{huybrechts_fm_transforms}, we obtain 
		\begin{equation*}\label{eq:K}
			\mcO_{Z^\nu}(r p^*K_X) \cong \mcO_{Z^\nu}(r q^*K_Y).
		\end{equation*}
		Since $K_X$ or $-K_X$ is big, arguing as in the proof of \cite[Thm.~2.3(2)]{kawamata_d_equivalence_and_k_equivalence} (see also \cite[Prop.~6.19]{huybrechts_fm_transforms}) shows that
		 there exists a component $Z$ that dominates $X$ and $Y$
		and is such that $p$ and $q$ are birational morphisms.
		This proves that $X$ and $Y$ are $K$-equivalent. 
		By \cref{prop:K-O-eq}, $X$ and $Y$ are also $\mcO$-equivalent.
	\end{proof}
	
	\begin{remark}
		In the same way that $D$-equivalent smooth proper complex varieties are not necessarily $K$-equivalent \cite{uehara_an_example_of_fourier_mukai_partners_of_minimal_elliptic_surfaces}, $D$-equivalent smooth proper varieties in positive characteristic are not necessarily  $\mcO$-equivalent.
		Indeed, in \cite{ab}, Addington and Bragg have produced $D$-equivalent smooth projective threefolds over $\overline{\bbF}_3$ with different Hodge numbers~$h^{0,i}$ for $i=1$ and~$2$.
		 Such varieties are not $\mcO$-equivalent by \cref{prop:O-eq-Hodge-numbers} and \cref{rem:O-eq-Hodge-numbers}.
	\end{remark}

	\subsection{Invariance of the splinter property under strong $\mcO$-equivalence}
	Since a crepant morphism $p\colon Y \to X$ provides a strong $\mcO$-equivalence between $Y$ and $X$, the following theorem extends \cref{rmk:crepant}\labelcref{item:crepant-Dsplinter} in the case of proper varieties over a field.
	
	\begin{theorem}[Theorem~\ref{thm:O-splinter},
		Derived splinters are stable under strong $\mcO$-equivalence]
		\label{cor:splinters-O-eq}
		Let $X$ and~$Y$ be proper varieties over a field.
		If  $X$ and $Y$ are strongly $\mcO$-equivalent, then $X$ is a derived splinter if and only if $Y$ is a derived splinter.
	\end{theorem}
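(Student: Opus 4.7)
The strategy is to invoke \cref{prop:K_equvialence}\labelcref{item:both_pushforwards_isos} in both directions. By \cref{def:O_equiv}, there exists a scheme $Z$ of finite type and separated over $S$ with proper birational morphisms $p\colon Z\to X$ and $q\colon Z\to Y$ over $S$ such that $p^{!}\mcO_{X}\cong q^{!}\mcO_{Y}$ in $\sfD_{\mathrm{Coh}}(\mcO_{Z})$; since $p$ is of finite type with $X$ excellent, $Z$ is excellent as well.

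First, I would reduce to the case where $p$ and $q$ are both projective by a double application of Chow's lemma \cite[\href{https://stacks.math.columbia.edu/tag/02O2}{Tag 02O2}]{stacks-project}, as in the proof of \cref{prop:O-eq-Hodge-numbers}. Applied to $p$, Chow's lemma produces a projective birational modification $\pi_{1}\colon Z_{1}\to Z$ with $p\circ\pi_{1}$ projective; applied again to $q\circ\pi_{1}\colon Z_{1}\to Y$, it yields a projective birational $\pi_{2}\colon Z'\to Z_{1}$ such that $q\circ\pi_{1}\circ\pi_{2}\colon Z'\to Y$ is projective, while $Z'\to Z_{1}\to X$ remains projective as a composition of projective morphisms. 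Setting $\pi=\pi_{1}\circ\pi_{2}$, $p'=p\circ\pi$, and $q'=q\circ\pi$, functoriality of the exceptional inverse image functor gives $p'^{!}\mcO_{X}\cong\pi^{!}p^{!}\mcO_{X}\cong\pi^{!}q^{!}\mcO_{Y}\cong q'^{!}\mcO_{Y}$, so we may henceforth assume that $p$ and $q$ themselves are projective and birational.

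With $p$ and $q$ projective birational, the pseudo-rationality of $X$ and $Y$ combined with \cite[Cor.~8.7]{kovacs_rational_singularities} (as already exploited in the proof of \cref{prop:O-eq-Hodge-numbers}) forces both canonical maps $\mcO_{X}\to\mathrm{R}p_{*}\mcO_{Z}$ and $\mcO_{Y}\to\mathrm{R}q_{*}\mcO_{Z}$ to be isomorphisms. Hypothesis~\labelcref{item:both_pushforwards_isos} of \cref{prop:K_equvialence} is therefore verified, and we conclude that if $X$ is a derived splinter, then so is $Y$. Swapping the roles of $X$ and $Y$ in the $\mcO$-equivalence data yields the reverse implication. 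The substantive content is packaged into \cref{prop:K_equvialence}\labelcref{item:both_pushforwards_isos} and Kovács's theorem; the only mild subtlety is the double application of Chow's lemma needed to produce a single modification $Z'$ that is simultaneously projective over $X$ and over $Y$ while preserving the identification $p^{!}\mcO_{X}\cong q^{!}\mcO_{Y}$.
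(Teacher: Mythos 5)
Your proof is correct and follows essentially the same route as the paper: reduce to $p$ and $q$ projective via Chow's lemma, use pseudo-rationality with Kov\'acs's result to deduce that $\mcO_X \to \mathrm{R}p_*\mcO_Z$ and $\mcO_Y \to \mathrm{R}q_*\mcO_Z$ are isomorphisms, and then invoke \cref{prop:K_equvialence}\labelcref{item:both_pushforwards_isos} (plus the symmetry of $\mcO$-equivalence for the converse). The only difference is that you unpack the Chow's lemma reduction explicitly as a double application with functoriality of $(-)^!$, which the paper leaves implicit; this is a helpful clarification but not a different argument.
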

	\begin{proof}
		Suppose $X$ is a derived splinter.
		Let $f\colon B \to Y$ be a proper surjective morphism. We have to show that $f^\sharp \colon \mcO_Y \to \mathrm{R}f_*\mcO_B$ splits in $\sfD_{\mathrm{Coh}}(\mcO_Y)$.  
		Consider the diagram
		\[
		\begin{tikzcd}
				& Z \arrow[swap]{dl}{p} \arrow{dr}{q} && B' \arrow{ll}[swap]{f'} \arrow{d}{q'}\\
				X & & Y & B \arrow[swap]{l}{f}
		\end{tikzcd}
		\]
		where $p$ and $q$ are proper birational over $S$ with $p^!\mcO_X \cong q^!\mcO_Y$, and where $B'\coloneqq B\times_Y Z$.
			We then have the following commutative diagram\,:

		\[
		\begin{tikzcd}
			\Hom_{\sfD_{\Coh}(\mcO_X)} (\mathrm{R} p_* \mathrm{R} f'_* \mcO_{B'}, \mcO_X) \rar{- \circ \mathrm{R} p_* {f'}^\sharp}\dar{=} & \Hom_{\sfD_{\Coh}(\mcO_X)} (\mathrm{R} p_*  \mcO_{Z}, \mcO_X) \rar{-\circ p^\sharp} \dar{=} &\Hom_{\sfD_{\Coh}(\mcO_X)} (  \mcO_{X}, \mcO_X) \\
			 \Hom_{\sfD_{\Coh}(\mcO_Z)} ( \mathrm{R} f'_* \mcO_{B'}, p^! \mcO_X) \rar{- \circ {f'}^\sharp} \dar{\cong} & \Hom_{\sfD_{\Coh}(\mcO_Z)} ( \mcO_{Z}, p^! \mcO_X)  \dar{\cong}&\\		
			  \Hom_{\sfD_{\Coh}(\mcO_Z)} ( \mathrm{R} f'_* \mcO_{B'}, q^! \mcO_Y) \rar{- \circ  {f'}^\sharp} \dar{=} & \Hom_{\sfD_{\Coh}(\mcO_Z)} ( \mcO_{Z}, q^! \mcO_Y)  \dar{=}&\\
			  \Hom_{\sfD_{\Coh}(\mcO_Y)} (\mathrm{R} q_* \mathrm{R} f'_* \mcO_{B'}, \mcO_Y) \rar{- \circ \mathrm{R} q_* {f'}^\sharp} & \Hom_{\sfD_{\Coh}(\mcO_Y)} (\mathrm{R} q_*  \mcO_{Z}, \mcO_Y) \rar{-\circ q^\sharp}  &\Hom_{\sfD_{\Coh}(\mcO_Y)} (  \mcO_{Y}, \mcO_Y).
		\end{tikzcd}
		\]
		Let $s \colon \mathrm{R} p_*  \mathrm{R} f'_* \mcO_{B'} \to \mcO_X$ be a section of $(p\circ f')^\sharp \colon  \mcO_X \to \mathrm{R} p_*  \mathrm{R} f'_* \mcO_{B'}$
		and let $t\colon \mathrm{R} q_* \mathrm{R}f'_* \mcO_{B'}\to \mcO_Y$ be the image of $s$ under composition of the left vertical arrows above.
		Let $s'  \coloneqq s \circ \mathrm{R}p_*{f'}^\sharp $ and $t'  \coloneqq t \circ \mathrm{R}q_*{f'}^\sharp$.
		Since $\Hom_{\mcO_Y}(\mcO_Y, \mcO_Y)$ is a field, it is enough to show that $t' \circ q^\sharp$ is nonzero.
		Choose dense open subsets $U \subseteq X$ and $V \subseteq Y$ such that $p^{-1}(U) = q^{-1}(V)$ and such that $p$ and $q$ restrict to isomorphisms over $U$ and $V$, respectively.
		Since $Y$ is integral, it is enough to show that $t' \circ q^\sharp\vert_V$ is nonzero.
		As $-\circ q^\sharp \colon \Hom_{\sfD_{\Coh}(\mcO_V)} (\mathrm{R} q_*  \mcO_{Z}\vert_V, \mcO_Y\vert_V) \to \Hom_{\sfD_{\Coh}(\mcO_V)} (  \mcO_{Y}\vert_V, \mcO_Y\vert_V)$ is an isomorphism, the latter is equivalent to $t'\vert_V$ being nonzero.
		Since $s'$ is a section of $\mcO_X \to \mathrm{R} p_* \mcO_Z$, $s'\vert_U$ is nonzero.
		This shows that $t'\vert_V$ is nonzero, since $t'\vert_V$ is the image of $s'\vert_U$ under the middle vertical isomorphism when restricted to $U \leftarrow p^{-1}(U) = q^{-1} (V) \to V$.
		Hence, $\mcO_Y\to \mathrm{R}q_* \mathrm{R}f'_* \mcO_{B'} = \mathrm{R}f_* \mathrm{R}q'_* \mcO_{B'}$ splits and thus,
		$\mcO_Y\to \mathrm{R}f_* \mcO_{B}$ splits.
	\end{proof}

	Combined with \cref{prop:d_equiv_implies_o_equiv}, we obtain a partial answer to the question, whether the splinter property for smooth projective schemes over a field is stable under derived equivalence\,: 
	
	\begin{corollary}[Theorem~\ref{thm:D-splinter}]
		\label{cor:d_equvialent_splinters_pseudo-rational}
		Let $X$ and $Y$ be
		normal  Gorenstein
		projective varieties over a field of positive characteristic.
		Assume that 
		$-K_X$ is big.
		If
		$X$ and $Y$ are $D$-equivalent, then $X$ is a splinter if and only if $Y$ is a splinter.
	\end{corollary}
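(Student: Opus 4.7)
The plan is to chain together three results established earlier in the paper, with Bhatt's identification between splinters and derived splinters in positive characteristic serving as the final step. The argument is quite direct, and the main conceptual content is the observation that $\mcO$-equivalence is precisely the right bridge between derived equivalence and the splinter property: it is weaker than $K$-equivalence (and in particular can be extracted from $D$-equivalence without invoking resolution of singularities) but strong enough to preserve derived splinters for pseudo-rational schemes.

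Concretely, I would proceed as follows. Because $X$ and $Y$ are $D$-equivalent smooth projective varieties and $-K_X$ is big, \cref{prop:d_equiv_implies_o_equiv} produces a (not necessarily smooth) scheme $Z$ together with proper birational morphisms $p\colon Z\to X$ and $q\colon Z\to Y$ such that $p^!\mcO_X \cong q^!\mcO_Y$, i.e., $X$ and $Y$ are $\mcO$-equivalent. Since smooth schemes over a field are regular and hence pseudo-rational, both $X$ and $Y$ satisfy the hypotheses of \cref{cor:splinters-O-eq}, which yields that $X$ is a derived splinter if and only if $Y$ is a derived splinter. Note that the symmetry of $\mcO$-equivalence and of \cref{cor:splinters-O-eq} means we do not need a separate argument for the reverse implication; in particular, we do not need to know a priori whether $-K_Y$ is big.

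Finally, Bhatt's theorem \cite[Thm.~1.4]{bhatt_derived_splinters_in_positive_characteristic}, already recalled in \cref{SS:splinter}, asserts that the splinter and derived splinter properties coincide for Noetherian schemes of positive characteristic. Applied to both $X$ and $Y$, this gives the chain of equivalences: $X$ is a splinter $\iff$ $X$ is a derived splinter $\iff$ $Y$ is a derived splinter $\iff$ $Y$ is a splinter, which concludes the proof.

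I do not anticipate any substantive obstacle: the genuine work has been carried out in \cref{prop:d_equiv_implies_o_equiv}, whose proof circumvents resolution of singularities precisely by means of the $\mcO$-equivalence reformulation, and in \cref{cor:splinters-O-eq}, which relies on the formalism of the exceptional inverse image functor together with Kov\'acs' results on pseudo-rational singularities.
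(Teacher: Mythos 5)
Your proof is correct and follows essentially the same route as the paper: reduce $D$-equivalence to $\mcO$-equivalence via \cref{prop:d_equiv_implies_o_equiv}, invoke \cref{cor:splinters-O-eq} (noting that smooth $\Rightarrow$ regular $\Rightarrow$ pseudo-rational), and conclude with Bhatt's identification of splinters and derived splinters in positive characteristic. The only difference is cosmetic: you spell out the pseudo-rationality check, which the paper leaves implicit.
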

	\begin{proof}
		Under the assumption that $-K_X$ is big, we have from \cref{prop:d_equiv_implies_o_equiv} that $X$ and $Y$ are $\mcO$-equivalent. Assume now that $X$ and $Y$ are $\mcO$-equivalent and that $X$ is a splinter.
	Recall from \cref{prop:bhatt_normal_CM} that a splinter is normal pseudo-rational, and from
	 \cref{prop:canonical_pseudorational_Gorenstein} that a normal Gorenstein pseudo-rational variety has canonical singularities. 
		Therefore $X$ and $Y$ are strongly $\mcO$-equivalent by \cref{prop:K_equvialence-eqivalence_implies_small_birational_map}\labelcref{item:one_canonical_proper} and \cref{prop:K-O-eq}.
			We may now conclude with  \cref{cor:splinters-O-eq} that $Y$ is a splinter by using the fact due to Bhatt~\cite[Thm.~1.4]{bhatt_derived_splinters_in_positive_characteristic} that a Noetherian scheme of positive characteristic is a splinter if and only if it is a derived splinter.
	\end{proof}

	\begin{remark}
		According to \cref{conj:splinter_big_anticanonical_class}, the assumption that $-K_X$ is big in \cref{cor:d_equvialent_splinters_pseudo-rational} is conjecturally superfluous. 
	\end{remark}

	\subsection{Invariance of global $F$-regularity under strong $\mcO$-equivalence and $D$-equivalence}

	\begin{theorem}[Theorem~\ref{thm:O-gFr}, Global $F$-regularity is stable under strong $\mcO$-equivalence]\label{prop:globF_stable_O_equiv}
		Let $X$ and~$Y$ be projective varieties over 
		a field~$k$ of positive characteristic.
		If $X$ and $Y$ are strongly $\mcO$-equivalent, then $X$ is normal globally $F$-regular if and only if $Y$ is normal globally $F$-regular.
	\end{theorem}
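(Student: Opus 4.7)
The plan is to mirror the proof of \cref{cor:splinters-O-eq}, substituting the Schwede--Smith criterion \cref{thm:schwede_smith_f_reg_one_divisor} for the splinter check. By symmetry, I assume $X$ is globally $F$-regular and prove the statement for $Y$. Applying Chow's lemma, I reduce to the case where $p\colon Z \to X$ and $q\colon Z \to Y$ are projective birational; this preserves $\mcO$-equivalence because $(p \circ \pi)^! \mcO_X \cong (q \circ \pi)^! \mcO_Y$ for any further proper birational $\pi \colon Z' \to Z$. Pseudo-rationality of $X$ and $Y$ then yields $\mathrm{R} p_* \mcO_Z = \mcO_X$ and $\mathrm{R} q_* \mcO_Z = \mcO_Y$ by \cite[Cor.~8.7]{kovacs_rational_singularities}, and \cref{cor:splinters-O-eq} already gives that $Y$ is a splinter.

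Using quasi-projectivity of $Y$ and the codimension-$2$ bound on $Y_{\mathrm{sing}}$, I choose an effective ample Cartier divisor $D$ on $Y$ whose support contains $Y_{\mathrm{sing}}$ and whose complement $Y \setminus D$ is affine. Then $Y \setminus D \subseteq Y_{\mathrm{reg}}$ is smooth affine over the $F$-finite field $k$, hence strongly $F$-regular, hence globally $F$-regular. By \cref{thm:schwede_smith_f_reg_one_divisor}, it remains to produce $e > 0$ such that $\mcO_Y \to F_*^e \mcO_Y(D)$ splits. Setting $D_Z \coloneqq q^*D$ on $Z$ and defining $D_X$ as the Weil divisor on $X$ corresponding to the reflexive sheaf $(p_* \mcO_Z(D_Z))^{\vee\vee}$ (which is effective since pushing $\mcO_Z \hookrightarrow \mcO_Z(D_Z)$ forward yields $\mcO_X \hookrightarrow (p_* \mcO_Z(D_Z))^{\vee\vee}$), the global $F$-regularity of $X$ provides some $e > 0$ and a splitting of $\mcO_X \to F_*^e \mcO_X(D_X)$.

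I would then build a commutative diagram in the spirit of \cref{lem:splinter_ascend_pi_!}, \cref{lem:gFr_ascend_pi_!}, and the proof of \cref{cor:splinters-O-eq}, using the adjunctions $\mathrm{R} p_* \dashv p^!$ and $\mathrm{R} q_* \dashv q^!$, the identification $p^! \mcO_X \cong q^! \mcO_Y$, the projection formula applied to $\mathrm{R} p_* \mcO_Z = \mcO_X$ and $\mathrm{R} q_* \mcO_Z = \mcO_Y$, and the compatibility of the absolute Frobenius with $p$ and $q$, namely $p \circ F_Z^e = F_X^e \circ p$ and $q \circ F_Z^e = F_Y^e \circ q$. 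The diagram identifies the map $\Hom_X(F_*^e \mcO_X(D_X), \mcO_X) \to H^0(X, \mcO_X)$ with $\Hom_Y(F_*^e \mcO_Y(D), \mcO_Y) \to H^0(Y, \mcO_Y)$ (these share a common target by pseudo-rationality, since $H^0(X, \mcO_X) = H^0(Z, \mcO_Z) = H^0(Y, \mcO_Y)$), so surjectivity on the $X$-side, given by global $F$-regularity of $X$, yields surjectivity on the $Y$-side.

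The hard part will be verifying the commutativity of this diagram and reconciling the derived pushforward $\mathrm{R} p_* \mcO_Z(D_Z)$ with the reflexive sheaf $\mcO_X(D_X)$ in the absence of a relative Kawamata--Viehweg vanishing in positive characteristic. I would handle this by restricting to the locus where $p$ is an isomorphism (a big open subset of $X$) and exploiting that all sheaves involved are rank-one reflexive, so that identifications on big opens extend, in the spirit of \cref{lem:splinter_invariant_codim_2_surgery} and \cref{lem:small_birational_map_globF}.
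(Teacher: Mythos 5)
Your high-level strategy — Chow's lemma, pseudo-rationality to trivialize $\mathrm{R}p_*\mcO_Z$ and $\mathrm{R}q_*\mcO_Z$, the Schwede--Smith criterion, and an adjunction diagram built from $p^!\mcO_X \cong q^!\mcO_Y$ — matches the paper's. The genuine gap is precisely where you flag it, and the restriction-to-big-opens fix you propose does not close it. You choose $D$ on $Y$ first, set $D_Z = q^*D$, and then \emph{push forward} to $X$ by defining $D_X$ via $(p_*\mcO_Z(D_Z))^{\vee\vee}$. The resulting $D_X$ satisfies $p^*D_X = D_Z$ only over the locus where $p$ is an isomorphism; along the $p$-exceptional divisors of $Z$, the multiplicities of $p^*D_X$ and $D_Z$ have no reason to agree (and either can be strictly larger, depending on whether $q$ contracts that divisor and on $D$). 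Consequently there is no natural morphism $F_*^e\mcO_X(D_X) \to \mathrm{R}p_*F_*^e\mcO_Z(D_Z)$ (nor one in the opposite direction: the truncation of a complex concentrated in nonnegative degrees yields a map $p_*\mathcal{F} \to \mathrm{R}p_*\mathcal{F}$, not the reverse), so the key square of the diagram cannot be completed. Restricting to a big open $X^\circ \subseteq X$ does make $\mathrm{R}p_*F_*^e\mcO_Z(D_Z)$ concentrated in degree $0$, but $\Hom_{\sfD_{\mathrm{Coh}}(\mcO_X)}(\mathrm{R}p_*F_*^e\mcO_Z(D_Z), \mcO_X)$ is computed globally and the reflexive-sheaf extension principle applies to underived $\Hom$ of sheaves, not to derived $\Hom$ out of a complex with codimension-two-supported higher cohomology.

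The paper sidesteps this entirely by choosing the two divisors \emph{simultaneously}: fix smooth affine opens $U \subseteq X$ and $V \subseteq Y$ with $p^{-1}(U) = q^{-1}(V)$ on which $p$ and $q$ are isomorphisms, set $D = X \setminus U$ and $E = Y \setminus V$, and enlarge $E$ to a Cartier divisor (shrinking $V$, $U$, $D$ accordingly). Then $p^{-1}(D) = q^{-1}(E)$ holds \emph{as divisors on $Z$}, which is the exact identity you need in the middle rows of the diagram; on the $Y$ side, the Cartierness of $E$ makes the projection formula give the isomorphism $\mathrm{R}q_*F_*^e\mcO_Z(q^*E) \cong F_*^e\mcO_Y(E)$, while on the $X$ side the truncation $\mathrm{L}p^*\mcO_X(D) \to p^*\mcO_X(D)$ supplies the needed map (not an isomorphism, since $D$ need not be Cartier on $X$, but that is harmless because only a map, not an iso, is needed on that side). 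Global $F$-regularity of $X$ splits $\mcO_X \to F_*^e\mcO_X(D)$ for \emph{any} effective Weil divisor $D$, so the freedom to enlarge $D$ to match $E$ costs nothing. To repair your argument, replace your ad-hoc construction of $D_X$ by this compatible choice of $D$ and $E$, and the rest of your outline goes through.
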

	
	\begin{proof}
		Let $p\colon Z \to X$ and $q \colon Z \to Y$ be as in \cref{def:O_equiv}.
		After possibly normalizing $Z$, we may and do assume that $Z$ is normal.
		Assume that $X$ is normal globally $F$-regular.
		By \cref{prop:gFr-splinter}, $X$ is a splinter. By \cite[Thm.~1.4]{bhatt_derived_splinters_in_positive_characteristic}, $X$ is a derived splinter\,; 
		in particular the map $\mcO_X \to \mathrm{R}p_* \mcO_Z$ splits. Moreover, by \cref{cor:splinters-O-eq}, $Y$ is also a derived splinter and hence is normal.
		Fix nonempty regular affine open subsets $U \subseteq X$ and $V \subseteq Y$ such that $p^{-1}(U) = q^{-1}(V)$ 
		and such that $p\vert_U$ and $q\vert_V$ are isomorphisms.
		By \cite[\href{https://stacks.math.columbia.edu/tag/0BCU}{Tag 0BCU}]{stacks-project},
		$D \coloneqq X\setminus U$ and $E \coloneqq Y \setminus V$ are divisors. 
		Since $Y$ is projective, any Weil divisor on $Y$ is dominated by a Cartier divisor and we may thus further assume that $E$ is Cartier.
		By \cref{thm:schwede_smith_f_reg_one_divisor} it suffices to show that there exists $e>0$ such that $\mcO_Y \to F_*^e \mcO_Y(E)$ splits.
		Let $D'\geq D$ be a Cartier divisor on $X$. Since $q^*E = \sum_i a_i E_i$ for some $a_i \in \bbZ_{>0}$ with $E_i$ the irreducible components of $\Supp(q^*E) \subseteq \Supp(p^*D ')$, there exists $n \in \bbZ_{>0}$ such that $q^*E \leq np^*D'$.
		Let $U' \coloneqq X \setminus D'$ and set $V' \coloneqq q(p^{-1}(U'))$.
		Then $U'$ and $V'$ are regular affine open subsets such that $p\vert_{U'}$ and $q\vert_{V'}$ are isomorphisms.
		
		Since $X$ is globally $F$-regular, there exists an integer $e>0$ such that $\sigma_{nD'} \colon \mcO_X \to F_*^e \mcO_X(nD')$ splits.
		By the projection formula, the splitting of $\mcO_X \to \mathrm{R}p_* \mcO_Z$ gives a splitting of $\id_{\Coh(X)} \to \mathrm{R}p_* \mathrm{L}p^*$.
		Thus, we obtain a splitting $s$ of
		$$\alpha \colon \mcO_X \to F_*^e \mcO_X(nD') \to F_*^e \mathrm{R}p_* \mathrm{L}p^* \mcO_X(nD') =\mathrm{R}p_* F_*^e \mcO_Z(np^*D').$$
		Note that the map $\alpha$ can also be obtained as the composition
		$$ \mcO_X \to \mathrm{R}p_* \mcO_Z \to\mathrm{R}p_* F_*^e  \mcO_Z \to \mathrm{R}p_* F_*^e  \mcO_Z(np^*D').$$
		We obtain a commutative diagram (where $\sfD(-)$ stands for $\sfD_{\Coh}(-)$)\,:
		\[
		\begin{tikzcd}
			\Hom_{\sfD_{}(\mcO_X)} ( \mathrm{R}p_* F_*^e \mcO_Z(np^*D' ), \mcO_X) \arrow[rr,"- \circ \mathrm{R}p_* \sigma_{np^* D'}^\sharp"]
		    \dar{=} & &\Hom_{\sfD_{}(\mcO_X)} (\mathrm{R} p_*  \mcO_{Z}, \mcO_X) \rar{-\circ p^\sharp} \dar{=} &\Hom_{\sfD_{}(\mcO_X)} (  \mcO_{X}, \mcO_X) \\
			\Hom_{\sfD_{}(\mcO_Z)} (  F_*^e \mcO_Z(np^*D' ), p^! \mcO_X) \arrow[rr,"- \circ \sigma_{np^* D'}^\sharp"] \dar{\cong} && \Hom_{\sfD_{}(\mcO_Z)} ( \mcO_{Z}, p^! \mcO_X)  \dar{\cong}&\\		
			\Hom_{\sfD_{}(\mcO_Z)} ( F_*^e \mcO_Z(np^*D' ), q^! \mcO_Y) \arrow[rr,"- \circ  \sigma_{np^* D'}^\sharp"] \dar{\gamma} && \Hom_{\sfD_{}(\mcO_Z)} ( \mcO_{Z}, q^! \mcO_Y)  \dar{=}&\\
			\Hom_{\sfD_{}(\mcO_Z)} ( F_*^e \mcO_Z(q^*E ), q^! \mcO_Y) \arrow[rr,"- \circ  \sigma_{q^*E}^\sharp"] \dar{=} && \Hom_{\sfD_{}(\mcO_Z)} ( \mcO_{Z}, q^! \mcO_Y)  \dar{=}&\\
			\Hom_{\sfD_{}(\mcO_Y)} (\mathrm{R} q_* F_*^e\mcO_Z(q^*E ), \mcO_Y) \arrow[rr,"- \circ \mathrm{R}q_*\sigma_{q^*E}^\sharp"] && \Hom_{\sfD_{}(\mcO_Y)} (\mathrm{R} q_*  \mcO_{Z}, \mcO_Y) \rar{-\circ q^\sharp}  &\Hom_{\sfD_{}(\mcO_Y)} (  \mcO_{Y}, \mcO_Y),
		\end{tikzcd}		
		\]
		where $s$ maps to the identity under the composition of the top horizontal arrows.
		Here the vertical map $\gamma$ is the precomposition with $F_*^e \mcO_Z(q^*E ) \to F_*^e \mcO_Z(np^*D' )$.

		Let $t\colon \mathrm{R} q_* F_*^e\mcO_Z(q^*E )\to \mcO_Y$ be the image of $s$ under the composition of the left vertical arrows.
		As in the proof of \cref{cor:splinters-O-eq}, we conclude that $t$ maps to a nonzero element of $\Hom_{\sfD_{}(\mcO_Y)} (  \mcO_{Y}, \mcO_Y) $, thereby providing a splitting of
		$\mcO_Y \to F_*^e \mcO_Y(E) \to F_*^e \mathrm{R} q_* \mathrm{L}q^* \mcO_Y (E) = \mathrm{R} q_* F_*^e \mcO_{Z} (q^* E).$
	\end{proof}

	\begin{remark}[The $F$-split property is stable under strong $\mcO$-equivalence]
		\label{rnk:F-split-O-eq} 
		Assume that
			$X$ and~$Y$ are normal proper varieties over a field~$k$ of positive characteristic.	
		If $X$ and $Y$ are strongly $\mcO$-equivalent, then $X$ is $F$-split if and only if $Y$ is $F$-split. 
		This follows indeed by considering  in the proof of \cref{prop:globF_stable_O_equiv} dense opens $U'=U\subseteq X$ and $V'=V\subseteq Y$ such that  $p^{-1}(U) = q^{-1}(V)$ and such that $p\vert_U$ and $q\vert_V$ are isomorphisms, and by setting $D=0$ and $E=0$.
	\end{remark}

	\begin{corollary}[Theorem~\ref{thm:D-gFr}, Global $F$-regularity is stable under $D$-equivalence] \label{cor:d-equiv-gFr}
		Let $X$ and~$Y$ be normal Gorenstein
		projective varieties over 
		a field $k$ of positive characteristic.
		If 
		$X$ and $Y$ are $D$-equivalent, then $X$ is globally $F$-regular if and only if $Y$ is  globally $F$-regular.
	\end{corollary}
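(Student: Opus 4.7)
The plan is to reduce \cref{cor:d-equiv-gFr} to a combination of \cref{prop:gFr-big}, \cref{prop:d_equiv_implies_o_equiv}, and \cref{prop:globF_stable_O_equiv}. The point is that once we know the bigness of $\pm K_X$, $D$-equivalence upgrades to $\mcO$-equivalence, and global $F$-regularity is known to be stable under the latter.

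First I would argue that global $F$-regularity of $X$ forces $-K_X$ to be big. Indeed, $X$ is smooth and projective, hence in particular normal and $\bbQ$-Gorenstein (even Gorenstein), so \cref{prop:gFr-big}\labelcref{item:globF_log_Fano} applies and yields that $-K_X$ is big. Next, by \cref{prop:d_equiv_implies_o_equiv}, the $D$-equivalence between $X$ and $Y$ together with the bigness of $-K_X$ implies that $X$ and $Y$ are $\mcO$-equivalent. It now remains to check that the hypotheses of \cref{prop:globF_stable_O_equiv} are satisfied: $X$ and $Y$ are smooth projective (hence normal, quasi-projective) varieties over an $F$-finite field, and smoothness implies pseudo-rationality. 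Therefore \cref{prop:globF_stable_O_equiv} yields that $Y$ is globally $F$-regular.

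The converse direction is handled symmetrically by interchanging the roles of $X$ and $Y$: global $F$-regularity of $Y$ similarly forces $-K_Y$ to be big, and a $D$-equivalence $\sfD^b(X) \cong \sfD^b(Y)$ yields an inverse $D$-equivalence $\sfD^b(Y) \cong \sfD^b(X)$, so that the same chain of implications delivers global $F$-regularity of $X$.

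There is no real obstacle here beyond the routine verification that the hypotheses of \cref{prop:d_equiv_implies_o_equiv} and \cref{prop:globF_stable_O_equiv} hold; the substantive content has already been developed in those two results, together with the bigness of the anticanonical divisor of a globally $F$-regular $\bbQ$-Gorenstein projective variety from \cref{prop:gFr-big}.
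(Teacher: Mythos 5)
Your proof is correct and follows the same route as the paper: invoke \cref{prop:gFr-big} to get bigness of $-K_X$, pass to $\mcO$-equivalence via \cref{prop:d_equiv_implies_o_equiv}, and conclude with \cref{prop:globF_stable_O_equiv} after noting smoothness gives pseudo-rationality. The paper simply compresses these steps into one sentence; the content is identical.
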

	\begin{proof}
		Since normal globally $F$-regular projective varieties have big anticanonical class, this follows as in \cref{cor:d_equvialent_splinters_pseudo-rational} from \cref{prop:K_equvialence-eqivalence_implies_small_birational_map}\labelcref{item:one_canonical_proper},  \cref{prop:d_equiv_implies_o_equiv,prop:K-O-eq,prop:canonical_pseudorational_Gorenstein}, combined with \cref{prop:globF_stable_O_equiv}.
	\end{proof}

	\begin{remark}[The $F$-split property and $D$-equivalence] 
		As asked by Zsolt Patakfalvi,
		we are unaware whether the $F$-split property is stable under $D$-equivalence. As a partial result, we mention that under the assumptions of   \cref{cor:d-equiv-gFr}, if 	$X$ and $Y$ are $D$-equivalent and if $-K_X$ is big, then $X$ is $F$-split if and only if $Y$ is $F$-split. 
		For this, one argues as in the proof of \cref{cor:d_equvialent_splinters_pseudo-rational} by using \cref{rnk:F-split-O-eq}. 
		
		Moreover, if $X$ and $Y$ are $D$-equivalent abelian varieties or  $D$-equivalent strict Calabi--Yau varieties over a perfect field, then $X$ is $F$-split if and only if $Y$ is $F$-split. 
        In the case of abelian varieties, $X$ and $Y$ are then isogenous by \cite[Thm.~2.19]{orlov_derived_categories_of_coh_sheaves_on_ab_var_and_equiv_between_them}
        and therefore have the same height.
		In the case of strict Calabi--Yau varieties,
	  $X$ and $Y$ have the same height 	by \cite[Thm.~5.1]{antieau_bragg_derived_invariants_from_topological_hochschild_homology}. 
			One concludes with the fact that such varieties have height $1$ if and only if they are $F$-split\,; this is classical in the case of abelian varieties and is  \cite[Thm.~2.1]{van_der_geer_katsura_on_the_height_of_calabi_yau_varieties_in_positive_characteristic} in the case of strict Calabi--Yau varieties.
	\end{remark}

\begin{remark}
	It would be interesting to know whether
     the derived splinter property is stable under (strong) $\mcO$-equivalence for schemes of finite type and separated over a Noetherian base, and  whether Theorems~\labelcref{thm:D-splinter,thm:D-gFr} hold without assuming $X$ and $Y$ to be Gorenstein.
\end{remark}

	\printbibliography[
	title=References, ]

\end{document}